\newtheorem{thm}{Theorem}
\newtheorem{lem}[thm]{Lemma}
\newtheorem{assm}{Assumption}
\newtheorem{rem}{Remark}
\numberwithin{equation}{section}
\newcommand{\norm}[1]{\left\Vert#1\right\Vert}
\newcommand{\Pn}[0]{\frac{1}{n}\sum_{i=1}^n}
\newcommand{\alphahatnorm}[0]{\left\vert \widehat{\alpha}-\alpha_0 \right\vert_1}
\newcommand{\deltanorm}[0]{\left\vert \delta_0 \right\vert_1}
\newcommand{\fhatnorm}[0]{\left\Vert \widehat{f}-f_0\right\Vert_n}
\begin{document}

\title[The Lasso  with a Change-Point]{The Lasso for High-Dimensional Regression with a Possible  Change-Point}

\author[Lee]{Sokbae Lee}
\address{Department of Economics, Seoul National University, 1 Gwanak-ro, Gwanak-gu,
Seoul, 151-742, Republic of Korea, and
The Institute for Fiscal Studies, 7 Ridgmount Street, London, WC1E 7AE, UK.}
\email{sokbae@gmail.com}
\urladdr{https://sites.google.com/site/sokbae/.}
\author[Seo]{Myung Hwan Seo}
\address{Department of Economics, London School of Economics, Houghton
Street, London, WC2A 2AE, UK.}
\email{m.seo@lse.ac.uk}
\urladdr{http://personal.lse.ac.uk/SEO.}
\author[Shin]{Youngki Shin}
\address{Department of Economics, University of Western Ontario, 1151
Richmond Street N, London, ON N6A 5C2, Canada.}
\email{yshin29@uwo.ca}
\urladdr{http://publish.uwo.ca/\symbol{126}yshin29.}
\date{18 April 2014}
\thanks{We would like to thank Marine Carrasco,  Yuan Liao, 
Ya'acov Ritov,
two anonymous referees,
and seminar participants at various places for their helpful comments. 
This work was supported 
by the National Research Foundation of Korea Grant funded by the Korean Government (NRF-2012S1A5A8023573),
by the European Research Council (ERC-2009-StG-240910- ROMETA),
and by 
 the Social Sciences and Humanities Research Council of Canada (SSHRCC)}

\begin{abstract}
We consider a  high-dimensional regression model
with a possible change-point due to a covariate threshold
and develop the Lasso estimator of regression coefficients as well as
the threshold parameter.
Our Lasso estimator not only selects covariates but also selects a model 
between  linear and threshold regression models.
Under a sparsity assumption,  we derive non-asymptotic oracle inequalities for both the prediction risk and the $\ell_1$ estimation loss
for regression coefficients.
Since the Lasso estimator selects variables simultaneously, we show that oracle inequalities can be established without pretesting the existence of the threshold effect.
Furthermore, we establish conditions under which
the estimation error of the unknown threshold parameter can be
bounded by  a  nearly $n^{-1}$ factor 
even when the number of regressors can be much larger than the sample size ($n$).
We illustrate the usefulness of our proposed estimation method via Monte Carlo simulations
and an application to real data.
\\

\noindent \textsc{Key words.}
Lasso,
oracle inequalities,
sample splitting,
sparsity,
threshold models.

\end{abstract}

\maketitle


\onehalfspacing


\section{Introduction}

The Lasso and related methods have received rapidly increasing  attention in statistics since the seminal work of \citet{Tibshirani:96}.
For example, see  a timely monograph by \citet{BvdG:11} as well as  review articles by
\citet{Fan:Lv:10} and
\citet{Tibshirani:11} for general overview and recent developments.

In this paper, we develop a method for estimating a high-dimensional regression model with a possible
change-point due to a covariate threshold, while selecting relevant regressors from a set of many potential covariates.
In particular, we
propose the $\ell_1$ penalized least squares (Lasso) estimator of parameters, including
the unknown threshold parameter, and analyze its properties under a sparsity assumption when the number of possible covariates can be much larger than
the sample size.

To be specific, let $\{(Y_{i},X_{i},Q_{i}):i=1,\ldots ,n\}$ be a sample of independent
observations such that
\begin{equation}
Y_{i}=X_{i}^{\prime }\beta _{0}+X_{i}^{\prime }\delta _{0}1\{Q_{i}<\tau
_{0}\}+U_{i},\ \ \ i=1,\ldots ,n,  \label{model}
\end{equation}%
where for each $i$, $X_{i}$ is an $M\times 1$ deterministic vector, $Q_{i}$
is a deterministic scalar, $U_{i}$ follows $N(0,\sigma ^{2})$, and $1\{\cdot
\}$ denotes the indicator function.
The scalar variable $Q_i$ is the threshold variable and $\tau_0$ is the unknown threshold parameter.
Note that since $Q_i$ is a fixed variable in our setup, \eqref{model} includes
a regression model with a change-point at unknown time (e.g. $Q_i = i/n$).
Note that in this paper, we focus on the fixed design for $\{(X_i,Q_i):i=1,\ldots,n\}$
and independent normal errors $\{U_i: i=1,\ldots,n \}$. This setup has been extensively used in the literature \citep[e.g.][]{Bickel-et-al:09}.

A regression model such as \eqref{model} offers applied researchers a simple yet useful framework to model nonlinear relationships by splitting
the data into subsamples.
Empirical examples include cross-country growth models with multiple equilibria \citep{Durlauf:Johnson:95},
racial segregation \citep{Card:Mas:Rothstein:09}, and financial contagion \citep{Pes:Pick},
among many others.
Typically, the choice of the threshold variable is well motivated in applied work (e.g. initial per capita output
in \citet{Durlauf:Johnson:95}, and the
minority share in a neighborhood in \citet{Card:Mas:Rothstein:09}),
but selection of other covariates is subject to applied researchers' discretion.

However, covariate selection is important in identifying threshold effects (i.e., nonzero $\delta _{0}$) since a statistical model favoring threshold effects with a particular set of covariates
could be overturned by a linear model with a broader set of regressors.
Therefore, it seems natural to consider Lasso as a tool to estimate \eqref{model}.

The statistical problem we consider is to
estimate unknown parameters $(\beta _{0},\delta _{0},\tau _{0})\in \mathbb{R}%
^{2M+1}$ when $M$ is much larger than $n$.
For the classical setup (estimation of parameters without covariate selection when $M$ is smaller than $n$), estimation of \eqref{model} has been
well studied \citep[e.g.][]{Tong:90, chan:93, Hansen:00}.
Also, a general method for testing threshold effects in regression (i.e. testing $H_0: \delta_0 = 0$ in \eqref{model})
is available for the classical setup \citep[e.g.][]{lee2011testing}.

Although there are many  papers   on Lasso type methods
 and also equally many papers  on change points, sample splitting, and threshold models,
there seem to be only a handful of papers that intersect both topics.
\citet{Wu:08} proposed an information-based criterion for carrying out change point analysis and variable selection simultaneously in linear models with a possible change point; however, the proposed  method
in \citet{Wu:08}
would be infeasible in a sparse high-dimensional model.
In change-point models without covariates,
\citet{Harchaoui:Levy-Leduc:08, Harchaoui:Levy-Leduc:10} proposed a method for estimating the location of change-points in one-dimensional piecewise constant signals observed in white noise,
 using a penalized least-square criterion with an $\ell_1$-type penalty.
\citet{Zhang:Siegmund:12} developed Bayes Information Criterion (BIC)-like criteria for
determining the number of changes in the mean of multiple sequences of independent normal
observations  when the number of change-points can increase with the sample size.
 \citet{Ciuperca:12} considered a similar estimation problem as ours, but the corresponding analysis is restricted to the case when the number of potential covariates is small.

In this paper, we consider the Lasso estimator of regression coefficients as well as
the threshold parameter.
Since the change-point parameter $\tau_0$ does not enter additively in \eqref{model}, the resulting optimization problem in the Lasso estimation is non-convex.
We overcome this problem by comparing
the values of standard Lasso objective functions on a grid over the range of possible
values of $\tau_0$.

Theoretical properties of the Lasso and related methods for high-dimensional data are examined by
\citet{Fan:Peng:04},
\citet{Bunea-et-al:07b},
\citet{Candes:Tao:07},
\citet{Huang:Horowitz:Ma:08},
\citet{Huang:Ma:Zhang:08},
\citet{Kim:Choi:Oh:08},
\citet{Bickel-et-al:09},
and
\citet{Meinshausen:Yu:09},
among many others.
Most of the papers consider quadratic objective functions and linear or nonparametric models with an additive mean zero error.
There has been recent interest in extending this framework to generalized linear models \citep[e.g.][]{vdGeer:08,Fan:Lv:11},
to quantile regression models \citep[e.g.][]{Belloni:Chernozhukov:11,Bradic:Fan:Wang:12,Wang:Wu:Li:12},
and to hazards models \citep[e.g.][]{bradic2012regularization,Lin:Lv:12}.
We contribute to this literature by
considering a regression model with a possible change-point
 and then
 deriving nonasymptotic oracle inequalities for both the prediction risk and the $\ell_1$ estimation loss
for regression coefficients under a sparsity scenario.

Our theoretical results build on \citet{Bickel-et-al:09}.
Since the Lasso estimator selects variables simultaneously, we show that oracle inequalities similar to those obtained in \citet{Bickel-et-al:09}
 can be established without pretesting the existence of the threshold effect.
In particular, when there is no threshold effect ($\delta _{0} = 0$),  we prove oracle inequalities that are basically equivalent to those in
\citet{Bickel-et-al:09}. 
Furthermore, when $\delta _{0} \neq 0$, we establish conditions under which
the estimation error of the unknown threshold parameter can be
bounded by  a  nearly $n^{-1}$ factor 
when the number of regressors can be much larger than the sample size.
To achieve this, we develop some sophisticated chaining arguments and provide sufficient regularity conditions under which we prove oracle inequalities.
The super-consistency result of $\widehat \tau$ is well known when the number of covariates is small \citep[see, e.g.][]{chan:93,Seijo:Sen:11a,Seijo:Sen:11b}.
To the best of our knowledge, our paper is the first work that demonstrates the possibility of a nearly $n^{-1}$ bound in the context of sparse high-dimensional regression models with a change-point.

The remainder of this paper is as follows.
In Section \ref{sec:est} we propose the Lasso estimator, and
in Section \ref{sec:emp} we give a brief illustration of our proposed estimation method using a real-data example in economics.
In Section \ref{sec:consist} we establish the prediction consistency of our Lasso estimator.
In Section \ref{sec:oracle} we establish sparsity oracle inequalities in terms of both the prediction loss
and the $\ell_1$ estimation loss for $(\alpha_0, \tau_0)$, while providing low-level sufficient conditions
for two possible cases of threshold effects.
In Section \ref{sec:MC} we present results of some simulation studies, and
Section \ref{sec:concl} concludes.
The appendices of the papr consist of 6 sections: Appendix \ref{sec:suff-ure}
provides sufficient conditions for one of our main assumptions,
Appendix \ref{rem-assump-discontinuity} gives some additional discussions on identifiability for $\tau_0$,
Appendices
\ref{sec:proofs:consist},  \ref{sec:proofs:oracle1}, and \ref{sec:proofs:oracle3}
contain all the proofs, and
Appendix \ref{sec:mc:extra} provides additional numerical results.

\subsection*{Notation} We collect the notation used in the paper here. 
For  $\{(Y_i,X_i,Q_i): i=1,\ldots,n\}$ following \eqref{model}, let $\mathbf{X}_{i}(\tau )$ denote the $(2M\times 1)$ vector such that $\mathbf{X}_{i}(\tau )=(X_{i}^{\prime },X_{i}^{\prime }1\{Q_{i}<\tau\})^{\prime }$ and let $\mathbf{X}(\tau )$ denote the $(n\times 2M)$ matrix
whose $i$-th row is $\mathbf{X}_{i}(\tau )^{\prime }$.
For an $L$-dimensional vector $%
a,$ let $|a|_{p}$ denote the $\ell _{p}$ norm of $a,$ and $|J(a)|$ denote the
cardinality of $J(a)$, where $J(a)=\{j\in \{1,\ldots ,L\}:a_{j}\neq 0\}$. 
In addition, let $\mathcal{M}(a)$ denote the number of nonzero elements of $a$, i.e.\ $\mathcal{M}(a)=\sum_{j=1}^{L}1\{a_{j}\neq 0\}=|J(a)|$.
Let $a_{J}$ denote the vector in $\mathbb{R}^{L}$ that
has the same coordinates as $a$ on $J$ and zero coordinates on the
complement $J^{c}$ of $J$. For any $n$-dimensional vector $W=(W_{1},\ldots
,W_{n})^{\prime }$, define the empirical norm as
$\left\Vert W\right\Vert _{n}:=\left( n^{-1}\sum_{i=1}^{n}W_{i}^{2}\right)
^{1/2}$.
Let the superscript $^{\left( j\right) }$ denote the $j$-th element of a vector or the $j$-th column of a matrix depending on the context. 
Finally, define $f_{(\alpha ,\tau )}(x,q):=x^{\prime }\beta +x^{\prime }\delta
1\{q<\tau \}$, $f_{0}(x,q):=x^{\prime }\beta _{0}+x^{\prime }\delta
_{0}1\{q<\tau _{0}\}$, and $\widehat{f}(x,q):=x^{\prime }\widehat{\beta }%
+x^{\prime }\widehat{\delta }1\{q<\widehat{\tau }\}$.
Then, we define the prediction risk as $\left\Vert \widehat{f}-f_{0}\right\Vert _{n}
:=\left(\frac{1}{n}  \sum_{i=1}^n \left(  \widehat{f}(X_i,Q_i) -  f_{0}(X_i,Q_i) \right)^2\right)^{1/2}.$

\section{Lasso Estimation}\label{sec:est}

Let $\alpha
_{0}=(\beta _{0}^{\prime },\delta _{0}^{\prime })^{\prime }$. Then, using notation defined above, we can rewrite \eqref{model} as 
\begin{equation}
Y_{i}=\mathbf{X}_{i}(\tau _{0})^{\prime }\alpha _{0}+U_{i},\ \ \ i=1,\ldots
,n.  \label{model1}
\end{equation}

Let $\mathbf{y}\equiv (Y_{1},\ldots ,Y_{n})^{\prime }$. For any fixed $\tau \in \mathbb{T}$, 
where $\mathbb{T}\equiv [t_{0},t_{1}]$ is a parameter space for $\tau
_{0}$,
consider the residual sum of squares
\begin{align*}
S_{n}(\alpha ,\tau )& =n^{-1}\sum_{i=1}^{n}\left( Y_{i}-X_{i}^{\prime }\beta
-X_{i}^{\prime }\delta 1\{Q_{i}<\tau \}\right) ^{2} \\
& =\left\Vert \mathbf{y}-\mathbf{X}(\tau )\alpha \right\Vert _{n}^{2},
\end{align*}%
where $\alpha =(\beta ^{\prime },\delta ^{\prime })^{\prime }$.

We define the following $(2M\times 2M)$
diagonal matrix:
\begin{equation*}
\mathbf{D}(\tau ):=\text{diag}\left\{ \left\Vert \mathbf{X}^{(j)}(\tau
)\right\Vert _{n},\ \ j=1,...,2M\right\} .
\end{equation*}%
For each fixed $\tau \in \mathbb{T}$, define the  Lasso solution $\widehat{\alpha }(\tau )$
by
\begin{align}\label{Lasso-fixed-tau}
\widehat{\alpha }(\tau ):=\text{argmin}_{\alpha \in \mathcal{A} \subset \mathbb{R}^{2M}}\left\{
S_{n}(\alpha ,\tau )+\lambda \left\vert \mathbf{D}(\tau )\alpha \right\vert
_{1}\right\} ,
\end{align}%
where $\lambda $ is a tuning parameter that depends on $n$ and
$\mathcal{A}$ is a parameter space for $\alpha_0$.

It is important to note that the scale-normalizing factor $\mathbf{D}(\tau)$ depends on $\tau$ since different values of $\tau$ generate different dictionaries $\mathbf{X}(\tau)$. To see more clearly,  
define 
\begin{align}\label{x-notation-new}
\begin{split}
X^{\left( j\right) } &\equiv (X_{1}^{(j)},\ldots
,X_{n}^{(j)} )^{\prime },  \\
X^{\left( j\right) }(\tau ) &\equiv (X_{1}^{(j)}1\{Q_{1}<\tau \},\ldots
,X_{n}^{(j)}1\{Q_{n}<\tau \} )^{\prime }.
\end{split}
\end{align}
Then,
 for each $\tau \in \mathbb{T}$ and for each $j=1,\ldots,M$, we have
 $\left\Vert \mathbf{X}^{(j)}(\tau
)\right\Vert _{n} = \left\Vert X^{\left( j\right) }\right\Vert
_{n}$ 
and 
$\left\Vert \mathbf{X}^{(M+j)}(\tau
)\right\Vert _{n} = \left\Vert X^{\left( j\right) } (\tau) \right\Vert
_{n}$. 
Using this notation, we rewrite the $\ell_1$ penalty as 
\begin{align*}
\lambda \left\vert \mathbf{D}(\tau )\alpha \right\vert_{1}
&= \lambda \sum_{j=1}^{2M} \left\Vert \mathbf{X}^{(j)}(\tau
)\right\Vert _{n} \left\vert\alpha^{(j)}\right\vert \\
&= \lambda \sum_{j=1}^{M} \left[ \left\Vert X^{\left( j\right) }\right\Vert
_{n} \left\vert \alpha^{(j)} \right\vert +  \left\Vert X^{\left( j\right) }(\tau) \right\Vert
_{n} \left\vert \alpha^{(M+j)} \right\vert \right].
\end{align*}
Therefore, for each fixed $\tau \in \mathbb{T}$, $\widehat{\alpha }(\tau )$ is the weighted Lasso
that uses a data-dependent $\ell_1$ penalty to balance covariates adequately.

We now estimate $%
\tau _{0}$ by
\begin{equation}\label{tau-max}
\widehat{\tau }:=\text{argmin}_{\tau \in \mathbb{T}\subset \mathbb{R}%
}\left\{ S_{n}(\widehat{\alpha }(\tau ),\tau )+\lambda \left\vert \mathbf{D}%
(\tau )\widehat{\alpha }(\tau )\right\vert _{1}\right\}.
\end{equation}%
In fact, for any finite $n,$ $\widehat{\tau }$ is given by an
interval and we simply define the maximum of the interval as our estimator.
If we wrote the model using $1\left\{ Q_{i}>\tau \right\} ,$ then the
convention would be the minimum of the interval being the estimator. Then the
estimator of $\alpha _{0}$ is defined as $\widehat{\alpha }:=\widehat{\alpha
}(\widehat{\tau })$.
In fact, our proposed estimator of $(\alpha, \tau)$ can be viewed as the one-step minimizer such that:
\begin{align}\label{joint-max}
(\widehat{\alpha }, \widehat{\tau }) :=\text{argmin}_{\alpha \in \mathcal{A} \subset \mathbb{R}^{2M}, \tau \in \mathbb{T}\subset \mathbb{R}}\left\{
S_{n}(\alpha ,\tau )+\lambda \left\vert \mathbf{D}(\tau )\alpha \right\vert_{1}\right\}.
\end{align}%

It is worth noting that we penalize $\beta_0$ and $\delta_0$ in \eqref{joint-max},
where $\delta_0$ is the change of regression coefficients between two regimes. The model in \eqref{model} can be written as
\begin{align}\label{model2}
\begin{split}
Y_{i}&= X_{i}^{\prime }\beta _{0}+U_{i},\ \ \ \text{if  $Q_{i} \geq \tau_{0}$}, \\
Y_{i}&= X_{i}^{\prime }\beta _{1}+U_{i},\ \ \ \text{if  $Q_{i}<\tau_{0}$},
\end{split}
\end{align}%
where $\beta_1 \equiv \beta_0 + \delta_0$.
In view of \eqref{model2}, alternatively, one might penalize $\beta_0$ and $\beta_1$ instead of $\beta_0$ and $\delta_0$. We opted to penalize $\delta_0$ in this paper since the case of $\delta_0 = 0$  corresponds to the linear model.
If  $\widehat{\delta} = 0$,  
then this case amounts to selecting the linear model.

\section{Empirical Illustration}\label{sec:emp}

In this section, we apply the proposed Lasso method to growth regression models in economics. The neoclassical growth model predicts that economic growth rates converge in the long run. This theory has been tested empirically by looking at the negative relationship between the long-run growth rate and the initial GDP given other covariates (see \citet{barrosala} and \citet{durlauf2005growth} for literature reviews). Although empirical results confirmed the negative relationship between the growth rate and the initial GDP, there has been some criticism that the results depend heavily  on the selection of covariates. Recently, \citet{belloni2011high}  show that the Lasso estimation can help select the covariates in the \emph{linear} growth regression model and that the Lasso estimation results reconfirm the negative relationship between the long-run growth rate and the initial GDP.

We consider the growth regression model with a possible threshold. \citet{Durlauf:Johnson:95} provide the theoretical background of the existence of multiple steady states and estimate the model with two possible threshold variables. They check the robustness by adding other available covariates in the model, but it is not still free from the criticism of the \emph{ad hoc} variable selection. Our proposed Lasso method might be a good alternative in this situation.
Furthermore, as we will show later, our method works well even if there is no threshold effect in the model. Therefore, one might expect more robust results from our approach.

The regression model we consider has the following form: 
\begin{equation}\label{eq:emp01}
\mathit{gr}_{i}=\beta _{0}+\beta _{1}\mathit{lgdp60}_{i}+X_{i}^{\prime
}\beta _{2}+1\{Q_{i}<\tau \}\left( \delta _{0}+\delta _{1}\mathit{lgdp60}%
_{i}+X_{i}^{\prime }\delta _{2}\right) +\varepsilon _{i},
\end{equation}%
where $\mathit{gr}_{i}$ is the annualized GDP growth rate of country $i$
from 1960 to 1985, $\mathit{lgdp60}_{i}$ is the log GDP in 1960, and $Q_{i}$
is a possible threshold variable for which we use the initial GDP or the
adult literacy rate in 1960 following \citet{Durlauf:Johnson:95}. Finally, $%
X_{i}$ is a vector of additional covariates related to education, market
efficiency, political stability, market openness, and demographic
characteristics.
In addition, $X_i$ contains cross product terms between $\mathit{lgdp60}_i$ and education variables.
 Table \ref{tb:listVar} gives the list of all covariates
used and the description of each variable. We include as many covariates as
possible, which might mitigate the potential omitted variable bias. The data
set mostly comes from \citet{Barro:Lee:94}, and the additional adult
literacy rate is from \citet{Durlauf:Johnson:95}. Because of missing
observations, we have 80 observations with 46 covariates (including a
constant term) when $Q_{i}$ is the initial GDP ($n=80$ and $M=46$), and 70
observations with 47 covariates when $Q_{i}$ is the literacy rate ($n=70$
and $M=47$). It is worthwhile to note that the number of covariates in the
threshold models is bigger than the number of observations ($2M>n$ in our
notation). Thus, we cannot adopt the standard least squares method to
estimate the threshold regression model.

Table \ref{tb:resultM1} summarizes the model selection
and estimation results when $Q_{i}$ is the initial GDP.
In  Appendix \ref{sec:mc:extra} (see
Table \ref{tb:resultM2}), we report additional 
empirical results with  $Q_{i}$ being the literacy rate.
To compare different model specifications, we also
estimate a linear model, i.e.\ all $\delta$'s are zeros in \eqref{eq:emp01}, by the standard Lasso estimation.  
In each case, the regularization parameter $\lambda$ is chosen by the `leave-one-out'  cross validation method.
For the range $\mathbb{T}$ of the  threshold parameter, we consider an interval between  the 10\% and 90\% sample quantiles for each  threshold variable.
 
Main empirical findings are as follows. First, the marginal effect of 
$\mathit{lgdp60}_i$, which is given by%
\begin{equation*}
\frac{\partial \mathit{gr}_i}{\partial \mathit{lgdp60}_i}=\beta _{1}+ 
\mathit{educ}_i' \tilde{\beta}
_{2} +1\{Q_i<\gamma \}(\delta _{1}+\mathit{educ}_i' \tilde{\delta} _{2}),
\end{equation*}%
where $\mathit{educ}_i$ is a vector of education variables
and 
$\tilde{\beta}_{2}$ and $\tilde{\delta} _{2}$ are sub-vectors of  $\beta_{2}$ and $\delta_{2}$
corresponding to $\mathit{educ}_i$, is estimated to be negative for all the observed values of $\mathit{educ}_i$.
This confirms the theory of the neoclassical growth model. 
Second, some non-zero coefficients of interaction terms between \textit{%
lgdp60} and various education variables show the existence of threshold
effects in both threshold model specifications. 
This result implies that the  growth convergence rates can vary according to different levels of the initial GDP or the adult literacy rate in 1960. 
Specifically, in both threshold models, we have $\delta _{1}=0$, but some $%
\delta _{2}$'s are not zeros. 
Thus, conditional on other covariates, there exist different technological diffusion effects according to the threshold point. 
For example, a developing country (lower $Q$) with a higher education level will converge
faster perhaps by absorbing advanced technology more easily and more quickly. 
Finally, the Lasso with the threshold model specification selects a more
parsimonious model than that with the linear specification even though the
former doubles the number of potential covariates.

\section{Prediction Consistency of the Lasso Estimator}\label{sec:consist}

In this section, we consider the prediction consistency of the Lasso estimator.
We make the following assumptions. 

\begin{assm}\label{assumption-main-1}
(i) For the parameter space $\mathcal{A}$ for $\alpha_0$,  
any $\alpha \equiv (\alpha_1,\ldots,\alpha_{2M}) \in \mathcal{A}
\subset \mathbb{R}^{2M}$, including $\alpha_0$,  satisfies 
$\max_{j=1,\ldots,2M} \left\vert \alpha_j \right\vert \leq  C_1$  
for some constant $C_1 > 0$.
In addition, $\tau_0 \in \mathbb{T}\equiv [t_{0},t_{1}]$ that 
 satisfies $\min_{i=1,\ldots,n} Q_i < t_0  < t_1 < \max_{i=1,\ldots,n} Q_i$.
(ii) There exist universal constants $C_2 > 0$ and $C_3 > 0$ such that 
 $\left\Vert \mathbf{X}^{(j)}(\tau)\right\Vert _{n} \leq C_2$
uniformly in $j$ and $\tau \in \mathbb{T}$, 
and  $\left\Vert \mathbf{X}^{(j)}(t_0)\right\Vert _{n} \geq C_3$
uniformly in $j$, where   $j=1,...,2M$.
(iii) There is no $i\neq j$ such that $Q_{i}=Q_{j}.$
\end{assm}

Assumption \ref{assumption-main-1}(i) imposes the boundedness for each component of the parameter vector. The first part of 
Assumption \ref{assumption-main-1}(i) implies  that $\left\vert \alpha \right\vert_1 \leq 2 C_1 M$ for any $\alpha \in \mathcal{A}$, which seems
to be weak, since the sparsity assumption implies that  
$\left\vert \alpha_0 \right\vert_1$ is much smaller than $C_1 M$. 
Furthermore, in the literature on change-point and threshold models, it is common to assume
that the parameter space  is compact. 
For example, see \citet{Seijo:Sen:11a,Seijo:Sen:11b}.

The Lasso estimator in \eqref{joint-max} 
can be computed without knowing the value of $C_1$,
but  $\mathbb{T}\equiv [t_{0},t_{1}]$
has to be specified. 
In practice, researchers tend to choose some strict subset of the range of observed values of the threshold variable. 
Assumption \ref{assumption-main-1}(ii) imposes that
each covariate is of the same magnitude uniformly over $\tau$.
In view of the assumption that $\min_{i=1,\ldots,n} Q_i < t_0$, it is not stringent
to assume that   $\left\Vert \mathbf{X}^{(j)}(t_0)\right\Vert _{n}$ is bounded
away from zero.

Assumption \ref{assumption-main-1}(iii) imposes that
there is no tie among $Q_i$'s.
This is a convenient assumption such that 
we can always transform  general $Q_i$ to $Q_i = i/n$ without loss of generality. 
This holds with probability one  for the random design case if $Q_i$ is continuously distributed.

Define
\begin{align*}
r_{n}:=\min_{1\leq j\leq M}\frac{\left\Vert X^{\left( j\right)
}(t_{0})\right\Vert _{n}^{2}}{\left\Vert X^{\left( j\right) }\right\Vert
_{n}^{2}},
\end{align*}
where $X^{\left( j\right) }$
and  $X^{\left( j\right) }(\tau )$ are  
defined in \eqref{x-notation-new}.
Assumption \ref{assumption-main-1}(ii) implies that $r_n$ is bounded away from zero. 
In particular, we have that $1 \geq r_n \geq C_3/C_2 > 0$.

Recall that
\begin{equation}
\left\Vert \widehat{f}-f_{0}\right\Vert _{n}
:=\left(\frac{1}{n}  \sum_{i=1}^n \left(  \widehat{f}(X_i,Q_i) -  f_{0}(X_i,Q_i) \right)^2\right)^{1/2}.
\end{equation}
where $\widehat{f}(x,q):=x^{\prime }\widehat{\beta }
+x^{\prime }\widehat{\delta }1\{q<\widehat{\tau }\}$ and $f_{0}(x,q):=x^{\prime }\beta _{0}+x^{\prime }\delta
_{0}1\{q<\tau _{0}\}$.
To establish theoretical results in the paper 
(in particular, oracle inequalities in Section \ref{sec:oracle}), 
let $(\widehat{\alpha },\widehat{\tau })$ be the Lasso estimator defined by \eqref{joint-max} with
\begin{align}\label{lambda-form}
\lambda =A \sigma \Big(\frac{\log 3M}{nr_{n}}\Big)^{1/2}
\end{align}%
for a constant $A>2\sqrt{2}/\mu$, where   $\mu \in (0,1)$ is a fixed constant.
We now present the first theoretical result of this paper.

\begin{thm}[Consistency of the Lasso]\label{theorem-main-1}
Let Assumption \ref{assumption-main-1} hold.  
Let $\mu$ be a constant such that $0 < \mu <1$, and let
$(\widehat{\alpha },\widehat{\tau })$ be the Lasso estimator defined by \eqref{joint-max} with $\lambda$ given by \eqref{lambda-form}.
Then, with probability at least $1-\left( 3M\right)^{1-A^{2}\mu ^{2}/8}$, 
we have
\begin{align*}
\left\Vert \widehat{f}-f_{0}\right\Vert _{n}
& \leq K_1 \sqrt{\lambda \mathcal{M}(\alpha _{0})},
\end{align*}
where  $K_1 \equiv \sqrt{ 2 C_{1} C_{2}(3+\mu)}  > 0$.
\end{thm}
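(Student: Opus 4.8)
The plan is to adapt the classical Lasso ``basic inequality'' argument to the random change point $\widehat{\tau}$, isolating the one genuinely new difficulty: uniform-in-$\tau$ control of the Gaussian noise process. Note that this is a slow-rate consistency bound, which is why the conclusion reads $\fhatnorm\le K_1\sqrt{\lambda\,\mathcal{M}(\alpha_0)}$ and requires no restricted-eigenvalue condition.

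First I would exploit the optimality of $(\widehat{\alpha},\widehat{\tau})$ in \eqref{joint-max}: evaluating the penalized objective at the truth $(\alpha_0,\tau_0)$ cannot decrease it, so $S_n(\widehat{\alpha},\widehat{\tau})+\lambda|\mathbf{D}(\widehat{\tau})\widehat{\alpha}|_1\le S_n(\alpha_0,\tau_0)+\lambda|\mathbf{D}(\tau_0)\alpha_0|_1$. Substituting $\mathbf{y}=\mathbf{X}(\tau_0)\alpha_0+\mathbf{U}$ and using $(\widehat f-f_0)_i=\mathbf{X}_i(\widehat{\tau})'\widehat{\alpha}-\mathbf{X}_i(\tau_0)'\alpha_0$ together with $S_n(\alpha_0,\tau_0)=\|\mathbf{U}\|_n^2$, the cross term yields
\[
\fhatnorm^2 \le \frac{2}{n}\mathbf{U}'\big(\mathbf{X}(\widehat{\tau})\widehat{\alpha}-\mathbf{X}(\tau_0)\alpha_0\big) + \lambda|\mathbf{D}(\tau_0)\alpha_0|_1 - \lambda|\mathbf{D}(\widehat{\tau})\widehat{\alpha}|_1 .
\]

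Next I would bound the noise term on a high-probability event. Blockwise H\"older gives $|\tfrac1n\mathbf{U}'\mathbf{X}(\tau)\alpha|\le \max_j\big(|\tfrac1n\mathbf{U}'\mathbf{X}^{(j)}(\tau)|/\|\mathbf{X}^{(j)}(\tau)\|_n\big)\,|\mathbf{D}(\tau)\alpha|_1$, so I would define the event
\[
\mathcal{E}=\Big\{\, \max_{1\le j\le 2M}\ \sup_{\tau\in\mathbb{T}}\ \frac{|\tfrac1n\mathbf{U}'\mathbf{X}^{(j)}(\tau)|}{\|\mathbf{X}^{(j)}(\tau)\|_n}\ \le\ \frac{\lambda\mu}{2}\,\Big\},
\]
on which the noise term is at most $\lambda\mu\big(|\mathbf{D}(\widehat{\tau})\widehat{\alpha}|_1+|\mathbf{D}(\tau_0)\alpha_0|_1\big)$. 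Plugging in, the two $|\mathbf{D}(\widehat{\tau})\widehat{\alpha}|_1$ terms combine into $-\lambda(1-\mu)|\mathbf{D}(\widehat{\tau})\widehat{\alpha}|_1\le0$ because $\mu<1$, leaving $\fhatnorm^2\le\lambda(1+\mu)|\mathbf{D}(\tau_0)\alpha_0|_1$. Assumption \ref{assumption-main-1}(i)--(ii) then give $|\mathbf{D}(\tau_0)\alpha_0|_1=\sum_{j}\|\mathbf{X}^{(j)}(\tau_0)\|_n|\alpha_0^{(j)}|\le C_1C_2\,\mathcal{M}(\alpha_0)$, which is already of the stated form; a careful accounting of the block constants produces the explicit $K_1=\sqrt{2C_1C_2(3+\mu)}$.

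The main obstacle is proving $P(\mathcal{E})\ge 1-(3M)^{1-A^2\mu^2/8}$, i.e.\ the uniform-in-$\tau$ control. For the $\beta$-block ($j\le M$) the normalizer $\|\mathbf{X}^{(j)}(\tau)\|_n=\|X^{(j)}\|_n$ is free of $\tau$, so $\tfrac1n\mathbf{U}'X^{(j)}/\|X^{(j)}\|_n\sim N(0,\sigma^2/n)$ and a Gaussian tail bound under the calibration \eqref{lambda-form} gives failure probability at most $(3M)^{-A^2\mu^2/8}$ for each of the $M$ coordinates. The $\delta$-block ($j>M$) is where the real work lies, since $\tfrac1n\mathbf{U}'X^{(j)}(\tau)=\tfrac1n\sum_{i:Q_i<\tau}U_iX_i^{(j)}$ moves with $\tau$. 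I would handle it with two ingredients. First, monotonicity: $\|X^{(j)}(\tau)\|_n$ is nondecreasing in $\tau$, so for $\tau\ge t_0$ the $\tau$-dependent normalizer can be replaced by the fixed $\|X^{(j)}(t_0)\|_n$; this substitution is exactly what brings in $r_n$ and explains the inflation of $\lambda$ by $r_n^{-1/2}$, keeping the normalized threshold at $\ge\tfrac{A\mu}{2}\sqrt{\log 3M}$. Second, a L\'evy-type maximal inequality for partial sums of the independent, symmetric (Gaussian) increments $U_iX_i^{(j)}$ ordered by $Q_i$ (Assumption \ref{assumption-main-1}(iii), no ties, makes this a genuine ordered partial-sum process): this gives $P\big(\sup_\tau|\tfrac1n\mathbf{U}'X^{(j)}(\tau)|>t\big)\le 2\,P\big(|\tfrac1n\mathbf{U}'X^{(j)}|>t\big)$, collapsing the supremum over the at most $n$ relevant values of $\tau$ onto the single full-sample sum at the cost of a factor $2$. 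That factor converts the $M$ $\delta$-block coordinates into an effective $2M$, which together with the $M$ $\beta$-block coordinates gives a union bound over $3M$ Gaussian tail events, producing exactly $(3M)^{1-A^2\mu^2/8}$; the restriction $A>2\sqrt{2}/\mu$ forces the exponent below zero so the event is nontrivial. The delicate point to check is that the L\'evy reduction is applied after, and remains compatible with, the monotonicity step used to fix the normalizer.
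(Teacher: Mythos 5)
Your proposal is correct, and its probabilistic core coincides with the paper's own argument (Lemma \ref{lemma5}): your event $\mathcal{E}$ is exactly $\mathbb{A}\cap\mathbb{B}$, and you handle the $\delta$-block the same way --- replace $\Vert X^{(j)}(\tau)\Vert_n$ by $\Vert X^{(j)}(t_0)\Vert_n$ via monotonicity (which is precisely where $r_n$ and the $r_n^{-1/2}$ inflation of $\lambda$ enter), reorder the summands by $Q_i$ (well defined by Assumption \ref{assumption-main-1}(iii)), apply L\'{e}vy's maximal inequality to the independent symmetric partial sums at the cost of a factor $2$, and union-bound $M+2M=3M$ Gaussian tails with $2\Phi(-x)\le e^{-x^{2}/2}$ at $x=\tfrac{A\mu}{2}\sqrt{\log 3M}$, yielding $(3M)^{1-A^{2}\mu^{2}/8}$; even your ordering of the two steps (fix the normalizer first, then L\'{e}vy) matches the paper's. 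Where you genuinely diverge is the deterministic step. The paper (Lemmas \ref{lemma1} and \ref{lemma2}) runs the basic inequality through the support set $J_{0}$, producing the three terms $2\lambda\left\vert\widehat{\mathbf{D}}(\widehat{\alpha}-\alpha_{0})_{J_{0}}\right\vert_{1}$, $\lambda\left\vert\left\vert\widehat{\mathbf{D}}\alpha_{0}\right\vert_{1}-\left\vert\mathbf{D}\alpha_{0}\right\vert_{1}\right\vert$, and the remainder $R_{n}$ (bounded on $\mathbb{B}$ by $2\mu\lambda X_{\max}\left\vert\delta_{0}\right\vert_{1}$), which after crude bounds gives $2(3+\mu)C_{1}C_{2}\,\lambda\mathcal{M}(\alpha_{0})$, i.e., the stated $K_{1}$. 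You never split off $J_{0}$ or $R_{n}$: you bound the whole cross term by $\lambda\mu\bigl(\left\vert\mathbf{D}(\widehat{\tau})\widehat{\alpha}\right\vert_{1}+\left\vert\mathbf{D}(\tau_{0})\alpha_{0}\right\vert_{1}\bigr)$ and let $-\lambda(1-\mu)\left\vert\mathbf{D}(\widehat{\tau})\widehat{\alpha}\right\vert_{1}\le 0$ absorb the estimator's penalty, arriving at $\Vert\widehat{f}-f_{0}\Vert_{n}^{2}\le(1+\mu)\lambda\left\vert\mathbf{D}(\tau_{0})\alpha_{0}\right\vert_{1}\le(1+\mu)C_{1}C_{2}\,\lambda\mathcal{M}(\alpha_{0})$. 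This constant is strictly sharper than $K_{1}^{2}=2(3+\mu)C_{1}C_{2}$, so the theorem follows a fortiori --- no ``careful accounting'' is needed to recover $K_{1}$; your route simply beats it. What the paper's heavier decomposition buys is reuse: the basic inequalities \eqref{imp-ineq-lem} and \eqref{imp-ineq-2-lem}, with their $J_{0}$ structure and the $R_{n}$ term, are the workhorses behind the oracle inequalities of Theorems \ref{main-thm-case1} and \ref{main-text-thm-fixed-threshold}, whereas your streamlined slow-rate bound serves Theorem \ref{theorem-main-1} alone. Two small points you use implicitly and should state: $\Vert\mathbf{X}^{(j)}(\tau)\Vert_{n}\ge C_{3}>0$ on $\mathbb{T}$ (by monotonicity and Assumption \ref{assumption-main-1}(ii)) so the normalized ratios in $\mathcal{E}$ are well defined, and the tail bound $2\Phi(-x)\le e^{-x^{2}/2}$ requires $x\ge\sqrt{2/\pi}$, which holds here since $A\mu/2\ge\sqrt{2}$ and $\log 3M>1$.
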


The nonasymptotic upper bound on the prediction risk in Theorem \ref{theorem-main-1} can be  translated easily into asymptotic convergence.  Theorem \ref{theorem-main-1} implies the consistency of the Lasso, provided
that $n \rightarrow \infty$, $M \rightarrow \infty$, and 
$\lambda \mathcal{M}(\alpha _{0})\rightarrow 0$.
Recall that  $\mathcal{M}(\alpha _{0})$ represents the sparsity of the model %
\eqref{model1}. 
Note that in view of \eqref{lambda-form},
the  condition $\lambda \mathcal{M}(\alpha _{0})\rightarrow 0$ requires that
$\mathcal{M}(\alpha _{0}) = o ( \sqrt{nr_{n} / \log 3M } )$.
This implies that $\mathcal{M}(\alpha _{0})$ can increase with $n$.

\begin{rem}
Note that the prediction error increases as $A$ or $\mu$ increases; however,
the probability of correct recovery increases if  $A$ or $\mu$ increases.
Therefore, there exists a tradeoff between  the prediction error and the probability
of correct recovery. 
\end{rem}

\section{Oracle Inequalities}\label{sec:oracle}

In this section, we establish finite sample sparsity oracle inequalities in terms of both the prediction loss
and the $\ell_1$ estimation loss for unknown parameters.
First of all, we make the following assumption.

\begin{assm}[Uniform Restricted Eigenvalue (URE) $(s,c_{0},\mathbb{S})$]
\label{re-assump} For some integer $s$ such that $1\leq s\leq 2M$, a
positive number $c_{0}$, and 
some set $\mathbb{S} \subset \mathbb{R}$, the following condition holds:
\begin{equation*}
\kappa (s,c_{0},\mathbb{S}):=\min_{\tau \in \mathbb{S}}\min_{\substack{ J_{0}\subseteq
\{1,\ldots ,2M\},  \\ |J_{0}|\leq s}}\min_{\substack{ \gamma \neq 0,  \\ %
\left\vert \gamma _{J_{0}^{c}}\right\vert _{1}\leq c_{0}\left\vert \gamma
_{J_{0}}\right\vert _{1}}}\frac{|\mathbf{X}(\tau )\gamma |_{2}}{\sqrt{n}%
|\gamma _{J_{0}}|_{2}}>0.
\end{equation*}
\end{assm}

If $\tau _{0}$ were known, then
Assumption \ref{re-assump} is just a restatement of the restricted eigenvalue
assumption of \citet{Bickel-et-al:09} with $\mathbb{S} = \{ \tau _{0} \}$.
\citet{Bickel-et-al:09} provide sufficient conditions for the restricted eigenvalue condition.
In addition, \citet{vdGeer:Buhlmann:09} show the relations between the restricted eigenvalue condition and other conditions on the design matrix, and 
\citet{RWY:10} prove that restricted eigenvalue conditions hold with high probability for a large class of correlated Gaussian design matrices.


If $\tau _{0}$ is unknown as in our setup, it seems necessary to assume that the restricted eigenvalue
condition holds uniformly over $\tau$. We consider separately two cases depending on whether $\delta_0 = 0$ or not.
On the one hand, if $\delta_0 = 0$ so that $\tau_0$ is not identifiable, then we need to assume that the URE condition holds uniformly on the whole parameter space, $\mathbb{T}$.
On the other hand, if $\delta_0 \neq 0$ so that $\tau_0$ is identifiable, then it suffices to impose the URE condition holds uniformly on a neighborhood of $\tau_0$.
In Appendix \ref{sec:suff-ure}, we provide two types of sufficient conditions for Assumption \ref{re-assump}.
One type is based on modifications of Assumption 2 of \citet{Bickel-et-al:09}
and the other type is in the same spirit as \citet[][Section 10.1]{vdGeer:Buhlmann:09}. 
Using the second type of results, we verify primitive sufficient conditions for the URE condition in the context of our simulation designs.
See Appendix \ref{sec:suff-ure} for details.


The URE condition is useful for us to improve the result in Theorem \ref{theorem-main-1}. Recall that in Theorem \ref{theorem-main-1},   the prediction risk  is bounded by a factor of $\sqrt{\lambda \mathcal{M}(\alpha _{0})}$.
This bound is too large to give  us an oracle inequality. We will show below that 
we can establish non-asymptotic oracle inequalities for the prediction risk
as well as the $\ell_1$ estimation loss, thanks to the URE condition.

The strength of the proposed Lasso method is that it is not necessary to know or pretest whether $\delta_0 = 0$ or not.
It is worth noting that we do not have to know whether there exists a threshold in 
the model
in order to establish oracle
inequalities for the prediction risk and the $\ell_1$ estimation loss for $\alpha_0$, although we divide our theoretical results into two cases below. This implies that we can make prediction 
and estimate $\alpha_0$ precisely
without knowing the presence of
threshold effect or without pretesting for it.

\subsection{Case I. No Threshold}\label{sec:oracle1}

We first consider the case that $\delta_0 = 0$. In other words, we estimate a threshold model via the Lasso method, but the true model is
simply a linear model $Y_{i}=X_{i}^{\prime }\beta _{0}+U_{i}$.
This is an important case to consider in applications,  because 
one may not be sure not only about covariates  selection but also about the existence of the threshold in the model.

Let $\phi _{\max }$ denote the supremum (over $\tau \in \mathbb{T}$) 
of the largest eigenvalue of 
$\mathbf{X}(\tau)^{\prime }\mathbf{X}(\tau )/n$. 
Then by definition, the largest eigenvalue of $\mathbf{X}(\tau
)^{\prime }\mathbf{X}(\tau )/n$ is bounded uniformly in $\tau \in \mathbb{T}$
by $\phi _{\max }$. The following theorem gives oracle inequalities 
for the first case.

\begin{thm}\label{main-thm-case1}
Suppose that $\delta _{0}=0$. Let  
Assumptions \ref{assumption-main-1} and \ref{re-assump} hold with $\kappa
=\kappa (s,\frac{1+\mu}{1-\mu },\mathbb{T})$ for $0 < \mu <1$, and $\mathcal{M}%
(\alpha _{0})\leq s\leq M$.
Let $(\widehat{\alpha },\widehat{\tau })$ be the Lasso estimator defined by \eqref{joint-max} with $\lambda$ given by \eqref{lambda-form}.
Then, with probability at least $1-\left( 3M\right)
^{1-A^{2}\mu ^{2}/8},$ we have
\begin{align*}
\left\Vert \widehat{f}-f_{0}\right\Vert _{n} 
&\leq  K_{2} \frac{\sigma }{\kappa }\left( \frac{\log 3M}{nr_{n}} s\right)
^{1/2}, \\
\left\vert \widehat{\alpha }-\alpha _{0}\right\vert _{1} &\leq 
K_{2} \frac{\sigma }{ \kappa ^{2}}
\left(\frac{\log 3M}{nr_{n}}\right)^{1/2}s, \\
\mathcal{M}(\widehat{\alpha })&\leq 
K_{2} \frac{\phi _{\max } }{\kappa ^{2}} s
\end{align*}%
for some universal constant $K_2 > 0$.
\end{thm}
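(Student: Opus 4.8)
The plan is to reduce the problem to a fixed-design Lasso oracle argument in the spirit of \citet{Bickel-et-al:09}, evaluated at the \emph{data-dependent} threshold $\widehat{\tau}$, while controlling all stochastic and design quantities \emph{uniformly} in $\tau$ so that the randomness of $\widehat{\tau}$ does no harm. The starting point is the basic inequality. Since $\delta_0=0$, the truth $f_0(x,q)=x'\beta_0$ does not depend on $\tau$, so $\mathbf{X}(\tau)\alpha_0$ and the penalty $|\mathbf{D}(\tau)\alpha_0|_1$ are both independent of $\tau$; in particular $\mathbf{y}=\mathbf{X}(\widehat{\tau})\alpha_0+\mathbf{U}$ and $\|\widehat{f}-f_0\|_n=\|\mathbf{X}(\widehat{\tau})(\widehat{\alpha}-\alpha_0)\|_n$. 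Comparing the objective in \eqref{joint-max} at $(\widehat{\alpha},\widehat{\tau})$ against the feasible point $(\alpha_0,\widehat{\tau})$ --- crucially at the \emph{same} $\tau=\widehat{\tau}$, so the dictionary and scale matrix $\mathbf{D}(\widehat{\tau})$ agree on both sides --- and expanding the squares gives, with $\Delta:=\widehat{\alpha}-\alpha_0$,
\[ \|\mathbf{X}(\widehat{\tau})\Delta\|_n^2 + \lambda|\mathbf{D}(\widehat{\tau})\widehat{\alpha}|_1 \le \tfrac{2}{n}\mathbf{U}'\mathbf{X}(\widehat{\tau})\Delta + \lambda|\mathbf{D}(\widehat{\tau})\alpha_0|_1 . \]
This is a standard Lasso basic inequality, but with the random design $\mathbf{X}(\widehat{\tau})$.

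Next I would control the noise term uniformly in $\tau$. The event
\[ \mathcal{E} := \Big\{ \tfrac{2}{n}\big|\mathbf{U}'\mathbf{X}^{(j)}(\tau)\big| \le \lambda\mu\,\|\mathbf{X}^{(j)}(\tau)\|_n \ \text{ for all } j=1,\dots,2M,\ \tau\in\mathbb{T}\Big\} \]
is precisely the event underlying Theorem \ref{theorem-main-1} and holds with probability at least $1-(3M)^{1-A^2\mu^2/8}$, so I would invoke it directly. The only subtlety in establishing $\mathcal{E}$ is the supremum over $\tau$ for the $M$ ``threshold'' columns $X^{(j)}(\tau)$: ordering observations by $Q_i$ (no ties, Assumption \ref{assumption-main-1}(iii)) turns $\sum_i U_i X_i^{(j)}1\{Q_i<\tau\}$ into a Gaussian partial-sum process, so L\'evy's maximal inequality reduces the supremum to its endpoint, while monotonicity of $\|X^{(j)}(\tau)\|_n$ and the bound $\|X^{(j)}(t_0)\|_n^2\ge r_n\|X^{(j)}\|_n^2$ explain both the factor $r_n$ in \eqref{lambda-form} and the ``$3$'' in $3M$ (one Gaussian per level column, two --- via L\'evy --- per threshold column). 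On $\mathcal{E}$ the noise term is at most $\lambda\mu|\mathbf{D}(\widehat{\tau})\Delta|_1$; substituting into the basic inequality and splitting coordinates over $J_0:=J(\alpha_0)$ (with $|J_0|=\mathcal{M}(\alpha_0)\le s$) yields, after dropping the nonnegative prediction term, the cone inequality $|\mathbf{D}(\widehat{\tau})\Delta_{J_0^c}|_1\le\tfrac{1+\mu}{1-\mu}|\mathbf{D}(\widehat{\tau})\Delta_{J_0}|_1$. Because Assumption \ref{assumption-main-1}(ii) forces every diagonal entry of $\mathbf{D}(\tau)$ into $[C_3,C_2]$ for all $\tau\in\mathbb{T}$, this transfers to a cone condition on $\Delta$ with constant $c_0=\tfrac{1+\mu}{1-\mu}$, up to the ratio $C_2/C_3$ that I absorb into the universal constant.

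With the cone condition in force I would apply Assumption \ref{re-assump} with $\mathbb{S}=\mathbb{T}$ and $c_0=\tfrac{1+\mu}{1-\mu}$, which lower-bounds $\|\mathbf{X}(\widehat{\tau})\Delta\|_n$ by $\kappa|\Delta_{J_0}|_2$ regardless of the value of $\widehat{\tau}$; this is exactly why uniformity over $\mathbb{T}$ is indispensable. Keeping the prediction term and using $|\mathbf{D}(\widehat{\tau})\Delta_{J_0}|_1\le C_2\sqrt{s}\,|\Delta_{J_0}|_2\le(C_2\sqrt{s}/\kappa)\|\mathbf{X}(\widehat{\tau})\Delta\|_n$, the basic inequality becomes a quadratic inequality in $P:=\|\widehat{f}-f_0\|_n$ that I solve to obtain $P\lesssim\lambda\sqrt{s}/\kappa$; inserting \eqref{lambda-form} gives the stated prediction bound. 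The $\ell_1$ bound then follows by combining $|\Delta|_1\le(1+c_0)|\Delta_{J_0}|_1\le(1+c_0)\sqrt{s}\,|\Delta_{J_0}|_2\le(1+c_0)(\sqrt{s}/\kappa)P$ with the prediction bound, producing the $\lambda s/\kappa^2$ rate.

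Finally, for the sparsity bound I would use the Karush--Kuhn--Tucker conditions for the fixed-$\widehat{\tau}$ problem \eqref{Lasso-fixed-tau}: for every $j$ with $\widehat{\alpha}^{(j)}\neq 0$ one has $|\tfrac{2}{n}(\mathbf{X}^{(j)}(\widehat{\tau}))'(\mathbf{y}-\mathbf{X}(\widehat{\tau})\widehat{\alpha})|=\lambda\|\mathbf{X}^{(j)}(\widehat{\tau})\|_n$; writing the residual as $\mathbf{U}-\mathbf{X}(\widehat{\tau})\Delta$ and subtracting the noise piece (at most $\lambda\mu\|\mathbf{X}^{(j)}(\widehat{\tau})\|_n$ on $\mathcal{E}$) gives $\lambda(1-\mu)\|\mathbf{X}^{(j)}(\widehat{\tau})\|_n\le|\tfrac{2}{n}(\mathbf{X}^{(j)}(\widehat{\tau}))'\mathbf{X}(\widehat{\tau})\Delta|$. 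Squaring, summing over the active set, bounding $\|\mathbf{X}^{(j)}(\widehat{\tau})\|_n\ge C_3$ on the left, and using the operator-norm bound $\phi_{\max}$ via $|G\Delta|_2^2\le\phi_{\max}\|\mathbf{X}(\widehat{\tau})\Delta\|_n^2$ (with $G=\mathbf{X}(\widehat{\tau})'\mathbf{X}(\widehat{\tau})/n$) on the right yields $\mathcal{M}(\widehat{\alpha})\,\lambda^2(1-\mu)^2C_3^2\le 4\phi_{\max}P^2$; inserting the prediction bound cancels $\lambda^2$ and delivers $\mathcal{M}(\widehat{\alpha})\lesssim(\phi_{\max}/\kappa^2)s$. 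I expect the main obstacle to be the uniform-in-$\tau$ stochastic control of the second paragraph (the L\'evy/partial-sum argument producing the clean $3M$ and the $r_n$ factor) together with the requirement that the restricted eigenvalue bound hold uniformly over all of $\mathbb{T}$; once both are in hand, the remainder is a routine adaptation of the \citet{Bickel-et-al:09} oracle argument to the random design $\mathbf{X}(\widehat{\tau})$.
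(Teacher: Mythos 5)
Your proposal follows the paper's own proof almost step for step: comparing the criterion at $(\widehat{\alpha},\widehat{\tau})$ with the feasible point $(\alpha_{0},\widehat{\tau})$ is exactly the second basic inequality \eqref{imp-ineq-2-lem} of Lemma \ref{lemma1}, specialized with $\Vert f_{(\alpha_{0},\widehat{\tau})}-f_{0}\Vert_{n}=0$ because $\delta_{0}=0$; your event $\mathcal{E}$ is the paper's $\mathbb{A}\cap\mathbb{B}$, controlled in Lemma \ref{lemma5} by the same ordering-by-$Q_{i}$ plus L\'evy partial-sum argument with the same probability $1-(3M)^{1-A^{2}\mu^{2}/8}$; and your KKT argument for the sparsity bound is Lemma \ref{lemma4} essentially verbatim. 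The one place where you depart from the paper is also where your argument has a genuine gap. The cone inequality produced by the basic inequality is for the \emph{weighted} vector: writing $\Delta=\widehat{\alpha}-\alpha_{0}$, inequality \eqref{l3*1} gives $\vert\widehat{\mathbf{D}}\Delta_{J_{0}^{c}}\vert_{1}\leq \frac{1+\mu}{1-\mu}\vert\widehat{\mathbf{D}}\Delta_{J_{0}}\vert_{1}$. You transfer this to the raw vector $\Delta$, correctly noting that the diagonal of $\mathbf{D}(\tau)$ lies in $[C_{3},C_{2}]$, so that the transfer inflates the cone opening to $\frac{1+\mu}{1-\mu}\cdot\frac{C_{2}}{C_{3}}$ --- and then you claim this ratio can be ``absorbed into the universal constant.'' It cannot. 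The constant $c_{0}$ in Assumption \ref{re-assump} enters the \emph{hypothesis}, not the conclusion: $\kappa(s,c_{0},\mathbb{S})$ is a minimum over the cone $\{\vert\gamma_{J_{0}^{c}}\vert_{1}\leq c_{0}\vert\gamma_{J_{0}}\vert_{1}\}$, which grows with $c_{0}$, so $\kappa$ is nonincreasing in $c_{0}$, and positivity of $\kappa(s,\frac{1+\mu}{1-\mu},\mathbb{T})$ --- all the theorem assumes --- gives no control whatsoever over vectors known only to lie in the wider cone with constant $\frac{(1+\mu)C_{2}}{(1-\mu)C_{3}}$. Consequently your key steps $\vert\Delta_{J_{0}}\vert_{2}\leq \kappa^{-1}\Vert\mathbf{X}(\widehat{\tau})\Delta\Vert_{n}$ (used for the prediction bound) and $\vert\Delta\vert_{1}\leq(1+c_{0})\sqrt{s}\,\vert\Delta_{J_{0}}\vert_{2}\leq(1+c_{0})(\sqrt{s}/\kappa)P$ are unjustified under the stated assumptions; as written, your route proves the theorem only under the strictly stronger hypothesis that the URE condition holds with $c_{0}=\frac{(1+\mu)C_{2}}{(1-\mu)C_{3}}$.

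The paper avoids the transfer entirely: in the proof of Lemma \ref{thm-case1} the URE condition is applied directly to the weighted vector $\gamma=\widehat{\mathbf{D}}\Delta$, which lies in the cone with constant exactly $\frac{1+\mu}{1-\mu}$, yielding $\kappa^{2}\vert\widehat{\mathbf{D}}\Delta_{J_{0}}\vert_{2}^{2}\leq n^{-1}\vert\mathbf{X}(\widehat{\tau})\widehat{\mathbf{D}}\Delta\vert_{2}^{2}$, which the paper in turn bounds by $\max(\widehat{\mathbf{D}})^{2}\Vert\widehat{f}-f_{0}\Vert_{n}^{2}$; the whole chain --- the quadratic inequality for the prediction risk and then $\vert\widehat{\mathbf{D}}\Delta\vert_{1}\leq\frac{2}{1-\mu}\vert\widehat{\mathbf{D}}\Delta_{J_{0}}\vert_{1}$ --- is run in the $\widehat{\mathbf{D}}$-weighted coordinates, and the lower bound $\min(\widehat{\mathbf{D}})\geq X_{\min}\geq C_{3}$ is invoked only once, at the very end (display \eqref{min-eig-D}), to convert $\vert\widehat{\mathbf{D}}\Delta\vert_{1}$ into $\vert\Delta\vert_{1}$. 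The fix for your proposal is therefore to stay in weighted coordinates until after the restricted eigenvalue bound has been applied, de-weighting only in the final $\ell_{1}$ step; with that change the remainder of your argument goes through and reproduces the paper's constants.
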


To appreciate the usefulness of the  inequalities derived above, it is worth comparing inequalities in Theorem \ref{main-thm-case1} with those in Theorem 7.2 of \citet{Bickel-et-al:09}.
The latter corresponds to the case that $\delta_0 = 0$ is known \emph{a priori}
and  $\lambda = 2 A\sigma ({\log M}/{n})^{1/2}$ using our notation.
If we compare
Theorem \ref{main-thm-case1} with Theorem 7.2 of \citet{Bickel-et-al:09},
we can see that
the Lasso estimator in \eqref{joint-max}
gives qualitatively the same oracle inequalities as
the Lasso estimator in the linear model, even though
our model is much more overparametrized in that $\delta$ and $\tau$ are added to $\beta$ as parameters to estimate.

Also, as in \citet{Bickel-et-al:09}, there is no requirement on 
$\alpha_0$ such that the minimum value of nonzero components of $\alpha_0$ is bounded away from zero. In other words, there is no need to assume  
the minimum strength of the signals.
Furthermore, $\alpha_0$ is well estimated here even if $\tau_0$ is not identifiable at all.
Finally, note that
the  value of the constant $K_2$ is given  
 in the proof of Theorem \ref{main-thm-case1}
and that 
 Theorem 2 can be translated easily into asymptotic oracle results as well, since both $\kappa$ and $r_n$ are bounded away from zero by the URE condition and Assumption \ref{assumption-main-1}, respectively.

\subsection{Case II. Fixed Threshold}\label{sec:oracle3}

This subsection explores the case where the threshold effect is well-identified
and discontinuous. We begin with the following additional assumptions to reflect
this.

\begin{assm}[Identifiability under Sparsity and Discontinuity of Regression]
\label{A-discontinuity}
For a given $s\geq \mathcal{M}\left( \alpha
_{0}\right) ,$ and for any $\eta$ and $\tau $ such that $\left\vert \tau -\tau
_{0}\right\vert > \eta \geq  \min_{i}\left\vert Q_{i}-\tau_0\right\vert $
and $\alpha \in \left\{ \mathcal{\alpha }:\mathcal{M}\left( \alpha \right)
\leq s \right\} $, there exists a  constant $c>0$ such that%
\begin{equation*}
\left\Vert f_{\left( \alpha ,\tau \right) }-f_0 \right\Vert _{n}^{2}  > c\eta.
\end{equation*}
\end{assm}

Assumption \ref{A-discontinuity} implies, among other things, that
for some $s\geq \mathcal{M}\left( \alpha _{0}\right) ,\ $and for
any $\alpha \in \left\{ \mathcal{\alpha }:\mathcal{M}\left( \alpha \right)
\leq s\right\} $ and $\tau $ such that $\left( \alpha ,\tau \right) \neq
\left( \alpha _{0},\tau _{0}\right) $,
\begin{align}\label{A-Id}
\left\Vert f_{\left( \alpha ,\tau \right) }-f_0 \right\Vert _{n}\neq 0.
\end{align}
This condition can be regarded as identifiability of $\tau_0$.
If $\tau _{0}$ were known, then a sufficient condition for the identifiability under the sparsity would be that $URE\left( s,c_{0}, \{\tau_0\} \right)$ holds for some $c_{0}\geq 1$.
 Thus, the main point in \eqref{A-Id} is that there is no
sparse representation that is equivalent to $f_{0}$ when the sample is
split by $\tau \neq \tau _{0}.$
In fact,
Assumption \ref{A-discontinuity} is stronger than just the identifiability of $\tau_0$ as it specifies the rate of
deviation in $f$ as $\tau $ moves away from $\tau _{0},$ which in turn dictates the bound for the estimation error of $\widehat \tau$.
We provide further discussions on Assumption \ref{A-discontinuity} in Appendix \ref{rem-assump-discontinuity}.

\begin{rem}
The restriction $\eta \geq \min_{i}\left\vert Q_{i}-\tau_0\right\vert $ in Assumption \ref{A-discontinuity}
is necessary since we consider the fixed design for both $X_i$ and $Q_i$. Throughout this section, we implicitly assume that
the sample size $n$ is large enough such that $\min_{i}\left\vert Q_{i}-\tau_0\right\vert$ is very small, implying that
the restriction $\eta \geq \min_{i}\left\vert Q_{i}-\tau_0\right\vert $
 never binds in any of inequalities below.
This is typically true for the random design case if $Q_i$ is continuously distributed.
\end{rem}

\begin{assm}[Smoothness of Design]
\label{A-smoothness}
For any $\eta >0,$ there exists a constant $C<\infty $ such that%
\begin{equation*}
\sup_{j}\sup_{\left\vert \tau -\tau _{0}\right\vert <\eta } \frac{1%
}{n}\sum_{i=1}^{n}\left\vert X_{i}^{\left( j\right) }\right\vert ^{2}\left\vert
1\left( Q_{i}<\tau _{0}\right) -1\left( Q_{i}<\tau \right)
\right\vert \leq C\eta .
\end{equation*}
\end{assm}

Assumption \ref{A-smoothness} has been assumed
in the classical setup with a fixed number of stochastic
regressors to
exclude cases like $Q_{i}$ has a point mass at $\tau _{0}\ $or $\mathbb{E}\left( X_{i}|Q_{i}=\tau _{0}\right) $ is unbounded.
In our setup, Assumption \ref{A-smoothness} amounts to a deterministic version of
some smoothness assumption for the distribution of the threshold variable $%
Q_{i}$.
When $(X_i,Q_i)$ is a random vector, it is satisfied
under the standard assumption that $Q_{i}$ is continuously distributed and $%
\mathbb{E}( \vert X_{i}^{\left( j\right) }\vert ^{2}|Q_{i}=\tau )$ is continuous and bounded in a neighborhood of $%
\tau _{0}$ for each $j$.

To simplify notation, in the following theorem, we assume without loss of generality that $Q_{i}=i/n$. Then $\mathbb{T} = [t_0,t_1] \subset (0,1)$.
In addition, let $\eta_0 = \max \left\{ n^{-1}, K_1 \sqrt{\lambda \mathcal{M}(\alpha_0)} \right\}$, where $K_1$ is the same constant in Theorem \ref{theorem-main-1}.

\begin{assm}[Well-defined Second Moments]
\label{h-regular}
For any  $\eta$ such that $1/n \leq \eta \leq \eta_0$, 
 $h_n^2(\eta)$ is bounded, where 
\begin{align*}
h_{n}^2 \left( \eta \right)
 := \frac{1}{  2n\eta } \sum_{i=%
\max\{1, \left[ n\left( \tau _{0}-\eta \right) \right]\} }^{\min\{ \left[ n\left( \tau
_{0}+\eta \right) \right], n\} }\left( X_{i}^{\prime }\delta _{0}\right)
^{2}
\end{align*} 
and $[\cdot]$ denotes an integer part of any real number. 
\end{assm}

Assumption \ref{h-regular} assumes that $h_n^2(\eta)$ is well defined
for  any  $\eta$ such that $1/n \leq \eta \leq \eta_0$. 
Assumption \ref{h-regular} amounts to 
some weak regularity condition on the second moments of the fixed design.  
Assumption \ref{A-discontinuity} implies that $\delta_0 \neq 0$
and that 
 $h_n^2(\eta)$ is  bounded away from zero.
Hence,  Assumptions \ref{A-discontinuity} and  \ref{h-regular} imply that 
 $h_n^2(\eta)$ is bounded and bounded away from zero. 

To present the theorem below, it is necessary to make one additional
technical assumption (see Assumption \ref{tech-cond}  in  Appendix 
\ref{sec:proofs:oracle3}). We opted not to show Assumption \ref{tech-cond} here, since we believe this is just a sufficient condition that does not add much to our understanding of the main result. However, we would like to point out that  
Assumption \ref{tech-cond} can hold  for all sufficiently large $n$, provided that $s \lambda  \left\vert \delta _{0}\right\vert_{1} \rightarrow 0$, as $n \rightarrow 0$.
See Remark \ref{tech-remark} in  Appendix 
\ref{sec:proofs:oracle3} for details.

We now give the main result of this section.

\begin{thm}\label{main-text-thm-fixed-threshold} 
Suppose that Assumptions \ref{assumption-main-1} and \ref{re-assump}
hold with $\mathbb{S} = \left\{ \left\vert \tau -\tau _{0}\right\vert \leq
\eta_0 \right\} $, $\kappa =\kappa (s,\frac{2+\mu}{1-\mu },\mathbb{S})$
for $0<\mu <1,$ and $\mathcal{M}(\alpha _{0})\leq s\leq M$. Furthermore,  Assumptions \ref{A-discontinuity}, \ref{A-smoothness}, and \ref{h-regular}  hold
and let $n$ be large enough so that Assumption \ref{tech-cond}  
in  Appendix 
\ref{sec:proofs:oracle3}
holds. 
Let $(\widehat{\alpha },\widehat{\tau })$ be the Lasso estimator defined by \eqref{joint-max} with $\lambda$ given by \eqref{lambda-form}.
Then, 
with
probability at least $1-\left( 3M\right) ^{1-A^{2}\mu
^{2}/8}-
C_4 \left( 3M\right) ^{-C_5/r_{n}}$ for some positive constants $C_4$ and $C_5$, we have
\begin{align*}
\left\Vert \widehat{f}-f_{0}\right\Vert _{n}& \leq 
K_3 \frac{\sigma }{\kappa }\left( \frac{\log 3M}{nr_{n}}s\right) ^{1/2}, \\
\left\vert \widehat{\alpha }-\alpha _{0}\right\vert _{1}& \leq 
K_3 \frac{\sigma }{\kappa^2 }
\left( {\frac{\log 3M}{nr_{n}}}\right) ^{1/2} s, \\
\left\vert \hat{\tau}-\tau _{0}\right\vert &\leq 
K_3 \frac{\sigma^2 }{\kappa^2 }
\frac{\log 3M}{nr_{n}}s, \\
\mathcal{M}\left( \hat{\alpha}\right) &\leq 
K_3 \frac{\phi _{\max }}{\kappa ^{2}} s
\end{align*}
for some universal constant $K_3 > 0$.
\end{thm}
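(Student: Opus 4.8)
The plan is to build on Theorem \ref{main-thm-case1} by adding the one genuinely new piece: the near-$n^{-1}$ bound on $|\widehat\tau - \tau_0|$. First I would establish, using Theorem \ref{theorem-main-1} together with the URE condition (exactly as in Theorem \ref{main-thm-case1}), that $\widehat\tau$ lands in the neighborhood $\mathbb{S} = \{|\tau-\tau_0| \le \eta_0\}$ with high probability. The point of defining $\eta_0 = \max\{n^{-1}, K_1\sqrt{\lambda\mathcal{M}(\alpha_0)}\}$ is precisely that Theorem \ref{theorem-main-1} guarantees $\|\widehat f - f_0\|_n \le K_1\sqrt{\lambda\mathcal{M}(\alpha_0)} \le \eta_0$, and then Assumption \ref{A-discontinuity} (with $\eta$ set just below $|\widehat\tau-\tau_0|$) forces $|\widehat\tau - \tau_0| \le \eta_0$ as well; otherwise $\|\widehat f - f_0\|_n^2 > c|\widehat\tau - \tau_0|$ would contradict the prediction bound. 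This reduces everything to the neighborhood on which the URE condition with $c_0 = \frac{2+\mu}{1-\mu}$ is assumed, so the prediction-risk, $\ell_1$-loss, and sparsity inequalities follow by essentially the same argument as Theorem \ref{main-thm-case1}, now carried out over $\mathbb{S}$ rather than all of $\mathbb{T}$.

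The substantive work is sharpening the bound on $|\widehat\tau - \tau_0|$ from $\eta_0$ down to the advertised $O(n^{-1}\log(3M)\,s)$ rate. The idea is to combine the lower bound from Assumption \ref{A-discontinuity}, which reads $\|f_{(\widehat\alpha,\widehat\tau)} - f_0\|_n^2 > c|\widehat\tau-\tau_0|$ once $|\widehat\tau-\tau_0|$ exceeds $\min_i|Q_i-\tau_0| = O(n^{-1})$, with an upper bound on the same prediction risk of order $\frac{\sigma^2}{\kappa^2}\frac{\log 3M}{nr_n}s$ coming from the oracle inequality. Dividing the squared prediction bound by the constant $c$ yields the stated rate. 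The Remark preceding the theorem already signals the role of Assumption \ref{A-discontinuity} in converting a deviation in $\tau$ into a lower bound on $f$, so the logic is: prediction risk is small (oracle inequality, upper side) and prediction risk controls threshold error from below (discontinuity assumption, lower side), and the two together pin down $\widehat\tau$.

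The hard part, and the reason for the auxiliary machinery (Assumptions \ref{A-smoothness}, \ref{h-regular}, the unstated Assumption \ref{tech-cond}, and the extra failure probability $C_4(3M)^{-C_5/r_n}$), is that the oracle-inequality argument is not uniform in $\tau$ for free: the empirical process $n^{-1}\sum_i U_i\, \mathbf{X}_i(\tau)'\gamma$ must be controlled simultaneously over a $\tau$-neighborhood whose effective entropy grows, and the cross terms in $\|f_{(\alpha,\tau)}-f_0\|_n^2$ mix the $\alpha$-deviation with the $\tau$-deviation through the indicator difference $1\{Q_i<\tau_0\}-1\{Q_i<\tau\}$. I would handle this with a chaining/peeling argument: partition the neighborhood $\{\eta \le |\tau-\tau_0| \le \eta_0\}$ into dyadic shells $\eta \in (2^{-(k+1)}\eta_0, 2^{-k}\eta_0]$, and on each shell use Assumption \ref{A-smoothness} to bound the number of design points that switch regime (of order $n\eta$) and Assumption \ref{h-regular} to control the second-moment contribution $h_n^2(\eta)$ of those switched points, then apply a Gaussian tail bound to the noise term over the shell. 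Summing the per-shell failure probabilities produces the $(3M)^{-C_5/r_n}$ term, and Assumption \ref{tech-cond} is what makes the deterministic (bias) part of the cross term negligible relative to the $h_n^2(\eta)$ signal on every shell, so that Assumption \ref{A-discontinuity}'s lower bound is never swamped. I expect the delicate bookkeeping in this peeling step — keeping the $\alpha$-estimation error, the noise fluctuation, and the regime-switch smoothness all balanced shell by shell — to be the main obstacle, with the rest of the theorem reducing cleanly to the Case I analysis localized to $\mathbb{S}$.
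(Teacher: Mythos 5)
Your skeleton matches the paper's own proof at the level of ingredients: the initial localization of $\widehat\tau$ by playing the Theorem \ref{theorem-main-1} prediction bound against Assumption \ref{A-discontinuity} is exactly Lemma \ref{lem-claim2} (proved there as a contradiction on the penalized objective, which is logically the same move); the localized oracle bounds under URE on $\mathbb{S}$ are Lemma \ref{lem-claim3}; your ``noise at scale $\eta$'' control of the switched-regime sum, using Assumption \ref{A-smoothness} for the number of switching points and $h_n^2(\eta)$ from Assumption \ref{h-regular} for its variance, is precisely the event $\mathbb{C}(\eta)$ bounded via L\'{e}vy's inequality in Lemma \ref{prob-ABC}, and the summed failure probabilities give the $C_4(3M)^{-C_5/r_n}$ term just as in your shell-wise union bound. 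The organizational difference is that you peel dyadic shells in $\tau$ simultaneously, whereas the paper runs a sequential fixed-point iteration, alternating Lemma \ref{lem-claim3} (updating the bounds on $\|\widehat f-f_0\|_n$ and $|\widehat\alpha-\alpha_0|_1$ given $(c_\alpha,c_\tau)$) with Lemma \ref{lem-claim4} (updating $c_\tau$ given $c_\alpha$), with Assumption \ref{tech-cond} guaranteeing the iteration map contracts to a fixed point below the target after finitely many steps $m^\ast$, each step conditioning on a fresh $\mathbb{C}(\eta_j)$.

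The one genuine soft spot is your claim that ``dividing the squared prediction bound by the constant $c$ yields the stated rate,'' with the $\ell_1$ bound $c_\alpha$ treated as essentially static during the peeling. A one-pass division is not available: at localization scale $c_\tau$, the oracle bound of Lemma \ref{lem-claim3} is not $O(\lambda^2 s)$ but of order $\lambda^2 s \vee \lambda\sqrt{c_\tau} \vee \lambda\left(c_\alpha c_\tau C\left\vert\delta_0\right\vert_1 s\right)^{1/2}$, because $\mathbb{C}(c_\tau)$ only gives $|R_n|\le\lambda\sqrt{c_\tau}$ and the indicator-switch cross term couples $c_\alpha$ with $c_\tau$. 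With the only a-priori available bound $c_\alpha \asymp s$ (from the consistency step, as in the paper's starting values $c_\alpha^{(1)}, c_\tau^{(1)}$), the shell-wise contradiction against $c\eta$ stalls at $\eta \asymp \lambda^2 s^2 \left\vert\delta_0\right\vert_1/c^2$, short of the advertised $\lambda^2 s$. You must re-improve $c_\alpha$ (down to $\asymp \lambda s$) as the localization tightens and re-peel, i.e., contract $(c_\alpha,c_\tau)$ jointly; this alternation is exactly what the paper's iteration does, and it is why Assumption \ref{tech-cond} contains the specific inequalities \eqref{case2a}--\eqref{case2c} governing the second fixed-point regime (Case (ii) of the proof of Lemma \ref{main-thm-fixed-threshold}). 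With that joint contraction added, your peeling argument goes through and is in substance equivalent to the paper's chaining.
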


Theorem \ref{main-text-thm-fixed-threshold} gives the same  inequalities (up to constants)
as those in Theorem \ref{main-thm-case1}
 for the prediction risk as well as the $\ell_1$ estimation loss for $\alpha_0$.
It is important to note that
$\left\vert \hat{\tau}-\tau _{0}\right\vert$ is bounded by a constant times
$s \log 3M/(nr_{n})$, whereas  $\left\vert \widehat{\alpha }-\alpha _{0}\right\vert _{1}$ is bounded by a constant times $s [\log 3M/({nr_{n}})]^{1/2}$.
This can be viewed as a nonasymptotic version of the super-consistency of $\widehat \tau$ to $\tau_0$.
As noted at the end of Section \ref{sec:oracle1}, since both $\kappa$ and $r_n$ are bounded away from zero by the URE condition and Assumption \ref{assumption-main-1}, respectively, 
Theorem \ref{main-text-thm-fixed-threshold} 
implies asymptotic rate results immediately. 
The values of constants $C_4$, $C_5$ and  $K_3$ are given  
 in the proof of Theorem \ref{main-text-thm-fixed-threshold}.

The main contribution of this section  is that we have extended the well-known super-consistency result of $\widehat \tau$ when $M < n$ \citep[see, e.g.][]{chan:93,Seijo:Sen:11a,Seijo:Sen:11b}
to the high-dimensional setup ($M \gg n$).
In both cases, the main reason we achieve the super-consistency for the threshold parameter is that the least squares objective function behaves locally linearly around the true threshold parameter value rather than locally quadratically, as in regular estimation problems.  
An interesting remaining research question is to investigate whether it would be possible
to obtain the super-consistency result of $\widehat \tau$
under a weaker condition, perhaps without a restricted eigenvalue condition. 

\section{Monte Carlo Experiments}\label{sec:MC}

In this section we conduct some simulation studies and check the properties of the proposed Lasso estimator. The baseline model is \eqref{model}, where $X_i$ is an $M$-dimensional vector generated from $N(0,I)$, $Q_i$ is a scalar generated from the uniform distribution on the interval of $(0,1)$, and the error term $U_i$ is generated from $N(0,0.5^2)$. The threshold parameter is set to $\tau_0=0.3,0.4,$ and $0.5$ depending on the simulation design, and the coefficients are set to  $\beta_0=(1,0,1,0,\ldots,0)$, and $\delta_0=c\cdot(0,-1,1,0,\ldots,0)$ where $c=0$ or $1$. Note that there is no threshold effect when $c=0$. The number of observations is set to $n=200$. Finally, the dimension of $X_i$ in each design is set to $M=50,100, 200$ and $400$, so that the total number of regressors are 100, 200, 400 and 800, respectively. The range of $\tau$ is $\mathbb{T} = [0.15,0.85]$.

We can estimate the parameters by the standard LASSO/LARS algorithm of \citet{efron2004least} without much modification. Given a regularization parameter value $\lambda$, we estimate the model for each grid point of $\tau$ that spans over 71 equi-spaced points on $\mathbb{T}$. This procedure can be conducted by using the standard linear Lasso. Next, we plug-in the estimated parameter $\widehat{\alpha}(\tau):=\left(\widehat{\beta}(\tau)', \widehat{\delta}(\tau)'\right)'$ for each $\tau$ into the objective function and choose $\widehat{\tau}$ by \eqref{tau-max}. Finally, $\widehat{\alpha}$ is estimated by $\widehat{\alpha}(\widehat{\tau})$.
The regularization parameter $\lambda$ is chosen by \eqref{lambda-form}
where $\sigma=0.5$ is assumed to be known. For the constant $A$, we use four different values: $A=2.8, 3.2, 3.6,$ and $4.0$.

Table \ref{tb:M50} and Figures \ref{fg:indepX1}--\ref{fg:indepX2} summarize these simulation results. To compare the performance of the Lasso estimator, we also report the estimation results of the least squares estimation (Least Squares) available only when $M=50$ and two oracle models (Oracle 1 and Oracle 2, respectively). Oracle 1 assumes that the regressors with non-zero coefficients are known. In addition to that, Oracle 2 assumes that the true threshold parameter $\tau_0$ is known. Thus, when $c\neq0$, Oracle 1 estimates $(\beta^{(1)}, \beta^{(3)},\delta^{(2)},\delta^{(3)})$ and $\tau$ using the least squares estimation while Oracle 2 estimates only $(\beta^{(1)}, \beta^{(3)},\delta^{(2)},\delta^{(3)})$. When $c=0$, both Oracle 1 and Oracle 2 estimate only $(\beta^{(1)}, \beta^{(3)})$. All results are based on 400 replications of each sample.

The reported mean-squared prediction error ($PE$) for each sample is calculated numerically as follows. For each sample $s$, we have the estimates $\widehat{\beta}_{s}$, $\widehat{\delta}_{s}$, and $\widehat{\tau}_{s}$. Given these estimates, we generate a new data $\{Y_j,X_j,Q_j\}$ of 400 observations and calculate the prediction error as

\begin{align}
\widehat{PE}_{s} = \frac{1} {400} \sum_{j=1}^{400} \left(f_0(x_j,q_j) - \widehat{f}(x_j,q_j) \right)^2.
\end{align}
The mean, median, and standard deviation of the prediction error are calculated from the 400 replications,  $\{\widehat{PE}_{s}\}_{s=1}^{400}$. We also report the mean of $\mathcal{M}(\widehat{\alpha})$ and $\ell_1$-errors for $\alpha$ and $\tau$. Table  \ref{tb:M50} reports the simulation results of $M=50$. For simulation designs with $M>50$, Least Squares is not available, and we summarize the same statistics only for the Lasso estimation in Figures \ref{fg:indepX1}--\ref{fg:indepX2}.

When $M=50$, across all designs, the proposed Lasso estimator performs better than Least Squares in terms of mean and median prediction errors, the mean of $\mathcal{M}(\widehat{\alpha})$, and the $\ell_1$-error for $\alpha$.
The performance of the Lasso estimator becomes much better when there is no threshold effect, i.e.\ $c=0$. 
This result confirms the robustness of the Lasso estimator for whether  or not there exists a threshold effect.
However, Least Squares performs better than the Lasso estimator in terms of estimation of $\tau_0$ when $c=1$,
although the difference here is much smaller than the differences in prediction errors and the $\ell_1$-error for $\alpha$.

From Figures \ref{fg:indepX1}--\ref{fg:indepX2}, we can reconfirm the robustness of the Lasso estimator when $M=100, 200$, and $400$. As predicted by the theory developed in previous sections, the prediction error and $\ell_1$ errors for $\alpha$ and $\tau$ increase slowly as $M$ increases. The graphs also show that the results are quite uniform across different regularization parameter values except $A=4.0$.

In  Appendix \ref{sec:mc:extra}, we report additional simulation results, while
allowing correlation between covariates.
Specifically, the $M$-dimensional vector $X_i$ is  generated from a multivariate normal $N(0,\Sigma)$ with $(\Sigma)_{i,j}=\rho^{|i-j|}$, where $(\Sigma)_{i,j}$ denotes the (i,j) element of the $M \times M$ covariance matrix $\Sigma$. 
All other random variables are the same as above. 
We obtained  very similar results as previous cases: Lasso outperforms Least Squares, and the prediction error, the mean of $\mathcal{M}(\widehat{\alpha})$, and  $\ell_1$-errors increase very slowly as $M$ increases. See further details in
 Appendix \ref{sec:mc:extra}, which also reports satisfactory simulation results
regarding
 frequencies of selecting true parameters when both $\rho=0$ and $\rho=0.3$.

In sum, the simulation results confirm the theoretical results developed earlier and show that the proposed Lasso estimator will be useful for the high-dimensional threshold regression model.

\section{Conclusions}\label{sec:concl}

We have considered a  high-dimensional regression model
with a possible change-point due to a covariate threshold
and have developed the Lasso method. We have derived nonasymptotic oracle inequalities
and have illustrated the usefulness of our proposed estimation method via simulations and a real-data application.

We conclude this paper by providing some areas of future research. 
First, it would be interesting to extend other penalized estimators (for example, the adaptive Lasso of \citet{Zou:06} and the smoothly clipped absolute deviation (SCAD) penalty of \citet{Fan:Li:01}) to our setup and to see whether we would be able to improve the performance of our estimation method.
Second, an extension to multiple change points is also an important research topic. 
There has been some advance to this direction, especially regarding key issues like computational cost and the determination of the number of change points (see, for example, \citet{Harchaoui:Levy-Leduc:10} and \citet{frick2013multiscale}). 
However, they are confined to a single regressor case, and the extension to a large number of regressors would be highly interesting. 
Finally, it would be also an interesting research topic to investigate the minimax lower bounds of the proposed estimator and its prediction risk as \citet{raskutti2011minimax, raskutti2012minimax} did in high-dimensional linear regression setups.



\begin{table}[htbp]

\begin{center}
\caption{List of Variables}
\label{tb:listVar}
\small
\begin{tabular}{p{1.2in}p{5in}}
\hline \hline
Variable Names & Description \\
\hline
\multicolumn{2}{l}{\underline{\emph{Dependent Variable}}}\\
$\textit{gr}$ & Annualized GDP growth rate in the period of 1960--85 \\
& \\
\multicolumn{2}{l}{\underline{\emph{Threshold Variables}}}\\
\textit{gdp60} & Real GDP per capita in 1960 (1985 price)\\
\textit{lr} & Adult literacy rate in 1960 \\
& \\
\multicolumn{2}{l}{\underline{\emph{Covariates}}}\\
\textit{lgdp60} &	Log GDP per capita in 1960 (1985 price)\\
\textit{lr} & Adult literacy rate in 1960 (only included when $Q=lr$)\\
$\textit{ls}_k$	& Log(Investment/Output) annualized over 1960-85; a proxy for the log physical savings rate\\
$\textit{lgr}_{pop}$ &Log population growth rate annualized over 1960--85\\
\textit{pyrm60} &	Log average years of primary schooling in the male population in 1960\\
\textit{pyrf60}	&Log average years of primary schooling in the female population in 1960\\
\textit{syrm60} &	Log average years of secondary schooling in the male population in 1960\\
\textit{syrf60}	&Log average years of secondary schooling in the female population in 1960\\
\textit{hyrm60} &	Log average years of higher schooling in the male population in 1960\\
\textit{hyrf60} &	Log average years of higher schooling in the female population in 1960\\
\textit{nom60} &	Percentage of no schooling in the male population in 1960\\
\textit{nof60}	&Percentage of no schooling in the female population in 1960\\
\textit{prim60}&	Percentage of primary schooling attained in the male population in 1960\\
\textit{prif60}&	Percentage of primary schooling attained in the female population in 1960\\
\textit{pricm60}	&Percentage of primary schooling complete in the male population in 1960\\
\textit{pricf60}&	Percentage of primary schooling complete in the female population in 1960\\
\textit{secm60}&	Percentage of secondary schooling attained in the male population in 1960\\
\textit{secf60}	&Percentage of secondary schooling attained in the female population in 1960\\
\textit{seccm60}&	Percentage of secondary schooling complete in the male population in 1960\\
\textit{seccf60}&	Percentage of secondary schooling complete in the female population in 1960\\
\textit{llife} &	Log of life expectancy at age 0 averaged over 1960--1985\\
\textit{lfert}&	Log of fertility rate (children per woman) averaged over 1960--1985\\
\textit{edu/gdp} &	Government expenditure on eduction per GDP averaged over 1960--85\\
\textit{gcon/gdp}&	Government consumption expenditure net of defence and education per GDP averaged over 1960--85\\
\textit{revol} & The number of revolutions per year over 1960--84\\
\textit{revcoup} &	The number of revolutions and coups per year over 1960--84\\
\textit{wardum}  &	Dummy for countries that participated in at least one external war over 1960--84\\
\textit{wartime}  &	The fraction of time over 1960-85 involved in external war\\
\textit{lbmp}	& Log(1+black market premium averaged over 1960--85)\\
\textit{tot}&	The term of trade shock\\
$\textit{lgdp60} \times \textit{`educ'}$	& {Product of two covariates (interaction of \textit{lgdp60} and education variables from \textit{pyrm60} to \textit{seccf60}); total 16 variables} \\
\hline
\end{tabular}
\end{center}
\end{table}

\begin{table}[htbp]
\small
\begin{center}
\caption{Model Selection and Estimation Results with $Q={gdp60}$}
\label{tb:resultM1}
\begin{tabular}{cccccc}
\\
\hline \hline
 && \multirow{2}{*}{Linear Model}  && \multicolumn{2}{c}{Threshold Model}\\
 && & &\multicolumn{2}{c}{$\widehat{\tau}=2898$}\\
\hline
 && & & $ \widehat{\beta}$ & $ \widehat{\delta}$ \\
\cline{5-6}
\textit{const.} 			&& -0.0923 	&& -0.0811 & - \\
\textit{lgdp60} 		&& -0.0153 	&& -0.0120 & - \\
$\textit{ls}_k$			&& 0.0033 	&& 0.0038 & - \\
$\textit{lgr}_{pop}$	&&0.0018	&& - & - \\
\textit{pyrf60}		&&0.0027	&& - & - \\
\textit{syrm60}		&&0.0157	&& - & - \\
\textit{hyrm60}		&&0.0122	&& 0.0130 & - \\
\textit{hyrf60}		&&-0.0389	&& - & -0.0807\\
\textit{nom60}		&& - 			&& - & $2.64 \times 10^{-5}$ \\
\textit{prim60}		&&-0.0004	&& -0.0001 & - \\
\textit{pricm60}		&&0.0006	&&$-1.73 \times 10^{-4}$ & $-0.35 \times 10^{-4}$\\
\textit{pricf60}		&&-0.0006	&& - & - \\
\textit{secf60}		&&0.0005	&& - & - \\
\textit{seccm60}		&&0.0010	&& - & 0.0014\\
\textit{llife}			&&0.0697	&& 0.0523& - \\
\textit{lfert}			&&-0.0136	&& -0.0047& - \\
\textit{edu/gdp}		&&-0.0189	&& - & - \\
\textit{gcon/gdp}		&&-0.0671	&& -0.0542& - \\
\textit{revol}			&&-0.0588	&& - & - \\
\textit{revcoup} 		&&0.0433	&& - & - \\
\textit{wardum}		&&-0.0043	&& - & -0.0022\\
\textit{wartime}		&&-0.0019	&& -0.0143 &-0.0023 \\
\textit{lbmp}			&&-0.0185	&& -0.0174 & -0.0015\\
\textit{tot}			&&0.0971	&& - & 0.0974\\
$\textit{lgdp60} \times\textit{pyrf60}$         && - &&  $-3.81\times 10^{-6}$ &  - \\
$\textit{lgdp60} \times\textit{syrm60} $ 	&& - && - & 0.0002\\
$\textit{lgdp60} \times\textit{hyrm60}$ 	&& - && - & 0.0050\\
 $\textit{lgdp60} \times\textit{hyrf60}$ 	&& - && -0.0003 & - \\
$\textit{lgdp60} \times\textit{nom60}$	&& - && - & $8.26\times 10^{-6}$\\
$\textit{lgdp60} \times\textit{prim60}$	&&$-6.02 \times 10^{-7}$   && - & - \\
$\textit{lgdp60} \times\textit{prif60}$		&&$-3.47 \times 10^{-6}$   && - & $-8.11 \times 10^{-6}$\\
$\textit{lgdp60} \times\textit{pricf60}$	&&$-8.46 \times 10^{-6}$  && - & - \\
$\textit{lgdp60} \times\textit{secm60}$	&& -0.0001  && - & - \\
$\textit{lgdp60} \times\textit{seccf60}$	&& -0.0002   && $-2.87 \times 10^{-6}$ & -\\
\hline
$\lambda$ & & $0.0004$  & & \multicolumn{2}{c}{$0.0034$}\\
$\mathcal{M}(\widehat{\alpha})$ &&28 && \multicolumn{2}{c}{26}\\
$\#\ \textit{of covariates}$ & &46 & & \multicolumn{2}{c}{92}\\
$\#\  \textit{of obsesrvations}$ && 80 && \multicolumn{2}{c}{80}\\
\hline
\\
\multicolumn{6}{p{.8\textwidth}}{\footnotesize \emph{Note: }The regularization parameter $\lambda$ is chosen by the `leave-one-out'  cross validation method. $\mathcal{M}(\widehat{\alpha})$ denotes the number of covariates to be selected by the Lasso estimator, and `-' indicates that the regressor is not selected. Recall that $\widehat{\beta}$ is the coefficient when $Q \ge \widehat{\gamma}$ and that $\widehat{\delta}$ is the change of the coefficient value when $Q < \widehat{\gamma}$.}
\end{tabular}
\end{center}
\end{table}

\begin{table}[htbp]
\footnotesize
\begin{center}
\caption{Simulation Results with $M=50$}
\label{tb:M50}
\begin{tabular}{lp{2cm}crrrrrr}
\hline \hline
{Threshold}                 &  {Estimation}   &Constant          &   \multicolumn{3}{c}{Prediction Error (PE)}  & \multirow{2}{*}{$\mathbb{E}\left[\mathcal{M}\left(\widehat{\alpha}\right)\right]$}&   \multirow{2}{*}{$\mathbb{E}\left|\widehat{\alpha}-\alpha_0\right|_1$} &    \multirow{2}{*}{$\mathbb{E}\left|\widehat{\tau}-\tau_0\right|_1$}  \\
\cline{4-6}
Parameter &  Method & for $\lambda$ & Mean & Median & SD & & \\\hline
& & & & & & &\\
\multicolumn{9}{c}{\underline{Jump Scale: $c=1$}} \\
& & & & & & &\\
\multirow{7}{*}{$\tau_0=0.5$} & {Least Squares}  & None & 0.285  &  0.276  &  0.074  & 100.00& 7.066 &0.008 \\
 & \multirow{4}{*}{Lasso}  &$A=2.8$ & 0.041  &  0.030  &  0.035  & 12.94 &0.466 & 0.010 \\
 & & $A=3.2$ & 0.048  &  0.033  &  0.049  &10.14 &0.438  &0.013 \\
 & & $A=3.6$ & 0.067  &  0.037  &  0.086  &8.44 &0.457  & 0.024\\
 & & $A=4.0$ & 0.095  &  0.050  &  0.120  &7.34  &0.508   &0.040 \\
 & Oracle 1  & None & 0.013  &  0.006  &  0.019  &4.00 &0.164  &0.004  \\
 & Oracle 2  & None & 0.005  &  0.004  &  0.004  & 4.00&0.163 &0.000 \\
& & & & & & &\\
\hline
& & & & & & &\\
\multirow{7}{*}{$\tau_0=0.4$} & {Least Squares}  & None & 0.317  &  0.304  &  0.095  & 100.00 &7.011 &0.008\\
 & \multirow{4}{*}{Lasso}  &$A=2.8$ & 0.052  &  0.034  &  0.063  &13.15 &0.509  &0.016 \\
 & & $A=3.2$ & 0.063  &  0.037  &  0.083  & 10.42 &0.489 & 0.023 \\
 & & $A=3.6$ & 0.090  &  0.045  &  0.121  &8.70 &0.535 & 0.042\\
 & & $A=4.0$ & 0.133  &  0.061  &  0.162  &7.68  &0.634 &0.078  \\
 & Oracle 1  & None & 0.014  &  0.006  &  0.022  &4.00 &0.163 &0.004 \\
 & Oracle 2  & None & 0.005  &  0.004  &  0.004  & 4.00 &0.163 &0.000 \\
& & & & & & &\\
\hline
& & & & & & &\\
\multirow{7}{*}{$\tau_0=0.3$} & {Least Squares}  & None & 2.559  &  0.511  &  16.292  &100.00 & 12.172&0.012\\
 & \multirow{4}{*}{Lasso}  &$A=2.8$ & 0.062  &  0.035  &  0.091  &13.45 & 0.602 &0.030  \\
 & & $A=3.2$ & 0.089  &  0.041  &  0.125  &10.85 & 0.633 &0.056 \\
 & & $A=3.6$ & 0.127  &  0.054  &  0.159  &9.33 & 0.743 &0.099 \\
 & & $A=4.0$ & 0.185  &  0.082  &  0.185  &8.43 & 0.919 &0.168 \\
 & Oracle 1  & None & 0.012  &  0.006  &  0.017  & 4.00&0.177 & 0.004\\
 & Oracle 2  & None & 0.005  &  0.004  &  0.004  & 4.00&0.176  &0.000\\
& & & & & & &\\
\hline
& & & & & & &\\
\multicolumn{9}{c}{\underline{Jump Scale: $c=0$}} \\
& & & & & & &\\
\multirow{6}{*}{N/A} & {Least Squares}  & None & 6.332  &  0.460  &  41.301  &100.00&20.936 & \multirow{6}{*}{ N/A} \\
 & \multirow{4}{*}{Lasso}  &$A=2.8$ & 0.013  &  0.011  &  0.007  &9.30 & 0.266  & \\
 & & $A=3.2$ & 0.014  &  0.012  &  0.008   & 6.71 &  0.227 & \\
 & & $A=3.6$ & 0.015  &  0.014  &  0.009   & 4.95 &0.211 & \\
 & & $A=4.0$ & 0.017  &  0.016  &  0.010   &3.76 & 0.204  & \\
 & Oracle 1 \& 2  & None & 0.002  &  0.002  &  0.003   &2.00 & 0.054 & \\
& & & & & & &\\
\hline
\\
\multicolumn{9}{p{\textwidth}}{\footnotesize \emph{Note: }$M$ denotes the column size of $X_i$ and $\tau$ denotes the threshold parameter. Oracle 1 \& 2 are estimated by the least squares when sparsity is known and when sparsity and $\tau_0$ are known, respectively. All simulations are based on 400 replications of a sample with 200 observations. }
\end{tabular}
\end{center}

\end{table}

\begin{figure}[p]
\caption{Mean Prediction Errors and Mean $\mathcal{M}(\widehat{\alpha})$}
\begin{center}

\includegraphics[width = 2.8in]{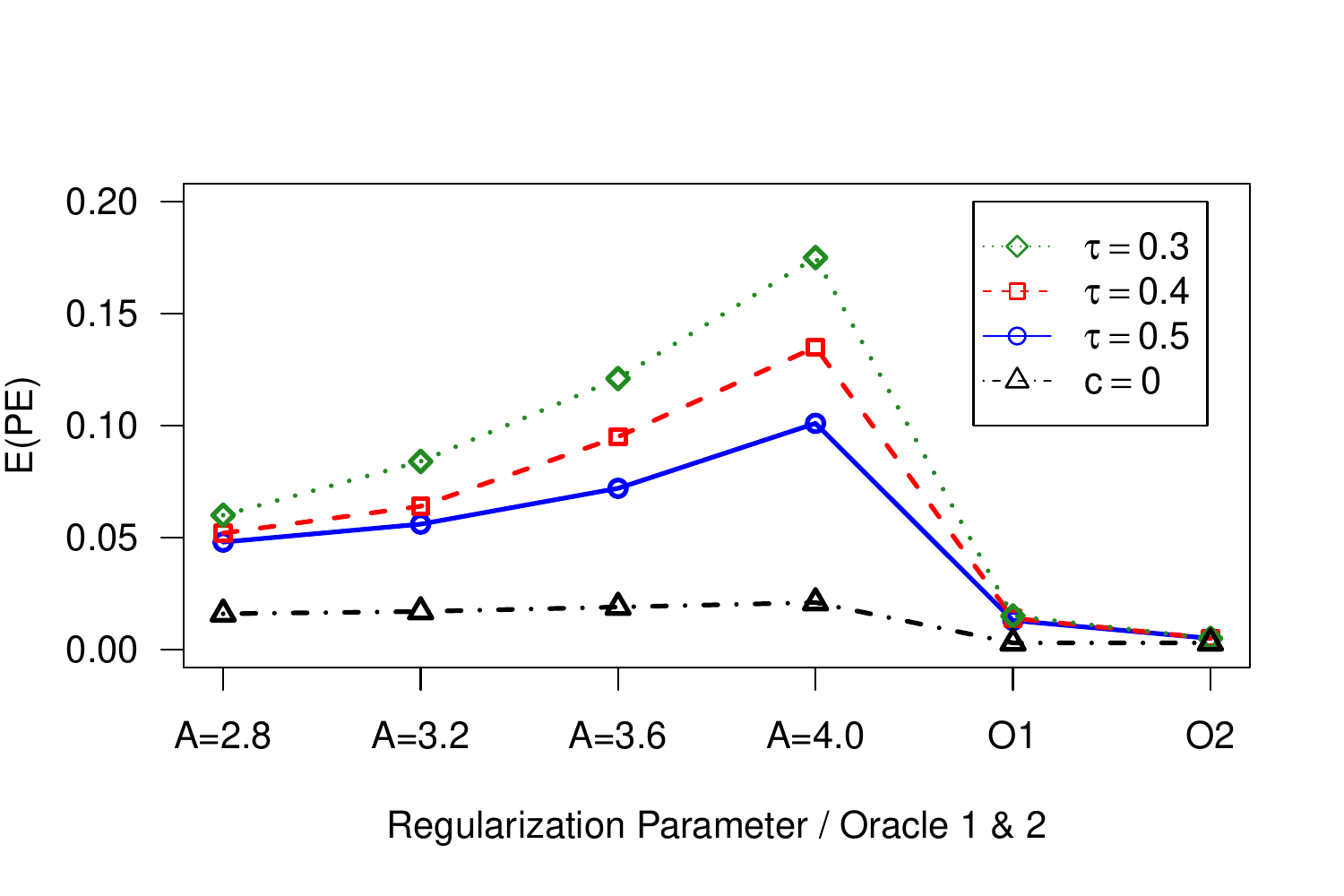}
\includegraphics[width = 2.8in]{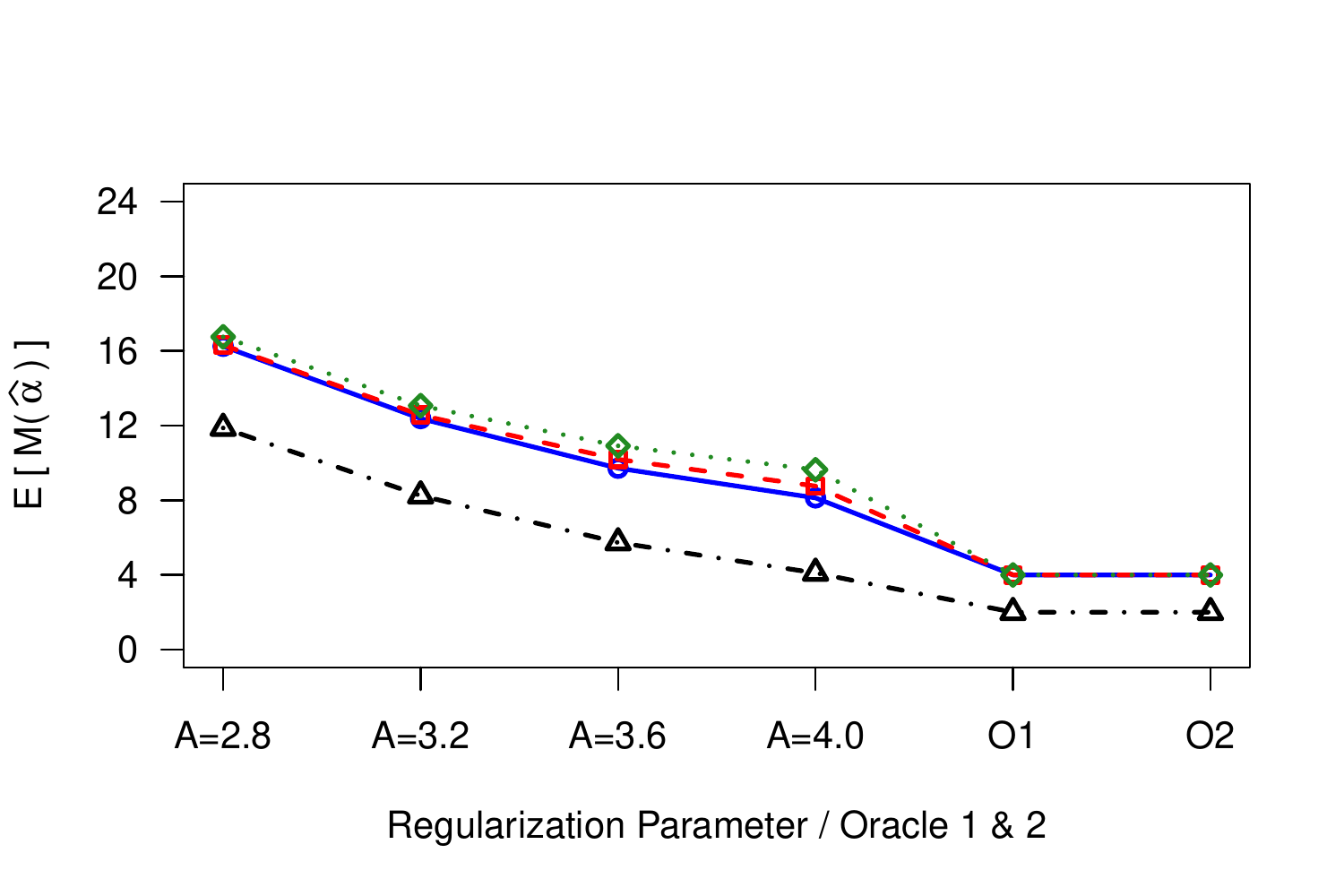} \\[0pt]
$M=100$\\[0pt]
\includegraphics[width = 2.8in]{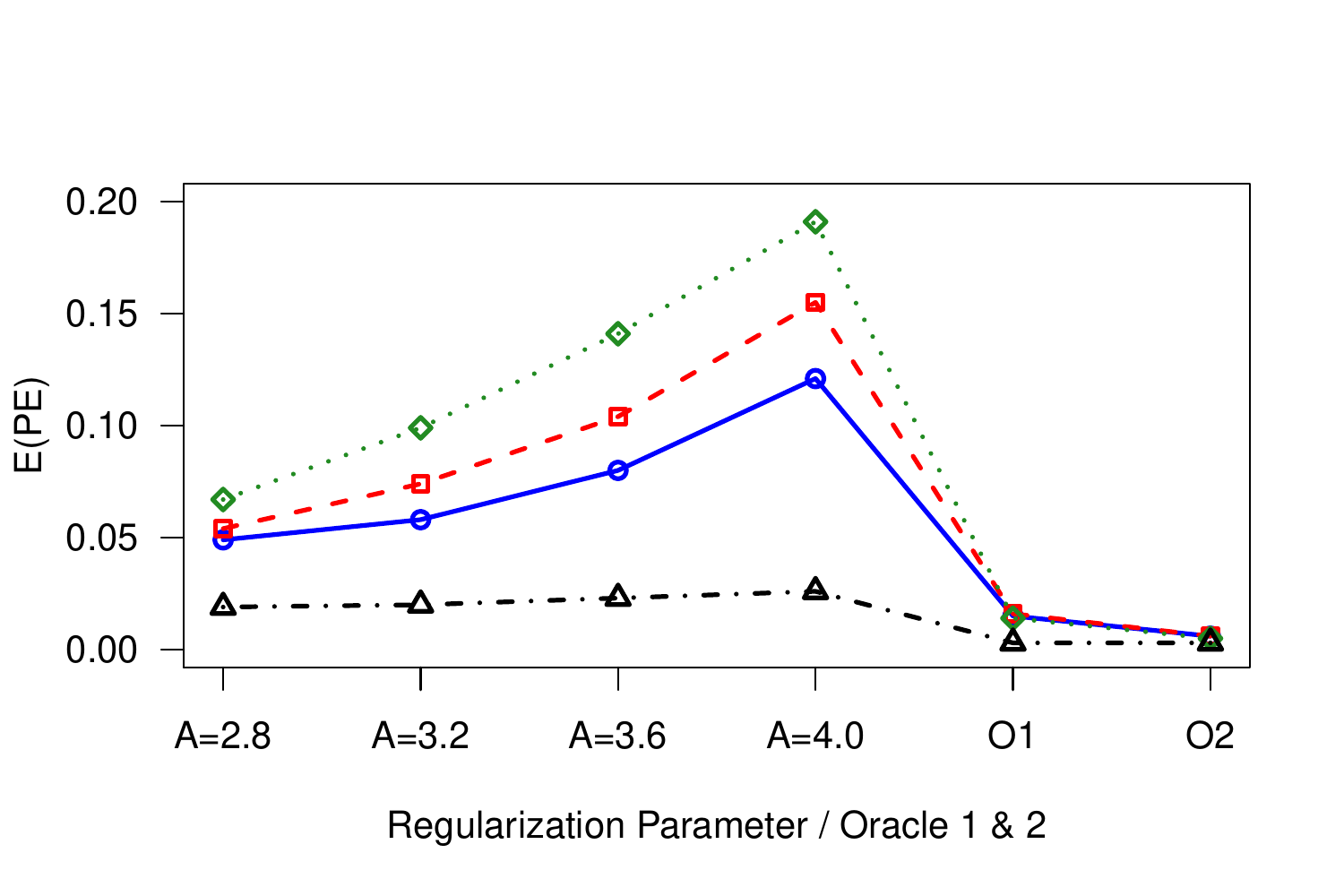}
\includegraphics[width = 2.8in]{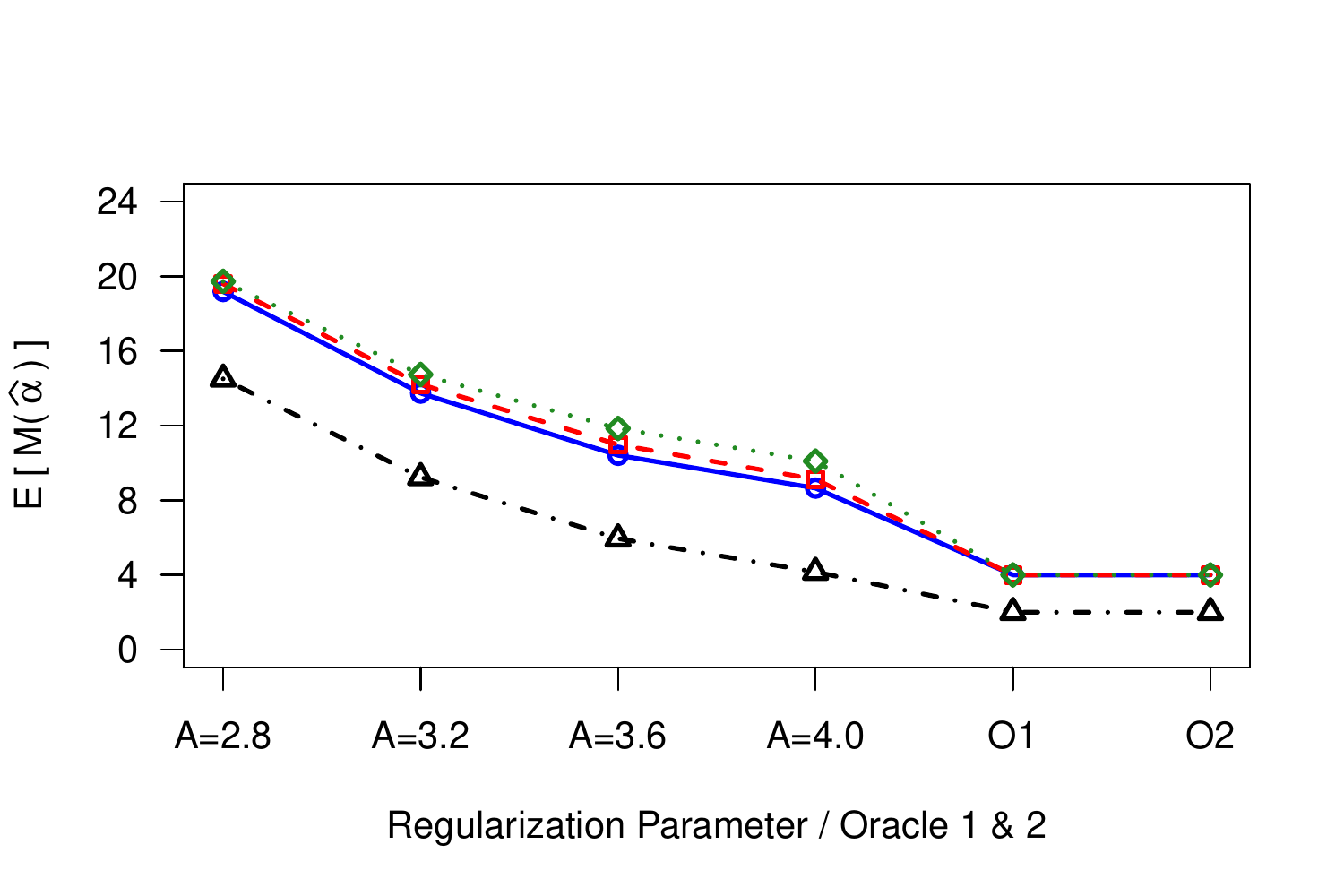} \\[0pt]
$M=200$\\[0pt]
\includegraphics[width = 2.8in]{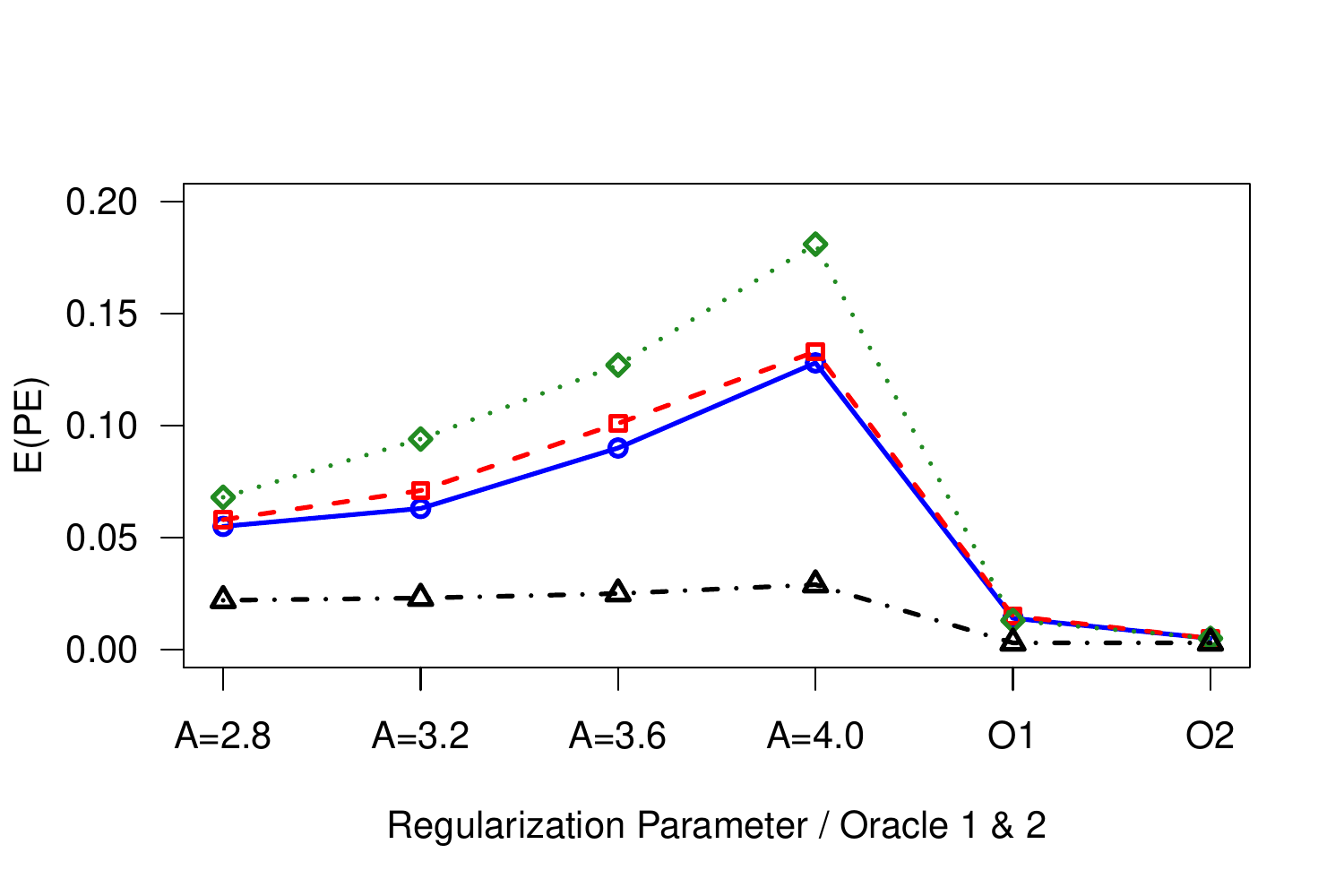}
\includegraphics[width = 2.8in]{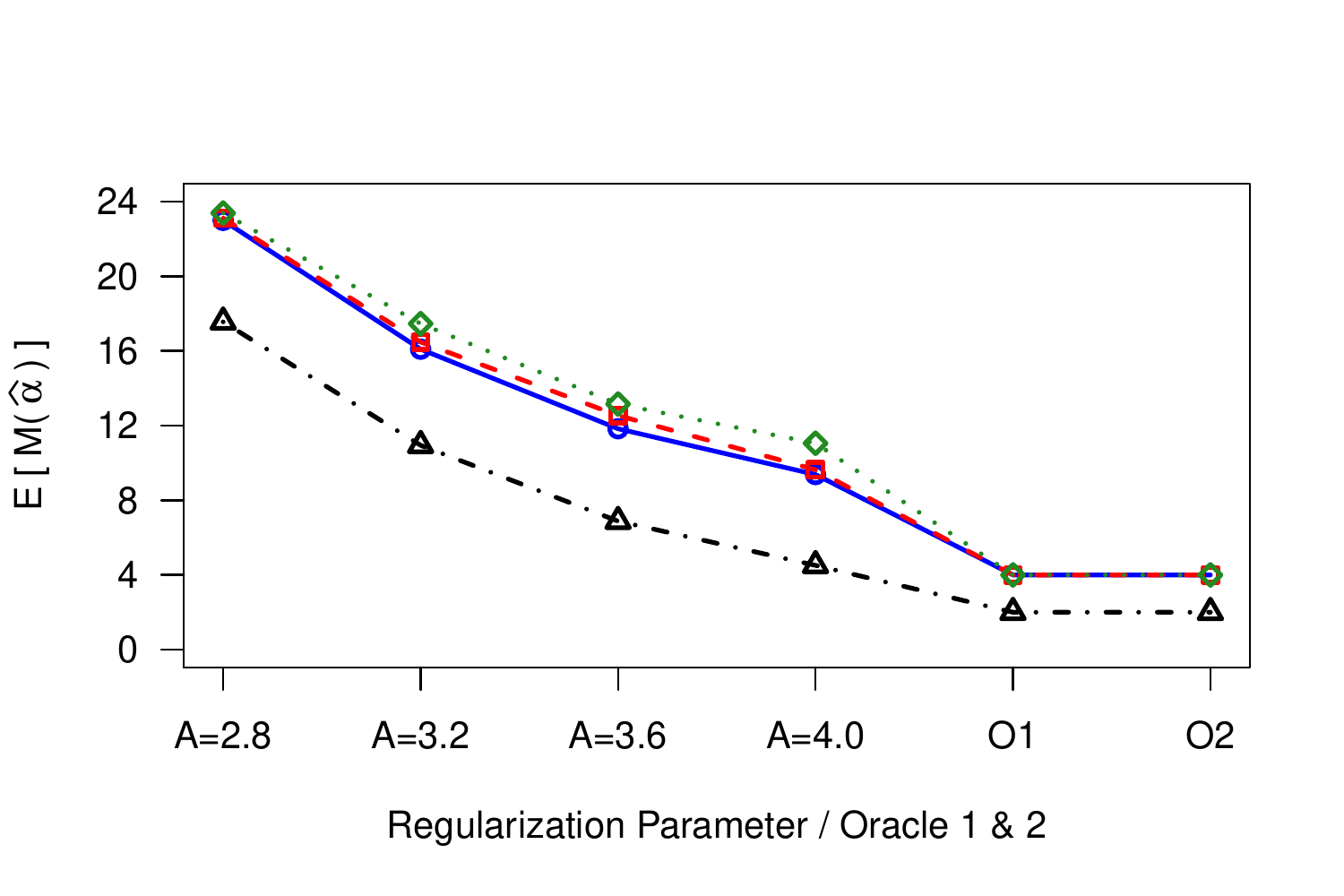} \\[0pt]
$M=400$\\[0pt]
\end{center}
\par
\label{fg:indepX1}
\end{figure}

\begin{figure}[tbp]
\caption{Mean $\ell_1$-Errors for $\alpha$ and $\tau$}
\begin{center}

\includegraphics[width = 2.8in]{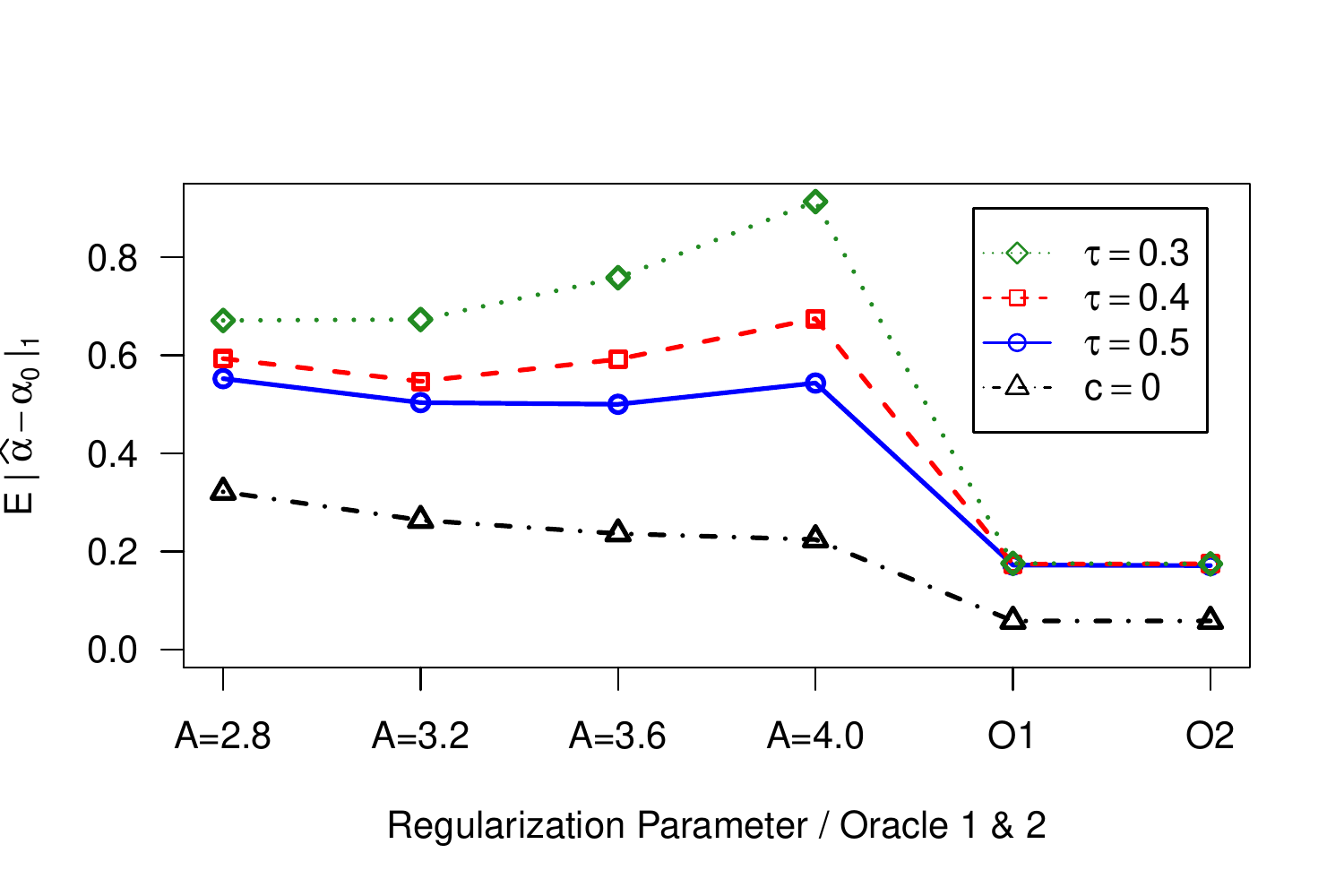}
\includegraphics[width = 2.8in]{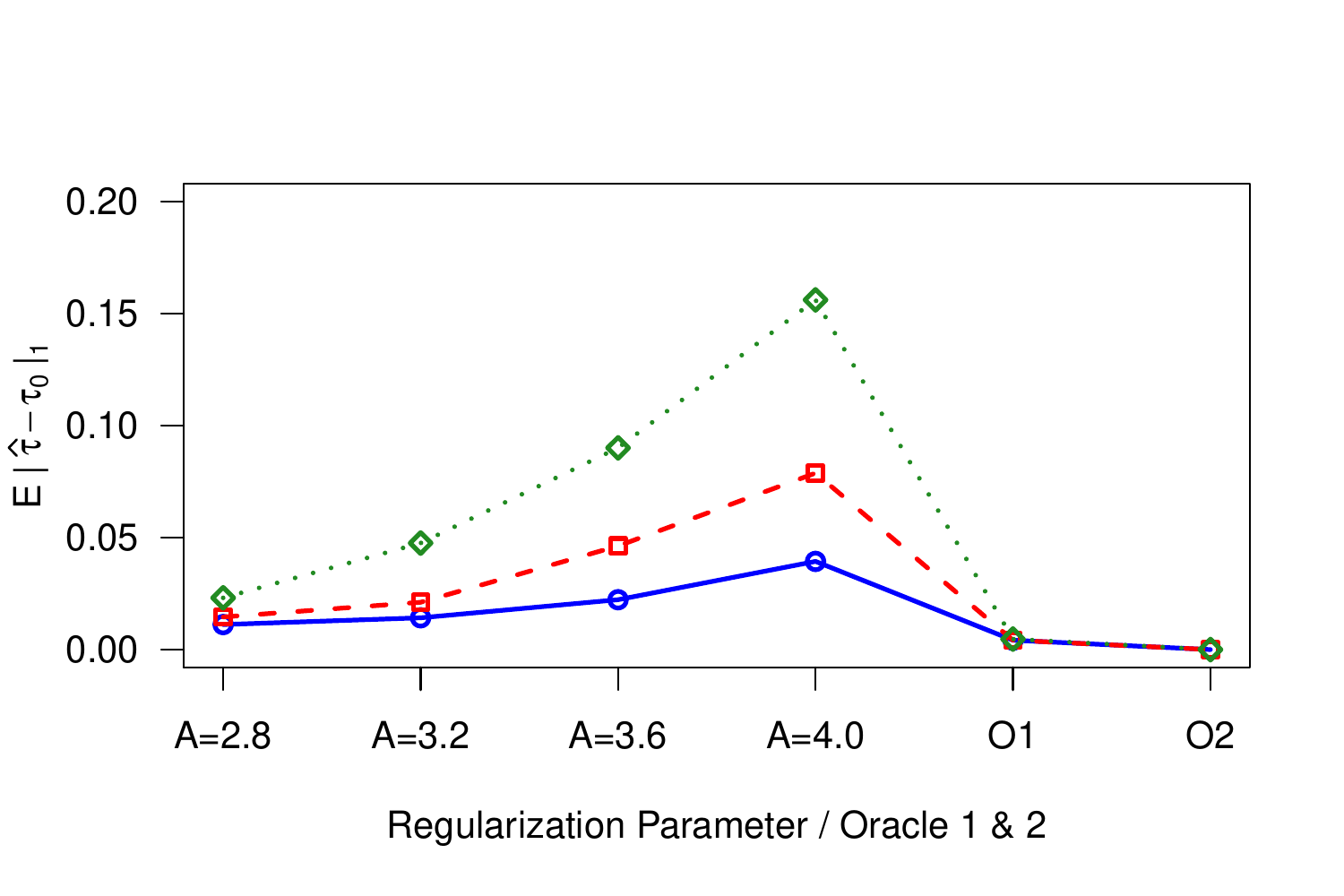} \\[0pt]
$M=100$\\[0pt]
\includegraphics[width = 2.8in]{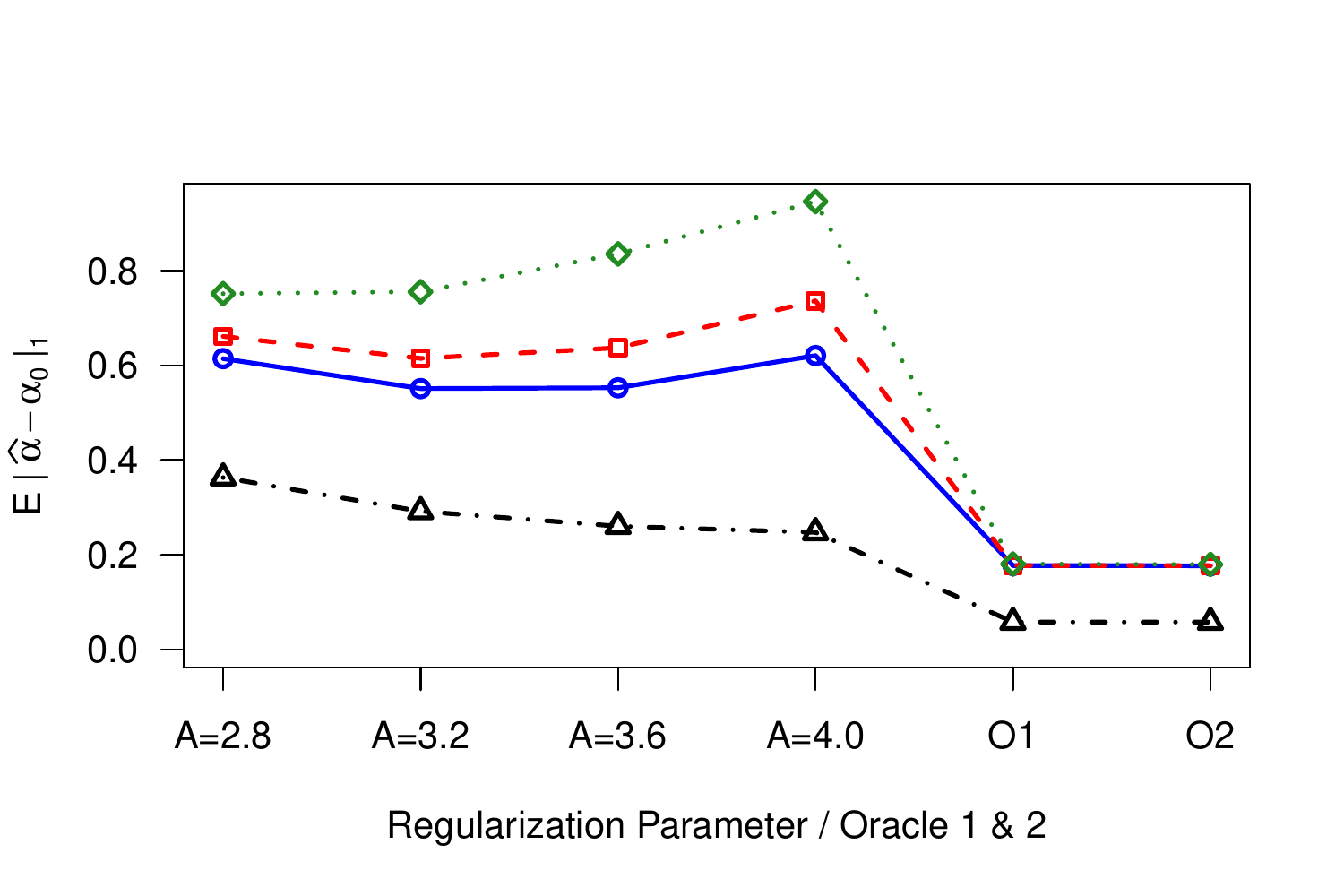}
\includegraphics[width = 2.8in]{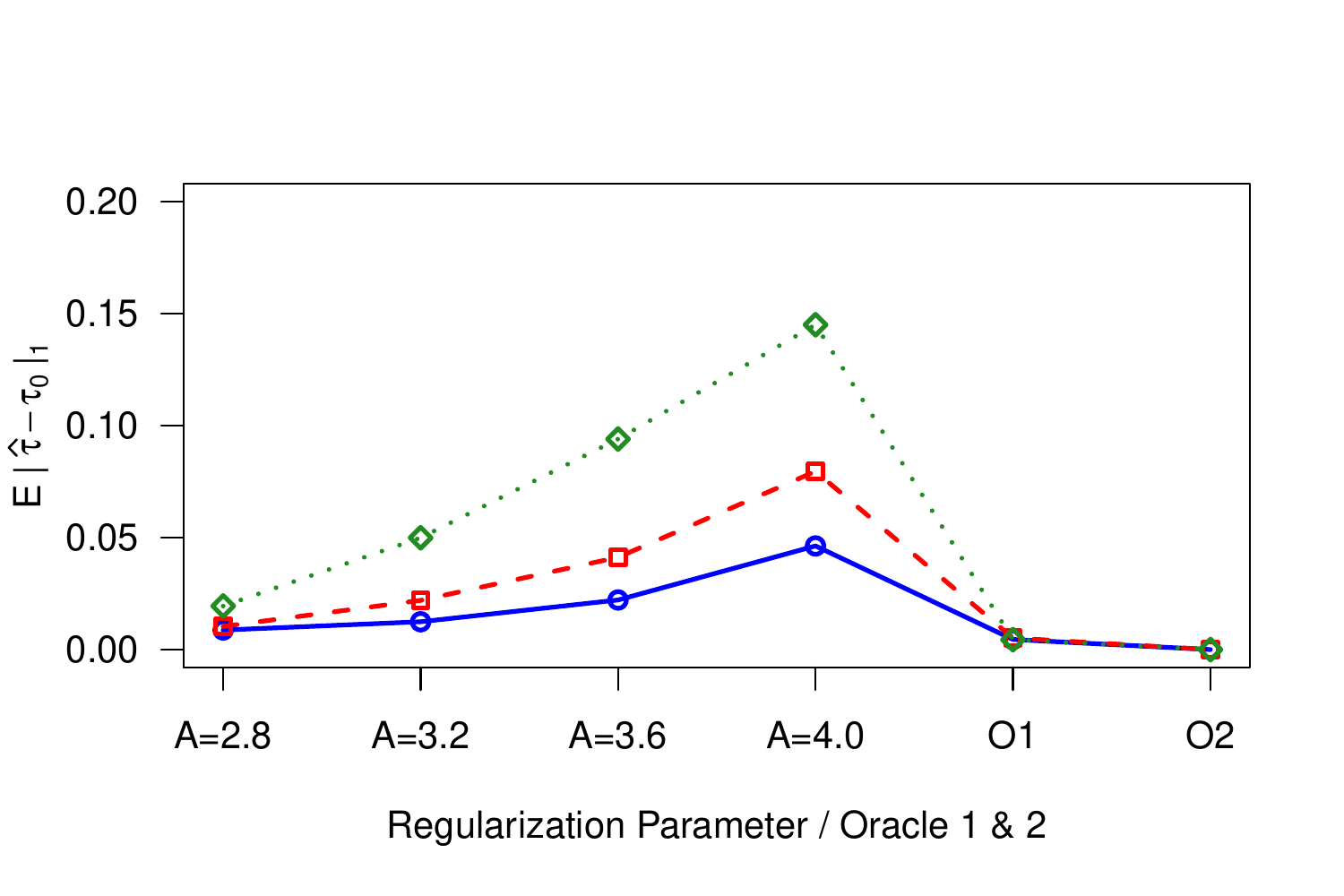} \\[0pt]
$M=200$\\[0pt]
\includegraphics[width = 2.8in]{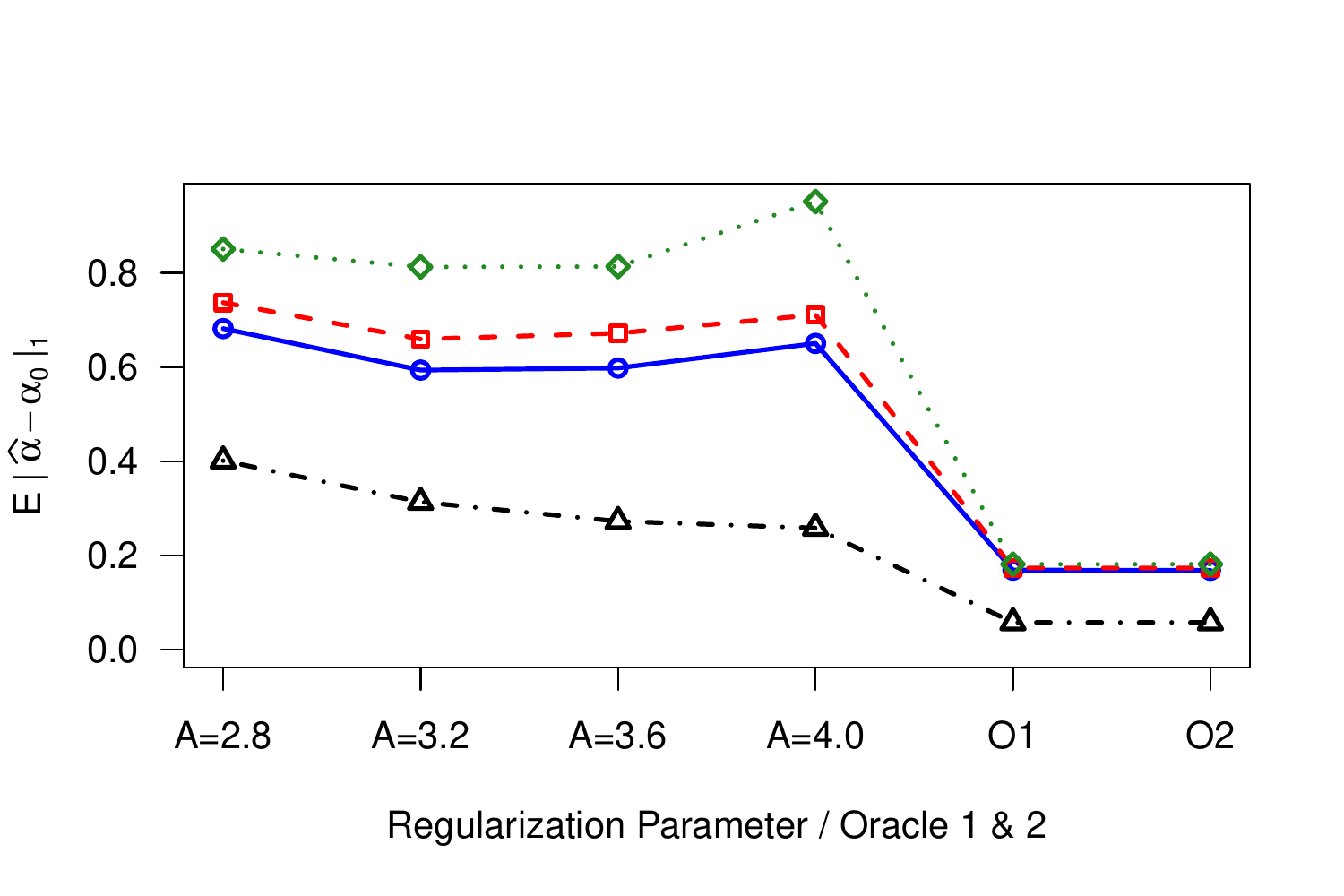}
\includegraphics[width = 2.8in]{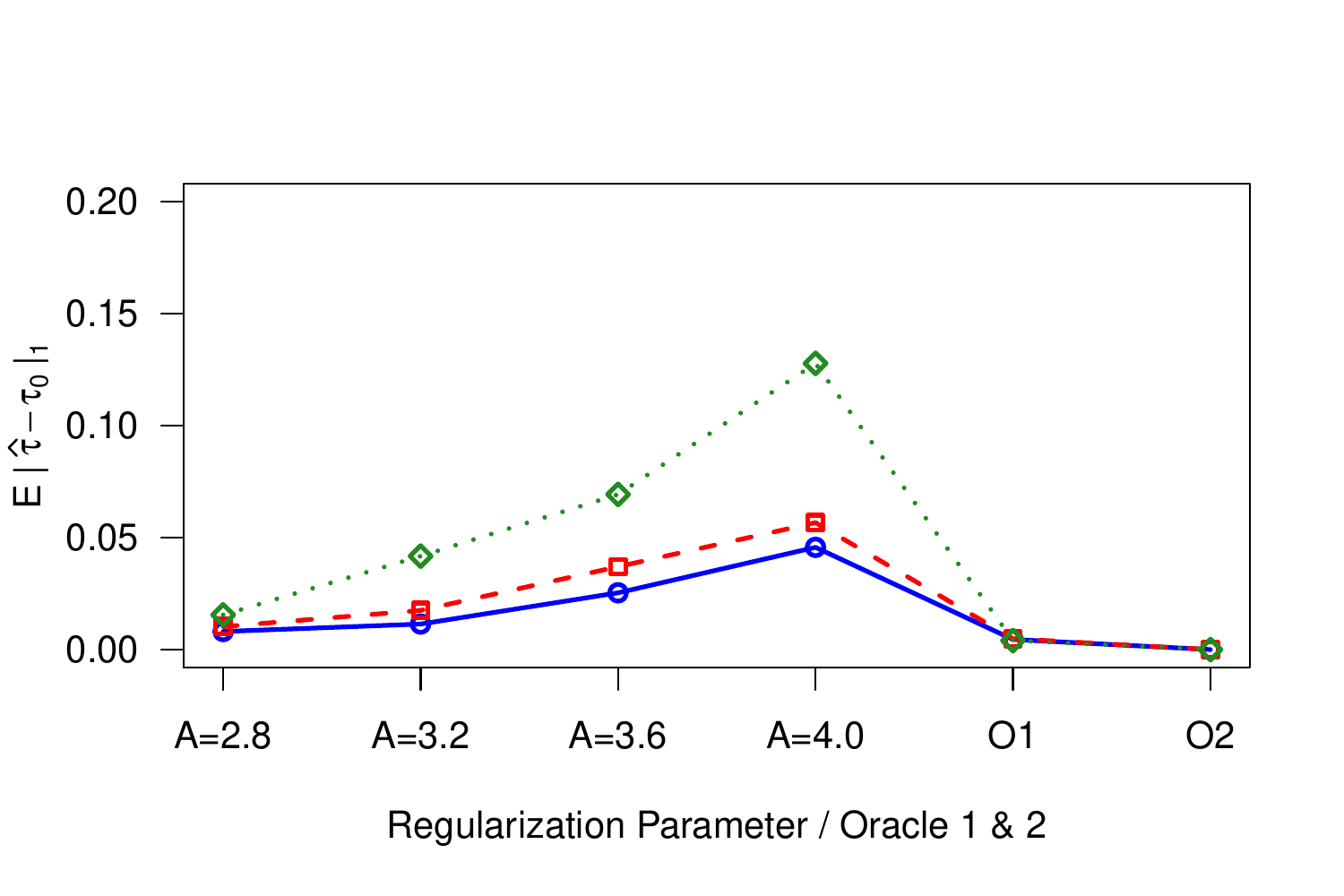} \\[0pt]
$M=400$\\[0pt]
\end{center}
\par
\label{fg:indepX2}
\end{figure}

\clearpage

\renewcommand\thepage{A-\arabic{page}} \setcounter{page}{1}

\appendix

\section*{Appendices}
We first define some notation used in the appendices. Let $a \vee b \equiv \max\{a,b\}$ and $a \wedge b \equiv \min\{a,b\}$ for any real numbers $a$ and $b$. 
For two (positive semi-definite) matrices $\mathbf{V}_1$ and $\mathbf{V}_2$, define the supremum distance $d_\infty(\mathbf{V}_1, \mathbf{V}_2) :=\max_{j,k}|(\mathbf{V}_1)_{j,k} -  (\mathbf{V}_2)_{j,k}|$.
Let $\widehat{\mathbf{D}}=%
\mathbf{D}(\widehat{\tau })$ and $\mathbf{D=D}\left( \tau _{0}\right) ,$ and similarly, let $\widehat{S}_{n}=S_{n}(\widehat{\alpha },\widehat{\tau })$ and $S_{n}=S_{n}\left( \alpha _{0},\tau _{0}\right)$.
Recall that $\mathbf{X}(\tau )$ denotes the $(n\times 2M)$ matrix whose $i$-th row is $\mathbf{X}_{i}(\tau )^{\prime }=(X_{i}^{\prime },X_{i}^{\prime }1\{Q_{i}<\tau\})^{\prime }$.  Define $X_{\max }
:=\max_{\tau, j} \big\{ \left\Vert \mathbf{X}^{(j)}(\tau)\right\Vert _{n},\ \ j=1,...,2M, \tau \in \mathbb{T} \big\}$ and $X_{\min }:=\min_{j} \big\{ \left\Vert \mathbf{X}^{(j)}(t_0)\right\Vert _{n},\ \ j=1,...,2M \big\}$ where $t_0$ is from $\mathbb{T}\equiv[t_0,t_1]$. Also,$\ $let $\alpha _{\max} $ denote the maximum value that all the elements of $\alpha $ can take in absolute value.

\section{Sufficient Conditions for the Uniform Restricted Eigenvalue Assumption}\label{sec:suff-ure}

In this section of the appendix, 
we provide two sets of sufficient conditions for Assumption \ref{re-assump}. 

\subsection{The First Sufficient Condition}

The first approach is based on modifications of Assumption 2 of \citet{Bickel-et-al:09}.
We first write
$\mathbf{X}\left( \tau \right) = (\mathbf{\tilde{X}}, \mathbf{\tilde{X}}\left( \tau \right))$
where  $\mathbf{\tilde{X}}$ is the $\left( n\times M\right) $
matrix whose $i$-th row is $X_{i}^{\prime }$,
and
$\mathbf{\tilde{X}}\left( \tau \right)$
is
the $\left( n\times M\right) $
matrix whose $i$-th row is $X_{i}^{\prime }1\{Q_{i}<\tau \}$, respectively.
Define the following Gram matrices:
\begin{align*}
\Psi_n(\tau) &:= n^{-1} \mathbf{X}\left( \tau\right) ^{\prime }\mathbf{X}\left( \tau \right), \\
\Psi_{n,+}(\tau) &:= n^{-1} \mathbf{\tilde{X}}\left( \tau \right) ^{\prime }\mathbf{\tilde{X}}\left( \tau \right), \\
\Psi_{n,-}(\tau) &:=  n^{-1} \left[\mathbf{\tilde{X}} - \mathbf{\tilde{X}}\left( \tau \right)\right]^{\prime }\left[\mathbf{\tilde{X}} - \mathbf{\tilde{X}}\left( \tau \right)\right],
\end{align*}
and define the following restricted eigenvalues:
\begin{align*}
\phi _{\min }\left( u,\tau \right) &:= \min_{x\in \mathbb{R}^{2M}:1\leq
\mathcal{M}\left( x\right) \leq u}\frac{x^{\prime } \Psi_n(\tau) x}{x^{\prime }x},\ 
\phi _{\max}\left( u,\tau \right) :=\max_{x\in \mathbb{R}^{2M}:1\leq
\mathcal{M}\left( x\right) \leq u}\frac{x^{\prime } \Psi_n(\tau) x}{x^{\prime }x},\\
\phi _{\min,+ } \left( u,\tau \right)  &:=\min_{x\in \mathbb{R}^{M}:1\leq
\mathcal{M}\left( x\right) \leq u}\frac{x^{\prime }\Psi_{n,+}(\tau) x}{x^{\prime }x},
\phi _{\max,+ } \left( u,\tau \right)  :=\max_{x\in \mathbb{R}^{M}:1\leq
\mathcal{M}\left( x\right) \leq u}\frac{x^{\prime }\Psi_{n,+}(\tau) x}{x^{\prime }x}
\end{align*}%
and $\phi _{\min,- } \left( u,\tau \right)$ and $\phi _{\max,- } \left( u,\tau \right)$ are defined analogously with
$\Psi_{n,-}(\tau)$.
Let%
\begin{align*}
\kappa _{2}\left( s,m,c_{0},\tau \right) &:=\sqrt{\phi _{\min }\left( s+m,\tau
\right) }\left( 1-c_{0}\sqrt{\frac{s\phi _{\max }\left( m,\tau \right) }{%
m\phi _{\min }\left( s+m,\tau \right) }}\right), \\
\psi &:= \min_{\tau \in \mathbb{S}} \frac{ \phi _{\max,-}\left( 2m,\tau
\right) \wedge \phi _{\max,+ }\left( 2m,\tau \right) }{ \phi
_{\max,- } \left( 2m,\tau \right) \vee \phi _{\max,+ }\left( 2m,\tau
\right) }.
\end{align*}

\begin{lem}\label{lm-suffURE}
Assume that the following holds uniformly in $\tau \in \mathbb{S}$:
\begin{align}\label{brt-assumption2}
\begin{split}
m\phi _{\min,+} \left( 2s+2m,\tau \right)  &> c_1^{2} s\phi _{\max,+} \left( 2m,\tau \right),  \\
m\phi _{\min,-} \left( 2s+2m,\tau \right)  &> c_1^{2} s\phi _{\max,-} \left( 2m,\tau \right)
\end{split}
\end{align}%
for some integers $s,m$ such that $1 \leq s \leq M/4$, $m\geq s$ and $2s+2m \leq M
$ and a constant $c_{1} > 0$.
Also, assume that $\psi >0.$
Then, Assumption \ref{re-assump}
is satisfied with $c_0 = c_1 \sqrt{\psi/(1+\psi)}$ and  $\kappa \left( s,c_{0},\mathbb{S}\right) =\min_{\tau \in \mathbb{S}}\kappa _{2}\left(
s,m,c_{0},\tau \right).$
\end{lem}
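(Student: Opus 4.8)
The plan is to reduce the uniform restricted eigenvalue condition for the full $2M$-dimensional design $\mathbf{X}(\tau)$ to the block conditions \eqref{brt-assumption2} on the two regime-specific Gram matrices $\Psi_{n,+}(\tau)$ and $\Psi_{n,-}(\tau)$, and then to invoke the standard restricted-eigenvalue chaining argument of \citet{Bickel-et-al:09}. The starting point is a structural identity. Since the indicators $1\{Q_i<\tau\}$ and $1\{Q_i\geq\tau\}$ partition the sample, the columns of $\mathbf{\tilde{X}}(\tau)$ and of $\mathbf{\tilde{X}}-\mathbf{\tilde{X}}(\tau)$ are supported on disjoint rows, so these two matrices are mutually orthogonal. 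Writing $\gamma=(\beta',\delta')'$ and using $\mathbf{\tilde{X}}=\mathbf{\tilde{X}}(\tau)+[\mathbf{\tilde{X}}-\mathbf{\tilde{X}}(\tau)]$, I would first establish the exact decomposition
\begin{equation*}
|\mathbf{X}(\tau)\gamma|_2^2 = |\mathbf{\tilde{X}}(\tau)(\beta+\delta)|_2^2 + |[\mathbf{\tilde{X}}-\mathbf{\tilde{X}}(\tau)]\beta|_2^2 = n(\beta+\delta)'\Psi_{n,+}(\tau)(\beta+\delta) + n\beta'\Psi_{n,-}(\tau)\beta,
\end{equation*}
which block-diagonalizes the quadratic form under the change of variables $(\beta,\delta)\mapsto(\beta,\beta+\delta)$.

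Next I would use this identity to bound the full restricted eigenvalues $\phi_{\min}(\cdot,\tau)$ from below and $\phi_{\max}(\cdot,\tau)$ from above in terms of the block quantities $\phi_{\min,\pm}$ and $\phi_{\max,\pm}$. For any vector with $\mathcal{M}\leq u$, both $\beta$ and $\beta+\delta$ have at most $2u$ nonzero coordinates once $\mathcal{M}(\beta)$ and $\mathcal{M}(\delta)$ are each bounded by $u$; this is exactly what forces the doubled sparsity indices $2s+2m$ and $2m$ in \eqref{brt-assumption2}. Combined with $|\beta+\delta|_2^2\leq 2(|\beta|_2^2+|\delta|_2^2)$ and $|\beta|_2^2\leq|\gamma|_2^2$, the decomposition yields a lower bound on $\phi_{\min}(s+m,\tau)$ and an upper bound on $\phi_{\max}(m,\tau)$ in terms of $\phi_{\min,\pm}(2s+2m,\tau)$ and $\phi_{\max,\pm}(2m,\tau)$. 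The balance factor $\psi$ enters here: because the two regimes contribute $\Psi_{n,+}$ and $\Psi_{n,-}$ with generally different largest eigenvalues, the optimal way to merge the two regime-level bounds into a single full-design bound introduces the ratio $\psi$ of the smaller to the larger of $\phi_{\max,+}(2m,\tau)$ and $\phi_{\max,-}(2m,\tau)$, and carrying out this optimization is what produces the admissible cone constant $c_0=c_1\sqrt{\psi/(1+\psi)}$.

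With these two bounds in hand, I would run the argument of \citet{Bickel-et-al:09} for each fixed $\tau$: given $\gamma$ with $|\gamma_{J_0^c}|_1\leq c_0|\gamma_{J_0}|_1$ and $|J_0|\leq s$, partition $J_0^c$ into blocks $J_1,J_2,\ldots$ of size $m$ ordered by decreasing magnitude of the entries of $\gamma$, apply the triangle inequality together with the lower bound $\sqrt{\phi_{\min}(s+m,\tau)}$ on the leading block $J_0\cup J_1$ and the upper bound $\sqrt{\phi_{\max}(m,\tau)}$ on each remaining block, and use the cone inequality $\sum_{k\geq 2}|\gamma_{J_k}|_2\leq (s/m)^{1/2}c_0|\gamma_{J_0}|_2$. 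This gives $|\mathbf{X}(\tau)\gamma|_2\geq\sqrt{n}\,\kappa_2(s,m,c_0,\tau)|\gamma_{J_0}|_2$ with $\kappa_2$ as defined. The preceding step then shows that \eqref{brt-assumption2} together with $\psi>0$ makes the bracketed factor $1-c_0\sqrt{s\phi_{\max}(m,\tau)/[m\phi_{\min}(s+m,\tau)]}$ strictly positive uniformly over $\tau\in\mathbb{S}$, so that $\kappa(s,c_0,\mathbb{S})=\min_{\tau\in\mathbb{S}}\kappa_2(s,m,c_0,\tau)>0$ and Assumption \ref{re-assump} holds.

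The main obstacle is the middle step: the precise constant bookkeeping that converts the two separate regime-level inequalities into a single positive lower bound for the full-design $\kappa_2$. The delicate points are (i) correctly tracking how the change of variables $(\beta,\delta)\mapsto(\beta,\beta+\delta)$ doubles the effective sparsity, forcing the $2s+2m$ and $2m$ indices in the hypothesis, and (ii) deriving the exact balance constant $\psi$ and the consequent cone constant $c_0=c_1\sqrt{\psi/(1+\psi)}$ from the optimal combination of the $+$ and $-$ blocks, which is what makes the two inequalities in \eqref{brt-assumption2} sufficient. The remaining chaining argument is a direct adaptation of \citet{Bickel-et-al:09}, and the uniformity over $\tau\in\mathbb{S}$ is handled trivially by taking the minimum of $\kappa_2(s,m,c_0,\tau)$ over $\mathbb{S}$.
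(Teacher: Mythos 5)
Your proposal follows essentially the same route as the paper's proof: the same orthogonal decomposition $\mathbf{X}(\tau)\gamma = \mathbf{\tilde{X}}(\tau)(\beta+\delta) + [\mathbf{\tilde{X}}-\mathbf{\tilde{X}}(\tau)]\beta$ under the change of variables $(\beta,\delta)\mapsto(\beta,\beta+\delta)$ that doubles the sparsity indices to $2s+2m$ and $2m$, the same convex-combination bound yielding the balance factor $\psi/(1+\psi)$ and hence $c_0 = c_1\sqrt{\psi/(1+\psi)}$, and then the restricted-eigenvalue machinery of \citet{Bickel-et-al:09}. The only difference is presentational: the paper verifies \citet{Bickel-et-al:09}'s Assumption 2 for $\mathbf{X}(\tau)$ and then simply cites their Lemma 4.1, whereas you propose to re-derive that block-chaining argument explicitly, which is the identical argument.
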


Conditions in \eqref{brt-assumption2} are modifications of Assumption 2 of \citet{Bickel-et-al:09}.
Note that for each $\tau \in  \mathbb{S}$, data are split into two subsamples with corresponding Gram matrices $\Psi_{n,+}(\tau)$
$\Psi_{n,-}(\tau)$, respectively. Hence, conditions in \eqref{brt-assumption2} are equivalent to stating that
 Assumption 2 of \citet{Bickel-et-al:09} holds with a universal constant $c_0$ for each subsample of all possible sample splitting induced by different values of $\tau  \in  \mathbb{S}$.
 As discussed by \citet{Bickel-et-al:09}, if we take $s + m = s \log n$ and assume that $\phi _{\max,+}(\cdot,\cdot)$ and  $\phi _{\max,-}(\cdot,\cdot)$ are uniformly bounded by a constant,
conditions in Lemma \ref{lm-suffURE} are equivalent to
\begin{align*}
\min_{\tau \in \mathbb{S}} \log n \left[ \phi _{\min,+}( 2s \log n, \tau) \wedge \phi _{\min,-}( 2s \log n, \tau ) \right] > c_{URE},
\end{align*}
where  $c_{URE} > 0$ is a constant.

\begin{proof}[\textbf{Proof of Lemma \ref{lm-suffURE}}]
Define $\bar{\mathbf{X}}\left( \tau \right) :=(\mathbf{\tilde{X}}-\mathbf{\tilde{X}}\left( \tau \right), \mathbf{\tilde{X}}\left( \tau \right)).$
For any $y=\left( y_{1}^{\prime },y_{2}^{\prime }\right) ^{\prime}$
such that $y_1, y_2  \in \mathbb{R}^{M} \setminus \{ 0 \}$, let $x_{1}=y_{1}/\sqrt{y^{\prime }y},x_{2}=y_{2}/%
\sqrt{y^{\prime }y}.$ Then $x_{1}^{\prime }x_{1}+x_{2}^{\prime
}x_{2}=1$.
Furthermore, since  $[\mathbf{\tilde{X}}-\mathbf{\tilde{X}}\left( \tau \right)]'\mathbf{\tilde{X}}\left( \tau \right) = 0$, we have
\begin{equation*}
\frac{y^{\prime } n^{-1} \bar{\mathbf{X}}\left( \tau \right) ^{\prime }\bar{\mathbf{X}}\left( \tau
\right) y}{y^{\prime }y}=
\frac{x_{1}^{\prime } \Psi_{n,-}(\tau)  x_{1}}{x_{1}^{\prime }x_{1}}x_{1}^{\prime }x_{1}+\frac{%
x_{2}^{\prime }\Psi_{n,+}(\tau)  x_{2}}{x_{2}^{\prime }x_{2}}%
x_{2}^{\prime }x_{2}.
\end{equation*}
Also, note that $\mathcal{M}\left( x_{1}\right) $ and $\mathcal{M}\left( x_{2}\right) $
are smaller than or equal to $\mathcal{M}\left( y\right) .$

Since any selection of $s$ column vectors in $\mathbf{X}\left( \tau \right) $
can be represented by a linear transformation of a selection of $2s$ column
vectors of $\bar{\mathbf{X}}\left( \tau \right) $,
the minimum restricted eigenvalue of dimension $2s$ for $\bar{\mathbf{X}}\left( \tau \right)$  can be smaller than
that of dimension $s$ for $\mathbf{X}\left( \tau \right) $. Likewise,
the maximum restricted eigenvalue of dimension $2s$ for $\bar{\mathbf{X}}\left( \tau \right)$  can be larger than
that of dimension $s$ for $\mathbf{X}\left( \tau \right) $.
Thus, with $u=2s+2m$,%
\begin{eqnarray*}
m\min_{y\in \mathbb{R}^{2M}:1\leq \mathcal{M}\left( y\right) \leq s+m}\frac{%
y^{\prime } n^{-1} \mathbf{X}\left( \tau \right) ^{\prime }\mathbf{X}\left( \tau
\right) y}{y^{\prime }y} &\geq &m\min_{y\in \mathbb{R}^{2M}:1\leq \mathcal{M}%
\left( y\right) \leq u}\frac{y^{\prime } n^{-1} \bar{\mathbf{X}}\left( \tau \right) ^{\prime }%
\bar{\mathbf{X}}\left( \tau \right) y}{y^{\prime }y} \\
&\geq &m\left( \phi _{\min,-}\left( u,\tau \right) \wedge \phi _{\min,+}\left( u,\tau \right) \right)  \\
&>&c_{1}^{2}s\left( \phi _{\max,- } \left( 2m,\tau \right) \wedge \phi
_{\max,+ }\left( 2m,\tau \right) \right)  \\
&>&c_{1}^{2}s\frac{\psi }{1+\psi }\max_{y\in \mathbb{R}^{2M}:1\leq \mathcal{M%
}\left( y\right) \leq 2m}\frac{y^{\prime } n^{-1} \bar{\mathbf{X}}\left( \tau \right)
^{\prime }\bar{\mathbf{X}}\left( \tau \right) y}{y^{\prime }y} \\
&\geq &c_{1}^{2}s\frac{\psi }{1+\psi }\max_{y\in \mathbb{R}^{2M}:1\leq
\mathcal{M}\left( y\right) \leq m}\frac{y^{\prime } n^{-1} \mathbf{X}\left( \tau
\right) ^{\prime }\mathbf{X}\left( \tau \right) y}{y^{\prime }y}.
\end{eqnarray*}%
This implies that \citet{Bickel-et-al:09}'s Assumption 2 hold for $\mathbf{X}%
\left( \tau \right) $ with $c_{0}=c_{1}\sqrt{\psi /\left( 1+\psi \right) }.$
Then, it follows from their Lemma 4.1 that Assumption \ref{re-assump} is
satisfied with $\kappa \left( s,c_{0}\right) =\min_{\tau \in \mathbb{S}%
}\kappa _{2}\left( s,m,c_{0},\tau \right) .$
\end{proof}

\subsection{The Second Sufficient Condition}\label{subsec:suff:URE}

The second approach is in the spirit of Section 10.1 of  \citet{vdGeer:Buhlmann:09}.
In this subsection, we provide primitive sufficient conditions for our simulation designs in Section \ref{sec:MC} and Appendix \ref{sec:mc:extra}. 
In our simulation designs,
$X_i$ is independent and identically distributed (i.i.d.) as $N(0,\Sigma)$.
The independent design case is with $\Sigma = I_M$ and the dependent case is  with $(\Sigma)_{i,j} = \rho^{|i-j|}$, where $(\Sigma)_{i,j}$ denotes the (i,j) element of the $M \times M$ covariance matrix $\Sigma$. 
Also,  $Q_i$ is independent of $X_i$ and i.i.d. from $\text{Unif}(0,1)$. 

Define
$\hat{\mathbf{V}}(\tau) := \mathbf{X}(\tau)'\mathbf{X}(\tau)/n$
and $\mathbf{V}(\tau) := \mathbb{E}[\mathbf{X}_i(\tau)  \mathbf{X}_i(\tau)']$.
In our simulation designs,
$\mathbf{V}(\tau) = \Omega \otimes \Sigma$, where 
\begin{align*}
\Omega &\equiv 
\left(
\begin{array}{cc}
   1 &  \tau    \\
 \tau &  \tau     \\ 
\end{array}
\right),
\end{align*}
since $Q_i$ and $X_i$ are independent of each other and $\mathbb{P}[Q_i <\tau] = \tau$.

For a positive semi-definite, $2M \times 2M$ matrix $\mathbf{V}$, define
\begin{equation*}
\kappa (\mathbf{V}; s,c_{0},\mathbb{S}):=\min_{\tau \in \mathbb{S}}\min_{\substack{ J_{0}\subseteq
\{1,\ldots ,2M\},  \\ |J_{0}|\leq s}}\min_{\substack{ \gamma \neq 0,  \\ %
\left\vert \gamma _{J_{0}^{c}}\right\vert _{1}\leq c_{0}\left\vert \gamma
_{J_{0}}\right\vert _{1}}}\frac{(\gamma' \mathbf{V} \gamma)^{1/2} }{
|\gamma _{J_{0}}|_{2}}.
\end{equation*}
As in \citet{vdGeer:Buhlmann:09}, 
define the supremum distance:
\begin{align*}
d_\infty(\mathbf{V}_1, \mathbf{V}_2) :=
\max_{j,k}|(\mathbf{V}_1)_{j,k} -  (\mathbf{V}_2)_{j,k}|
\end{align*}
for two (positive semi-definite) matrices $\mathbf{V}_1$ and $\mathbf{V}_2$.


Note that for any $2M$-dimensional, nonzero vector $\gamma$ such that
$|\gamma_{J_0^c}|_1 \leq c_0  |\gamma_{J_0}|_1$, we have that 
\begin{align*}
|\gamma' (\hat{\mathbf{V}}(\tau) - \mathbf{V}(\tau)) \gamma| 
&\leq \sup_{\tau \in \mathbb{T}} d_\infty(\hat{\mathbf{V}}(\tau), \mathbf{V}(\tau))  |\gamma|_1^2  \\
&\leq \sup_{\tau \in \mathbb{T}} d_\infty(\hat{\mathbf{V}}(\tau), \mathbf{V}(\tau))  (1+c_0)^2|\gamma_{J_0}|_1^2 \\
&\leq \sup_{\tau \in \mathbb{T}} d_\infty(\hat{\mathbf{V}}(\tau), \mathbf{V}(\tau))  (1+c_0)^2 s|\gamma_{J_0}|_2^2, 
\end{align*}
which implies that
\begin{align*}
\kappa (\hat{\mathbf{V}}(\tau); s,c_{0},\mathbb{S}) 
\geq \kappa (\mathbf{V}(\tau); s,c_{0},\mathbb{S})
- (1+c_0) \sqrt{s \times \sup_{\tau \in \mathbb{T}} d_\infty(\hat{\mathbf{V}}(\tau), \mathbf{V}(\tau)) }.
\end{align*}
Hence, our simulation design satisfies
Assumption \ref{re-assump} with probability approaching one as $n \rightarrow \infty$ if we establish the following two steps:
\begin{enumerate}
\item[\textbf{Step 1}.] $\kappa (\mathbf{V}(\tau); s,c_{0},\mathbb{S}) > 0$ (a population version of the URE condition),

\item[\textbf{Step 2}.] $s \times \sup_{\tau \in \mathbb{T}} d_\infty(\hat{\mathbf{V}}(\tau), \mathbf{V}(\tau)) \rightarrow_p 0$  as $n \rightarrow \infty$ (uniform convergence in probability of the sample covariance matrix with a rate faster than $s^{-1}$). 
\end{enumerate}

\begin{proof}[Proof of Step 1]
All the eigenvalues of the Kronecker product of $\Omega$ and $\Sigma$
can be written as the product between eigenvalues of $\Omega$ and those of $\Sigma$.
First, note that as long as $\tau$ belongs to a strict compact interval between 0 and 1, as in our simulations ($\mathbb{T} \in [0.15, 0.85]$), we have strictly positive eigenvalues for  $\Omega$.
Second, if $\Sigma$ is an identity matrix, then all eigenvalues are 1's; 
if $\Sigma$ is a Toeplitz matrix such that $(\Sigma)_{i,j} = \rho^{|i-j|}$, then 
the smallest eigenvalue of $\Sigma$ is $1-\rho$, independent of the dimension $M$.
Hence, in both cases, the smallest eigenvalue of $\mathbf{V}(\tau)$ is bounded away from zero
uniformly in $\tau$. Thus,  it is clear that $\kappa (\mathbf{V}(\tau); s,c_{0},\mathbb{S}) > 0$ holds.
\end{proof}

To prove the second step, it is sufficient to assume that as $n \rightarrow \infty$,
we have that 
$M \rightarrow \infty$ and that
\begin{align}\label{URE-suff-mc}
s = o\left( \sqrt{\frac{n}{\log nM}} \; \right).
\end{align}

\begin{proof}[Proof of Step 2]
For any $j,k=1,\ldots,M$, define 
\begin{align*}
\tilde{V}_{j,k} &:= \frac{1}{n} \sum_{i=1}^n X_i^{(j)}X_i^{(k)}  - 
\mathbb{E}[X_i^{(j)}X_i^{(k)}],\\
 \hat{V}_{j,k}(\tau) &:= \frac{1}{n} \sum_{i=1}^n X_i^{(j)}X_i^{(k)} 1\{ Q_i < \tau \} - 
\mathbb{E}[X_i^{(j)}X_i^{(k)} 1\{ Q_i < \tau \}].
\end{align*}
Note that 
$s \times \sup_{\tau \in \mathbb{T}} d_\infty(\hat{\mathbf{V}}(\tau), \mathbf{V}(\tau))$
is bounded by the maximum between 
$s \times \max_{j,k}|\tilde{V}_{j,k}|$ and  
$s \times \max_{j,k}\sup_{\tau \in \mathbb{T}}
|\hat{V}_{j,k}(\tau)|$. The former is already shown to be $o_p(1)$
by \citet[][Section 10.1]{vdGeer:Buhlmann:09} under the restriction 
$s = o\left[ (n/\log M)^{1/2} \right]$.
Thus, it suffices to show that
\begin{align}\label{need-to-prove}
s \times \max_{j,k}\sup_{\tau \in \mathbb{T}}
|\hat{V}_{j,k}(\tau)| = o_p(1).
\end{align}

Since  $X_i$ is i.i.d. as $N(0,\Sigma)$,
$Q_i$ i.i.d. as $\text{Unif}(0,1)$, and 
 $Q_i$ is independent of $X_i$ in our simulation designs, 
there exists a universal constant $C < \infty$ such that 
\begin{align*}
\max_{j,k} \sup_{\tau \in \mathbb{T}} \mathbb{E} \left\{ X_i^{(j)}X_i^{(k)} 1\{ Q_i < \tau \} - 
\mathbb{E}[X_i^{(j)}X_i^{(k)} 1\{ Q_i < \tau \}] \right\}^2 < C.
\end{align*}
Suppose that $\hat{V}_{j,k}'(\tau)$ denotes an independent copy of $\hat{V}_{j,k}(\tau)$.
Note that for each $(j,k)$ pair and every $\tau \in \mathbb{T}$, by Chebyshev's inequality, we have 
\begin{align*}
\mathbb{P} \left[ |\hat{V}_{j,k}'(\tau)| \geq  \frac{\varepsilon}{2} \right] \leq 
\frac{4C}{n \varepsilon^2}
\end{align*}
for every $\varepsilon > 0$.
Let $\epsilon_1,\ldots,,\epsilon_n$ be a Rademacher sequence independent of the original data.
For each $(j,k)$ pair, 
\begin{align*}
\mathbb{P} \left[ \sup_{\tau \in \mathbb{T}} |\hat{V}_{j,k}(\tau)| \geq \varepsilon \right] 
&\leq \left[ 1-  \frac{4C}{n \varepsilon^2} \right]^{-1} P \left[ \sup_{\tau \in \mathbb{T}} |\hat{V}_{j,k}(\tau) - \hat{V}_{j,k}'(\tau)| \geq \frac{\varepsilon}{2} \right]  \\
&\leq 2\left[ 1-  \frac{4C}{n \varepsilon^2} \right]^{-1} \mathbb{P} \left[ \sup_{\tau \in \mathbb{T}} \left| 
\frac{1}{n} \sum_{i=1}^n \epsilon_i X_i^{(j)}X_i^{(k)} 1\{ Q_i < \tau \}
\right|\geq \frac{\varepsilon}{4} \right],
\end{align*}
where the first inequality follows from Pollard's first symmetrization lemma 
\citep[][Lemma II.3.8, page 14]{Pollard:84}, 
and
the second inequality comes from Pollard's second symmetrization 
\citep[][page 15]{Pollard:84}.  

Note that
 for all $j,k = 1,\ldots,M$,
$\mathbb{E} \epsilon_i X_i^{(j)}X_i^{(k)} 1\{ Q_i < \tau \} = 0$
and that 
there exists  some universal constant $K < \infty$ such that 
\begin{align*}
\mathbb{E} \left[ \left| \epsilon_i X_i^{(j)}X_i^{(k)} 1\{ Q_i < \tau \} \right|^m \right] \leq \frac{m!}{2} K^{m-2}
\end{align*}
for all $m \geq 2$, for all $\tau \in \mathbb{T}$, and  for all $j,k = 1,\ldots,M$.
Then by Bernstein's inequality
\citep[see e.g. Lemma 14.9. of][]{BvdG:11}, we have, for each $(j,k)$ pair and each
$\tau \in \mathbb{T}$,
\begin{align*}
\mathbb{P} \left[ 
\frac{1}{n} \sum_{i=1}^n   \epsilon_i X_i^{(j)}X_i^{(k)} 1\{ Q_i < \tau \}
\geq K t + \sqrt{2t} \right]
\leq \exp( - nt)
\end{align*}
for any $t > 0$.
Since $K$ is a universal constant over $(j,k)$ pairs and 
over $\tau \in \mathbb{T}$ as well, an application of Boole's inequality yields that 
\begin{align*}
\mathbb{P} \left[ 
\max_{j,k} \sup_{\tau \in \mathbb{T}} \frac{1}{n} \sum_{i=1}^n   \epsilon_i X_i^{(j)}X_i^{(k)} 1\{ Q_i < \tau \} \geq K t + \sqrt{2t} \right]
\leq n M^2 \exp( - nt)
\end{align*}
for any $t > 0$.
Combining all the results above yields
\begin{align*}
\mathbb{P} \left[ \max_{j,k} \sup_{\tau \in \mathbb{T}} |\hat{V}_{j,k}(\tau)| \geq 4(K t + \sqrt{2t}) \right] 
&\leq 4 \left[ 1-  \frac{4C}{n (K t + \sqrt{2t})^2} \right]^{-1} n M^2 \exp( - nt)
 \end{align*}
for any $t > 0$.
Then, under the restriction \eqref{URE-suff-mc},
we obtain the desired result by taking 
$t = \tilde{C} \times {\log n M}/{n}$ for a sufficiently large 
$\tilde{C} > 0$. 
\end{proof}

\section{Discussions on Assumption \ref{A-discontinuity}}\label{rem-assump-discontinuity}

We provide further discussions on  Assumption \ref{A-discontinuity}.
Assumption \ref{A-discontinuity} is stronger than just the identifiability of $\tau_0$ as it specifies the rate of
deviation in $f$ as $\tau $ moves away from $\tau _{0}.$ The linear rate
here is sharper than the quadratic one that is usually observed in 
regular M-estimation problems, and it reflects the fact that the limit
criterion function, in the classical setup with a fixed number of stochastic
regressors, has a kink at the true $\tau _{0}.$

For instance,
suppose that $\{(Y_{i},X_{i},Q_{i}):i=1,\ldots ,n\}$ are independent and identically distributed,
and  consider the case where only the intercept is included in $X_i$. Assuming that $Q_i$ has a
density function that is continuous and positive everywhere (so that $%
\mathbb{P}\left( \tau \leq Q_i<\tau _{0}\right) $ and $\mathbb{P}\left( \tau _{0}\leq Q_i<\tau
\right) $ can be bounded below by $c_{1}\left\vert \tau -\tau
_{0}\right\vert $ for some $c_{1}>0$), we have that
\begin{eqnarray*}
&&\mathbb{E}\left( Y_{i}-f_{i}\left( \alpha ,\tau \right) \right) ^{2}-%
\mathbb{E}\left( Y_{i}-f_{i}\left( \alpha _{0},\tau _{0}\right) \right) ^{2}
\\
&=&\mathbb{E}\left( f_{i}\left( \alpha _{0},\tau _{0}\right) -f_{i}\left(
\alpha ,\tau \right) \right) ^{2} \\
&=&\left( \alpha _{1}-\alpha _{10}\right) ^{2}\mathbb{P}\left( Q_i<\tau \wedge \tau
_{0}\right) +\left( \alpha _{2}-\alpha _{20}\right) ^{2}\mathbb{P}\left( Q_i\geq \tau
\vee \tau _{0}\right) \\
&&+\left( \alpha _{2}-\alpha _{10}\right) ^{2}\mathbb{P}\left( \tau \leq Q_i<\tau
_{0}\right) +\left( \alpha _{1}-\alpha _{20}\right) ^{2}\mathbb{P}\left( \tau
_{0}\leq Q_i<\tau \right) \\
&\geq &c\left\vert \tau -\tau _{0}\right\vert ,
\end{eqnarray*}%
for some $c>0,$ where
$f_{i}\left( \alpha ,\tau \right) = X_i'\beta + X_i'\delta 1\{Q_i < \tau\}$,
$\alpha _{1}=\beta +\delta \ $and $\alpha _{2}=\beta ,$
unless $\left\vert \alpha _{2}-\alpha _{10}\right\vert $ is too small when $%
\tau <\tau _{0}$ and $\left\vert \alpha _{1}-\alpha _{20}\right\vert $ is
too small when $\tau >\tau _{0}.$ However, when $\left\vert \alpha
_{2}-\alpha _{10}\right\vert $ is small, say smaller than $\varepsilon ,$ $%
\left\vert \alpha _{2}-\alpha _{20}\right\vert $ is bounded above zero due
to the discontinuity that $\alpha _{10}\neq \alpha _{20}$ and $\mathbb{P}\left( Q_i\geq
\tau \vee \tau _{0}\right) =\mathbb{P}\left( Q_i\geq \tau _{0}\right) $ is also bounded
above zero. This implies the inequality still holds. Since the same
reasoning applies for the latter case, we can conclude our discontinuity
assumption holds in the standard discontinuous threshold regression setup.
In other words, the previous literature has typically imposed conditions
sufficient enough to render this condition.

\subsection{Verification of Assumption \ref{A-discontinuity} for the Simulation Design of Section \ref{sec:MC}}

In this subsection, we may provide more 
primitive discussions for our simulation design in Section \ref{sec:MC},
where $X_{i}\sim N\left( 0,I_{M}\right) $ and $Q_{i}\sim \text{Unif}(0,1)$
independent of $X_{i}$ and $U_{i}\sim N\left( 0,\sigma ^{2}\right) $
independent of $\left( X_{i},Q_{i}\right) .$ 
For simplicity, suppose that $\beta _{0}=0$ and 
$\delta _{0}=\left( c_{0},0,...,0\right) ^{\prime }$ for $c_{0}\neq 0$ and $%
\tau _{0}=0.5.$ 
Recall that $\mathbb{T} = [0.15,0.85]$ in our simulation design.
As our theoretical framework is deterministic design, we may
check if Assumption \ref{A-discontinuity} is satisfied with probability
approaching one as $n\rightarrow \infty .$

We only consider the  case of $\tau < \tau _{0}$ explicitly below.
The other case is similar. 
Note that when $\tau < \tau _{0}$, 
\begin{align*}
\left\Vert f_{\alpha ,\tau }-f_{0}\right\Vert _{n}^{2}& =\frac{1}{n}%
\sum_{i=1}^{n}\left( X_{i}^{\prime }1\left\{ Q_{i} < \tau \right\}
\left( \beta + \delta -
\beta_0 - \delta _{0}\right)
\right) ^{2} \\
&+\frac{1}{n}\sum_{i=1}^{n}\left(
X_{i}^{\prime }1\left\{ Q_{i} \geq \tau_0 \right\} \left( \beta -\beta _{0}\right) \right) ^{2} \\
& +\frac{1}{n}\sum_{i=1}^{n}\left( X_{i}^{\prime }1\left\{ \tau
 \leq Q_{i} < \tau_0 \right\} \left( \beta   -\beta _{0} -\delta_0 \right)
\right) ^{2}.
\end{align*}%
 Then, under our specification of the data generating process,
\begin{align*}
\lefteqn{\mathbb{E}\left\Vert f_{\alpha ,\tau }-f_{0}\right\Vert _{n}^{2}} \\
&=  \tau \left( \beta + \delta -
\beta_0 - \delta _{0}\right)'\left( \beta + \delta -
\beta_0 - \delta _{0}\right) 
+ 
(1-\tau_0) \left( \beta  -
\beta_0 \right)' \left( \beta  -
\beta_0 \right)  \\
&+(\tau_0 - \tau) \left( \beta  -
\beta_0 - \delta_0 \right)'\left( \beta  -
\beta_0 - \delta_0 \right) \\
&\geq (1-\tau_0) \left( \beta  -
\beta_0 \right)' \left( \beta  -
\beta_0 \right) +(\tau_0 - \tau) \left( \beta  -
\beta_0 - \delta_0 \right)'\left( \beta  -
\beta_0 - \delta_0 \right) \\
&= (1-\tau_0)  \beta '  \beta   +(\tau_0 - \tau) \left( \beta   - \delta_0 \right)'\left( \beta  -
 \delta_0 \right) \\
&=  c_{0}^{2}\left( \tau_0 -\tau \right) 
+\beta ^{\prime }\beta \left( 1 -\tau \right) -2\beta' \delta
_{0}\left( \tau_0 -\tau \right).
\end{align*}%
If $\beta ^{\prime }\beta \left( 1-\tau \right) -2\beta' \delta
_{0}\left( \tau_0 -\tau \right) \geq 0$, Assumption \ref{A-discontinuity} is satisfied with probability
approaching one as $n\rightarrow \infty$. 

Suppose not.
Then, we must have that 
$\beta ^{\prime }\beta \left( 1-\tau \right)  < 2\beta' \delta
_{0}\left( \tau_0 -\tau \right) = 2 \beta_1 c_0 \left( \tau_0 -\tau \right)$
for some nonzero $\beta_1$, which is the first element of $\beta$.
Hence, 
\begin{align*}
\beta ^{\prime }\beta   < 2 |\beta_1| |c_0| \frac{\tau_0 -\tau}{ 1-\tau}
\end{align*}
Now note that
\begin{equation*}
\left\vert \beta _{1}\right\vert \leq \frac{\beta ^{\prime }\beta }{%
\left\vert \beta _{1}\right\vert }\leq 2 |c_0| \frac{\tau_0 -\tau}{ 1-\tau}
\leq  |c_0| \frac{0.7}{ 0.85} \;
\text{ for any $\tau \in \mathbb{T} = [0.15,0.85]$},
\end{equation*}%
which implies that 
\begin{align*}
\mathbb{E} \left\Vert f_{\alpha ,\tau }-f_{0}\right\Vert _{n}^{2}
&\geq \left(
c_{0}-\beta _{1}\right) ^{2}\left( \tau_0 -\tau \right) +
\beta ^{\prime}\beta (1-\tau) - \beta_1^2 \left( \tau_0 -\tau \right) \\
&\geq \left( \frac{0.15}{0.85}\right) ^{2}c_{0}^{2}\left( \tau
-\tau _{0}\right),
\end{align*}%
where the last inequality follows from the simple fact that $\beta ^{\prime}\beta (1-\tau) \geq \beta_1^2 \left( \tau_0 -\tau \right)$.

\section{Proofs for Section \ref{sec:consist}}\label{sec:proofs:consist}

In this section of the appendix, we prove the prediction consistency of our Lasso estimator. Let
\begin{align*}
V_{1j}& :=\left( n\sigma \left\Vert X^{(j)}\right\Vert _{n}\right)
^{-1}\sum_{i=1}^{n}U_{i}X_{i}^{(j)},  \\
V_{2j}(\tau )& :=\left( n\sigma \left\Vert X^{(j)}(\tau )\right\Vert
_{n}\right) ^{-1}\sum_{i=1}^{n}U_{i}X_{i}^{(j)}1\{Q_{i}<\tau \}.
\end{align*}%
For a constant $\mu \in (0,1)$, define the events
\begin{align*}
\mathbb{A}& :=\bigcap_{j=1}^{M}\left\{ 2|V_{1j}|\leq \mu \lambda /\sigma
\right\} , \\
\mathbb{B}& :=\bigcap_{j=1}^{M}\left\{ 2\sup_{\tau \in \mathbb{T}%
}|V_{2j}(\tau )|\leq \mu \lambda /\sigma \right\} ,
\end{align*}%
Also define $J_{0}:=J(\alpha _{0})$ and $%
R_{n}:=R_{n}(\alpha _{0},\tau _{0}),$ where%
\begin{equation*}
R_{n}(\alpha ,\tau ):=2n^{-1}\sum_{i=1}^{n}U_{i}X_{i}^{\prime }\delta
\left\{ 1(Q_{i}<\widehat{\tau })-1(Q_{i}<\tau )\right\} .
\end{equation*}

The following lemma gives some useful inequalities  that provide a basis for all our  theoretical results.

\begin{lem}[Basic Inequalities]
\label{lemma1} Conditional on the events $\mathbb{A}$ and $\mathbb{B}$, we
have
\begin{align}
\left\Vert \widehat{f}-f_{0}\right\Vert _{n}^{2}+\left( 1-\mu \right)
\lambda \left\vert \widehat{\mathbf{D}}(\widehat{\alpha }-\alpha
_{0})\right\vert _{1}& \leq 2\lambda \left\vert \widehat{\mathbf{D}}(%
\widehat{\alpha }-\alpha _{0})_{J_{0}}\right\vert _{1}  \label{imp-ineq-lem}
\\
& +\lambda \left\vert \left\vert \widehat{\mathbf{D}}{\alpha _{0}}%
\right\vert _{1}-\left\vert \mathbf{D}{\alpha _{0}}\right\vert
_{1}\right\vert +R_{n}  \notag
\end{align}%
and%
\begin{equation}
\left\Vert \widehat{f}-f_{0}\right\Vert _{n}^{2}+\left( 1-\mu \right)
\lambda \left\vert \widehat{\mathbf{D}}(\widehat{\alpha }-\alpha
_{0})\right\vert _{1}\leq 2\lambda \left\vert \widehat{\mathbf{D}}(\widehat{%
\alpha }-\alpha _{0})_{J_{0}}\right\vert _{1}+\left\Vert f_{(\alpha _{0},%
\widehat{\tau })}-f_{0}\right\Vert _{n}^{2}.  \label{imp-ineq-2-lem}
\end{equation}
\end{lem}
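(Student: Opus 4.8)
The plan is to derive both inequalities from the defining minimization property of the joint Lasso estimator \eqref{joint-max}, expanding the residual sum of squares into a deterministic part plus a stochastic cross term, and then to control that cross term on $\mathbb{A}\cap\mathbb{B}$. The two displays will come from two different comparison points: \eqref{imp-ineq-lem} from comparing $(\widehat\alpha,\widehat\tau)$ against the truth $(\alpha_0,\tau_0)$, and \eqref{imp-ineq-2-lem} from comparing it against $(\alpha_0,\widehat\tau)$, which is legitimate because $\widehat\alpha=\widehat\alpha(\widehat\tau)$ minimizes the fixed-$\tau$ objective \eqref{Lasso-fixed-tau} at $\tau=\widehat\tau$.

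First I would record the common stochastic bound. Writing the vector $\widehat f-f_0$ with $i$-th entry $\mathbf{X}_i(\widehat\tau)'(\widehat\alpha-\alpha_0)+X_i'\delta_0\{1(Q_i<\widehat\tau)-1(Q_i<\tau_0)\}$, the inner product splits as $2n^{-1}\sum_i U_i(\widehat f-f_0)_i = 2n^{-1}\sum_i U_i\mathbf{X}_i(\widehat\tau)'(\widehat\alpha-\alpha_0)+R_n$, where the $\delta_0$-shift part is exactly $R_n=R_n(\alpha_0,\tau_0)$. Expanding the first piece coordinatewise and using the definitions of $V_{1j}$, $V_{2j}(\tau)$ together with the diagonal entries $(\widehat{\mathbf{D}})_{jj}=\Vert\mathbf{X}^{(j)}(\widehat\tau)\Vert_n$, it equals $2\sigma\sum_{j=1}^{2M}(\widehat{\mathbf{D}})_{jj}(\widehat\alpha-\alpha_0)^{(j)}V_j$, with $V_j=V_{1j}$ for $j\le M$ and $V_j=V_{2,\,j-M}(\widehat\tau)$ for $j>M$. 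On $\mathbb{A}\cap\mathbb{B}$ we have $2|V_j|\le\mu\lambda/\sigma$ for every $j$ — here it is essential that $\mathbb{B}$ bounds the supremum over $\tau\in\mathbb{T}$, so the bound holds at the data-dependent $\widehat\tau\in\mathbb{T}$ — whence $\big|2n^{-1}\sum_i U_i\mathbf{X}_i(\widehat\tau)'(\widehat\alpha-\alpha_0)\big|\le\mu\lambda|\widehat{\mathbf{D}}(\widehat\alpha-\alpha_0)|_1$.

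For \eqref{imp-ineq-lem} I would invoke minimality at $(\alpha_0,\tau_0)$, giving $\widehat S_n+\lambda|\widehat{\mathbf{D}}\widehat\alpha|_1\le S_n+\lambda|\mathbf{D}\alpha_0|_1$. Since $\widehat S_n-S_n=-2n^{-1}\sum_i U_i(\widehat f-f_0)_i+\Vert\widehat f-f_0\Vert_n^2$, substituting the split and the cross-term bound yields $\Vert\widehat f-f_0\Vert_n^2+\lambda|\widehat{\mathbf{D}}\widehat\alpha|_1\le\mu\lambda|\widehat{\mathbf{D}}(\widehat\alpha-\alpha_0)|_1+\lambda|\mathbf{D}\alpha_0|_1+R_n$. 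I then decompose on $J_0=J(\alpha_0)$: since $\alpha_0$ vanishes on $J_0^c$, the triangle inequality gives $|\widehat{\mathbf{D}}\widehat\alpha|_1\ge|\widehat{\mathbf{D}}\alpha_0|_1-a+b$ with $a:=|\widehat{\mathbf{D}}(\widehat\alpha-\alpha_0)_{J_0}|_1$ and $b:=|\widehat{\mathbf{D}}(\widehat\alpha-\alpha_0)_{J_0^c}|_1$, and I bound $|\mathbf{D}\alpha_0|_1-|\widehat{\mathbf{D}}\alpha_0|_1\le\big|\,|\widehat{\mathbf{D}}\alpha_0|_1-|\mathbf{D}\alpha_0|_1\,\big|$. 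This produces $\Vert\widehat f-f_0\Vert_n^2+(1-\mu)\lambda b\le(1+\mu)\lambda a+\lambda\big|\,|\widehat{\mathbf{D}}\alpha_0|_1-|\mathbf{D}\alpha_0|_1\,\big|+R_n$; adding $(1-\mu)\lambda a$ to both sides and using $(1+\mu)+(1-\mu)=2$ together with $a+b=|\widehat{\mathbf{D}}(\widehat\alpha-\alpha_0)|_1$ gives \eqref{imp-ineq-lem}.

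For \eqref{imp-ineq-2-lem} I would instead compare against $(\alpha_0,\widehat\tau)$, obtaining $\widehat S_n+\lambda|\widehat{\mathbf{D}}\widehat\alpha|_1\le S_n(\alpha_0,\widehat\tau)+\lambda|\widehat{\mathbf{D}}\alpha_0|_1$, where now both penalties carry the same weight matrix $\widehat{\mathbf{D}}$. Since $S_n(\alpha_0,\widehat\tau)-S_n=-R_n+\Vert f_{(\alpha_0,\widehat\tau)}-f_0\Vert_n^2$, the two $R_n$ terms cancel, and after the same coordinatewise bound and $J_0$-decomposition (now with no $\mathbf{D}$-versus-$\widehat{\mathbf{D}}$ discrepancy) the identical ``add $(1-\mu)\lambda a$'' step yields \eqref{imp-ineq-2-lem}. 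The main obstacle is the bookkeeping of the cross term: correctly isolating that the $\delta_0$-shift part is exactly $R_n$ while the remaining part is the weighted-penalty direction $\widehat{\mathbf{D}}(\widehat\alpha-\alpha_0)$, and recognizing that the uniform-in-$\tau$ control in $\mathbb{B}$ is what makes the bound valid at the random $\widehat\tau$. The choice of comparison point is what separates the appearance of the penalty-discrepancy and $R_n$ terms in \eqref{imp-ineq-lem} from the clean approximation term $\Vert f_{(\alpha_0,\widehat\tau)}-f_0\Vert_n^2$ in \eqref{imp-ineq-2-lem}.
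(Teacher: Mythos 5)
Your proposal is correct and takes essentially the same approach as the paper: both start from the minimization property of \eqref{joint-max}, expand the residual sum of squares so that the $\delta_0$-shift part of the cross term is exactly $R_n$, bound the remaining linear part by $\mu\lambda\left\vert \widehat{\mathbf{D}}(\widehat{\alpha}-\alpha_0)\right\vert_1$ on $\mathbb{A}\cap\mathbb{B}$ (with the uniformity over $\tau\in\mathbb{T}$ in $\mathbb{B}$ covering the random $\widehat{\tau}$), and then apply the same $J_0$-decomposition at the two comparison points $(\alpha_0,\tau_0)$ and $(\alpha_0,\widehat{\tau})$. The only difference is organizational: the paper first derives a single generic inequality valid for all $(\alpha,\tau)$ and then specializes it, whereas you write out the two comparison points separately, which is mathematically identical.
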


The basic inequalities in Lemma \ref{lemma1} involve more terms than
that of the linear model \citep[e.g. Lemma 6.1 of][]{BvdG:11} because our model in \eqref{model} includes the unknown threshold parameter $\tau_0$
and the weighted Lasso is considered in \eqref{Lasso-fixed-tau}.
Also, it helps prove our main results to have different upper bounds in
\eqref{imp-ineq-lem} and \eqref{imp-ineq-2-lem}
for the same lower bound.

\begin{proof}[\textbf{Proof of Lemma \protect\ref{lemma1}}]
Note that
\begin{align}\label{joint-Lasso-def}
\widehat{S}_{n}+\lambda \left\vert \widehat{\mathbf{D}}\widehat{\alpha }%
\right\vert _{1}\leq S_{n}(\alpha ,\tau )+\lambda \left\vert \mathbf{D}(\tau
)\alpha \right\vert _{1}
\end{align}
for all $(\alpha ,\tau )\in \mathbb{R}^{2M}\times \mathbb{T}.$ Now write
\begin{align*}
\lefteqn{\widehat{S}_{n}-S_{n}(\alpha ,\tau )} \\
& =n^{-1}\left\vert \mathbf{y}-\mathbf{X}(\widehat{\tau })\widehat{\alpha }%
\right\vert _{2}^{2}-n^{-1}\left\vert \mathbf{y}-\mathbf{X}(\tau )\alpha
\right\vert _{2}^{2} \\
& =n^{-1}\sum_{i=1}^{n}\left[ U_{i}-\left\{ \mathbf{X}_{i}(\widehat{\tau }%
)^{\prime }\widehat{\alpha }-\mathbf{X}_{i}(\tau _{0})^{\prime }\alpha
_{0}\right\} \right] ^{2}-n^{-1}\sum_{i=1}^{n}\left[ U_{i}-\left\{ \mathbf{X}%
_{i}(\tau )^{\prime }\alpha -\mathbf{X}_{i}(\tau _{0})^{\prime }\alpha
_{0}\right\} \right] ^{2} \\
& =n^{-1}\sum_{i=1}^{n}\left\{ \mathbf{X}_{i}(\widehat{\tau })^{\prime }%
\widehat{\alpha }-\mathbf{X}_{i}(\tau _{0})^{\prime }\alpha _{0}\right\}
^{2}-n^{-1}\sum_{i=1}^{n}\left\{ \mathbf{X}_{i}(\tau )^{\prime }\alpha -%
\mathbf{X}_{i}(\tau _{0})^{\prime }\alpha _{0}\right\} ^{2} \\
& \;\;\;-2n^{-1}\sum_{i=1}^{n}U_{i}\left\{ \mathbf{X}_{i}(\widehat{\tau }%
)^{\prime }\widehat{\alpha }-\mathbf{X}_{i}(\tau )^{\prime }\alpha \right\}
\\
& =\left\Vert \widehat{f}-f_{0}\right\Vert _{n}^{2}-\left\Vert f_{(\alpha
,\tau )}-f_{0}\right\Vert _{n}^{2} \\
& \;\;\;-2n^{-1}\sum_{i=1}^{n}U_{i}X_{i}^{\prime }(\widehat{\beta }-\beta
)-2n^{-1}\sum_{i=1}^{n}U_{i}\left\{ X_{i}^{\prime }\widehat{\delta }1(Q_{i}<%
\widehat{\tau })-X_{i}^{\prime }\delta 1(Q_{i}<\tau )\right\} .
\end{align*}%
Further, write the last term above as
\begin{align*}
\lefteqn{n^{-1}\sum_{i=1}^{n}U_{i}\left\{ X_{i}^{\prime }\widehat{\delta }%
1(Q_{i}<\widehat{\tau })-X_{i}^{\prime }\delta 1(Q_{i}<\tau )\right\} } \\
& =n^{-1}\sum_{i=1}^{n}U_{i}X_{i}^{\prime }(\widehat{\delta }-\delta
)1(Q_{i}<\widehat{\tau })+n^{-1}\sum_{i=1}^{n}U_{i}X_{i}^{\prime }\delta
\left\{ 1(Q_{i}<\widehat{\tau })-1(Q_{i}<\tau )\right\} .
\end{align*}%
Hence, (\ref{joint-Lasso-def}) can be written as
\begin{equation*}
\begin{split}
\left\Vert \widehat{f}-f_{0}\right\Vert _{n}^{2}& \leq \left\Vert f_{(\alpha
,\tau )}-f_{0}\right\Vert _{n}^{2}+\lambda \left\vert \mathbf{D}(\tau
)\alpha \right\vert _{1}-\lambda \left\vert \widehat{\mathbf{D}}\widehat{%
\alpha }\right\vert _{1} \\
& +2n^{-1}\sum_{i=1}^{n}U_{i}X_{i}^{\prime }(\widehat{\beta }-\beta
)+2n^{-1}\sum_{i=1}^{n}U_{i}X_{i}^{\prime }(\widehat{\delta }-\delta
)1(Q_{i}<\widehat{\tau }) \\
& +2n^{-1}\sum_{i=1}^{n}U_{i}X_{i}^{\prime }\delta \left\{ 1(Q_{i}<\widehat{%
\tau })-1(Q_{i}<\tau )\right\} .
\end{split}%
\end{equation*}%
Then on the events $\mathbb{A}$ and $\mathbb{B}$, we have
\begin{equation}
\begin{split}
\left\Vert \widehat{f}-f_{0}\right\Vert _{n}^{2}& \leq \left\Vert f_{(\alpha
,\tau )}-f_{0}\right\Vert _{n}^{2}+\mu \lambda \left\vert \widehat{\mathbf{D}%
}(\widehat{\alpha }-\alpha )\right\vert _{1} \\
& +\lambda \left\vert \mathbf{D}(\tau )\alpha \right\vert _{1}-\lambda
\left\vert \widehat{\mathbf{D}}\widehat{\alpha }\right\vert
_{1}+R_{n}(\alpha ,\tau )
\end{split}
\label{basic-ineq}
\end{equation}%
for all $(\alpha ,\tau )\in \mathbb{R}^{2M}\times \mathbb{T}.$

Note  the fact that
\begin{align}\label{sparsity-useful-fact}
\left\vert \widehat{\alpha }^{(j)}-\alpha _{0}^{(j)}\right\vert +\left\vert
\alpha _{0}^{(j)}\right\vert -\left\vert \widehat{\alpha }^{(j)}\right\vert
=0\text{ for $j\notin J_{0}$}.
\end{align}%
On the one hand, by \eqref{basic-ineq} (evaluating at $(\alpha ,\tau
)=(\alpha _{0},\tau _{0})$), on the events $\mathbb{A}$ and $\mathbb{B}$,
\begin{equation*}
\begin{split}
\lefteqn{\norm{ \widehat{f} - f_{0} }_{n}^{2}+\left( 1-\mu \right) \lambda
\left\vert \mathbf{\hat{D}}(\widehat{\alpha }-\alpha _{0})\right\vert _{1}}
\\
& \leq \lambda \left( \left\vert \widehat{\mathbf{D}}(\widehat{\alpha }%
-\alpha _{0})\right\vert _{1}+\left\vert \widehat{\mathbf{D}}\alpha
_{0}\right\vert _{1}-\left\vert \widehat{\mathbf{D}}\widehat{\alpha }%
\right\vert _{1}\right) \\
& +\lambda \left\vert \left\vert \widehat{\mathbf{D}}{\alpha _{0}}%
\right\vert _{1}-\left\vert \mathbf{D}{\alpha _{0}}\right\vert
_{1}\right\vert +R_{n}(\alpha _{0},\tau _{0}) \\
& \leq 2\lambda \left\vert \widehat{\mathbf{D}}(\widehat{\alpha }-\alpha
_{0})_{J_{0}}\right\vert _{1}+\lambda \left\vert \left\vert \widehat{\mathbf{%
D}}{\alpha _{0}}\right\vert _{1}-\left\vert \mathbf{D}{\alpha _{0}}%
\right\vert _{1}\right\vert +R_{n}(\alpha _{0},\tau _{0}),
\end{split}%
\end{equation*}%
which proves $\left( \ref{imp-ineq-lem}\right) $. On the other hand, again
by \eqref{basic-ineq} (evaluating at $(\alpha ,\tau )=(\alpha _{0},\widehat{%
\tau })$), on the events $\mathbb{A}$ and $\mathbb{B}$,
\begin{equation*}
\begin{split}
\lefteqn{\norm{ \widehat{f} - f_{0} }_{n}^{2}+\left( 1-\mu \right) \lambda
\left\vert \mathbf{\hat{D}}(\widehat{\alpha }-\alpha _{0})\right\vert _{1}}
\\
& \leq \lambda \left( \left\vert \widehat{\mathbf{D}}(\widehat{\alpha }%
-\alpha _{0})\right\vert _{1}+\left\vert \widehat{\mathbf{D}}\alpha
_{0}\right\vert _{1}-\left\vert \widehat{\mathbf{D}}\widehat{\alpha }%
\right\vert _{1}\right) +\left\Vert f_{(\alpha _{0},\widehat{\tau }%
)}-f_{0}\right\Vert _{n}^{2} \\
& \leq 2\lambda \left\vert \widehat{\mathbf{D}}(\widehat{\alpha }-\alpha
_{0})_{J_{0}}\right\vert _{1}+\left\Vert f_{(\alpha _{0},\widehat{\tau }%
)}-f_{0}\right\Vert _{n}^{2},
\end{split}%
\end{equation*}%
which proves $\left( \ref{imp-ineq-2-lem}\right) $.
\end{proof}

We now establish conditions under which $\mathbb{A}\cap \mathbb{B}$ has
probability close to one with a suitable choice of $\lambda $. 
Let $\Phi $ denote the cumulative distribution function of the standard normal.

\begin{lem}[Probability of $\mathbb{A}\cap \mathbb{B}$]
\label{lemma5}
Let $\{U_i: i=1,\ldots, n \}$ be independent and identically distributed as  $\mathbf{N}(0,\sigma ^{2})$.
Then
\begin{equation*}
\mathbb{P}\{\mathbb{A}\cap \mathbb{B}\}\geq 1-6M\Phi \left( -\frac{\mu \sqrt{%
nr_{n}}}{2\sigma }\lambda \right) .
\end{equation*}
\end{lem}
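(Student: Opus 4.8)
The plan is to control the two events $\mathbb{A}$ and $\mathbb{B}$ separately, using the Gaussian tail bound for $\mathbb{A}$ and a maximal inequality over $\tau$ for $\mathbb{B}$, and then combine them by a union bound. First I would note that each $V_{1j}$ is, conditional on the fixed design, a standard normal variable: since $U_i \sim N(0,\sigma^2)$ i.i.d.\ and $X_i^{(j)}$ is deterministic, $V_{1j} = (n\sigma \|X^{(j)}\|_n)^{-1}\sum_{i=1}^n U_i X_i^{(j)}$ has mean zero and variance $(n^2\sigma^2 \|X^{(j)}\|_n^2)^{-1}\sum_i \sigma^2 (X_i^{(j)})^2 = n^{-1}$. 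Hence $\sqrt{n}\,V_{1j}\sim N(0,1)$, so that $\mathbb{P}\{2|V_{1j}| > \mu\lambda/\sigma\} = 2\Phi(-\tfrac{\mu\sqrt{n}}{2\sigma}\lambda)$. A union bound over $j=1,\ldots,M$ gives $\mathbb{P}\{\mathbb{A}^c\}\leq 2M\Phi(-\tfrac{\mu\sqrt{n}}{2\sigma}\lambda)$. Since $r_n\leq 1$, I can weaken this to $2M\Phi(-\tfrac{\mu\sqrt{nr_n}}{2\sigma}\lambda)$ to match the target form.

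The harder event is $\mathbb{B}$, because $V_{2j}(\tau)$ depends on $\tau$ and we need a bound on its supremum. The key observation is that, by Assumption \ref{assumption-main-1}(iii), the indicators $1\{Q_i<\tau\}$ take only finitely many distinct values as $\tau$ ranges over $\mathbb{T}$: since there are no ties among the $Q_i$, the process $\tau\mapsto V_{2j}(\tau)$ is piecewise constant and changes only at the $Q_i$. So $\sup_{\tau\in\mathbb{T}}|V_{2j}(\tau)|$ is a maximum over at most $n$ values of $\tau$ (the distinct thresholds crossed). For each such $\tau$, the numerator $\sum_i U_i X_i^{(j)}1\{Q_i<\tau\}$ is again Gaussian with mean zero and variance $\sigma^2 \sum_i (X_i^{(j)})^2 1\{Q_i<\tau\} = \sigma^2 n \|X^{(j)}(\tau)\|_n^2$, so after normalization $\sqrt{n}\,V_{2j}(\tau)\sim N(0,1)$ for each fixed $\tau$. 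I must be careful here: rather than a crude union bound over all $n$ breakpoints (which would introduce an extra $n$ factor), the clean approach is to use the fact that the partial-sum process indexed by the ordered $Q_i$ is a Gaussian random walk, and bound its supremum. Specifically, I would relate $\sup_\tau |V_{2j}(\tau)|$ to the maximum of a normalized Gaussian sequence and invoke the reflection principle / a maximal inequality for Gaussian partial sums to show $\mathbb{P}\{2\sup_\tau|V_{2j}(\tau)| > \mu\lambda/\sigma\}\leq 4\Phi(-\tfrac{\mu\sqrt{nr_n}}{2\sigma}\lambda)$, where the factor $r_n$ enters through $\|X^{(j)}(\tau)\|_n^2/\|X^{(j)}\|_n^2 \geq r_n$ (from the definition of $r_n$) used to lower-bound the standardizing variance relative to the worst case.

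Combining, a union bound over $j=1,\ldots,M$ for $\mathbb{B}^c$ contributes at most $4M\Phi(-\tfrac{\mu\sqrt{nr_n}}{2\sigma}\lambda)$, and adding the $2M\Phi(\cdots)$ bound from $\mathbb{A}^c$ yields $\mathbb{P}\{(\mathbb{A}\cap\mathbb{B})^c\}\leq 6M\Phi(-\tfrac{\mu\sqrt{nr_n}}{2\sigma}\lambda)$, which is the claim. The main obstacle I anticipate is getting the maximal inequality over $\tau$ for $\mathbb{B}$ to produce the same Gaussian tail $\Phi(-\tfrac{\mu\sqrt{nr_n}}{2\sigma}\lambda)$ with only a constant multiplier (here $4$, i.e.\ two from the two-sided bound and two from the supremum via reflection) rather than a factor growing in $n$; the role of $r_n$ is precisely to absorb the variation of the standardizing factor $\|X^{(j)}(\tau)\|_n$ across $\tau$ so that a single clean tail bound suffices uniformly.
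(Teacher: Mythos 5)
Your proposal is correct and takes essentially the same route as the paper: the paper likewise bounds $\mathbb{P}\{\mathbb{A}^c\}\leq 2M\Phi\left(-\tfrac{\mu\sqrt{nr_n}}{2\sigma}\lambda\right)$ via Gaussian tails, a union bound over $j$, and $r_n\leq 1$, and for $\mathbb{B}$ it rearranges $\sum_i U_iX_i^{(j)}1\{Q_i<\tau\}$ by the order of the $Q_i$ into a symmetric partial-sum process, lower-bounds the $\tau$-dependent normalizer by $\Vert X^{(j)}(t_0)\Vert_n$ (which is exactly how $r_n$ enters), and applies L\'evy's maximal inequality --- your ``reflection principle'' step --- to obtain the $4M\Phi(\cdot)$ contribution. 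The only cosmetic difference is terminology: what you call the reflection principle for the Gaussian random walk is invoked in the paper as L\'evy's inequality for sums of independent symmetric random variables.
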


Recall that $r_n$ depends on the lower bound $t_0$ of the parameter space for $\tau_0$.
Suppose that $t_0$ is taken such that $t_0 < \min_{i=1,\ldots,n} Q_i$. Then
$\left\Vert X^{\left( j\right)}(t_{0})\right\Vert _{n} = 0$, and therefore, $r_n = 0$.
In this case, Lemma \ref{lemma5} reduces to $\mathbb{P}\{\mathbb{A}\cap \mathbb{B}\} \geq 1 - 3M$ regardless of $n$ and $\lambda$, hence resulting in a useless bound.
This illustrates a need for restricting the parameter space for $\tau_0$ (see Assumption \ref{assumption-main-1}).

\begin{proof}[\textbf{Proof of Lemma \ref{lemma5}}]
Since $U_i  \sim \mathbf{N}(0,\sigma ^{2})$,
\begin{equation*}
\mathbb{P}\{\mathbb{A}^{c}\}\leq \sum_{j=1}^{M}\mathbb{P}\left\{ \sqrt{n}%
|V_{1j}|>\mu \sqrt{n}\lambda /(2\sigma )\right\}
= 2M\Phi \left( -\frac{\mu
\sqrt{n}}{2\sigma}\lambda \right)
\leq 2M\Phi \left( -\frac{\mu
\sqrt{r_n n}}{2\sigma}\lambda \right) ,
\end{equation*}
where the last inequality follows from $0 < r_n \leq 1$.

Now consider the event $\mathbb{B}$. 
For the  simplify of notation, we assume
without loss of generality that $Q_{i}=i/n$
since  there is no tie among $Q_i$'s. 
Note that $\left\Vert X^{\left(
j\right) }(\tau )\right\Vert _{n}$ is monotonically increasing in $\tau $ and $%
\sum_{i=1}^{n}U_{i}X_{i}^{\left( j\right) }1\left\{ Q_{i}<\tau \right\} $
can be rewritten as a partial sum process by the rearrangement of $i$
according to the magnitude of $Q_{i}.$ 
To see the latter, given $\left\{ Q_{i}\right\} $, let $\ell $ be the
index $i$ such that $Q_{i}$ is the $\ell $-th smallest of $\left\{
Q_{i}\right\}.$
 Since $\left\{ U_{i}\right\} $ is an independent and identically distributed (i.i.d.) sequence and $%
Q_{i} $ is deterministic, $\left\{ U_{\ell }\right\} _{\ell =1,...,n}$ is
also an i.i.d. sequence. Furthermore, $U_{\ell }X_{\ell }^{\left( j\right) }$
is a sequence of independent and symmetric random variables as $U_{\ell }$
is Gaussian and $X$ is a deterministic design. Thus, it satisfies the
conditions for L\'{e}vy's
inequality \citep[see e.g. Proposition A.1.2 of][]{VW}.
Then, by L\'{e}vy's
inequality,%
\begin{align*}
\mathbb{P}\left\{ \sup_{\tau \in \mathbb{T}}\sqrt{n}\left\vert V_{2j}(\tau
)\right\vert >\mu \sqrt{n}\lambda /(2\sigma )\right\} & \leq \mathbb{P}\left\{ \sup_{1\leq s\leq n}\left\vert \frac{1}{\sigma \sqrt{n}}\sum_{i=1}^{s}U_{i}X_{i}^{\left( j\right) }\right\vert >\left\Vert X^{\left(j\right) }\left( t_{0}\right) \right\Vert _{n}\frac{\mu \sqrt{n}}{2\sigma }\lambda \right\} \\
& \leq 2\mathbb{P}\left\{ \sqrt{n}|V_{1j}|>\frac{\left\Vert X^{\left(j\right) }(t_{0})\right\Vert _{n}}{\left\Vert X^{\left( j\right)}\right\Vert _{n}}\frac{\mu \sqrt{n}}{2\sigma }\lambda \right\} .
\end{align*}
Therefore, we have
\begin{align*}
\mathbb{P}\{\mathbb{B}^{c}\}& \leq \sum_{j=1}^{M}\mathbb{P}\left\{
\sup_{\tau \in \mathbb{T}}\sqrt{n}|V_{2j}(\tau )|>\mu \sqrt{n}\lambda
/(2\sigma )\right\} \\
& \leq 4M\Phi \left( -\frac{\mu \sqrt{r_{n}n}}{2\sigma }\lambda \right) .
\end{align*}%
Since $\mathbb{P}\{\mathbb{A}\cap \mathbb{B}%
\}\geq 1-\mathbb{P}\{\mathbb{A}^{c}\}-\mathbb{P}\{\mathbb{B}^{c}\}$, we have
proved the lemma.
\end{proof}

We are ready to establish the prediction consistency of the Lasso estimator.
Recall that $X_{\max }
:=\max_{\tau, j} \big\{ \left\Vert \mathbf{X}^{(j)}(\tau)\right\Vert _{n},\ \ j=1,...,2M, \tau \in \mathbb{T} \big\}$ and 
$X_{\min }:=\min_{j} \big\{ \left\Vert \mathbf{X}^{(j)}(t_0)\right\Vert _{n},\\ \ \  j=1,...,2M \big\}$,
that $\alpha _{\max
} $ denotes the maximum value that all the elements of $\alpha $ can take in
absolute value, and  that Assumption \ref{assumption-main-1} implies that $r_n > 0$ and also $X_{\min} > 0$.

\begin{lem}[Consistency of the Lasso]\label{lemma2}
Let $(\widehat{\alpha },\widehat{\tau })$ be the Lasso estimator defined by \eqref{joint-max} with $\lambda$ given by \eqref{lambda-form}.  Then, with probability at least $1-\left( 3M\right)^{1-A^{2}\mu ^{2}/8}$, we have
\begin{align*}
\left\Vert \widehat{f}-f_{0}\right\Vert _{n}
& \leq \Big(6\lambda X_{\max }\alpha _{\max }\mathcal{M}%
(\alpha _{0})+2\mu \lambda X_{\max }\left\vert \delta _{0}\right\vert _{1} \Big)^{1/2}.
\end{align*}
\end{lem}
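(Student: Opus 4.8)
The plan is to run the whole argument on the event $\mathbb{A}\cap\mathbb{B}$ and then invoke Lemma~\ref{lemma5} for the probability statement. I would begin from the first basic inequality \eqref{imp-ineq-lem} of Lemma~\ref{lemma1} and simply discard the nonnegative term $(1-\mu)\lambda|\widehat{\mathbf{D}}(\widehat\alpha-\alpha_0)|_1$ from its left-hand side, which leaves
\[
\|\widehat{f}-f_0\|_n^2 \le 2\lambda\big|\widehat{\mathbf{D}}(\widehat\alpha-\alpha_0)_{J_0}\big|_1 + \lambda\Big|\,|\widehat{\mathbf{D}}\alpha_0|_1 - |\mathbf{D}\alpha_0|_1\,\Big| + R_n.
\]
It then remains to bound the three terms on the right separately, in each case peeling off the diagonal weights and the bounded coordinates of $\widehat\alpha$ and $\alpha_0$.

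For the first term I would use $\|\mathbf{X}^{(j)}(\widehat\tau)\|_n\le X_{\max}$ together with the crude coordinatewise bound $|\widehat\alpha^{(j)}-\alpha_0^{(j)}|\le 2\alpha_{\max}$ (valid since both $\widehat\alpha$ and $\alpha_0$ lie in $\mathcal{A}$) and $|J_0|=\mathcal{M}(\alpha_0)$, which yields $2\lambda|\widehat{\mathbf{D}}(\widehat\alpha-\alpha_0)_{J_0}|_1\le 4\lambda X_{\max}\alpha_{\max}\mathcal{M}(\alpha_0)$. For the second term I would apply the triangle inequality $|\,|\widehat{\mathbf{D}}\alpha_0|_1-|\mathbf{D}\alpha_0|_1\,|\le |\widehat{\mathbf{D}}\alpha_0|_1+|\mathbf{D}\alpha_0|_1$ and bound each of these $\ell_1$ norms, a sum over $J_0$, by $X_{\max}\alpha_{\max}\mathcal{M}(\alpha_0)$, giving a contribution of $2\lambda X_{\max}\alpha_{\max}\mathcal{M}(\alpha_0)$. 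Adding these produces the leading $6\lambda X_{\max}\alpha_{\max}\mathcal{M}(\alpha_0)$.

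The delicate term is $R_n$, which carries the effect of not knowing $\tau_0$. Writing $X_i'\delta_0=\sum_{j=1}^M X_i^{(j)}\delta_0^{(j)}$ and recalling the definition of $V_{2j}(\tau)$, I would re-express
\[
R_n = 2\sigma\sum_{j=1}^M \delta_0^{(j)}\big[\,\|X^{(j)}(\widehat\tau)\|_n\,V_{2j}(\widehat\tau) - \|X^{(j)}(\tau_0)\|_n\,V_{2j}(\tau_0)\,\big].
\]
Since $\widehat\tau,\tau_0\in\mathbb{T}$, on the event $\mathbb{B}$ one has $|V_{2j}(\tau)|\le \mu\lambda/(2\sigma)$ uniformly in $\tau$, while each empirical norm is at most $X_{\max}$; the triangle inequality then gives $R_n\le 2\mu\lambda X_{\max}|\delta_0|_1$. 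Combining the three bounds and taking square roots delivers the stated inequality for $\|\widehat{f}-f_0\|_n$.

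For the probability, I would substitute $\lambda$ from \eqref{lambda-form} into Lemma~\ref{lemma5}: the argument of $\Phi$ simplifies to $-\tfrac{\mu A}{2}\sqrt{\log 3M}$, so the Gaussian tail bound $\Phi(-x)\le\tfrac12 e^{-x^2/2}$ turns $6M\,\Phi(\cdot)$ into $3M(3M)^{-A^2\mu^2/8}=(3M)^{1-A^2\mu^2/8}$, the complementary probability. I expect the only genuinely delicate step to be the handling of $R_n$: its control rests on rewriting it through the normalized noise terms $V_{2j}$ and exploiting the uniform-in-$\tau$ bound supplied by $\mathbb{B}$ (which in turn came from L\'evy's inequality in Lemma~\ref{lemma5}); the remaining manipulations are routine bookkeeping once \eqref{imp-ineq-lem} is in hand.
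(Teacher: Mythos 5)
Your proposal is correct and follows essentially the same route as the paper's own proof: both start from the basic inequality \eqref{imp-ineq-lem}, bound the $J_0$-restricted term and the weight-difference term by $6\lambda X_{\max }\alpha _{\max }\mathcal{M}(\alpha _{0})$ in total, control $R_{n}$ on the event $\mathbb{B}$ by $2\mu \lambda X_{\max }\left\vert \delta _{0}\right\vert _{1}$, and convert Lemma \ref{lemma5} into the stated probability via the Gaussian tail bound $\Phi (-x)\leq \tfrac{1}{2}e^{-x^{2}/2}$. Your explicit rewriting of $R_{n}$ through the normalized terms $V_{2j}(\tau )$ merely makes visible the step the paper performs implicitly, so the two arguments coincide.
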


\begin{proof}[\textbf{Proof of Lemma \protect\ref{lemma2}}]
Note that
\begin{equation*}
R_{n}=2n^{-1}\sum_{i=1}^{n}U_{i}X_{i}^{\prime }\delta _{0}\left\{ 1(Q_{i}<%
\widehat{\tau })-1(Q_{i}<\tau _{0})\right\} .
\end{equation*}%
Then on the event $\mathbb{B}$,
\begin{align}
\begin{split}
|R_{n}|& \leq 2\mu \lambda \sum_{j=1}^{M}\left\Vert X^{(j)}\right\Vert
_{n}|\delta _{0}^{(j)}| \\
& \leq 2\mu \lambda X_{\max }\left\vert \delta _{0}\right\vert _{1}.
\end{split}
\label{rough-ineq3}
\end{align}%
Then, conditional on $\mathbb{A}\cap \mathbb{B}$, combining \eqref{rough-ineq3} with \eqref{imp-ineq-lem} gives
\begin{align}\label{lem2-conc}
\left\Vert \widehat{f}-f_{0}\right\Vert _{n}^{2}+\left( 1-\mu \right)
\lambda \left\vert \widehat{\mathbf{D}}(\widehat{\alpha }-\alpha
_{0})\right\vert _{1}\leq 6\lambda X_{\max }\alpha _{\max }\mathcal{M}%
(\alpha _{0})+2\mu \lambda X_{\max }\left\vert \delta _{0}\right\vert _{1}
\end{align}
since
\begin{align*}
\left\vert \widehat{\mathbf{D}}(\widehat{\alpha }-\alpha
_{0})_{J_{0}}\right\vert _{1}& \leq 2X_{\max }\alpha _{\max }\mathcal{M}%
(\alpha _{0}), \\
\left\vert \left\vert \widehat{\mathbf{D}}{\alpha _{0}}\right\vert
_{1}-\left\vert \mathbf{D}{\alpha _{0}}\right\vert _{1}\right\vert & \leq
2X_{\max }\left\vert \alpha _{0}\right\vert _{1}.
\end{align*}
Using the bound that
$ 2\Phi \left( -x\right)  \leq \exp \left( -x^{2}/2\right)$ for $x \geq (2/\pi)^{1/2}$
as in equation (B.4) of \citet{Bickel-et-al:09},
Lemma \ref{lemma5} with $\lambda$ given by \eqref{lambda-form} implies that the event $\mathbb{A}\cap \mathbb{B}$
occurs with probability at least $1-\left( 3M\right)^{1-A^{2}\mu ^{2}/8}$.
Then the lemma follows from \eqref{lem2-conc}.
\end{proof}

\begin{proof}[Proof of Theorem \ref{theorem-main-1}]
The proof follows immediately 
from combining Assumption \ref{assumption-main-1} with 
Lemma \ref{lemma2}.
In particular, for the value of $K_1$, note that since $\left\vert  \delta _{0}\right\vert _{1} \leq \alpha _{\max }\mathcal{M}(\alpha _{0})$,
\begin{align*}
6\lambda X_{\max }\alpha _{\max }\mathcal{M}(\alpha _{0})+2\mu \lambda
X_{\max }\left\vert \delta _{0}\right\vert _{1}
&\leq \lambda \mathcal{M}%
\left( \alpha _{0}\right) \left( 6X_{\max }\alpha _{\max }+2\mu X_{\max
}\alpha _{\max }\right) \\
&\leq \lambda \mathcal{M}%
\left( \alpha _{0}\right) \left( 6 C_2 C_1 +2 \mu C_2 C_1\right),
\end{align*}%
where the last inequality follows from 
Assumption \ref{assumption-main-1}.
Therefore, we set 
\begin{align*}
K_{1} &:=  \sqrt{ 2 C_{1} C_{2}(3+\mu)}.
\end{align*}%
\end{proof}


\section{Proofs for Section \ref{sec:oracle1}}\label{sec:proofs:oracle1}

We first provide a lemma to derive an oracle inequality regarding the sparsity of the Lasso
estimator $\widehat{\alpha}$.

\begin{lem}[Sparsity of the Lasso]
\label{lemma4}
Conditional on the event $\mathbb{A}\bigcap \mathbb{%
B}$, we have
\begin{equation}
\mathcal{M}(\widehat{\alpha })\leq \frac{4\phi _{\max }}{\left( 1-\mu
\right) ^{2}\lambda ^{2}X_{\min }^{2}}\left\Vert \widehat{f}%
-f_{0}\right\Vert _{n}^{2}.  \label{lemma4ineq}
\end{equation}
\end{lem}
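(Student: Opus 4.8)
The plan is to exploit the first-order (KKT) conditions of the weighted Lasso subproblem. Since $\widehat{\alpha}=\widehat{\alpha}(\widehat{\tau})$ is, by construction in \eqref{Lasso-fixed-tau}, a minimizer of the convex map $\alpha\mapsto S_{n}(\alpha,\widehat{\tau})+\lambda|\mathbf{D}(\widehat{\tau})\alpha|_{1}$ (taken in the interior of $\mathcal{A}$), the subgradient optimality condition yields, for every active coordinate $j\in J(\widehat{\alpha})$,
\[
2n^{-1}\left|(\mathbf{X}^{(j)}(\widehat{\tau}))^{\prime}(\mathbf{y}-\mathbf{X}(\widehat{\tau})\widehat{\alpha})\right|=\lambda\left\Vert\mathbf{X}^{(j)}(\widehat{\tau})\right\Vert_{n}.
\]
First I would write $\mathbf{U}=(U_{1},\ldots,U_{n})^{\prime}$ and substitute $\mathbf{y}-\mathbf{X}(\widehat{\tau})\widehat{\alpha}=\mathbf{U}-(\widehat{\mathbf{f}}-\mathbf{f}_{0})$, where $\widehat{\mathbf{f}}-\mathbf{f}_{0}$ is the $n$-vector with $i$-th entry $\widehat{f}(X_{i},Q_{i})-f_{0}(X_{i},Q_{i})$, and apply the triangle inequality to split the left-hand quantity into a noise part $2n^{-1}|(\mathbf{X}^{(j)}(\widehat{\tau}))^{\prime}\mathbf{U}|$ and a signal part $2n^{-1}|(\mathbf{X}^{(j)}(\widehat{\tau}))^{\prime}(\widehat{\mathbf{f}}-\mathbf{f}_{0})|$.

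Next I would control the noise part on $\mathbb{A}\cap\mathbb{B}$. Recalling that the first $M$ columns of $\mathbf{X}(\widehat{\tau})$ are the $\tau$-free vectors $X^{(j)}$ and the last $M$ are $X^{(k)}(\widehat{\tau})$, the inner product $n^{-1}(\mathbf{X}^{(j)}(\widehat{\tau}))^{\prime}\mathbf{U}$ equals $\sigma\Vert X^{(j)}\Vert_{n}V_{1j}$ for $j\leq M$ and $\sigma\Vert X^{(k)}(\widehat{\tau})\Vert_{n}V_{2k}(\widehat{\tau})$ for $j=M+k$. Hence on $\mathbb{A}\cap\mathbb{B}$ (the latter containing a supremum over $\tau\in\mathbb{T}$, which is exactly why the data-dependent $\widehat{\tau}$ is admissible here) the noise part is at most $\mu\lambda\Vert\mathbf{X}^{(j)}(\widehat{\tau})\Vert_{n}$. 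Absorbing this into the KKT identity gives $(1-\mu)\lambda\Vert\mathbf{X}^{(j)}(\widehat{\tau})\Vert_{n}\leq 2n^{-1}|(\mathbf{X}^{(j)}(\widehat{\tau}))^{\prime}(\widehat{\mathbf{f}}-\mathbf{f}_{0})|$. Because $\Vert X^{(k)}(\tau)\Vert_{n}$ is nondecreasing in $\tau$ and $\widehat{\tau}\geq t_{0}$, every column obeys $\Vert\mathbf{X}^{(j)}(\widehat{\tau})\Vert_{n}\geq\Vert\mathbf{X}^{(j)}(t_{0})\Vert_{n}\geq X_{\min}$, so that $(1-\mu)\lambda X_{\min}\leq 2n^{-1}|(\mathbf{X}^{(j)}(\widehat{\tau}))^{\prime}(\widehat{\mathbf{f}}-\mathbf{f}_{0})|$ for each $j\in J(\widehat{\alpha})$.

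Finally I would square this inequality, sum over $j\in J(\widehat{\alpha})$, and enlarge the sum to all $2M$ coordinates to obtain $\mathcal{M}(\widehat{\alpha})(1-\mu)^{2}\lambda^{2}X_{\min}^{2}\leq 4n^{-2}|\mathbf{X}(\widehat{\tau})^{\prime}(\widehat{\mathbf{f}}-\mathbf{f}_{0})|_{2}^{2}$. Writing $w=\widehat{\mathbf{f}}-\mathbf{f}_{0}$ and noting that $\mathbf{X}(\widehat{\tau})\mathbf{X}(\widehat{\tau})^{\prime}$ and $\mathbf{X}(\widehat{\tau})^{\prime}\mathbf{X}(\widehat{\tau})$ share the same nonzero eigenvalues, the definition of $\phi_{\max}$ bounds the largest eigenvalue of $\mathbf{X}(\widehat{\tau})\mathbf{X}(\widehat{\tau})^{\prime}/n$ by $\phi_{\max}$ uniformly in $\widehat{\tau}\in\mathbb{T}$, whence $n^{-2}|\mathbf{X}(\widehat{\tau})^{\prime}w|_{2}^{2}\leq\phi_{\max}\Vert w\Vert_{n}^{2}=\phi_{\max}\Vert\widehat{f}-f_{0}\Vert_{n}^{2}$. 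Rearranging yields \eqref{lemma4ineq}.

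I expect the main obstacle to be the careful bookkeeping in the second step: correctly matching the two blocks of columns of $\mathbf{X}(\widehat{\tau})$ to the statistics $V_{1j}$ and $V_{2j}(\cdot)$, and ensuring the bound invokes the supremum over $\tau$ built into event $\mathbb{B}$ so that the random $\widehat{\tau}$ is genuinely covered. Paired with this is the clean justification that the $\alpha$-subproblem's subgradient conditions hold at $\widehat{\tau}$ via interiority in $\mathcal{A}$. The concluding eigenvalue step is routine once $\phi_{\max}$ is invoked.
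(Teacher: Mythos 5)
Your proposal is correct and follows essentially the same route as the paper's proof of Lemma \ref{lemma4}: the KKT equality conditions at the active coordinates of $\widehat{\alpha}(\widehat{\tau})$, the noise bound from $\mathbb{A}\cap\mathbb{B}$ (with the supremum over $\tau\in\mathbb{T}$ in $\mathbb{B}$ covering the random $\widehat{\tau}$) leaving the signal part bounded below by $(1-\mu)\lambda X_{\min}$, followed by squaring, summing over active coordinates, and the $\phi_{\max}$ eigenvalue bound on $\mathbf{X}(\widehat{\tau})\mathbf{X}(\widehat{\tau})'/n$. Your explicit justifications of the two details the paper leaves implicit — the monotonicity of $\tau\mapsto\Vert X^{(j)}(\tau)\Vert_{n}$ giving the $X_{\min}$ lower bound for the threshold block, and the coincidence of nonzero eigenvalues of $\mathbf{X}\mathbf{X}'$ and $\mathbf{X}'\mathbf{X}$ — are accurate and complete the argument.
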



\begin{proof}[\textbf{Proof of Lemma \ref{lemma4}}]
As in (B.6) of \citet{Bickel-et-al:09}, for each $\tau $, the necessary and sufficient condition
for $\widehat{\alpha }(\tau )$ to be the Lasso solution can be written in
the form
\begin{align*}
\frac{2}{n}[X^{(j)}]^{\prime }(\mathbf{y}-\mathbf{X}(\tau )\widehat{\alpha }%
(\tau ))& =\lambda \left\Vert X^{(j)}\right\Vert _{n}\text{sign}(\widehat{%
\beta }^{(j)}(\tau )) & \text{if $\widehat{\beta }^{(j)}(\tau )$}& \text{$%
\neq 0$} \\
\left\vert \frac{2}{n}[X^{(j)}]^{\prime }(\mathbf{y}-\mathbf{X}(\tau )%
\widehat{\alpha }(\tau ))\right\vert & \leq \lambda \left\Vert
X^{(j)}\right\Vert _{n} & \text{if $\widehat{\beta }^{(j)}(\tau )$}& \text{$%
=0$} \\
\frac{2}{n}[X^{(j)}(\tau )]^{\prime }(\mathbf{y}-\mathbf{X}(\tau )\widehat{%
\alpha }(\tau ))& =\lambda \left\Vert X^{(j)}(\tau )\right\Vert _{n}\text{%
sign}(\widehat{\delta }^{(j)}(\tau )) & \text{if $\widehat{\delta }%
^{(j)}(\tau )$}& \text{$\neq 0$} \\
\left\vert \frac{2}{n}[X^{(j)}(\tau )]^{\prime }(\mathbf{y}-\mathbf{X}(\tau )%
\widehat{\alpha }(\tau ))\right\vert & \leq \lambda \left\Vert X^{(j)}(\tau
)\right\Vert _{n} & \text{if $\widehat{\delta }^{(j)}(\tau )$}& \text{$=0$},
\end{align*}%
where $j=1,\ldots ,M$.

Note that conditional on events $\mathbb{A}$ and $\mathbb{B}$,
\begin{align*}
\left\vert \frac{2}{n}\sum_{i=1}^{n}U_{i}X_{i}^{(j)}\right\vert & \leq \mu \lambda \left\Vert
X^{(j)}\right\Vert _{n} \\
\left\vert\frac{2}{n}\sum_{i=1}^{n}U_{i}X_{i}^{(j)}1\{Q_{i}<\tau \}\right\vert & \leq \mu \lambda
\left\Vert X^{(j)}(\tau )\right\Vert _{n}
\end{align*}%
for any $\tau $, where $j=1,\ldots ,M$. Therefore,
\begin{align*}
\left\vert \frac{2}{n}[X^{(j)}]^{\prime }(\mathbf{X}(\tau _{0})\alpha _{0}-%
\mathbf{X}(\tau )\widehat{\alpha }(\tau ))\right\vert & \geq \left( 1-\mu
\right) \lambda \left\Vert X^{(j)}\right\Vert _{n} & \text{if $\widehat{%
\beta }^{(j)}(\tau )$}& \text{$\neq 0$} \\
\left\vert \frac{2}{n}[X^{(j)}(\tau )]^{\prime }(\mathbf{X}(\tau _{0})\alpha
_{0}-\mathbf{X}(\tau )\widehat{\alpha }(\tau ))\right\vert & \geq \left(
1-\mu \right) \lambda \left\Vert X^{(j)}(\tau )\right\Vert _{n} & \text{if $%
\widehat{\delta }^{(j)}(\tau )$}& \text{$\neq 0$}.
\end{align*}%
Using inequalities above, write
\begin{align*}
\lefteqn{\frac{1}{n^{2}}\left[ \mathbf{X}(\tau _{0})\alpha _{0}-\mathbf{X}(%
\widehat{\tau })\widehat{\alpha }\right] ^{\prime }\mathbf{X}(\widehat{\tau }%
)\mathbf{X}(\widehat{\tau })^{\prime }\left[ \mathbf{X}(\tau _{0})\alpha
_{0}-\mathbf{X}(\widehat{\tau })\widehat{\alpha }\right] } \\
& =\frac{1}{n^{2}}\sum_{j=1}^{M}\left\{ [X^{(j)}]^{\prime }[\mathbf{X}(\tau
_{0})\alpha _{0}-\mathbf{X}(\widehat{\tau })\widehat{\alpha }]\right\} ^{2}+%
\frac{1}{n^{2}}\sum_{j=1}^{M}\left\{ [X^{(j)}(\widehat{\tau })]^{\prime }[%
\mathbf{X}(\tau _{0})\alpha _{0}-\mathbf{X}(\widehat{\tau })\widehat{\alpha }%
]\right\} ^{2} \\
& \geq \frac{1}{n^{2}}\sum_{j:\widehat{\beta }^{(j)}\neq 0}\left\{
[X^{(j)}]^{\prime }[\mathbf{X}(\tau _{0})\alpha _{0}-\mathbf{X}(\widehat{%
\tau })\widehat{\alpha }]\right\} ^{2}+\frac{1}{n^{2}}\sum_{j:\widehat{%
\delta }^{(j)}\neq 0}\left\{ [X^{(j)}(\widehat{\tau })]^{\prime }[\mathbf{X}%
(\tau _{0})\alpha _{0}-\mathbf{X}(\widehat{\tau })\widehat{\alpha }]\right\}
^{2} \\
& \geq \frac{\left( 1-\mu \right) ^{2}\lambda ^{2}}{4}\left( \sum_{j:%
\widehat{\beta }^{(j)}\neq 0}\left\Vert X^{(j)}\right\Vert _{n}^{2}+\sum_{j:%
\widehat{\delta }^{(j)}\neq 0}\left\Vert X^{(j)}(\widehat{\tau })\right\Vert
_{n}^{2}\right) \\
& \geq \frac{\left( 1-\mu \right) ^{2}\lambda ^{2}}{4}X_{\min }^{2}\mathcal{M%
}\left( \hat{\alpha}\right).
\end{align*}%
To complete the proof, note that
\begin{align*}
\frac{1}{n^{2}}& \left[ \mathbf{X}(\tau _{0})\alpha _{0}-\mathbf{X}(\widehat{%
\tau })\widehat{\alpha }\right] ^{\prime }\mathbf{X}(\widehat{\tau })\mathbf{%
X}(\widehat{\tau })^{\prime }\left[ \mathbf{X}(\tau _{0})\alpha _{0}-\mathbf{%
X}(\widehat{\tau })\widehat{\alpha }\right] \\
& \leq \text{maxeig}(\mathbf{X}(\widehat{\tau })\mathbf{X}(\widehat{\tau }%
)^{\prime }/n)\left\Vert \widehat{f}-f_{0}\right\Vert _{n}^{2} \\
& \leq \phi _{\max }\left\Vert \widehat{f}-f_{0}\right\Vert _{n}^{2},
\end{align*}%
where $\text{maxeig}(\mathbf{X}(\widehat{\tau })\mathbf{X}(\widehat{\tau }%
)^{\prime }/n)$ denotes the largest eigenvalue of $\mathbf{X}(\widehat{\tau }%
)\mathbf{X}(\widehat{\tau })^{\prime }/n$.
\end{proof}

\begin{lem}\label{thm-case1}
Suppose that $\delta _{0}=0$. Let Assumption \ref{re-assump} hold with $\kappa
=\kappa (s,\frac{1+\mu}{1-\mu },\mathbb{T})$ for $\mu <1$, and $\mathcal{M}%
(\alpha _{0})\leq s\leq M$.
Let $(\widehat{\alpha },\widehat{\tau })$ be the Lasso estimator defined by \eqref{joint-max} with $\lambda$ given by \eqref{lambda-form}.
Then, with probability at least $1-\left( 3M\right)
^{1-A^{2}\mu ^{2}/8},$ we have
\begin{align*}
\left\Vert \widehat{f}-f_{0}\right\Vert _{n} &\leq \frac{2 A\sigma  X_{\max }}{\kappa }\left( \frac{\log 3M}{nr_{n}} s\right)
^{1/2}, \\
\left\vert \widehat{\alpha }-\alpha _{0}\right\vert _{1} &\leq \frac{%
4 A\sigma }{\left( 1-\mu \right) \kappa ^{2}}%
\frac{X_{\max }^{2}}{X_{\min }}\left(\frac{\log 3M}{nr_{n}}\right)^{1/2}s, \\
\mathcal{M}(\widehat{\alpha }) &\leq \frac{16 \phi _{\max } }{\left( 1-\mu \right) ^{2}\kappa ^{2}}\frac{X_{\max }^{2}}{X_{\min
}^{2}}s.
\end{align*}
\end{lem}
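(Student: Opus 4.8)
The plan is to reduce the analysis to the standard Lasso argument behind Theorem~7.2 of \citet{Bickel-et-al:09} by exploiting the hypothesis $\delta_0=0$. The crucial observation is that the two ``extra'' terms in the basic inequality \eqref{imp-ineq-lem} both vanish in this case. First, since $R_n=2n^{-1}\sum_{i=1}^n U_iX_i'\delta_0\{1(Q_i<\widehat\tau)-1(Q_i<\tau_0)\}$ and $\delta_0=0$, we have $R_n=0$. Second, because $\alpha_0=(\beta_0',\delta_0')'=(\beta_0',0)'$ is supported on the first $M$ coordinates, where the diagonal entries of $\widehat{\mathbf{D}}=\mathbf{D}(\widehat\tau)$ and $\mathbf{D}=\mathbf{D}(\tau_0)$ coincide (both equal $\|X^{(j)}\|_n$ for $j\le M$), we get $|\widehat{\mathbf{D}}\alpha_0|_1=|\mathbf{D}\alpha_0|_1$. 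Conditional on $\mathbb{A}\cap\mathbb{B}$, inequality \eqref{imp-ineq-lem} therefore collapses to
\begin{equation*}
\|\widehat f-f_0\|_n^2+(1-\mu)\lambda|\widehat{\mathbf{D}}(\widehat\alpha-\alpha_0)|_1\le 2\lambda|\widehat{\mathbf{D}}(\widehat\alpha-\alpha_0)_{J_0}|_1,
\end{equation*}
the familiar form of the Lasso basic inequality. A further simplification is that when $\delta_0=0$ one has $f_0(X_i,Q_i)=\mathbf{X}_i(\widehat\tau)'\alpha_0$, so that with $\gamma:=\widehat\alpha-\alpha_0$ the prediction error is \emph{exactly} $\|\widehat f-f_0\|_n=n^{-1/2}|\mathbf{X}(\widehat\tau)\gamma|_2$, precisely the object controlled by the uniform restricted eigenvalue constant $\kappa$.

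Next I would extract the cone condition and feed it into the URE inequality. Dropping the nonnegative prediction term in the displayed inequality and splitting the $\ell_1$ norm across $J_0$ and $J_0^c$ yields the weighted cone $|\widehat{\mathbf{D}}\gamma_{J_0^c}|_1\le\tfrac{1+\mu}{1-\mu}|\widehat{\mathbf{D}}\gamma_{J_0}|_1$; using $X_{\min}\le\widehat{\mathbf{D}}_{jj}\le X_{\max}$ this transfers to a cone on $\gamma$ itself, so that Assumption~\ref{re-assump} applies at the (random) $\widehat\tau\in\mathbb{T}$ and gives $|\gamma_{J_0}|_2\le\kappa^{-1}\|\widehat f-f_0\|_n$. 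Substituting $|\widehat{\mathbf{D}}\gamma_{J_0}|_1\le X_{\max}\sqrt{s}\,|\gamma_{J_0}|_2$ into the basic inequality produces $\|\widehat f-f_0\|_n^2\le 2\lambda X_{\max}\sqrt s\,\kappa^{-1}\|\widehat f-f_0\|_n$, and cancelling one factor gives $\|\widehat f-f_0\|_n\le 2\lambda X_{\max}\sqrt s/\kappa$; inserting $\lambda$ from \eqref{lambda-form} matches the first claimed inequality.

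For the remaining inequalities I would proceed as follows. The $\ell_1$ bound follows by writing $|\gamma|_1=|\gamma_{J_0}|_1+|\gamma_{J_0^c}|_1$, controlling $|\gamma_{J_0^c}|_1$ through the cone, bounding $|\gamma_{J_0}|_1\le\sqrt s\,|\gamma_{J_0}|_2\le\sqrt s\,\kappa^{-1}\|\widehat f-f_0\|_n$, and inserting the prediction bound; the factor $X_{\max}^2/X_{\min}$ arises when the weighted cone is re-expressed in terms of $\gamma$ and the two $X_{\max}/X_{\min}$ ratios are collected. The sparsity bound is immediate from Lemma~\ref{lemma4}, which gives $\mathcal{M}(\widehat\alpha)\le 4\phi_{\max}(1-\mu)^{-2}\lambda^{-2}X_{\min}^{-2}\|\widehat f-f_0\|_n^2$ on $\mathbb{A}\cap\mathbb{B}$; substituting the squared prediction bound cancels $\lambda^{-2}$ and yields $\mathcal{M}(\widehat\alpha)\le 16\phi_{\max}(1-\mu)^{-2}\kappa^{-2}X_{\max}^2X_{\min}^{-2}s$. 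Finally, the probability statement is inherited from Lemma~\ref{lemma5} together with the Gaussian tail bound already used in the proof of Lemma~\ref{lemma2}, which guarantee $\mathbb{P}(\mathbb{A}\cap\mathbb{B})\ge 1-(3M)^{1-A^2\mu^2/8}$ under \eqref{lambda-form}.

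The step I expect to be the main obstacle is applying the restricted eigenvalue bound at the \emph{random} threshold estimate $\widehat\tau$: because $\delta_0=0$ renders $\tau_0$ unidentified, $\widehat\tau$ may lie anywhere in $\mathbb{T}$, so the inequality $|\gamma_{J_0}|_2\le\kappa^{-1}n^{-1/2}|\mathbf{X}(\widehat\tau)\gamma|_2$ must hold uniformly over $\tau\in\mathbb{T}$ — which is exactly why Assumption~\ref{re-assump} is imposed here with $\mathbb{S}=\mathbb{T}$. The accompanying bookkeeping, namely converting the cone expressed through the penalty weights $\widehat{\mathbf{D}}$ into the unweighted cone required by Assumption~\ref{re-assump} while tracking the $X_{\max}$ and $X_{\min}$ factors, is the only delicate calculation and is what produces the design-dependent constants in the stated inequalities.
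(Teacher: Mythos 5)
Your reduction of the basic inequality is correct and matches the paper: with $\delta_0=0$ you get $R_n=0$ and $\left\vert \widehat{\mathbf{D}}\alpha_0\right\vert_1=\left\vert \mathbf{D}\alpha_0\right\vert_1$, so \eqref{imp-ineq-lem} collapses to the standard display (the paper reaches the identical display via \eqref{imp-ineq-2-lem}, using $\Vert f_{(\alpha_0,\widehat\tau)}-f_0\Vert_n=0$ --- an equivalent route); your handling of the sparsity bound via Lemma \ref{lemma4} and of the probability statement via Lemma \ref{lemma5} is exactly the paper's, and your final constants agree. The genuine gap is at the URE step. When you transfer the weighted cone $\vert\widehat{\mathbf{D}}\gamma_{J_0^c}\vert_1\le\frac{1+\mu}{1-\mu}\vert\widehat{\mathbf{D}}\gamma_{J_0}\vert_1$ to a cone on $\gamma=\widehat\alpha-\alpha_0$ itself, the best constant available is $\frac{1+\mu}{1-\mu}\cdot\frac{X_{\max}}{X_{\min}}$, since the weights bound $\vert\gamma_{J_0^c}\vert_1$ from below by $X_{\min}$ and $\vert\gamma_{J_0}\vert_1$ from above by $X_{\max}$. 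This cone is strictly wider than the $\frac{1+\mu}{1-\mu}$-cone whenever $X_{\max}>X_{\min}$, while the lemma only assumes $\kappa=\kappa(s,\frac{1+\mu}{1-\mu},\mathbb{T})$, whose defining minimum is taken over the narrower cone. So the inequality $\vert\gamma_{J_0}\vert_2\le\kappa^{-1}\Vert\widehat f-f_0\Vert_n$ that you invoke is not licensed: $\gamma$ need not lie in the set over which $\kappa$ is a minimum. Your closing remark that the weighted-to-unweighted conversion merely ``tracks the $X_{\max}$ and $X_{\min}$ factors'' is exactly where this is hidden --- the conversion changes the cone opening, i.e., \emph{which} restricted eigenvalue constant is needed, not just multiplicative constants. (By contrast, the part you flagged as the main obstacle, applying the bound at the random $\widehat\tau$, is unproblematic: uniformity over $\tau\in\mathbb{T}$ is precisely what Assumption \ref{re-assump} with $\mathbb{S}=\mathbb{T}$ supplies.)

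The paper avoids the mismatch by never unweighting before the URE: it applies Assumption \ref{re-assump} directly to the weighted vector $\widehat{\mathbf{D}}(\widehat\alpha-\alpha_0)$, which satisfies the cone condition with constant exactly $\frac{1+\mu}{1-\mu}$ because $\widehat{\mathbf{D}}$ is diagonal with positive entries and hence support-preserving. This yields $\kappa^2\vert\widehat{\mathbf{D}}(\widehat\alpha-\alpha_0)_{J_0}\vert_2^2\le n^{-1}\vert\mathbf{X}(\widehat\tau)\widehat{\mathbf{D}}(\widehat\alpha-\alpha_0)\vert_2^2$, which the paper then bounds by $\max(\widehat{\mathbf{D}})^2\Vert\widehat f-f_0\Vert_n^2$; the factors $X_{\max}$ and $X_{\min}$ enter only afterwards, through $\max(\widehat{\mathbf{D}})\le X_{\max}$ in the Cauchy--Schwarz step $\vert\widehat{\mathbf{D}}\gamma_{J_0}\vert_1\le\sqrt{s}\,\vert\widehat{\mathbf{D}}\gamma_{J_0}\vert_2$ chain and $\min(\widehat{\mathbf{D}})\ge X_{\min}$ in \eqref{min-eig-D}. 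To repair your argument you must either follow this route, or strengthen the hypothesis to $\kappa(s,\frac{1+\mu}{1-\mu}\frac{X_{\max}}{X_{\min}},\mathbb{T})$ --- which proves a weaker statement than the lemma as given. With that single step replaced, the rest of your proposal goes through and reproduces the stated bounds.
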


\begin{proof}[\textbf{Proof of Lemma \ref{thm-case1}}]
Note that $\delta_0 = 0$ implies $\left\Vert f_{(\alpha_0,\widehat{\tau})} - f_0 \right\Vert ^2=0$. Combining this with \eqref{imp-ineq-2-lem}, we have
\begin{align}
\left\Vert \widehat{f}-f_{0}\right\Vert _{n}^{2}+\left( 1-\mu \right)
\lambda \left\vert \widehat{\mathbf{D}}(\widehat{\alpha }-\alpha
_{0})\right\vert _{1}\leq 2 \lambda \left\vert \widehat{%
\mathbf{D}}(\widehat{\alpha }-\alpha _{0})_{J_{0}}\right\vert _{1},
\label{l3*1}
\end{align}
which implies that
\begin{equation*}
\left\vert \widehat{\mathbf{D}}(\widehat{\alpha }-\alpha
_{0})_{J_{0}^{c}}\right\vert _{1}\leq \frac{1+\mu}{1-\mu }\left\vert
\widehat{\mathbf{D}}(\widehat{\alpha }-\alpha _{0})_{J_{0}}\right\vert _{1}.
\end{equation*}%
This in turn allows us to apply Assumption \ref{re-assump}, specifically URE$(s,\frac{1+\mu}{1-\mu },\mathbb{T})$, to yield
\begin{equation}
\begin{split}
\kappa ^{2} \left\vert \widehat{\mathbf{D}}(\widehat{\alpha }-\alpha
_{0})_{J_{0}}\right\vert _{2}^{2}&  \leq \frac{1}{n}|\mathbf{X}(\widehat{\tau})
\widehat{\mathbf{D}}(\widehat{\alpha }-\alpha _{0})|_{2}^{2} \\
& =\frac{1}{n}(\widehat{\alpha }-\alpha _{0})^{\prime }\widehat{\mathbf{D}}
\mathbf{X}(\widehat{\tau})^{\prime }\mathbf{X}(\widehat{\tau})\widehat{\mathbf{D}}(%
\widehat{\alpha }-\alpha _{0}) \\
& \leq \frac{\max (\widehat{\mathbf{D}})^{2}}{n}(\widehat{\alpha }-\alpha
_{0})^{\prime }\mathbf{X}(\widehat{\tau})^{\prime }\mathbf{X}(\widehat{\tau})(\widehat{\alpha }-\alpha _{0}) \\
& =\max (\widehat{\mathbf{D}})^{2}  \left\Vert \widehat{f}-f_{0}\right\Vert _{n}^{2},
\end{split}
\label{l3*2}
\end{equation}%
where $\kappa =\kappa (s,\frac{1+\mu}{1-\mu },\mathbb{T})$ and the last equality is due to the assumption that $\delta_0 = 0$.

Combining \eqref{l3*1} with \eqref{l3*2} yields
\begin{align*}
\left\Vert \widehat{f}-f_{0}\right\Vert _{n}^{2}& \leq  2
\lambda \left\vert \widehat{\mathbf{D}}(\widehat{\alpha }-\alpha
_{0})_{J_{0}}\right\vert _{1} \\
& \leq  2  \lambda \sqrt{s }\left\vert \widehat{\mathbf{D}}(\widehat{\alpha }-\alpha
_{0})_{J_{0}}\right\vert _{2} \\
& \leq \frac{2 \lambda }{\kappa }\sqrt{s }\max (\widehat{\mathbf{D}}) \left\Vert \widehat{f}-f_{0}\right\Vert _{n}.
\end{align*}%
Then the first conclusion of the lemma follows immediately.

In addition, combining the arguments above with the first conclusion of the
lemma yields
\begin{align}\label{alphahat-derivation}
\begin{split}
\left\vert \widehat{\mathbf{D}}\left( \widehat{\alpha }-\alpha _{0}\right)\right\vert _{1}& =\left\vert \widehat{\mathbf{D}}(\widehat{\alpha }-\alpha_{0})_{J_{0}}\right\vert _{1}+\left\vert \widehat{\mathbf{D}}(\widehat{\alpha }-\alpha _{0})_{J_{0}^{c}}\right\vert _{1} \\
& \leq 2 \left( 1-\mu \right) ^{-1}\left\vert \widehat{\mathbf{D}}(\widehat{\alpha }-\alpha _{0})_{J_{0}}\right\vert _{1} \\
& \leq 2 \left( 1-\mu \right) ^{-1}\sqrt{s }\left\vert \widehat{\mathbf{D}}(\widehat{\alpha }-\alpha _{0})_{J_{0}}\right\vert _{2} \\
& \leq \frac{2}{\kappa \left( 1-\mu \right) } \sqrt{s } \max (\widehat{\mathbf{D}})\left\Vert \widehat{f}-f_{0}\right\Vert _{n} \\
& \leq \frac{4\lambda }{\left( 1-\mu \right)\kappa ^{2}}{s} X_{\max }^{2},
\end{split}
\end{align}
which proves the second conclusion of the lemma since
\begin{align}\label{min-eig-D}
\left\vert \widehat{\mathbf{D}}\left( \widehat{\alpha }-\alpha _{0}\right)
\right\vert _{1}\geq \min (\widehat{\mathbf{D}})\left\vert  \widehat{\alpha }-\alpha _{0} \right\vert _{1}.
\end{align}
Finally, the lemma follows by Lemma \ref{lemma4}
with the bound on  $\mathbb{P} (\mathbb{A}\cap \mathbb{B})$
as in the proof of Lemma \protect\ref{lemma2}.
\end{proof}

\begin{proof}[Proof of Theorem \ref{main-thm-case1}]
The proof follows immediately 
from combining Assumption \ref{assumption-main-1} with 
Lemma \ref{thm-case1}. In particular, the constant $K_2$ can be chosen as   
\begin{align*}
K_2 &\equiv  \max \left( 2AC_2, \frac{4A C_2^2}{(1-\mu)C_3},
\frac{16 C_2^2}{(1-\mu)^2 C_3^2} \right).
\end{align*}%
\end{proof}

\section{Proofs for Section \ref{sec:oracle3}}\label{sec:proofs:oracle3}

To simplify notation, in this section, we assume without loss of generality that $Q_{i}=i/n$. Then $\mathbb{T} = [t_0,t_1] \subset (0,1)$.
For some constant $\eta > 0$, define  an event%
\begin{equation*}
\mathbb{C}\left( \eta \right) := \left\{ \sup_{\left\vert \tau -\tau
_{0}\right\vert < \eta }\left\vert \frac{2}{n}\sum_{i=1}^{n}U_{i}X_{i}^{%
\prime }\delta _{0}\left[ 1\left( Q_{i}<\tau _{0}\right) -1\left( Q_{i}<\tau
\right) \right] \right\vert \leq \lambda \sqrt{\eta }\right\}.
\end{equation*}
Recall that $h_{n}\left( \eta \right) :=\left( \left( 2n\eta \right) ^{-1}\sum_{i=%
\max\{1, \left[ n\left( \tau _{0}-\eta \right) \right]\} }^{\min\{ \left[ n\left( \tau
_{0}+\eta \right) \right], n\} }\left( X_{i}^{\prime }\delta _{0}\right)
^{2}\right) ^{1/2}$.

The following lemma gives the lower bound of the probability of the event
$\mathbb{A}\cap \mathbb{B}\cap [\cap_{j=1}^m \mathbb{C}(\eta_j)]$
for a given $m$ and some positive constants $\eta _{1},...,\eta _{m}$.
 To deal with the event $\cap_{j=1}^m \mathbb{C}(\eta_j)$, an extra term is added to the lower bound of the probability, in comparison to Lemma \ref{lemma5}.

\begin{lem}
[Probability of $\mathbb{A}\cap \mathbb{B}\cap \{\cap_{j=1}^m \mathbb{C}(\eta_j)\}$]
\label{prob-ABC}
For a given $m$ and some positive constants $\eta _{1},...,\eta _{m}$ such that $h_{n}\left( \eta_j  \right) > 0$ for each $j=1,\ldots,m$,
\begin{equation*}
\mathbb{P}\left\{ \mathbb{A\bigcap B} \bigcap \left[ \bigcap_{j=1}^{m}\mathbb{C}\left( \eta
_{j}\right) \right] \right\} \geq 1-6M\Phi \left( -\frac{\mu \sqrt{nr_{n}}}{2\sigma }%
\lambda \right) -4\sum_{j=1}^{m}\Phi \left( -\frac{\lambda \sqrt{n}}{2\sqrt{2%
}\sigma h_{n}\left( \eta _{j}\right) }\right).
\end{equation*}%
\end{lem}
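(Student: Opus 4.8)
The plan is to bound the complement by a union bound and to recycle Lemma \ref{lemma5} for the part involving $\mathbb{A}\cap\mathbb{B}$, so that only the new events $\mathbb{C}(\eta_j)$ require work. Writing
\begin{align*}
\mathbb{P}\Big\{\mathbb{A}\cap\mathbb{B}\cap\big[\cap_{j=1}^m\mathbb{C}(\eta_j)\big]\Big\}
\ge 1-\mathbb{P}\{\mathbb{A}^c\}-\mathbb{P}\{\mathbb{B}^c\}-\sum_{j=1}^m\mathbb{P}\{\mathbb{C}(\eta_j)^c\},
\end{align*}
the proof of Lemma \ref{lemma5} already delivers $\mathbb{P}\{\mathbb{A}^c\}+\mathbb{P}\{\mathbb{B}^c\}\le 6M\Phi(-\mu\sqrt{nr_n}\lambda/(2\sigma))$. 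Hence it suffices to show, for each $j$, that $\mathbb{P}\{\mathbb{C}(\eta_j)^c\}\le 4\Phi\big(-\lambda\sqrt n/(2\sqrt2\,\sigma h_n(\eta_j))\big)$, and then sum over $j=1,\dots,m$.

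For the core estimate, fix $\eta=\eta_j$ and set $W_i:=U_iX_i'\delta_0$. Since $U_i\sim N(0,\sigma^2)$ and the design is deterministic, the $W_i$ are independent, mean-zero, and symmetric, with $W_i\sim N(0,\sigma^2(X_i'\delta_0)^2)$. Using $Q_i=i/n$, the indicator difference $1(Q_i<\tau_0)-1(Q_i<\tau)$ is supported on the block of indices lying between $n\tau$ and $n\tau_0$, so as $\tau$ ranges over $|\tau-\tau_0|<\eta$ the inner sum is a partial-sum process of the $W_i$ over the window running from $\max\{1,[n(\tau_0-\eta)]\}$ to $\min\{[n(\tau_0+\eta)],n\}$, pinned to $0$ at $\tau=\tau_0$. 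Exactly as in the treatment of the event $\mathbb{B}$ in Lemma \ref{lemma5}, the summands are independent and symmetric, so L\'evy's inequality \citep[see e.g. Proposition A.1.2 of][]{VW} lets us pass from the supremum over $\tau$ to the full window sum $\Sigma:=\sum_i W_i$ taken over that window. The event $\mathbb{C}(\eta)^c$ requires this supremum to exceed $n\lambda\sqrt\eta/2$ in absolute value; since $\Sigma\sim N(0,V)$ with
\begin{align*}
V=\sigma^2\!\!\!\sum_{i=\max\{1,[n(\tau_0-\eta)]\}}^{\min\{[n(\tau_0+\eta)],n\}}\!\!\!(X_i'\delta_0)^2=2n\eta\,\sigma^2 h_n^2(\eta),
\end{align*}
the two-sided Gaussian tail at $t=n\lambda\sqrt\eta/2$ is $2\Phi(-t/\sqrt V)=2\Phi\big(-\lambda\sqrt n/(2\sqrt2\,\sigma h_n(\eta))\big)$, and the L\'evy factor then produces the claimed $4\Phi(\cdot)$. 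The hypothesis $h_n(\eta_j)>0$ is what keeps this tail argument finite.

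The delicate step, and the one I expect to be the main obstacle, is precisely this L\'evy reduction for $\mathbb{C}(\eta)$. Unlike the monotone process $\tau\mapsto\sum_{i:Q_i<\tau}W_i$ handled for $\mathbb{B}$, the relevant process $T(\tau)=\sum_i W_i[1(Q_i<\tau_0)-1(Q_i<\tau)]$ is anchored to $0$ at the \emph{interior} point $\tau_0$ and accumulates increments on both sides, so it is the oscillation of a random walk about an interior index rather than a one-sided maximal partial sum. One must split the window at $n\tau_0$ into its left and right arms, apply L\'evy's inequality to each arm (reversing the left arm so that it too is a genuine partial-sum process anchored at $\tau_0$), and then recombine the two Gaussian tails against the full-window variance $V=2n\eta\sigma^2 h_n^2(\eta)$, to which the two arm variances add. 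Keeping the constant sharp here, and verifying that the supremum over the continuum of $\tau$ is attained at the finitely many jump locations $Q_i=i/n$, is the part that needs careful bookkeeping; everything else follows the template of Lemma \ref{lemma5}.
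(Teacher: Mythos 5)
Your proposal follows essentially the same route as the paper's proof: the union bound
\begin{equation*}
\mathbb{P}\Big\{\mathbb{A}\cap\mathbb{B}\cap\Big[\bigcap_{j=1}^{m}\mathbb{C}(\eta_j)\Big]\Big\}\geq 1-\mathbb{P}\{\mathbb{A}^c\}-\mathbb{P}\{\mathbb{B}^c\}-\sum_{j=1}^{m}\mathbb{P}\{\mathbb{C}(\eta_j)^c\},
\end{equation*}
recycling Lemma \ref{lemma5} for $\mathbb{A}\cap\mathbb{B}$, then a L\'evy-inequality reduction of the supremum over $\tau$ to the full-window sum, and finally the Gaussian tail with variance $V=2n\eta_j\sigma^2 h_n^2(\eta_j)$ at threshold $t=n\lambda\sqrt{\eta_j}/2$, giving $t/\sqrt{V}=\lambda\sqrt{n}/(2\sqrt{2}\sigma h_n(\eta_j))$ exactly as in the paper. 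One difference in emphasis: the paper does \emph{not} perform the left/right-arm split you describe; it applies L\'evy's inequality in a single stroke to the two-sided process anchored at the interior point $\tau_0$, simply citing the treatment of $\mathbb{B}$ in Lemma \ref{lemma5}. So on the step you flag as delicate you are in fact more careful than the printed proof.

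However, your resolution of that step overclaims the constant. Splitting the window at $[n\tau_0]$, reversing the left arm, and applying L\'evy to each arm gives, per arm, $\mathbb{P}(\sup_{\mathrm{arm}}>t)\leq 2\,\mathbb{P}(|\Sigma_{\mathrm{arm}}|>t)=4\Phi(-t/\sqrt{V_{\mathrm{arm}}})$, and since the two arms involve disjoint blocks of variables, the union over arms yields
\begin{equation*}
4\left[\Phi\Big(-\frac{t}{\sqrt{V_L}}\Big)+\Phi\Big(-\frac{t}{\sqrt{V_R}}\Big)\right]\leq 8\,\Phi\Big(-\frac{t}{\sqrt{V}}\Big),
\end{equation*}
using only $V_L,V_R\leq V=V_L+V_R$. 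The fact that the arm variances \emph{add} to $V$ does not let you recombine the two tails into a single $4\Phi(-t/\sqrt{V})$: the needed inequality $\Phi(-t/\sqrt{V_L})+\Phi(-t/\sqrt{V_R})\leq\Phi(-t/\sqrt{V})$ fails for moderate $t/\sqrt{V}$ (e.g.\ balanced arms with $t/\sqrt{V}\approx 0.9$), even though it holds in the far tail. So what your sketch rigorously proves is the lemma with $8\Phi(\cdot)$ in place of $4\Phi(\cdot)$ in each summand. This is the same slippage implicit in the paper's one-shot application of L\'evy, since the sums indexed by $\tau$ on the two sides of $\tau_0$ are not the initial partial sums of any single ordering of the window variables, and it is harmless: the bound enters the sequel only through the probability statement in Lemma \ref{main-thm-fixed-threshold} and the constant $C_4$ in Theorem \ref{main-text-thm-fixed-threshold}. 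But as written, your claim that the two-arm recombination "produces the claimed $4\Phi(\cdot)$" is a genuine (if purely constant-level) gap; either carry the factor $8$, or supply an argument beyond the per-arm union bound.
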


\begin{proof}[\textbf{Proof of Lemma \protect\ref{prob-ABC}}]
Given Lemma \ref{lemma5}, it remains to examine the probability of $\mathbb{C%
}\left( \eta _{j}\right) $. As in the proof of Lemma \ref{lemma5}, L\'{e}%
vy's inequality yields that%
\begin{eqnarray*}
\mathbb{P}\left\{ \mathbb{C}\left( \eta _{j}\right) ^{c}\right\}  &\leq &%
\mathbb{P}\left\{ \sup_{\left\vert \tau -\tau _{0}\right\vert \leq \eta
_{j}}\left\vert \frac{2}{n}\sum_{i=1}^{n}U_{i}X_{i}^{\prime }\delta _{0}%
\left[ 1\left( Q_{i}<\tau _{0}\right) -1\left( Q_{i}<\tau \right) \right]
\right\vert >\lambda \sqrt{\eta _{j}}\right\} . \\
&\leq &2\mathbb{P}\left\{ \left\vert \frac{2}{n}\sum_{i=\left[ n\left( \tau
_{0}-\eta _{j}\right) \right] }^{\left[ n\left( \tau _{0}+\eta _{j}\right) %
\right] }U_{i}X_{i}^{\prime }\delta _{0}\right\vert >\lambda \sqrt{\eta _{j}}%
\right\}  \\
\, &\leq &4\Phi \left( -\frac{\lambda \sqrt{n}}{2\sqrt{2}\sigma h_{n}\left(
\eta _{j}\right) }\right) .
\end{eqnarray*}%
Hence, we have proved the lemma since $\mathbb{P}\left\{ \mathbb{A\bigcap B}%
\bigcap \left[ \bigcap_{j=1}^{m}\mathbb{C}\left( \eta _{j}\right) \right] \right\} \geq 1-\mathbb{P}%
\{\mathbb{A}^{c}\}-\mathbb{P}\{\mathbb{B}^{c}\}-\sum_{j=1}^{m}\mathbb{P}\{%
\mathbb{C}\left( \eta _{j}\right) ^{c}\}$.
\end{proof}

The following lemma gives an upper bound of $\left\vert \widehat{\tau}-\tau _{0}\right\vert$ using only Assumption \ref{A-discontinuity}, conditional on the events $\mathbb{A}$ and $\mathbb{B}$.

\begin{lem}\label{lem-claim2}
Suppose that Assumption  \ref{A-discontinuity} holds.
Let 
\begin{align*}
\eta^\ast = \max \left\{ \min_{i}\left\vert Q_{i}-\tau_0\right\vert, c^{-1}\lambda\left( 6 X_{\max }\alpha _{\max }\mathcal{M}
(\alpha _{0})+  2\mu  X_{\max }\left\vert \delta _{0}\right\vert _{1}\right) \right\},
\end{align*}
where  $c$ is  the constant defined in Assumption \ref{A-discontinuity}.
Then conditional on the events $\mathbb{A}$ and $\mathbb{B}$,
\begin{align*}
\left\vert \widehat{\tau}-\tau _{0}\right\vert \leq \eta^\ast.
\end{align*}%
\end{lem}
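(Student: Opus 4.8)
The plan is to argue by contradiction, feeding the prediction-consistency bound obtained (conditional on $\mathbb{A}\cap\mathbb{B}$) inside the proof of Lemma \ref{lemma2} into the quantitative separation supplied by Assumption \ref{A-discontinuity}. The definition of $\eta^\ast$ is reverse-engineered precisely so that $c\,\eta^\ast$ dominates that consistency bound, which is what will produce the contradiction.

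First I would record the intermediate inequality \eqref{lem2-conc} from the proof of Lemma \ref{lemma2}: on $\mathbb{A}\cap\mathbb{B}$, after discarding the nonnegative penalty term on the left,
\[
\left\Vert \widehat{f}-f_{0}\right\Vert_{n}^{2}\le 6\lambda X_{\max}\alpha_{\max}\mathcal{M}(\alpha_{0})+2\mu\lambda X_{\max}\left\vert\delta_{0}\right\vert_{1}=:B .
\]
By inspection, the second argument of the maximum defining $\eta^\ast$ is exactly $c^{-1}B$, so $c\,\eta^\ast\ge B$. Now suppose, toward a contradiction, that $\left\vert\widehat{\tau}-\tau_{0}\right\vert>\eta^\ast$. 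Since $\eta^\ast\ge\min_{i}\left\vert Q_{i}-\tau_{0}\right\vert$, the triple $(\eta,\tau,\alpha)=(\eta^\ast,\widehat{\tau},\widehat{\alpha})$ meets the hypotheses $\left\vert\tau-\tau_{0}\right\vert>\eta\ge\min_{i}\left\vert Q_{i}-\tau_{0}\right\vert$ of Assumption \ref{A-discontinuity}, whence
\[
\left\Vert\widehat{f}-f_{0}\right\Vert_{n}^{2}=\left\Vert f_{(\widehat{\alpha},\widehat{\tau})}-f_{0}\right\Vert_{n}^{2}>c\,\eta^\ast\ge B,
\]
which contradicts the displayed consistency bound. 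Hence $\left\vert\widehat{\tau}-\tau_{0}\right\vert\le\eta^\ast$, as claimed.

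The step that needs the most care — and the one I expect to be the real obstacle — is the invocation of Assumption \ref{A-discontinuity} at $\alpha=\widehat{\alpha}$. That assumption only separates $f_{(\alpha,\tau)}$ from $f_0$ for $\alpha$ in the sparse class $\{\alpha:\mathcal{M}(\alpha)\le s\}$, so the separation inequality $\left\Vert\widehat{f}-f_{0}\right\Vert_{n}^{2}>c\,\eta^\ast$ is only licensed once $\mathcal{M}(\widehat{\alpha})\le s$ is known. This is not automatic for the Lasso, so I would either verify it on the relevant event (combining the sparsity control of Lemma \ref{lemma4} with the bound $B$, if necessary after enlarging the $s$ entering the assumption) or restrict to the event on which the final sparsity oracle inequality holds, absorbing the complement into the failure probability already present in Theorem \ref{main-text-thm-fixed-threshold}. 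Apart from this point the argument is pure bookkeeping: the only algebraic content is the identity between $c^{-1}B$ and the second term in the maximum defining $\eta^\ast$, which makes the contradiction immediate and explains why no restricted-eigenvalue constant enters this particular bound.
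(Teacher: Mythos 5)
Your proof is correct and takes essentially the same route as the paper's: the paper also argues by contradiction, pitting the bound $6\lambda X_{\max}\alpha_{\max}\mathcal{M}(\alpha_0)+2\mu\lambda X_{\max}\left\vert\delta_0\right\vert_1$ (its inequality \eqref{contra-ineq}, which is just \eqref{lem2-conc} recast as a lower bound on the penalized-objective difference, contradicted via minimality of $(\widehat{\alpha},\widehat{\tau})$) against the separation $c\eta^\ast$ from Assumption \ref{A-discontinuity}. The sparsity caveat you flag is genuine but applies equally to the paper's own proof, which likewise invokes Assumption \ref{A-discontinuity} at $\alpha=\widehat{\alpha}$ without verifying $\mathcal{M}(\widehat{\alpha})\leq s$; your suggested remedies (enlarging $s$ or conditioning on the event where the sparsity bound of Lemma \ref{lemma4} suffices) are reasonable ways to close it.
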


\begin{proof}[\textbf{Proof of Lemma  \ref{lem-claim2}}]
As in the proof of Lemma \ref{lemma1}, we have,  on the events $\mathbb{A}$ and $\mathbb{B}$,
\begin{align}\label{pf-lem-claim2-a}
\begin{split}
\lefteqn{\widehat{S}_{n}-S_{n}(\alpha_0 ,\tau_0 )} \\
& =
\left\Vert \widehat{f}-f_{0}\right\Vert _{n}^{2}
- 2n^{-1}\sum_{i=1}^{n}U_{i}X_{i}^{\prime }(\widehat{\beta }-\beta_0) - 2n^{-1}\sum_{i=1}^{n}U_{i}X_{i}^{\prime }(\widehat{\delta }-\delta_0)1(Q_{i}<\widehat{\tau }) - R_n \\
&\geq
\left\Vert \widehat{f}-f_{0}\right\Vert _{n}^{2}
- \mu \lambda \left\vert \widehat{\mathbf{D}}(\widehat{\alpha }-\alpha_0 )\right\vert _{1}  - R_n.
\end{split}
\end{align}
Then using \eqref{sparsity-useful-fact}, on the events $\mathbb{A}$ and $\mathbb{B}$,
\begin{align}\label{contra-ineq}
\begin{split}
\lefteqn{\left[ \widehat{S}_{n} + \lambda \left\vert \widehat{\mathbf{D}}\widehat{\alpha }\right\vert _{1} \right] - \left[ S_{n}(\alpha_0 ,\tau_0 ) + \lambda \left\vert \mathbf{D}\alpha_0 \right\vert _{1} \right] } \\
&\geq
\left\Vert \widehat{f}-f_{0}\right\Vert _{n}^{2}
- \mu \lambda \left\vert \widehat{\mathbf{D}}(\widehat{\alpha }-\alpha_0 )\right\vert _{1}
- \lambda \left[ \left\vert \mathbf{D}\alpha_0 \right\vert _{1} - \left\vert \widehat{\mathbf{D}}\widehat{\alpha }\right\vert _{1} \right]  - R_n \\
&\geq
\left\Vert \widehat{f}-f_{0}\right\Vert _{n}^{2}
- 2 \lambda \left\vert \widehat{\mathbf{D}}(\widehat{\alpha }-\alpha_0 )_{J_0} \right\vert _{1}
- \lambda \left[ \left\vert \mathbf{D}\alpha_0 \right\vert _{1} - \left\vert \widehat{\mathbf{D}}\alpha_0 \right\vert _{1} \right]
 - R_n \\
&\geq
\left\Vert \widehat{f}-f_{0}\right\Vert _{n}^{2}
- \left[ 6\lambda X_{\max }\alpha _{\max }\mathcal{M}%
(\alpha _{0})+2\mu \lambda X_{\max }\left\vert \delta _{0}\right\vert _{1} \right],
\end{split}
\end{align}
where the  last inequality comes from 
\eqref{rough-ineq3} and following bounds:
\begin{align*}
2 \lambda \left\vert \widehat{\mathbf{D}}(\widehat{\alpha }-\alpha_0 )_{J_0} \right\vert _{1}
&\leq 4 \lambda  X_{\max}\alpha _{\max } \mathcal{M}\left( \alpha_0 \right), \\
 \lambda \left| \left\vert \mathbf{D}\alpha_0 \right\vert _{1} - \left\vert \widehat{\mathbf{D}}\alpha_0 \right\vert _{1} \right|
&\leq  2 \lambda X_{\max}\alpha _{\max }  \mathcal{M}\left( \alpha_0 \right).
\end{align*}%

Suppose now that $\left\vert \hat{\tau}-\tau _{0}\right\vert> \eta^\ast$.
Then Assumption \ref{A-discontinuity} and \eqref{contra-ineq} together imply that
\begin{align*}
\left[ \widehat{S}_{n} + \lambda \left\vert \widehat{\mathbf{D}}\widehat{\alpha }\right\vert _{1} \right] - \left[ S_{n}(\alpha_0 ,\tau_0 ) + \lambda \left\vert \mathbf{D}\alpha_0 \right\vert _{1} \right]
 > 0,
\end{align*}
which  leads to contradiction as $\widehat{\tau}$ is the minimizer of the
criterion function as in \eqref{joint-max}.
Therefore, we have proved the lemma.
\end{proof}

\begin{rem}
The nonasymptotic bound in Lemma \ref{lem-claim2} can be translated into the consistency of  $\widehat \tau$, as in Lemma \ref{lemma2}.
That is,  if $n \rightarrow \infty$, $M \rightarrow \infty$, and $\lambda \mathcal{M}(\alpha _{0})\rightarrow 0$,
Lemma \ref{lem-claim2} implies the consistency of $\widehat \tau$,
provided that
$X_{\max }$, $\alpha _{\max }$, and $c^{-1}$ are bounded uniformly in $n$
and $Q_i$ is continuously distributed.
\end{rem}

We now provide a lemma for bounding the prediction risk as well as the $\ell_1$ estimation loss for $\alpha_0$.

\begin{lem}
\label{lem-claim3} Suppose that $\left\vert \hat{\tau}-\tau _{0}\right\vert
\leq c_{\tau }$ and $\left\vert \hat{\alpha}-\alpha _{0}\right\vert _{1}\leq
c_{\alpha }$ for some $(c_{\tau },c_{\alpha })$. Suppose further that
Assumption \ref{A-smoothness} and Assumption \ref{re-assump} hold with $%
\mathbb{S} = \left\{ \left\vert \tau -\tau _{0}\right\vert \leq c_{\tau
}\right\} $, $\kappa =\kappa (s,\frac{2+\mu}{1-\mu },\mathbb{S})$ for $0<\mu <1$
and $\mathcal{M}(\alpha _{0})\leq s\leq M$. Then, conditional on $\mathbb{A}$%
, $\mathbb{B\ }$and $\mathbb{C}(c_{\tau })$, we have
\begin{align*}
\left\Vert \widehat{f}-f_{0}\right\Vert _{n}^{2}& \leq   3\lambda  \left\{\sqrt{c_{\tau }}+\left( 2 X_{\min } \right)^{-1} c_{\tau } C \left\vert \delta _{0}\right\vert _{1} \vee \frac{6X_{\max}^2}{\kappa^2}\lambda s \vee \frac{2  X_{\max}}{\kappa}  \left(     c_{\alpha}c_{\tau}C \deltanorm  s \right)^{1/2} \right\}, \\
\alphahatnorm& \leq  \frac{3}{(1-\mu)X_{\min}}  \left\{  \sqrt{c_{\tau }}+\left( 2 X_{\min } \right)^{-1} c_{\tau } C \deltanorm
\vee \frac{6X_{\max}^2}{\kappa^2}  \lambda s \vee  \frac{2X_{\max}}{\kappa} \left( c_{\alpha} c_{\tau}C \deltanorm s\right)^{1/2} \right\}.
\end{align*}
\end{lem}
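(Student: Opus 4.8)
The plan is to argue along the same lines as the proof of Lemma~\ref{thm-case1}, starting from the basic inequality \eqref{imp-ineq-lem}, but now retaining the two terms that vanished there because $\delta_0=0$ --- the remainder $R_n$ and the penalty discrepancy $\lambda\bigl\vert\,\vert\widehat{\mathbf{D}}\alpha_0\vert_1-\vert\mathbf{D}\alpha_0\vert_1\,\bigr\vert$ --- and correcting for the fact that $\Vert\widehat f-f_0\Vert_n$ no longer equals $n^{-1/2}\vert\mathbf{X}(\widehat\tau)(\widehat\alpha-\alpha_0)\vert_2$. I work throughout conditional on $\mathbb{A}\cap\mathbb{B}\cap\mathbb{C}(c_\tau)$. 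On $\mathbb{C}(c_\tau)$, since $\vert\widehat\tau-\tau_0\vert\le c_\tau$, the definition of $R_n$ gives at once $R_n\le\lambda\sqrt{c_\tau}$. For the penalty discrepancy, only the $\delta$-block of $\widehat{\mathbf{D}}$ and $\mathbf{D}$ depends on $\tau$, so it is bounded by $\sum_{j=1}^{M}\bigl\vert\,\Vert X^{(j)}(\widehat\tau)\Vert_n-\Vert X^{(j)}(\tau_0)\Vert_n\,\bigr\vert\,\vert\delta_0^{(j)}\vert$; writing $\vert a-b\vert=\vert a^2-b^2\vert/(a+b)$, bounding the numerator by Assumption~\ref{A-smoothness} and the denominator below by $2X_{\min}$ (using $\Vert X^{(j)}(\tau)\Vert_n\ge X_{\min}$ for $\tau\ge t_0$), this term is at most $(2X_{\min})^{-1}c_\tau C\vert\delta_0\vert_1$. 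Writing $b:=\sqrt{c_\tau}+(2X_{\min})^{-1}c_\tau C\vert\delta_0\vert_1$ and $W:=\vert\widehat{\mathbf{D}}(\widehat\alpha-\alpha_0)_{J_0}\vert_1$, inequality \eqref{imp-ineq-lem} becomes
\begin{equation*}
\left\Vert\widehat f-f_0\right\Vert_n^2+(1-\mu)\lambda\left\vert\widehat{\mathbf{D}}(\widehat\alpha-\alpha_0)\right\vert_1\le 2\lambda W+\lambda b.
\end{equation*}

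Next I would split on the size of $W$. If $W\le b$, the right-hand side is at most $3\lambda b$, which produces the first entry of each displayed maximum. If $W>b$, discarding the nonnegative term $\Vert\widehat f-f_0\Vert_n^2$ and rearranging gives the cone bound $\vert\widehat{\mathbf{D}}(\widehat\alpha-\alpha_0)_{J_0^c}\vert_1\le\frac{2+\mu}{1-\mu}W$, which is exactly the cone entering Assumption~\ref{re-assump} with $c_0=\frac{2+\mu}{1-\mu}$. Applying Assumption~\ref{re-assump} to $\gamma=\widehat{\mathbf{D}}(\widehat\alpha-\alpha_0)$ and repeating the computation in the proof of Lemma~\ref{thm-case1} yields $W\le\frac{\sqrt s\,X_{\max}}{\kappa}\Vert g\Vert_n$, with $g:=\mathbf{X}(\widehat\tau)(\widehat\alpha-\alpha_0)$, while $\Vert\widehat f-f_0\Vert_n^2\le 2\lambda W+\lambda b\le 3\lambda W$.

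The step with genuine content is to relate $\Vert g\Vert_n$ to $\Vert\widehat f-f_0\Vert_n$. Unlike in Lemma~\ref{thm-case1}, the two differ: $g-(\widehat f-f_0)=\Delta$, where $\Delta_i=X_i'\delta_0\bigl[1\{Q_i<\tau_0\}-1\{Q_i<\widehat\tau\}\bigr]$ is supported on the window $\{i:\vert Q_i-\tau_0\vert\le c_\tau\}$. Writing $\langle a,b\rangle_n:=n^{-1}\sum_{i=1}^n a_ib_i$, I would expand $\Vert g\Vert_n^2=\langle g,\widehat f-f_0\rangle_n+\langle g,\Delta\rangle_n$, bound the first summand by $\Vert g\Vert_n\Vert\widehat f-f_0\Vert_n$, and control the second by $E:=c_\alpha C c_\tau\vert\delta_0\vert_1$: from $\langle g,\Delta\rangle_n=(\widehat\alpha-\alpha_0)'\bigl[n^{-1}\sum_i\mathbf{X}_i(\widehat\tau)\Delta_i\bigr]$, apply H\"older's inequality with $\vert\widehat\alpha-\alpha_0\vert_1\le c_\alpha$, then Cauchy--Schwarz over $i$ together with Assumption~\ref{A-smoothness} and the elementary inequality $(X_i'\delta_0)^2\le\vert\delta_0\vert_1\sum_k\vert\delta_0^{(k)}\vert(X_i^{(k)})^2$. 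Solving $\Vert g\Vert_n^2\le\Vert g\Vert_n\Vert\widehat f-f_0\Vert_n+E$ gives $\Vert g\Vert_n\le\Vert\widehat f-f_0\Vert_n+\sqrt E$. Substituting back, in the case $W>b$ I obtain, with $P:=3\lambda\sqrt s\,X_{\max}/\kappa$, the scalar quadratic $\Vert\widehat f-f_0\Vert_n^2\le P\bigl(\Vert\widehat f-f_0\Vert_n+\sqrt E\bigr)$; solving it and recording whether the term $P^2$ or the term $P\sqrt E$ dominates reproduces the second entry $\tfrac{6X_{\max}^2}{\kappa^2}\lambda s$ and the third entry $\tfrac{2X_{\max}}{\kappa}(c_\alpha c_\tau C\vert\delta_0\vert_1 s)^{1/2}$ of the stated maximum. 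The $\ell_1$ conclusion follows in parallel: from $(1-\mu)\lambda\vert\widehat{\mathbf{D}}(\widehat\alpha-\alpha_0)\vert_1\le 2\lambda W+\lambda b$ and $\vert\widehat\alpha-\alpha_0\vert_1\le\vert\widehat{\mathbf{D}}(\widehat\alpha-\alpha_0)\vert_1/X_{\min}$, the same bound on $2W+b$ divided by $(1-\mu)X_{\min}$ gives the second displayed inequality.

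The hard part is the control of the cross term $\langle g,\Delta\rangle_n$. In Lemma~\ref{thm-case1} the exact identity $\Vert\widehat f-f_0\Vert_n^2=n^{-1}\vert\mathbf{X}(\widehat\tau)(\widehat\alpha-\alpha_0)\vert_2^2$ made the passage from the restricted eigenvalue bound to the prediction bound immediate, whereas here the mismatch $\Delta$ entangles the threshold error with the coefficient error. It is only because $\Delta$ is concentrated on a window of width $O(c_\tau)$ (governed by Assumption~\ref{A-smoothness}) and is paired against a direction of $\ell_1$ norm at most $c_\alpha$ that $\langle g,\Delta\rangle_n$ is of the smaller order $c_\alpha C c_\tau\vert\delta_0\vert_1$ rather than the naive $C c_\tau\vert\delta_0\vert_1^2$; this is precisely what lets $(c_\alpha c_\tau C\vert\delta_0\vert_1)^{1/2}$ enter the bound and couples the eventual rate for $\widehat\tau$ to that for $\widehat\alpha$. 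The remaining work --- tracking the absolute constants through the two quadratic inequalities and collapsing the sum of the driving terms into the displayed maximum --- is routine but must be carried out with care.
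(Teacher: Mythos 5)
Your proposal is correct and follows essentially the same route as the paper's own proof: the bound $|R_n|\leq\lambda\sqrt{c_\tau}$ on $\mathbb{C}(c_\tau)$, the penalty-discrepancy bound $(2X_{\min})^{-1}c_\tau C\left\vert\delta_0\right\vert_1$ (your difference-of-squares step is the paper's mean-value-theorem step applied to $\sqrt{x}$), the case split on whether $W$ exceeds $b$, the cone condition with $c_0=\frac{2+\mu}{1-\mu}$ feeding into the URE at $\widehat\tau\in\mathbb{S}$, and the control $\langle g,\Delta\rangle_n\leq c_\alpha c_\tau C\left\vert\delta_0\right\vert_1$ of the mismatch term are all exactly the paper's steps. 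The one quantitative caveat is your linearization $\Vert g\Vert_n\leq\Vert\widehat f-f_0\Vert_n+\sqrt{E}$: substituted into $\Vert\widehat f-f_0\Vert_n^2\leq P\left(\Vert\widehat f-f_0\Vert_n+\sqrt{E}\right)$ with $P=3\lambda\sqrt{s}\,X_{\max}/\kappa$, it yields $\Vert\widehat f-f_0\Vert_n^2\leq 4P^2\vee 2P\sqrt{E}$, whose first entry $36X_{\max}^2\lambda^2 s/\kappa^2$ is twice the stated $3\lambda\cdot\frac{6X_{\max}^2}{\kappa^2}\lambda s=18X_{\max}^2\lambda^2 s/\kappa^2$, so as written you prove the lemma only with a weaker middle constant. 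The fix is one line: from your own inequality $\Vert g\Vert_n^2\leq\Vert g\Vert_n\Vert\widehat f-f_0\Vert_n+E$, Young's inequality gives $\Vert g\Vert_n^2\leq\Vert\widehat f-f_0\Vert_n^2+2E$ (the paper obtains this directly by expanding $\Vert g\Vert_n^2=\Vert\widehat f-f_0\Vert_n^2+2\langle g,\Delta\rangle_n-\Vert\Delta\Vert_n^2$ and dropping $-\Vert\Delta\Vert_n^2$), and then splitting \emph{inside} the square root of $\Vert\widehat f-f_0\Vert_n^2\leq 3\lambda\sqrt{s}\,\kappa^{-1}X_{\max}\left(\Vert\widehat f-f_0\Vert_n^2+2E\right)^{1/2}$ via $a+b\leq 2a\vee 2b$ recovers the exact constants $18$ and $6$; the same adjustment fixes the parallel $\ell_1$ chain, which otherwise mirrors the paper's derivation through $\left\vert\widehat\alpha-\alpha_0\right\vert_1\leq\left\vert\widehat{\mathbf{D}}(\widehat\alpha-\alpha_0)\right\vert_1/X_{\min}$.
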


Lemma \ref{lem-claim3} states the bounds for both $\left\Vert \widehat{f}-f_{0}\right\Vert _{n}$
and $\alphahatnorm$ may become smaller as $c_\tau$ gets smaller.
This is because decreasing $c_\tau$ reduces the first and third terms in the bounds directly,
and also because decreasing $c_\tau$ reduces the second term in the bound indirectly by allowing for a possibly larger $\kappa$ since $\mathbb{S}$ gets smaller.

\begin{proof}[\textbf{Proof of Lemma \protect\ref{lem-claim3}}]
Note that on $\mathbb{C}$,
\begin{eqnarray*}
\left\vert R_{n}\right\vert &=&\left\vert
2n^{-1}\sum_{i=1}^{n}U_{i}X_{i}^{\prime }\delta _{0}\left\{ 1(Q_{i}<\widehat{%
\tau })-1(Q_{i}<\tau _{0})\right\} \right\vert \\
&\leq &\lambda \sqrt{c_{\tau }}.
\end{eqnarray*}%
The triangular inequality, the mean value theorem (applied to $f\left( x\right) =\sqrt{x}$),
and Assumption \ref{A-smoothness} imply that
\begin{align}\label{Dhat-D}
\begin{split}
 \left\vert \left\vert \widehat{\mathbf{D}}{\alpha _{0}}\right\vert
_{1}-\left\vert \mathbf{D}{\alpha _{0}}\right\vert _{1}\right\vert
& =\left\vert \sum_{j=1}^{M}\left( \left\Vert X^{\left( j\right) }\left( \hat{\tau}\right)
\right\Vert _{n}-\left\Vert X^{\left( j\right)}\left( \tau _{0}\right)
\right\Vert _{n}\right) \left\vert \delta _{0}^{\left(
j\right) }\right\vert \right\vert  \\
& \leq \sum_{j=1}^{M}
\left(2 \left\Vert X^{\left( j\right) }\left( t_0 \right)\right\Vert _{n}\right)^{-1}
\left\vert \delta _{0}^{\left(
j\right) }\right\vert \frac{1}{n}\sum_{i=1}^{n}\left\vert X_{i}^{\left(
j\right) }\right\vert ^{2}\left\vert 1\left\{ Q_{i}<\hat{\tau}\right\}
-1\left\{ Q_{i}<\tau _{0}\right\} \right\vert   \\
& \leq \left( 2 X_{\min } \right)^{-1} c_{\tau } C \left\vert \delta _{0}\right\vert _{1}.
\end{split}
\end{align}
We now consider two cases: (i) $\left\vert \widehat{\mathbf{D}}(\widehat{%
\alpha }-\alpha _{0})_{J_{0}}\right\vert _{1}> \sqrt{c_{\tau }}+\left( 2 X_{\min } \right)^{-1} c_{\tau } C \left\vert \delta _{0}\right\vert _{1}$
and \\(ii) $\left\vert \widehat{\mathbf{D}}(\widehat{\alpha }-\alpha _{0})_{J_{0}}\right\vert _{1}\leq \sqrt{c_{\tau }}+\left( 2 X_{\min } \right)^{-1} c_{\tau } C \left\vert \delta _{0}\right\vert _{1}$.

\bigskip

\noindent \textbf{Case (i)}: In this case, note that
\begin{align*}
\lambda \left\vert \left\vert \widehat{\mathbf{D}}\alpha_0\right\vert_1 - \left\vert \mathbf{D}\alpha_0 \right\vert_1\right\vert +R_n &< \lambda (2X_{\min})^{-1}c_{\tau}C\deltanorm + \lambda \sqrt{c_{\tau}}\\
&=\lambda \left(\sqrt{c_{\tau}}+ (2X_{\min})^{-1}c_{\tau}C\deltanorm  \right)\\
&<\lambda \left\vert \widehat{\mathbf{D}}\left(\widehat{\alpha}-\alpha_0\right)_{J_0} \right\vert_1.
\end{align*}
Combining this result with \eqref{imp-ineq-lem}, we have
\begin{align}
\left\Vert \widehat{f}-f_{0}\right\Vert _{n}^{2}+\left( 1-\mu \right)
\lambda \left\vert \widehat{\mathbf{D}}(\widehat{\alpha }-\alpha
_{0})\right\vert _{1}& \leq 3\lambda \left\vert \widehat{\mathbf{D}}(%
\widehat{\alpha }-\alpha _{0})_{J_{0}}\right\vert _{1}, \label{lem7-eq1}
\end{align}
which implies
\begin{align*}
\left\vert \widehat{\mathbf{D}}(\widehat{\alpha }-\alpha _{0})_{J_{0}^c}\right\vert _{1} \leq \frac{2+\mu}{1-\mu} \left\vert \widehat{\mathbf{D}}(\widehat{\alpha }-\alpha _{0})_{J_{0}}\right\vert _{1}.
\end{align*}
Then, we apply Assumption \ref{re-assump} with URE$(s,\frac{2+\mu}{1-\mu },\mathbb{S})$. Note
that since it is assumed that $|\widehat\tau - \tau_0| \leq c_\tau$,  Assumption \ref{re-assump} only
needs to hold with $\mathbb{S}$ in the $c_\tau$ neighborhood of $\tau
_{0}$. Since $\delta_0 \neq 0$, \eqref{l3*2} now has an extra term
\begin{align*}
\lefteqn{ \kappa ^{2}\left\vert \widehat{\mathbf{D}}(\widehat{\alpha }-\alpha
_{0})_{J_{0}}\right\vert _{2}^{2} } \\
&\leq \max (\widehat{\mathbf{D}})^{2}\left\Vert \hat{f}-f_{0}\right\Vert
_{n}^{2} \\
&\;\;\; +\max (\widehat{\mathbf{D}})^{2}\frac{1}{n}\sum_{i=1}^{n} \bigg\{2\Big(  \mathbf{X}_i(\widehat{\tau})'\widehat{\alpha}  - \mathbf{X}_i(\widehat{\tau})'\alpha_0 \Big)
\Big( X_i'\delta_0  \big[ 1(Q_i<\tau_0) -1(Q_i<\widehat{\tau}) \big] \Big) \bigg\}\\
&\leq  \max (\widehat{\mathbf{D}})^{2} \left( \left\Vert \hat{f}-f_{0}\right\Vert
_{n}^{2} + 2 c_{\alpha} \left\vert  \delta_0\right\vert_1  \sup_{j}\Pn  \left\vert{X}_i^{(j)}\right\vert^2 \left\vert 1(Q_i<\tau_0) -1(Q_i<\widehat{\tau})  \right\vert \right) \\
&\leq  X_{\max}^2 \bigg(\left\Vert \widehat{f}-f_0\right\Vert_n^2 + 2c_{\alpha}c_{\tau}C \deltanorm \bigg),
\end{align*}%
where the last inequality is due to Assumption \ref{A-smoothness}. Combining this result with \eqref{lem7-eq1}, we have
\begin{align*}
\left\Vert \widehat{f}-f_0 \right\Vert_n^2
&\leq 3 \lambda \left\vert \widehat{\mathbf{D}}\left( \widehat{\alpha}-\alpha_0\right)_{J_0}\right\vert_1\\
& \leq 3 \lambda \sqrt{s}\left\vert \widehat{\mathbf{D}}\left( \widehat{\alpha}-\alpha_0\right)_{J_0}\right\vert_2\\
& \leq 3 \lambda \sqrt{s} \left(   \kappa^{-2}X_{\max}^2 \left(\left\Vert \widehat{f}-f_0\right\Vert_n^2 + 2 c_{\alpha}c_{\tau}C \deltanorm \right)  \right)^{1/2}.
\end{align*}
Applying $a+b \leq 2a \vee 2b$, we get the upper bound of $\fhatnorm$ on $\mathbb{A}$ and $\mathbb{B}$, as
\begin{align}\label{result1-fhatnorm}
\fhatnorm^2 \leq  \frac{18 X_{\max}^2}{\kappa^2} \lambda^2 s \vee   \frac{6  X_{\max}}{\kappa} \lambda\left(     c_{\alpha}c_{\tau}C \deltanorm  s \right)^{1/2}.
\end{align}

To derive the upper bound for $\left\vert \widehat{\alpha}-\alpha_0 \right\vert_1$, note that using the same arguments as in \eqref{alphahat-derivation},
\begin{align*}
\left\vert \widehat{D} (\widehat{\alpha}-\alpha_0 ) \right\vert_1&\leq \frac{3}{1-\mu}\left\vert \widehat{D}\left( \widehat{\alpha}-\alpha_0\right)_{J_0} \right\vert_1\\
& \leq
 \frac{3}{1-\mu}
\sqrt{s}\left\vert \widehat{D}\left( \widehat{\alpha}-\alpha_0\right)_{J_0}\right\vert_2\\
& \leq
 \frac{3}{1-\mu}
 \sqrt{s} \left(   \kappa^{-2}X_{\max}^2 \left(\left\Vert \widehat{f}-f_0\right\Vert_n^2 + 2  c_{\alpha}c_{\tau}C \deltanorm \right) \ \right)^{1/2}\\
&\leq \frac{3\sqrt{s}}{(1-\mu)\kappa} X_{\max} \left( \fhatnorm^2 +  2  c_{\alpha} c_{\tau}C \deltanorm \right)^{1/2}.
\end{align*}
Then combining the fact that $a+b \leq 2a \vee 2b$ with \eqref{min-eig-D} and \eqref{result1-fhatnorm} yields
\begin{align*}
\alphahatnorm &\leq \frac{18}{(1-\mu)\kappa^2} \frac{X_{\max}^2}{X_{\min}} \lambda s \vee \frac{6}{(1-\mu)\kappa} \frac{X_{\max}}{X_{\min}} \left( c_{\alpha} c_{\tau}C \deltanorm s\right)^{1/2}.
\end{align*}

\noindent \textbf{Case (ii)}: In this case, it follows directly from \eqref{imp-ineq-lem} that
\begin{align*}
\left\Vert \widehat{f}-f_{0}\right\Vert _{n}^{2}& \leq 3\lambda \left( \sqrt{c_{\tau }}+\left( 2 X_{\min } \right)^{-1} c_{\tau } C \left\vert \delta _{0}\right\vert _{1}\right) , \\
\left\vert \widehat{\alpha }-\alpha _{0} \right\vert
_{1}& \leq \frac{3}{(1-\mu
)X_{\min }}\left( \sqrt{c_{\tau }}+\left( 2 X_{\min } \right)^{-1} c_{\tau } C \left\vert \delta _{0}\right\vert _{1}\right),
\end{align*}%
which establishes the desired result.
\end{proof}

The following lemma shows that the bound for $\left\vert \hat{\tau}-\tau _{0}\right\vert$ can be further tightened if we combine results obtained in
Lemmas \ref{lem-claim2} and \ref{lem-claim3}.

\begin{lem}
\label{lem-claim4} Suppose that $\left\vert \hat{\tau}-\tau _{0}\right\vert
\leq c_{\tau }$ and $\left\vert \hat{\alpha}-\alpha _{0}\right\vert _{1}\leq
c_{\alpha }$ for some $(c_{\tau },c_{\alpha })$. Let $\tilde{\eta}%
:=c^{-1}\lambda \left( \left( 1+\mu \right) X_{\max }c_{\alpha }+\sqrt{c_{\tau }}
+(2X_{\min})^{-1}c_{\tau }C \left\vert \delta_0\right\vert_1\right)  $. If Assumption \ref{A-discontinuity}
holds, then conditional on the events $\mathbb{A}$, $\mathbb{B}$, and $%
\mathbb{C}(c_{\tau })$,
\begin{equation*}
\left\vert \hat{\tau}-\tau _{0}\right\vert \leq \tilde{\eta}.
\end{equation*}
\end{lem}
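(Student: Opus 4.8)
The plan is to rerun the contradiction argument used to prove Lemma \ref{lem-claim2}, but to replace the crude deterministic bounds there with the sharper estimates now available from the \emph{a priori} restrictions $\left\vert\widehat\tau-\tau_0\right\vert\le c_\tau$ and $\alphahatnorm\le c_\alpha$, from the event $\mathbb{C}(c_\tau)$, and from the smoothness bound \eqref{Dhat-D}. Throughout I assume, as permitted by the Remark following Assumption \ref{A-discontinuity}, that $n$ is large enough that $\tilde{\eta}\ge\min_i\left\vert Q_i-\tau_0\right\vert$, so that Assumption \ref{A-discontinuity} may be invoked with $\eta=\tilde{\eta}$.

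First I would reproduce the opening steps of the proof of Lemma \ref{lem-claim2}: starting from the Lasso optimality \eqref{joint-Lasso-def} evaluated at $(\alpha,\tau)=(\alpha_0,\tau_0)$, the computation leading to \eqref{pf-lem-claim2-a} gives, on $\mathbb{A}\cap\mathbb{B}$,
\begin{equation*}
\left[\widehat S_n+\lambda\left\vert\widehat{\mathbf D}\widehat\alpha\right\vert_1\right]-\left[S_n(\alpha_0,\tau_0)+\lambda\left\vert\mathbf D\alpha_0\right\vert_1\right]\ge\fhatnorm^2-\mu\lambda\left\vert\widehat{\mathbf D}(\widehat\alpha-\alpha_0)\right\vert_1-\lambda\left[\left\vert\mathbf D\alpha_0\right\vert_1-\left\vert\widehat{\mathbf D}\widehat\alpha\right\vert_1\right]-R_n.
\end{equation*}
The one refinement over Lemma \ref{lem-claim2} is to keep the sharp constant $1+\mu$ rather than rounding it up to $2$: after separating off the deterministic discrepancy $\left\vert\mathbf D\alpha_0\right\vert_1-\left\vert\widehat{\mathbf D}\alpha_0\right\vert_1$, the sparsity identity \eqref{sparsity-useful-fact} together with the reverse triangle inequality on $J_0$ gives
\begin{equation*}
-\mu\left\vert\widehat{\mathbf D}(\widehat\alpha-\alpha_0)\right\vert_1+\left\vert\widehat{\mathbf D}\widehat\alpha\right\vert_1-\left\vert\widehat{\mathbf D}\alpha_0\right\vert_1\ge-(1+\mu)\left\vert\widehat{\mathbf D}(\widehat\alpha-\alpha_0)_{J_0}\right\vert_1+(1-\mu)\left\vert\widehat{\mathbf D}(\widehat\alpha-\alpha_0)_{J_0^c}\right\vert_1,
\end{equation*}
and dropping the nonnegative last term produces the factor $1+\mu$.

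Next I would insert the three sharp bounds. On $\mathbb{C}(c_\tau)$, since $\left\vert\widehat\tau-\tau_0\right\vert\le c_\tau$, the definition of the event yields $\left\vert R_n\right\vert\le\lambda\sqrt{c_\tau}$. The leading term is controlled by the \emph{a priori} bound on the estimation error, $\left\vert\widehat{\mathbf D}(\widehat\alpha-\alpha_0)_{J_0}\right\vert_1\le X_{\max}\left\vert(\widehat\alpha-\alpha_0)_{J_0}\right\vert_1\le X_{\max}\alphahatnorm\le X_{\max}c_\alpha$, which is far smaller than the $X_{\max}\alpha_{\max}\mathcal{M}(\alpha_0)$ used in Lemma \ref{lem-claim2}. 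The change-of-normalization term is bounded exactly as in \eqref{Dhat-D} by $\left\vert\left\vert\widehat{\mathbf D}\alpha_0\right\vert_1-\left\vert\mathbf D\alpha_0\right\vert_1\right\vert\le(2X_{\min})^{-1}c_\tau C\deltanorm$. Assembling these, the quantity subtracted from $\fhatnorm^2$ is exactly $c\tilde{\eta}$, so that on $\mathbb{A}\cap\mathbb{B}\cap\mathbb{C}(c_\tau)$,
\begin{equation*}
\left[\widehat S_n+\lambda\left\vert\widehat{\mathbf D}\widehat\alpha\right\vert_1\right]-\left[S_n(\alpha_0,\tau_0)+\lambda\left\vert\mathbf D\alpha_0\right\vert_1\right]\ge\fhatnorm^2-c\tilde{\eta}.
\end{equation*}

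The final step is the contradiction. Suppose $\left\vert\widehat\tau-\tau_0\right\vert>\tilde{\eta}$. Applying Assumption \ref{A-discontinuity} with $\eta=\tilde{\eta}$, $\tau=\widehat\tau$ and $\alpha=\widehat\alpha$ gives $\fhatnorm^2=\left\Vert f_{(\widehat\alpha,\widehat\tau)}-f_0\right\Vert_n^2>c\tilde{\eta}$; substituting into the last display makes its left-hand side strictly positive, which contradicts the fact that $(\widehat\alpha,\widehat\tau)$ minimizes the penalized criterion in \eqref{joint-max}. Hence $\left\vert\widehat\tau-\tau_0\right\vert\le\tilde{\eta}$. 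I expect the main subtlety to sit in this last invocation of Assumption \ref{A-discontinuity}, which formally requires $\mathcal{M}(\widehat\alpha)\le s$ — exactly the implicit requirement already present in the proof of Lemma \ref{lem-claim2} — and in checking that the constraint $\tilde{\eta}\ge\min_i\left\vert Q_i-\tau_0\right\vert$ does not bind, which holds for large $n$. Carrying the constant $1+\mu$ (rather than the looser $2$) through the reverse-triangle-inequality step is the one calculation that must be done carefully to reproduce the stated $\tilde{\eta}$.
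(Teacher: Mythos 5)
Your proof is correct and takes essentially the same route as the paper's: rerunning the contradiction argument of Lemma \ref{lem-claim2} with the sharper bounds $\mu\lambda X_{\max}c_{\alpha}$ (from $\mathbb{A}\cap\mathbb{B}$ and $\left\vert\hat{\alpha}-\alpha_0\right\vert_1\le c_{\alpha}$), $\lambda\sqrt{c_{\tau}}$ (from $\mathbb{C}(c_{\tau})$), and $(2X_{\min})^{-1}c_{\tau}C\left\vert\delta_0\right\vert_1$ (from \eqref{Dhat-D}), whose sum is exactly $c\tilde{\eta}$, so that Assumption \ref{A-discontinuity} applied with $\eta=\tilde{\eta}$ forces the penalized criterion difference to be strictly positive, contradicting minimality in \eqref{joint-max}. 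The only cosmetic difference is your derivation of the $(1+\mu)X_{\max}c_{\alpha}$ term via a $J_0/J_0^{c}$ split using \eqref{sparsity-useful-fact} and dropping the nonnegative $(1-\mu)$-term, where the paper instead uses the plain triangle inequality $\left\vert\widehat{\mathbf{D}}\widehat{\alpha}\right\vert_1\ge\left\vert\mathbf{D}\alpha_0\right\vert_1-\left\vert(\widehat{\mathbf{D}}-\mathbf{D})\alpha_0\right\vert_1-\left\vert\widehat{\mathbf{D}}(\widehat{\alpha}-\alpha_0)\right\vert_1$ on the full vector; both yield the same constant, and your caveats about $\mathcal{M}(\widehat{\alpha})\le s$ and $\tilde{\eta}\ge\min_i\left\vert Q_i-\tau_0\right\vert$ match the level of rigor of the paper itself.
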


\begin{proof}[\textbf{Proof of Lemma \protect\ref{lem-claim4}}]
Note that on $\mathbb{A}$, $\mathbb{B\ }$and $\mathbb{C}$,
\begin{eqnarray*}
&&\left\vert \frac{2}{n}\sum_{i=1}^{n}\left[ U_{i}X_{i}^{\prime }\left( \hat{%
\beta}-\beta _{0}\right) +U_{i}X_{i}^{\prime }1\left( Q_{i}<\hat{\tau}%
\right) \left( \hat{\delta}-\delta _{0}\right) \right] \right\vert \\
&\leq &\mu \lambda X_{\max }\left\vert \hat{\alpha}-\alpha
_{0}\right\vert_{1}\leq \mu \lambda X_{\max }c_{\alpha }
\end{eqnarray*}%
and
\begin{equation*}
\left\vert \frac{2}{n}\sum_{i=1}^{n}U_{i}X_{i}^{\prime }\delta _{0}\left[
1\left( Q_{i}<\hat{\tau}\right) -1\left( Q_{i}<\tau _{0}\right) \right]
\right\vert \leq \lambda \sqrt{c_{\tau }}.
\end{equation*}

Suppose $\tilde{\eta}<\left\vert \hat{\tau}-\tau _{0}\right\vert <c_{\tau }.$
Then, as in \eqref{pf-lem-claim2-a},
\begin{equation*}
\widehat{S}_{n}-S_{n}(\alpha _{0},\tau _{0})\geq \left\Vert \widehat{f}%
-f_{0}\right\Vert _{n}^{2}-\mu \lambda X_{\max }c_{\alpha }-\lambda \sqrt{%
c_{\tau }}.
\end{equation*}%
Furthermore,  we obtain
\begin{align*}
\lefteqn{\left[ \widehat{S}_{n}+\lambda \left\vert \widehat{\mathbf{D}}
\widehat{\alpha }\right\vert _{1}\right] -\left[ S_{n}(\alpha _{0},\tau
_{0})+\lambda \left\vert \mathbf{D}\alpha _{0}\right\vert _{1}\right] }\\
& \geq \left\Vert \widehat{f}-f_{0}\right\Vert _{n}^{2}-\mu \lambda X_{\max
}c_{\alpha }-\lambda \sqrt{c_{\tau }} \\
& -\lambda \left(  \left\vert\widehat{\mathbf{D}}(\widehat{\alpha}  -\alpha_0 )\right\vert_1
+\left\vert (\widehat{\mathbf{D}}-  \mathbf{D})\alpha_0 \right\vert_1    \right) \\
& >c\tilde{\eta}-\left( \left( 1+\mu \right) X_{\max }c_{\alpha }+\sqrt{%
c_{\tau }} + (2X_{\min})^{-1}c_{\tau }C \left\vert\delta_0\right\vert_1\right) \lambda,
\end{align*}%
where the last inequality is due to Assumption \ref{A-discontinuity} and \eqref{Dhat-D}.

Since $c\tilde{\eta}=\left( \left( 1+\mu \right) X_{\max }c_{\alpha }+%
\sqrt{c_{\tau }}+(2X_{\min})^{-1}c_{\tau }C \left\vert\delta_0\right\vert_1\right) \lambda $, we again use the
contradiction argument as in the proof of Lemma \ref{lem-claim2} to establish the result.
\end{proof}

Lemma \ref{lem-claim3} provides us with
three different bounds for $\left\vert \hat{\alpha}-\alpha _{0}\right\vert
_{1}$ and the two of them are functions of $c_{\tau }$ and $c_{\alpha }.$
This leads us to apply Lemmas \ref{lem-claim3} and \ref{lem-claim4}
iteratively to tighten up the bounds. Furthermore, when the sample size is
large and thus $\lambda $ in \eqref{lambda-form} is small enough, we
show that the consequence of this chaining argument is that the bound for $%
\left\vert \hat{\alpha}-\alpha _{0}\right\vert $ is dominated by the middle
term in Lemma \ref{lem-claim3}. We give exact conditions for this on $%
\lambda $ and thus on the sample size $n.$ To do so, we first define
some constants:
\begin{align*}
A_{1 \ast}  := \frac{3\left( 1+\mu \right) X_{\max }}{\left( 1-\mu \right)X_{\min }}+1, 
A_{2 \ast}  := \frac{ C}{2cX_{\min }}, 
A_{3 \ast}  := \frac{6cX_{\max }^{2}}{\kappa ^{2}}, \ \ \text{and} \ \
A_{4 \ast}  := \frac{36 \left( 1+\mu \right) X_{\max }^{3}}{(1-\mu )^{2}  X_{\min}}.
\end{align*}

\begin{assm}[Inequality Conditions]
\label{tech-cond} The following inequalities hold:
\begin{align}
A_{1 \ast} A_{2 \ast} \lambda \left\vert \delta _{0}\right\vert_{1} &< 1,  \label{case1a} \\
\frac{A_{1 \ast} }{\left( 1-A_{1 \ast} A_{2 \ast} \lambda \left\vert \delta _{0}\right\vert_{1}\right) ^{2}} & < A_{3 \ast} s,  \label{case1b} \\
\left( 2\kappa ^{-2} A_{4 \ast} s +1\right) A_{2 \ast} \lambda \left\vert \delta _{0}\right\vert _{1} &< 1,  \label{case2a} \\
  \frac{ A_{2 \ast} \lambda \left\vert \delta _{0}\right\vert_{1} }{\left[ 1-\left( 2\kappa ^{-2} A_{4 \ast} s +1\right) A_{2 \ast} \lambda \left\vert \delta _{0}\right\vert _{1}
\right] ^{2}} &< \frac{(1-\mu)c}{4},  \label{case2b} \\
  \left[ 1-\left( 2\kappa ^{-2} A_{4 \ast} s +1\right) A_{2 \ast} \lambda \left\vert \delta _{0}\right\vert _{1}
\right]^{-2} &< A_{1 \ast}A_{3 \ast} s.  \label{case2c}
\end{align}
\end{assm}

\begin{rem}\label{tech-remark}
It would be easier to satisfy Assumption \ref{tech-cond} when the sample size $n$ is large. To appreciate Assumption \ref{tech-cond} in a setup when $n$ is large, suppose that
(1) $n \rightarrow \infty$, $M \rightarrow \infty$, $s \rightarrow \infty$, and $\lambda \rightarrow 0$; (2) $\left\vert \delta _{0}\right\vert_{1}$ may or may not diverge to infinity;
(3)   $X_{\min}$, $X_{\max }$,  $\kappa$,  $c$, $C$, and $\mu$ are independent of $n$. Then  conditions in Assumption \ref{tech-cond} can hold simultaneously for all sufficiently large $n$, provided that $s \lambda  \left\vert \delta _{0}\right\vert_{1} \rightarrow 0$.
\end{rem}

We now give the main result of this section.

\begin{lem}
\label{main-thm-fixed-threshold} Suppose that Assumption \ref{re-assump}
hold with $\mathbb{S} = \left\{ \left\vert \tau -\tau _{0}\right\vert \leq
\eta ^{\ast }\right\} $, $\kappa =\kappa (s,\frac{2+\mu}{1-\mu },\mathbb{S})$
for $0<\mu <1,$ and $\mathcal{M}(\alpha _{0})\leq s\leq M$. In addition,  Assumptions \ref{A-discontinuity}, \ref{A-smoothness}, and \ref{tech-cond} hold. 
Let $(\widehat{\alpha },\widehat{\tau })$ be the Lasso estimator defined by \eqref{joint-max} with $\lambda$ given by \eqref{lambda-form}.
Then, there exists a sequence of constants $\eta
_{1},...,\eta _{m^{\ast }}$ for some finite $m^{\ast }$ such that
 $h_{n}\left( \eta_j  \right) > 0$ for each $j=1,\ldots,m^{\ast}$,
with
probability at least $1-\left( 3M\right) ^{1-A^{2}\mu
^{2}/8}-4\sum_{j=1}^{m^{\ast }}\left( 3M\right) ^{-A^{2}/(16r_{n}h_{n}\left(
\eta _{j}\right) )},$ we have
\begin{align*}
\left\Vert \widehat{f}-f_{0}\right\Vert _{n}& \leq \frac{3A\sigma X_{\max }}{%
\kappa }\left( \frac{2\log 3M}{nr_{n}}s\right) ^{1/2}, \\
\left\vert \widehat{\alpha }-\alpha _{0}\right\vert _{1}& \leq \frac{%
18A\sigma }{\left( 1-\mu \right) \kappa ^{2}}\frac{X_{\max }^{2}}{X_{\min }}%
\left( {\frac{\log 3M}{nr_{n}}}\right) ^{1/2}s, \\
\left\vert \hat{\tau}-\tau _{0}\right\vert &\leq \left( \frac{3\left( 1+\mu
\right) X_{\max }}{(1-\mu )X_{\min }}+1\right) \frac{6X_{\max
}^{2}A^{2}\sigma ^{2}}{c\kappa ^{2}}\frac{\log 3M}{nr_{n}}s, \\
\mathcal{M}\left( \hat{\alpha}\right) &\leq \frac{36\phi _{\max }}{%
\left( 1-\mu \right) ^{2}\kappa ^{2}}\frac{X_{\max }^{2}}{X_{\min }^{2}}s.
\end{align*}
\end{lem}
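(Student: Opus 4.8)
The plan is to establish the four bounds by conditioning on the event $\mathbb{A}\cap\mathbb{B}\cap\{\cap_{j=1}^{m^\ast}\mathbb{C}(\eta_j)\}$ and running an iterative \emph{chaining} refinement of a pair of bounds $(c_\tau,c_\alpha)$ on $|\hat\tau-\tau_0|$ and $|\hat\alpha-\alpha_0|_1$, alternating between Lemma \ref{lem-claim3}, which for any such valid pair returns sharper bounds on $\|\hat f-f_0\|_n$ and $|\hat\alpha-\alpha_0|_1$, and Lemma \ref{lem-claim4}, which for any such pair returns a sharper bound on $|\hat\tau-\tau_0|$. The sparsity bound on $\mathcal{M}(\hat\alpha)$ will come at the end from Lemma \ref{lemma4}, and the probability statement from Lemma \ref{prob-ABC}.

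First I would initialize. By Lemma \ref{lem-claim2}, conditional on $\mathbb{A}\cap\mathbb{B}$ we have the crude bound $|\hat\tau-\tau_0|\le\eta^\ast$, so I set $c_\tau^{(0)}=\eta^\ast$; a crude starting value $c_\alpha^{(0)}$ for $|\hat\alpha-\alpha_0|_1$ is available from the boundedness of $\mathcal{A}$ in Assumption \ref{assumption-main-1} (or from Lemma \ref{lemma2}). For $k\ge0$, given $(c_\tau^{(k)},c_\alpha^{(k)})$ I apply Lemma \ref{lem-claim3} with $\mathbb{S}=\{|\tau-\tau_0|\le c_\tau^{(k)}\}$ to obtain a new bound $c_\alpha^{(k+1)}$, and then Lemma \ref{lem-claim4} to obtain a new bound $c_\tau^{(k+1)}=\tilde\eta$. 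The successive values $c_\tau^{(0)},c_\tau^{(1)},\dots$ generate the sequence $\eta_1,\dots,\eta_{m^\ast}$ appearing in the statement, and each pass legitimately requires only the single event $\mathbb{C}(c_\tau^{(k)})$. I would also verify that the neighborhoods $\{|\tau-\tau_0|\le c_\tau^{(k)}\}$ shrink monotonically, so the URE constant $\kappa$ used throughout remains valid.

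The heart of the argument is to show that this recursion terminates after finitely many steps ($m^\ast<\infty$) with the bounds driven by the middle \emph{oracle} term $\tfrac{6X_{\max}^2}{\kappa^2}\lambda s$ in Lemma \ref{lem-claim3}. Both bounds there are maxima of three terms: a $\sqrt{c_\tau}$-type term, the oracle term, and a cross term of order $(c_\alpha c_\tau)^{1/2}$; while $\tilde\eta$ in Lemma \ref{lem-claim4} is linear in $c_\alpha$ and $\sqrt{c_\tau}$. Since $\lambda$ is small it shrinks $c_\tau$ at each pass, so the $\sqrt{c_\tau}$ and cross terms contract relative to the oracle term. The inequalities \eqref{case1a}--\eqref{case2c} of Assumption \ref{tech-cond} are precisely the book-keeping conditions guaranteeing, in the two regimes of the recursion, that after this contraction the oracle term strictly attains the maximum; at that fixed point Lemma \ref{lem-claim3} yields the stated bounds for $\|\hat f-f_0\|_n$ and $|\hat\alpha-\alpha_0|_1$, and feeding the resulting $c_\alpha$ back once more into Lemma \ref{lem-claim4} produces the nearly $n^{-1}$ bound for $|\hat\tau-\tau_0|$. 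Inserting the prediction-risk bound into Lemma \ref{lemma4} then gives $\mathcal{M}(\hat\alpha)$, after which I substitute $\lambda$ from \eqref{lambda-form}.

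For the probability, I apply Lemma \ref{prob-ABC} to the finite sequence $\eta_1,\dots,\eta_{m^\ast}$, converting each Gaussian tail $\Phi(-\tfrac{\lambda\sqrt n}{2\sqrt 2\sigma h_n(\eta_j)})$ via $2\Phi(-x)\le e^{-x^2/2}$ and the form of $\lambda$ into a term of order $(3M)^{-A^2/(16 r_n h_n^2(\eta_j))}$; Assumptions \ref{A-discontinuity} and \ref{h-regular} ensure each $h_n(\eta_j)$ is bounded and positive, and $m^\ast<\infty$ keeps the sum finite, which lets the main-text version absorb it into the constants $C_4,C_5$. The main obstacle is exactly the convergence analysis of the chaining recursion: proving that it halts in finitely many steps and tracking, at each stage, which of the three competing terms attains the maximum so that the final bounds are governed by the oracle term — this is where Assumption \ref{tech-cond} is indispensable and where most of the work lies.
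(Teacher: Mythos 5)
Your proposal follows essentially the same route as the paper's own proof: initialize from Lemma \ref{lem-claim2} together with the crude bound \eqref{lem2-conc}, iterate Lemmas \ref{lem-claim3} and \ref{lem-claim4} as a contraction whose nonzero fixed points are controlled by the inequalities \eqref{case1a}--\eqref{case2c} of Assumption \ref{tech-cond} so that the oracle term $\tfrac{6X_{\max}^{2}}{\kappa^{2}}\lambda s$ dominates after finitely many ($m^{\ast}$) steps, then get the sparsity bound from Lemma \ref{lemma4} and the probability from Lemma \ref{prob-ABC} via $2\Phi(-x)\leq e^{-x^{2}/2}$, exactly as the paper does. As a minor aside, your exponent $(3M)^{-A^{2}/(16\,r_{n}h_{n}^{2}(\eta_{j}))}$ with $h_{n}^{2}$ is what the Gaussian tail computation actually yields, whereas the displayed probability in the lemma writes $h_{n}(\eta_{j})$; this favors your bookkeeping rather than revealing any gap in it.
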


\begin{rem}
It is interesting to compare the URE$(s,c_0,\mathbb{S})$ condition assumed in Lemma \ref{main-thm-fixed-threshold}
with that in Lemma \ref{thm-case1}.
For the latter, the entire parameter space $\mathbb{T}$ is taken to be $\mathbb{S}$ but with a smaller constant $c_0 = (1+\mu)/(1-\mu)$.
Hence, strictly speaking, it is undetermined which URE$(s,c_0,\mathbb{S})$ condition is less stringent. It is possible to reduce $c_0$ in Lemma \ref{main-thm-fixed-threshold}
to a smaller constant but larger than $(1+\mu)/(1-\mu)$ by considering a more general form,  e.g. $c_0 = (1+\mu + \nu)/(1-\mu)$ for a positive constant $\nu$, but we have chosen $\nu = 1$ here
for readability.
\end{rem}

\begin{proof}[\textbf{Proof of Lemma \protect\ref{main-thm-fixed-threshold}}]
Here we use a chaining argument by iteratively applying Lemmas \ref%
{lem-claim3} and \ref{lem-claim4} to tighten the bounds for the prediction
risk and the estimation errors in $\hat{\alpha}$ and $\hat{\tau}.$

Let $c_{\alpha }^{\ast }$ and $c_{\tau }^{\ast }$ denote the bounds given in
the statement of the lemma for $\left\vert \hat{\alpha}-\alpha _{0}\right\vert _{1}$
and $\left\vert \hat{\tau}-\tau _{0}\right\vert ,$ respectively. Suppose
that
\begin{equation}
\sqrt{c_{\tau }}+\left( 2X_{\min }\right) ^{-1}c_{\tau }C\left\vert \delta
_{0}\right\vert _{1}\vee \frac{6X_{\max }^{2}}{\kappa ^{2}}\lambda s\vee
\frac{2X_{\max }}{\kappa }\left( c_{\alpha }c_{\tau }C\left\vert \delta
_{0}\right\vert _{1}s\right) ^{1/2}=\frac{6X_{\max }^{2}}{\kappa ^{2}}%
\lambda s.  \label{12-1}
\end{equation}%
This implies due to Lemma \ref{lem-claim3} that $\left\vert \hat{\alpha}%
-\alpha _{0}\right\vert _{1}$ is bounded by $c_{\alpha }^{\ast }$ and thus
achieves the bounds in the lemma given the choice of $\lambda $. The same
argument applies for $\left\Vert \hat{f}-f_{0}\right\Vert _{n}^{2}.$ The
equation \eqref{12-1} also implies in conjunction with Lemma %
\ref{lem-claim4} with $c_{\alpha }=c_{\alpha }^{\ast }$ that
\begin{eqnarray}
\left\vert \hat{\tau}-\tau _{0}\right\vert &\leq &c^{-1}\lambda \left(
\left( 1+\mu \right) X_{\max }c_{\alpha }^{\ast }+\sqrt{c_{\tau }}+(2X_{\min
})^{-1}c_{\tau }C\left\vert \delta _{0}\right\vert _{1}\right)  \notag \\
&\leq &\left( \frac{3\left( 1+\mu \right) X_{\max }}{(1-\mu )X_{\min }}%
+1\right) \frac{6X_{\max }^{2}}{c\kappa ^{2}}\lambda ^{2}s,  \label{taust}
\end{eqnarray}%
which is $c_{\tau }^{\ast }$. Thus, it remains to show that there is
convergence in the iterated applications of Lemmas \ref{lem-claim3} and \ref%
{lem-claim4} toward the desired bounds when \eqref{12-1} does not hold and
the number of iteration is finite.

Let $c_{\tau }^{\left( m\right) }$ and $c_{\alpha }^{\left( m\right) }$, respectively, denote the bounds for $\left\vert \hat{\alpha}%
-\alpha _{0}\right\vert _{1}$ and $\left\vert \hat{\tau}-\tau
_{0}\right\vert$ in the $m$-th iteration.
In view of  \eqref{lem2-conc} and Lemma \ref{lem-claim2}, we start the
iteration with
\begin{align*}
c_{\alpha }^{\left( 1\right) } &:=\frac{8X_{\max }\alpha _{\max }}{\left(
1-\mu \right) X_{\min }}s, \\
c_{\tau }^{\left( 1\right) } &:=c^{-1}8X_{\max }\alpha _{\max }\lambda s.
\end{align*}%
If the starting values $c_{\alpha }^{\left( 1\right) }$ and $c_{\tau }^{\left( 1\right) }$ are smaller than the desired bounds, we do not start the iteration.
Otherwise, we stop the iteration as soon as updated bounds are smaller than the desired bounds.

Since Lemma \ref%
{lem-claim3} provides us with two types of bounds for $c_{\alpha }$ when $%
\left( \ref{12-1}\right) $ is not met, we evaluate each case below.

\noindent \textbf{Case (i)}:
\begin{equation*}
c_{\alpha }^{\left( m\right) }=\frac{3}{(1-\mu )X_{\min }}\left( \sqrt{%
c_{\tau }^{\left( m-1\right) }}+\left( 2X_{\min }\right) ^{-1}c_{\tau
}^{\left( m-1\right) }C\left\vert \delta _{0}\right\vert _{1}\right) .
\end{equation*}%
This implies by Lemma \ref{lem-claim4} that
\begin{align*}
c_{\tau }^{\left( m\right) } &= c^{-1}\lambda \left( \left( 1+\mu \right)
X_{\max }c_{\alpha }^{\left( m\right) }+\sqrt{c_{\tau }^{\left( m-1\right) }}%
+(2X_{\min })^{-1}c_{\tau }^{\left( m-1\right) }C\left\vert \delta
_{0}\right\vert _{1}\right) \\
&= c^{-1}\lambda \left( \frac{3\left( 1+\mu \right) X_{\max }}{\left( 1-\mu
\right) X_{\min }}+1\right) \left( \sqrt{c_{\tau }^{\left( m-1\right) }}%
+\left( 2X_{\min }\right) ^{-1}C\left\vert \delta _{0}\right\vert
_{1}c_{\tau }^{\left( m-1\right) }\right) \\
&=: A_{1}\sqrt{c_{\tau }^{\left( m-1\right) }}+A_{2}c_{\tau }^{\left(
m-1\right) },
\end{align*}%
where $A_{1}$ and $A_{2}$ are defined accordingly. This system has one
converging fixed point other than zero if $A_{2}<1,$ which is the case under
\eqref{case1a}. Note also that all the terms here are
positive. After some algebra, we get the fixed point%
\begin{eqnarray*}
c_{\tau }^{\infty } &=&\left( \frac{A_{1}}{1-A_{2}}\right) ^{2} \\
&=&\left( \frac{c^{-1}\lambda \left( \frac{3\left( 1+\mu \right) X_{\max }}{%
\left( 1-\mu \right) X_{\min }}+1\right) }{1-c^{-1}\lambda \left( \frac{%
3\left( 1+\mu \right) X_{\max }}{\left( 1-\mu \right) X_{\min }}+1\right)
\left( 2X_{\min }\right) ^{-1}C\left\vert \delta _{0}\right\vert _{1}}%
\right) ^{2}.
\end{eqnarray*}%
Furthermore, \eqref{case1b} implies that%
\begin{equation*}
\sqrt{c_{\tau }^{\infty }}+\left( 2X_{\min }\right) ^{-1}c_{\tau }^{\infty
}C\left\vert \delta _{0}\right\vert _{1}<\frac{6X_{\max }^{2}}{\kappa ^{2}}%
\lambda s,
\end{equation*}%
which in turn yields that
\begin{equation*}
c_{\alpha }^{\infty }=\frac{3}{(1-\mu )X_{\min }}\left( \sqrt{c_{\tau
}^{\infty }}+\left( 2X_{\min }\right) ^{-1}c_{\tau }^{\infty }C\left\vert
\delta _{0}\right\vert _{1}\right) <c_{\alpha }^{\ast },
\end{equation*}%
and that $c_{\tau }^{\infty }<c_{\tau }^{\ast }$ by construction of $c_{\tau
}^{\ast }$ in \eqref{taust}.

\noindent \textbf{Case (ii)}:
Consider the case that
\begin{align*}
c_{\alpha }^{\left( m\right) } &= \frac{6X_{\max }}{(1-\mu )X_{\min }\kappa }%
\left( c_{\alpha }^{\left( m-1\right) }c_{\tau }^{\left( m-1\right)
}C\left\vert \delta _{0}\right\vert _{1}s\right) ^{1/2}=: B_{1}\sqrt{%
c_{\alpha }^{\left( m-1\right) }}\sqrt{c_{\tau }^{\left( m-1\right) }}.
\end{align*}
where $B_{1}$ is defined accordingly.
Again, by Lemma \ref{lem-claim4}, we have that
\begin{align*}
c_{\tau }^{\left( m\right) } &= c^{-1}\lambda \left( \left( 1+\mu \right)
X_{\max }c_{\alpha }^{\left( m\right) }+\sqrt{c_{\tau }^{\left( m-1\right) }}%
+(2X_{\min })^{-1}c_{\tau }^{\left( m-1\right) }C\left\vert \delta
_{0}\right\vert _{1}\right) \\
&=\left( \frac{\lambda \left( 1+\mu \right) 6X_{\max }^{2}\left(
C\left\vert \delta _{0}\right\vert _{1}s\right) ^{1/2}}{c(1-\mu )X_{\min
}\kappa }\sqrt{c_{\alpha }^{\left( m-1\right) }}+\frac{\lambda }{c}\right)
\sqrt{c_{\tau }^{\left( m-1\right) }}+\frac{\lambda C\left\vert \delta
_{0}\right\vert _{1}}{c2X_{\min }}c_{\tau }^{\left( m-1\right) } \\
&=:\left( B_{2}\sqrt{c_{\alpha }^{\left( m-1\right) }}+B_{3}\right) \sqrt{%
c_{\tau }^{\left( m-1\right) }}+B_{4}c_{\tau }^{\left( m-1\right) },
\end{align*}
by defining $B_{2},B_{3}$, and $B_{4}$ accordingly. As above this system has one
fixed point%
\begin{eqnarray*}
c_{\tau }^{\infty } &=&\left( \frac{B_{3}}{1-B_{1}B_{2}-B_{4}}\right) ^{2} \\
&=&\left( \frac{\lambda /c}{1-\left( \frac{\left( 1+\mu \right) 72X_{\max
}^{3}}{(1-\mu )^{2}X_{\min }\kappa ^{2}}s+1\right) \frac{C\left\vert \delta
_{0}\right\vert _{1}}{c2X_{\min }}\lambda }\right) ^{2}
\end{eqnarray*}%
and
\begin{equation*}
c_{\alpha }^{\infty }=B_{1}^{2}c_{\tau }^{\infty }=\left( \frac{6X_{\max }}{%
(1-\mu )X_{\min }\kappa }\right) ^{2}C\left\vert \delta _{0}\right\vert
_{1}sc_{\tau }^{\infty },
\end{equation*}%
provided that $B_{1}B_{2}+B_{4}<1$, which is true under
under \eqref{case2a}.
Furthermore, the fixed points $c_{\alpha
}^{\infty }$ and $c_{\tau }^{\infty }$ of this system is strictly smaller
than $c_{\alpha }^{\ast }$ and $c_{\tau }^{\ast },$ respectively, under \
\eqref{case2b} and \eqref{case2c}.

Since we have shown that $c_{\tau }^{\infty }<c_{\tau }^{\ast }$ and $%
c_{\alpha }^{\infty }<c_{\alpha }^{\ast }$ in both cases and $c_{\tau
}^{\left( m\right) }$ and $c_{\alpha }^{\left( m\right) }$ are strictly
decreasing as $m$ increases, the bound in the lemma is reached within
a finite number, say $m^{\ast },$ of iterative applications of Lemma \ref%
{lem-claim3} and \ref{lem-claim4}. Therefore, for each case, we have shown
that $\left\vert \widehat{\alpha }-\alpha _{0}\right\vert _{1}\leq c_{\alpha
}^{\ast }$ and $\left\vert \hat{\tau}-\tau _{0}\right\vert \leq c_{\tau
}^{\ast }$. The bound for the prediction risk can be obtained similarly, and
then the bound for the sparsity of the Lasso estimator follows from Lemma %
\ref{lemma4}.
Finally, each application of Lemmas \ref{lem-claim3} and \ref{lem-claim4} in
the chaining argument requires conditioning on $\mathbb{C}\left( \eta
_{j}\right) $, $j=1,...,m^{\ast }.$
\end{proof}

\begin{proof}[Proof of Theorem \ref{main-text-thm-fixed-threshold}]
The proof follows immediately 
from combining Assumptions \ref{assumption-main-1} 
and \ref{h-regular} 
with 
Lemma \ref{main-thm-fixed-threshold}.
In particular, the constants $C_4$, $C_5$ and $K_3$   can be chosen as   
\begin{align*}
C_4 &\equiv 4m^\ast, \\
C_5 &\equiv \frac{A^2}{16 \max_{j=1,\ldots,m^\ast} h_n(\eta_j)}, \\
K_3 &\equiv  \max \left( 
3 \sqrt{2} AC_2, 
\frac{18A C_2^2}{(1-\mu)C_3},
\left\{ \frac{3 ( 1+\mu) C_2 }{(1-\mu )C_3}+1\right\} 
\frac{6C_2^{2}A^{2}}{c},
\frac{36 C_2^2}{(1-\mu)^2 C_3^2} \right). 
\end{align*}
\end{proof}

\section{Additional Numerical Results}\label{sec:mc:extra}

In  
Table \ref{tb:resultM2}, we report additional 
empirical results with  $Q_{i}$ being the literacy rate.
The model selection and estimation results are similar to the case when
$Q_i$ is the initial GDP.

In this section, we  also consider different simulation designs allowing correlation between covariates.
The $M$-dimensional vector $X_i$ is now generated from a multivariate normal $N(0,\Sigma)$ with $(\Sigma)_{i,j}=\rho^{|i-j|}$, where $(\Sigma)_{i,j}$ denotes the (i,j) element of the $M \times M$ covariance matrix $\Sigma$. 
All other random variables are the same as above. 
We have conducted the simulation studies for both $\rho=0.1$ and $0.3$; however,  Table \ref{tb:M50D} and Figures \ref{fg:depX1}--\ref{fg:depX2} report only the results of $\rho=0.3$  since the results with $\rho = 0.1$ are similar.
They show very similar results as those in Section \ref{sec:MC}.

Figure \ref{fg:cov} shows frequencies of selecting true parameters when both $\rho=0$ and $\rho=0.3$.
When $\rho = 0$, the probability that the Lasso estimates include the true nonzero parameters is very high. In most cases, the probability is 100\%,  and even the lowest probability is as high as 98.25\%.
When $\rho = 0.3$, the corresponding probability is somewhat lower than the no-correlation case, but it is still high and the lowest value is 80.75\%.

\begin{table}[htbp]
\small
\begin{center}
\caption{Model Selection and Estimation Results with $Q={lr}$}
\label{tb:resultM2}
\begin{tabular}{cccccc}
\\
\hline \hline
 && \multirow{2}{*}{Linear Model}  && \multicolumn{2}{c}{Threshold Model}\\
 && & &\multicolumn{2}{c}{$\widehat{\tau}=82$}\\
\hline
 && & & $ \widehat{\beta}$ & $ \widehat{\delta}$ \\
\cline{5-6}
\textit{const.} 			&& -0.1086 	&& -0.0151 & - \\
\textit{lgdp60} 		&& -0.0159 	&& -0.0099 & - \\
$\textit{ls}_k$			&& 0.0038 	&& 0.0046 & - \\
\textit{syrm60}		&& 0.0069	&& - & - \\
\textit{hyrm60}		&& 0.0188	&& 0.0101 & - \\
\textit{prim60}		&&-0.0001	&& -0.0001 & - \\
\textit{pricm60}		&& 0.0002	&& 0.0001 & 0.0001 \\
\textit{seccm60}		&& 0.0004	&& - & 0.0018\\
\textit{llife}			&& 0.0674	&& 0.0335&- \\
\textit{lfert}			&&-0.0098	&& -0.0069& - \\
\textit{edu/gdp}		&& -0.0547	&& - & - \\
\textit{gcon/gdp}		&&-0.0588	&& -0.0593& - \\
\textit{revol}			&&-0.0299	&& - & - \\
\textit{revcoup} 		&&0.0215	&& - & - \\
\textit{wardum}		&&-0.0017	&& - & - \\
\textit{wartime}		&&-0.0090	&& -0.0231 & -  \\
\textit{lbmp}			&&-0.0161	&& -0.0142 & - \\
\textit{tot}			&&0.1333	&& 0.0846 & - \\
$\textit{lgdp60} \times\textit{hyrf60}$ 	&& -0.0014 && - &  -0.0053\\
$\textit{lgdp60} \times\textit{nof60}$		&& $1.49\times 10^{-5}$ && - & - \\
$\textit{lgdp60} \times\textit{prif60}$		&&$-1.06 \times 10^{-5}$   && - & $-2.66 \times 10^{-6}$\\
$\textit{lgdp60} \times\textit{seccf60}$	&& -0.0001   && - & - \\
\hline
$\lambda$ & & $0.0011$  & & \multicolumn{2}{c}{$0.0044$}\\
$\mathcal{M}(\widehat{\alpha})$ && 22 && \multicolumn{2}{c}{16}\\

$\#\ \textit{of covariates}$ & &47 & & \multicolumn{2}{c}{94}\\
$\#\  \textit{of observations}$ && 70 && \multicolumn{2}{c}{70}\\
\hline
\\
\multicolumn{6}{p{.8\textwidth}}{\footnotesize \emph{Note:} The regularization parameter $\lambda$ is chosen by the `leave-one-out'  cross validation method. $\mathcal{M}(\widehat{\alpha})$ denotes the number of covariates to be selected by the Lasso estimator, and `-' indicates that the regressor is not selected. Recall that $\widehat{\beta}$ is the coefficient when $Q \ge \widehat{\gamma}$ and that $\widehat{\delta}$ is the change of the coefficient value when $Q < \widehat{\gamma}$.}
\end{tabular}
\end{center}
\end{table}


\begin{table}[htbp]
\footnotesize

\begin{center}
\caption{Simulation Results with $M=50$ and $\rho=0.3$}
\label{tb:M50D}
\begin{tabular}{lp{2cm}crrrrrr}
\hline \hline
{Threshold}                 &  {Estimation}   &Constant          &   \multicolumn{3}{c}{Prediction Error (PE)}  & \multirow{2}{*}{$\mathbb{E}\left[\mathcal{M}\left(\widehat{\alpha}\right)\right]$}&   \multirow{2}{*}{$\mathbb{E}\left|\widehat{\alpha}-\alpha_0\right|_1$} &    \multirow{2}{*}{$\mathbb{E}\left|\widehat{\tau}-\tau_0\right|_1$}  \\
\cline{4-6}
Parameter &  Method & for $\lambda$ & Mean & Median & SD & & \\
\hline
& & & & & & &\\
\multicolumn{9}{c}{\underline{Jump Scale: $c=1$}} \\
& & & & & & &\\
\multirow{7}{*}{$\tau_0=0.5$} & {Least Squares}  & None & 0.283  &  0.273  &  0.075  &  100.00   &  7.718 & 0.010 \\
 & \multirow{4}{*}{Lasso}   &$A=2.8$ & 0.075  &  0.043  &  0.087   &12.99   &0.650  &0.041 \\
 & & $A=3.2$ & 0.108  &  0.059  &  0.115   &  10.98  & 0.737  & 0.071\\
 & & $A=3.6$ & 0.160  &  0.099  &  0.137  &  9.74   &0.913  &0.119 \\
 & & $A=4.0$ & 0.208  &  0.181  &  0.143  &   8.72  & 1.084  &0.166 \\
 & Oracle 1  & None & 0.013  &  0.006  &  0.017  &  4.00  & 0.169 &0.005 \\
 & Oracle 2  & None & 0.005  &  0.004  &  0.004  & 4.00  & 0.163 &0.000 \\
& & & & & & &\\
\hline
& & & & & & &\\
\multirow{7}{*}{$\tau_0=0.4$} & {Least Squares}  & None & 0.317  &  0.297  &  0.099   &  100.00   &7.696  & 0.010 \\
 & \multirow{4}{*}{Lasso} &$A=2.8$ & 0.118  &  0.063  &  0.123   & 13.89   &0.855  & 0.094\\
 & & $A=3.2$ & 0.155  &  0.090  &  0.139   & 11.69  &0.962  &0.138 \\
 & & $A=3.6$ & 0.207  &  0.201  &  0.143   & 10.47   &1.150  &0.204 \\
 & & $A=4.0$ & 0.258  &  0.301  &  0.138   & 9.64  &1.333  & 0.266\\
 & Oracle 1  & None & 0.013  &  0.007  &  0.016   &4.00  &0.168  &0.006 \\
 & Oracle 2  & None & 0.005  &  0.004  &  0.004   &4.00  & 0.163 &0.000 \\
& & & & & & &\\
\hline
& & & & & & &\\
\multirow{7}{*}{$\tau_0=0.3$} & {Least Squares}  & None & 1.639  &  0.487  &  7.710   & 100.00   & 12.224 & 0.015\\
 & \multirow{4}{*}{Lasso}  &$A=2.8$ & 0.149  &  0.080  &  0.136  & 14.65    &1.135  & 0.184\\
 & & $A=3.2$ & 0.200  &  0.233  &  0.138   & 12.71  &1.346  & 0.272\\
 & & $A=3.6$ & 0.246  &  0.284  &  0.127   & 11.29  &1.548  &0.354 \\
 & & $A=4.0$ & 0.277  &  0.306  &  0.116   &  10.02 & 1.673 &0.408\\
 & Oracle 1  & None & 0.013  &  0.006  &  0.017 &4.00  & 0.182  &0.005 \\
 & Oracle 2  & None & 0.005  &  0.004  &  0.004  &4.00  &0.176 &0.000 \\
& & & & & & &\\
\hline
& & & & & & &\\
\multicolumn{9}{c}{\underline{Jump Scale: $c=0$}} \\
& & & & & & &\\
\multirow{6}{*}{N/A} & {Least Squares}  & None & 6.939  &  0.437  &  42.698 & 100.00  & 23.146 &\multirow{6}{*}{N/A} \\
 & \multirow{4}{*}{Lasso}  &$A=2.8$ & 0.012  &  0.011  &  0.007   & 9.02& 0.248& \\
 & & $A=3.2$ & 0.013  &  0.011  &  0.008     &6.54 &0.214& \\
 & & $A=3.6$ & 0.014  &  0.013  &  0.009    & 5.00 &0.196 & \\
 & & $A=4.0$ & 0.016  &  0.014  &  0.010  &  3.83 & 0.191& \\
 & Oracle 1 \& 2 & None & 0.002  &  0.002  &  0.003    &  2.00 & 0.054 & \\
& & & & & & &\\
\hline
\\
\multicolumn{9}{p{\textwidth}}{\footnotesize \emph{Note: }$M$ denotes the column size of $X_i$ and $\tau$ denotes the threshold parameter. Oracle 1 \& 2 are estimated by the least squares when sparsity is known and when sparsity and $\tau_0$ are known, respectively. All simulations are based on 400 replications of a sample with 200 observations. }
\end{tabular}
\end{center}
\end{table}

\clearpage

\begin{figure}[tbp]
\caption{Mean Prediction Errors and Mean $\mathcal{M}(\widehat{\alpha})$ when $\rho=0.3$}
\begin{center}

\includegraphics[width = 2.8in]{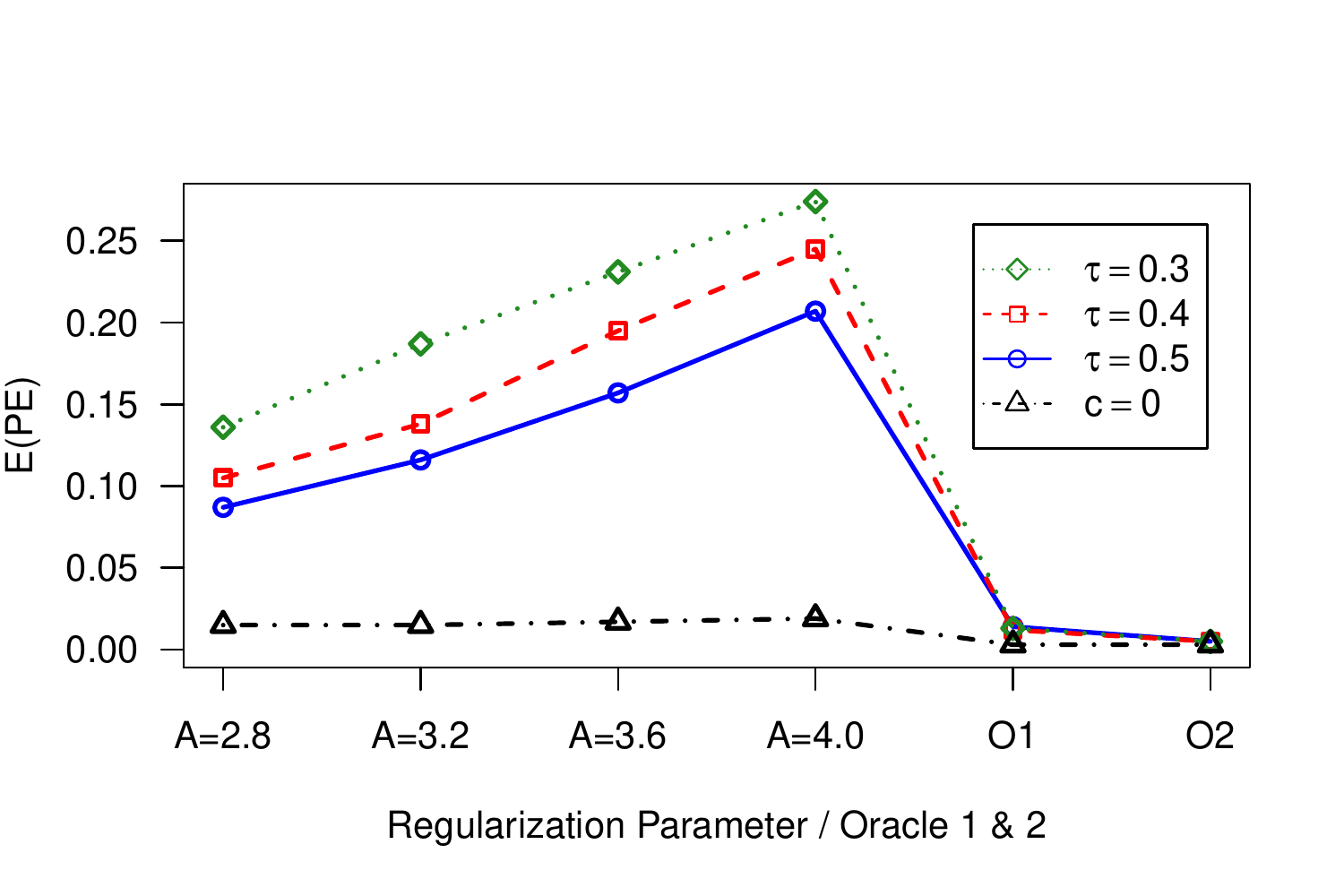}
\includegraphics[width = 2.8in]{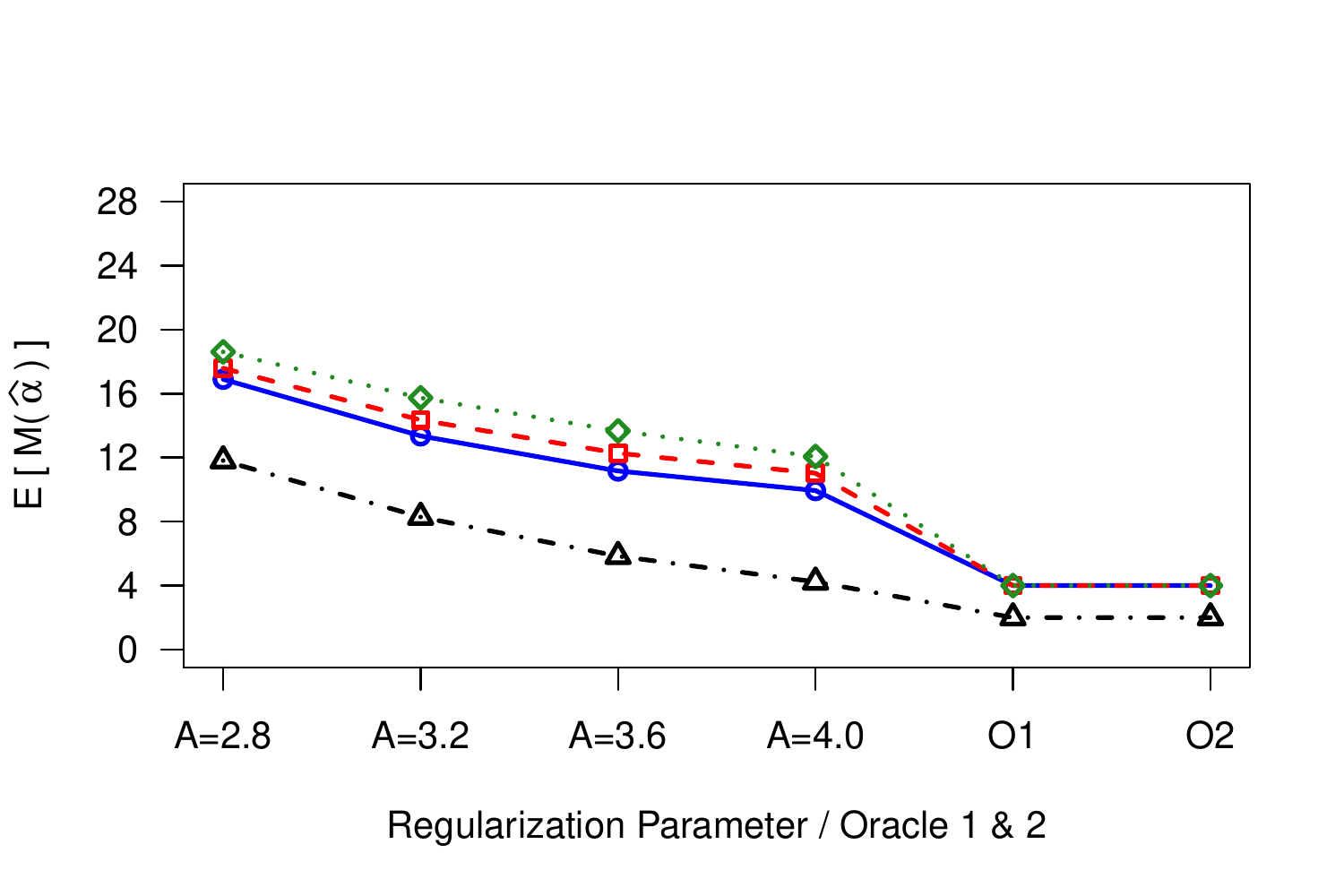} \\[0pt]
$M=100$\\[0pt]
\includegraphics[width = 2.8in]{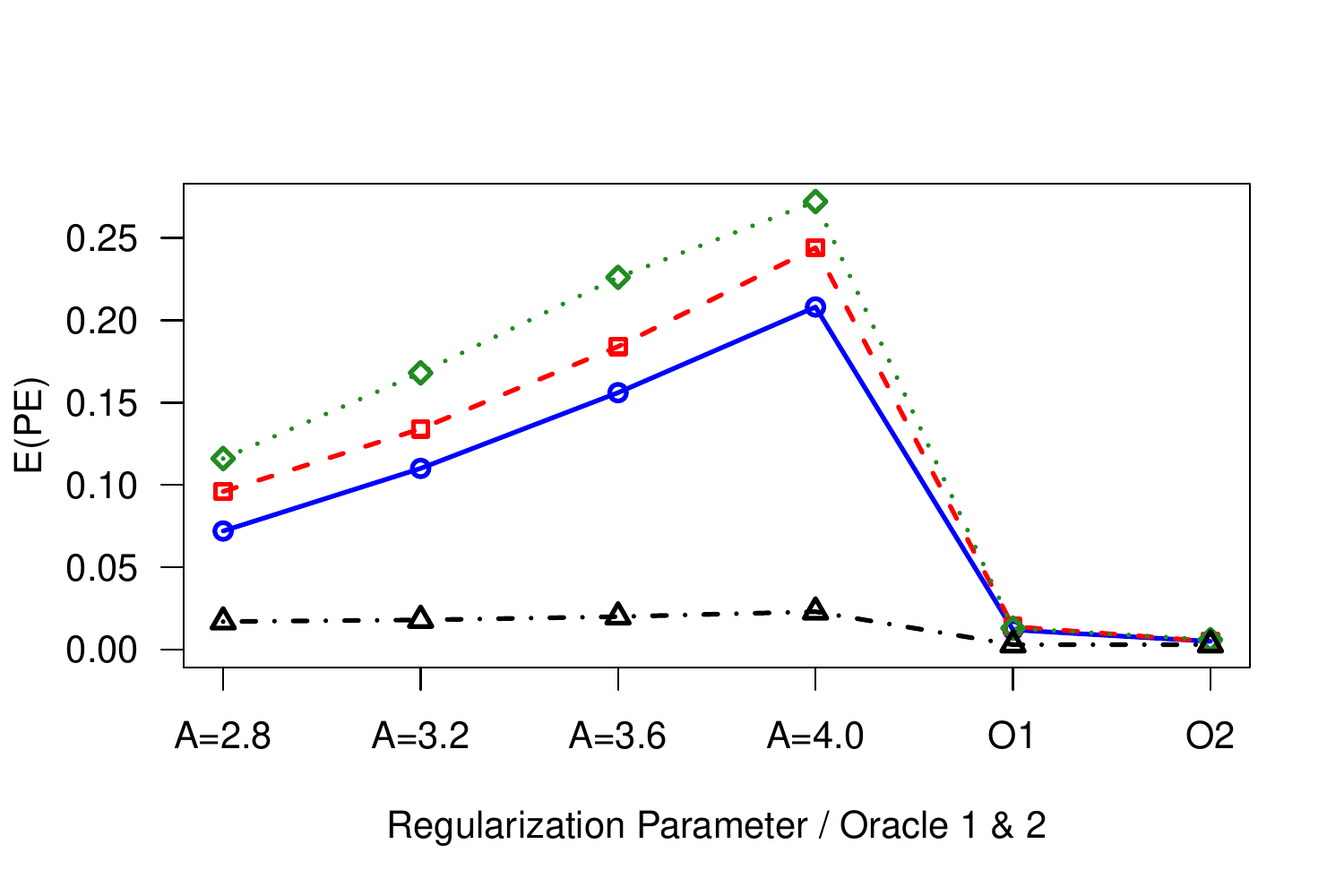}
\includegraphics[width = 2.8in]{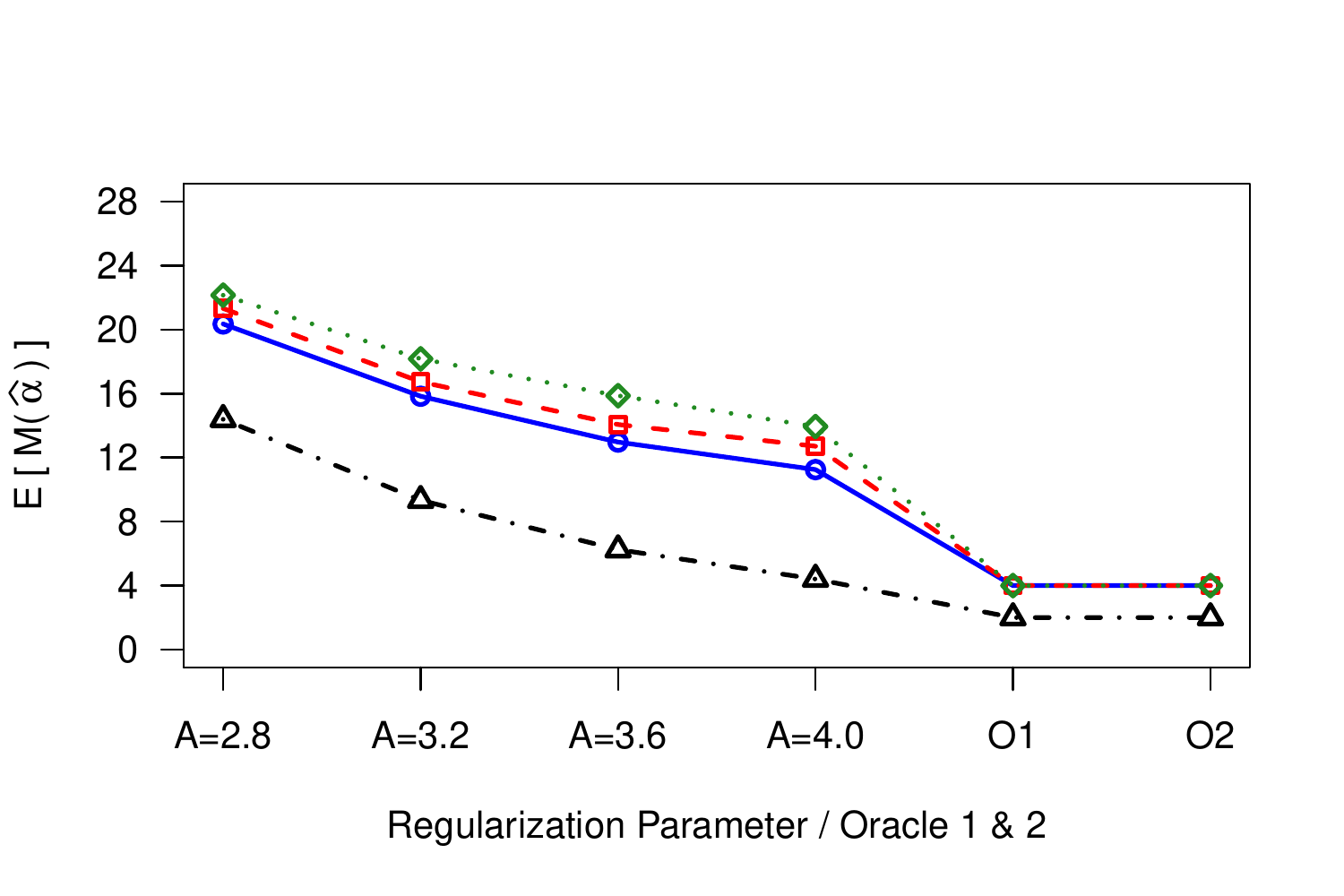} \\[0pt]
$M=200$\\[0pt]
\includegraphics[width = 2.8in]{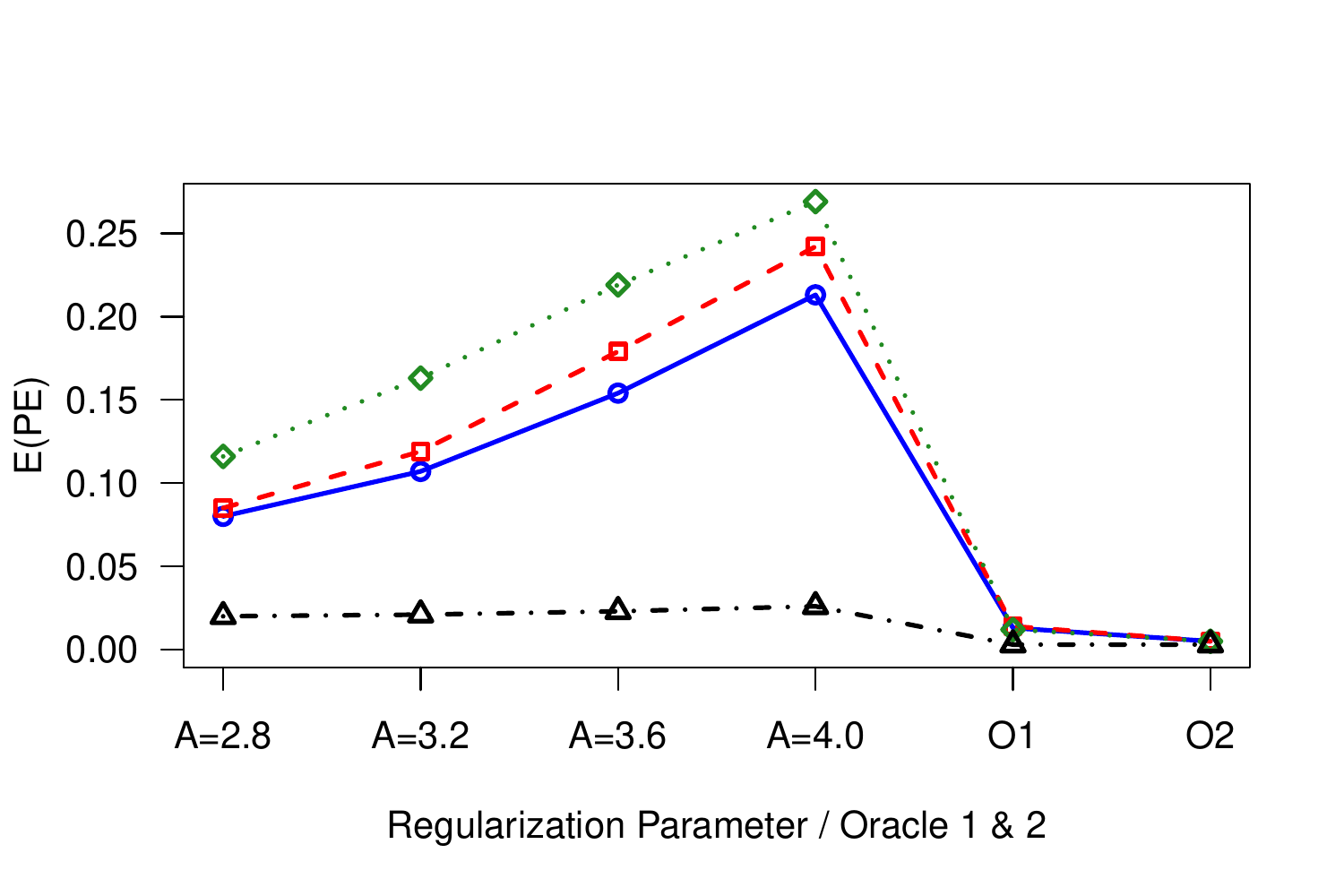}
\includegraphics[width = 2.8in]{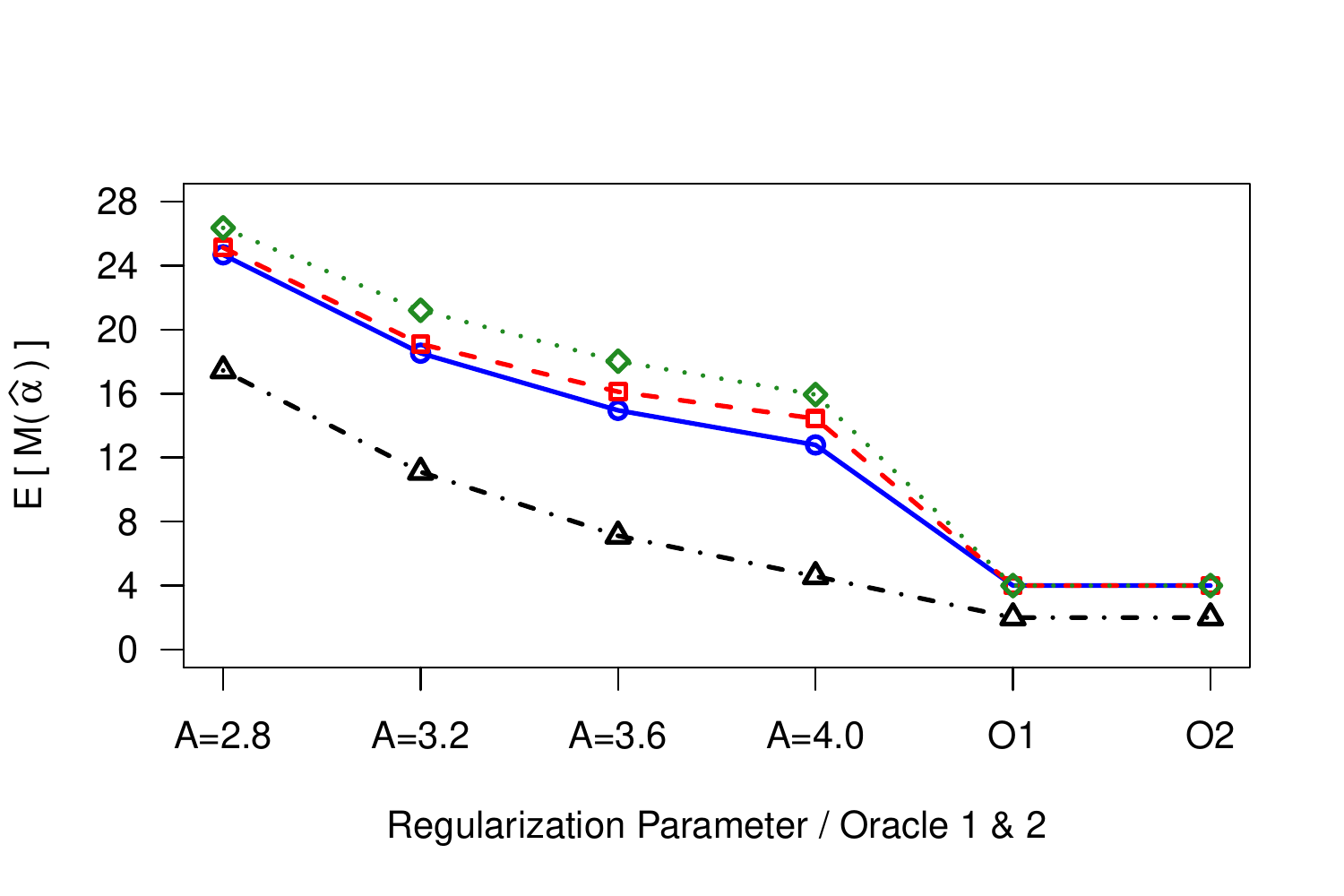} \\[0pt]
$M=400$\\[0pt]
\end{center}
\par
\label{fg:depX1}
\end{figure}

\begin{figure}[tbp]
\caption{Mean $\ell_1$-Errors for $\alpha$ and $\tau$ when $\rho=0.3$}
\begin{center}

\includegraphics[width = 2.8in]{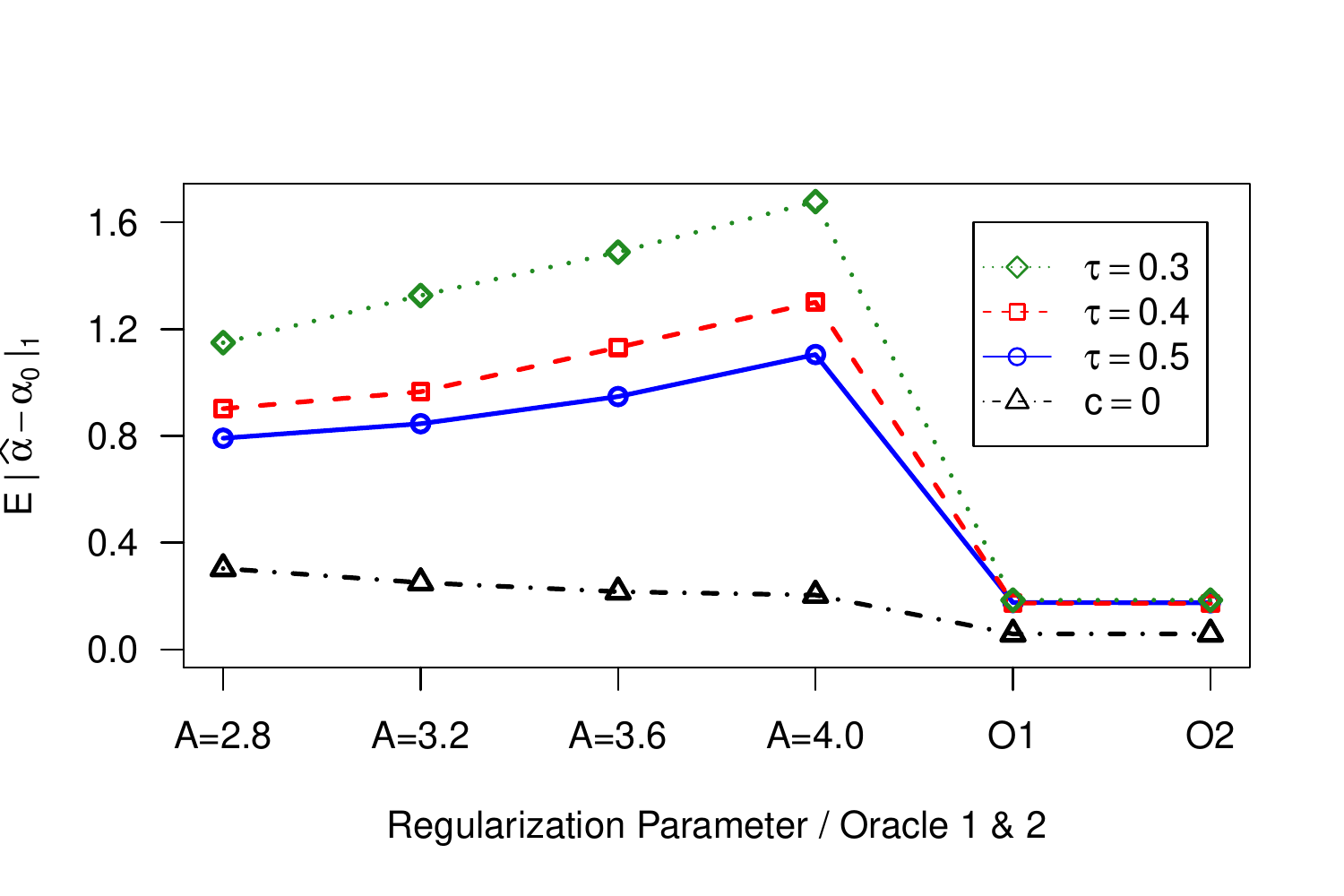}
\includegraphics[width = 2.8in]{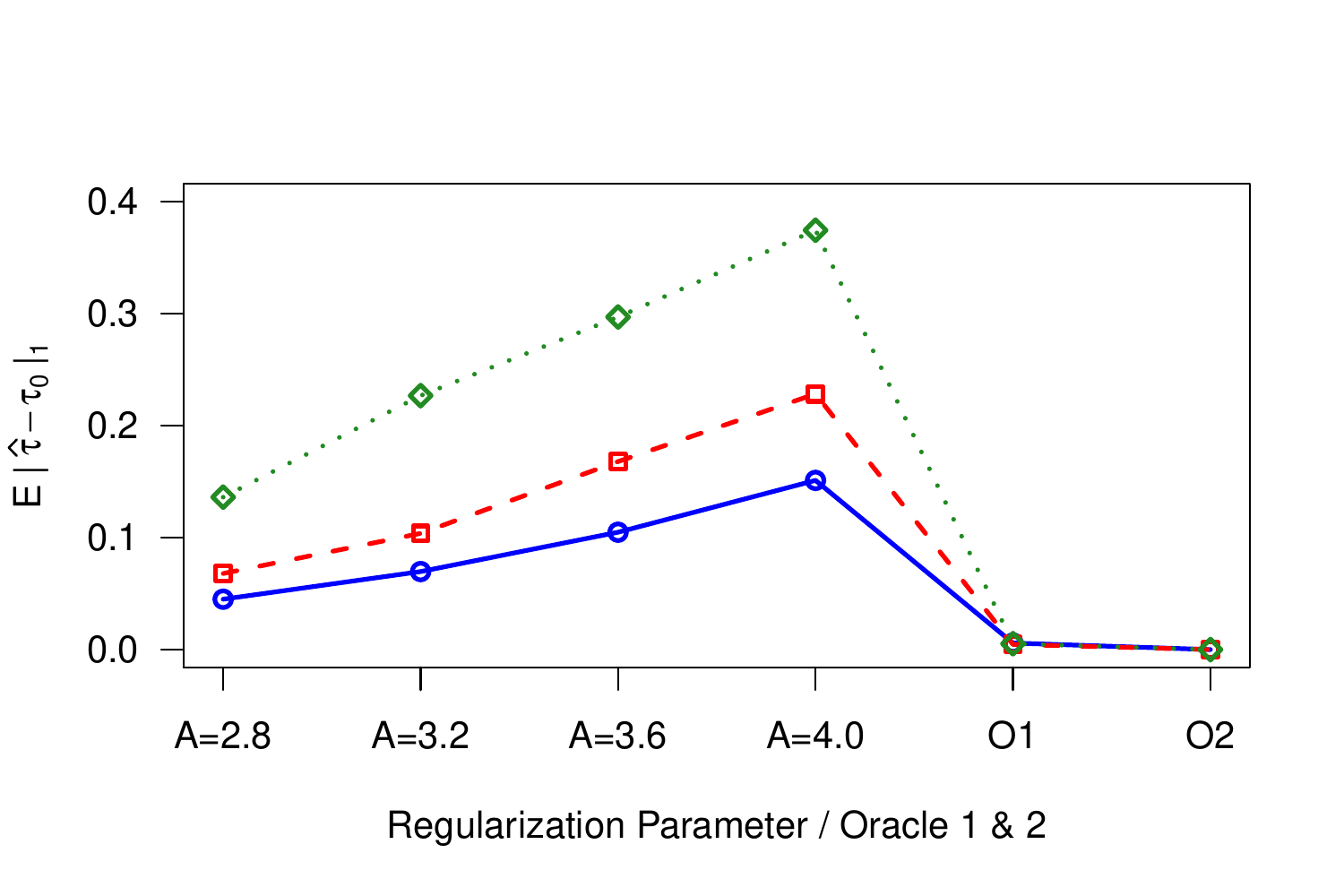} \\[0pt]
$M=100$\\[0pt]
\includegraphics[width = 2.8in]{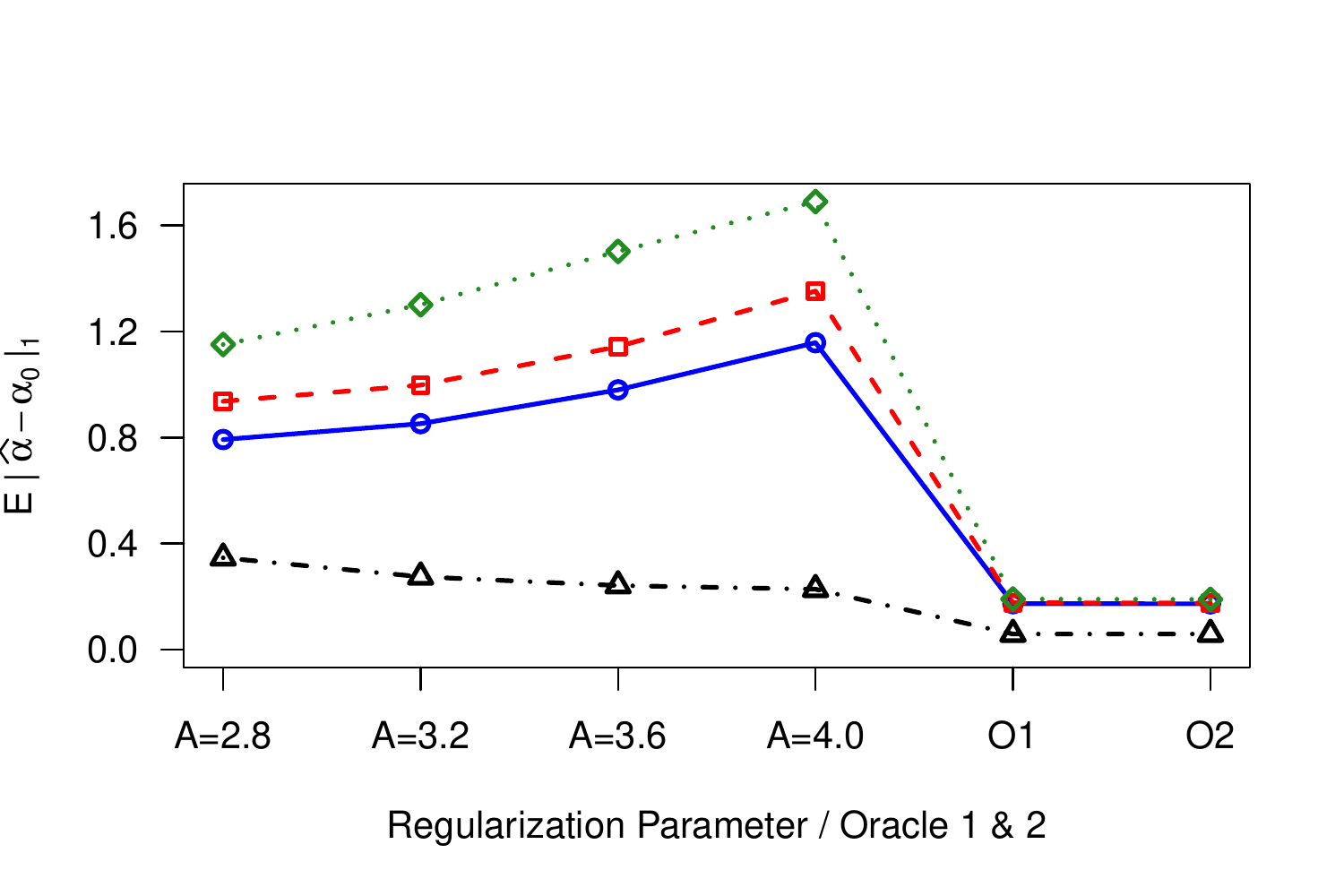}
\includegraphics[width = 2.8in]{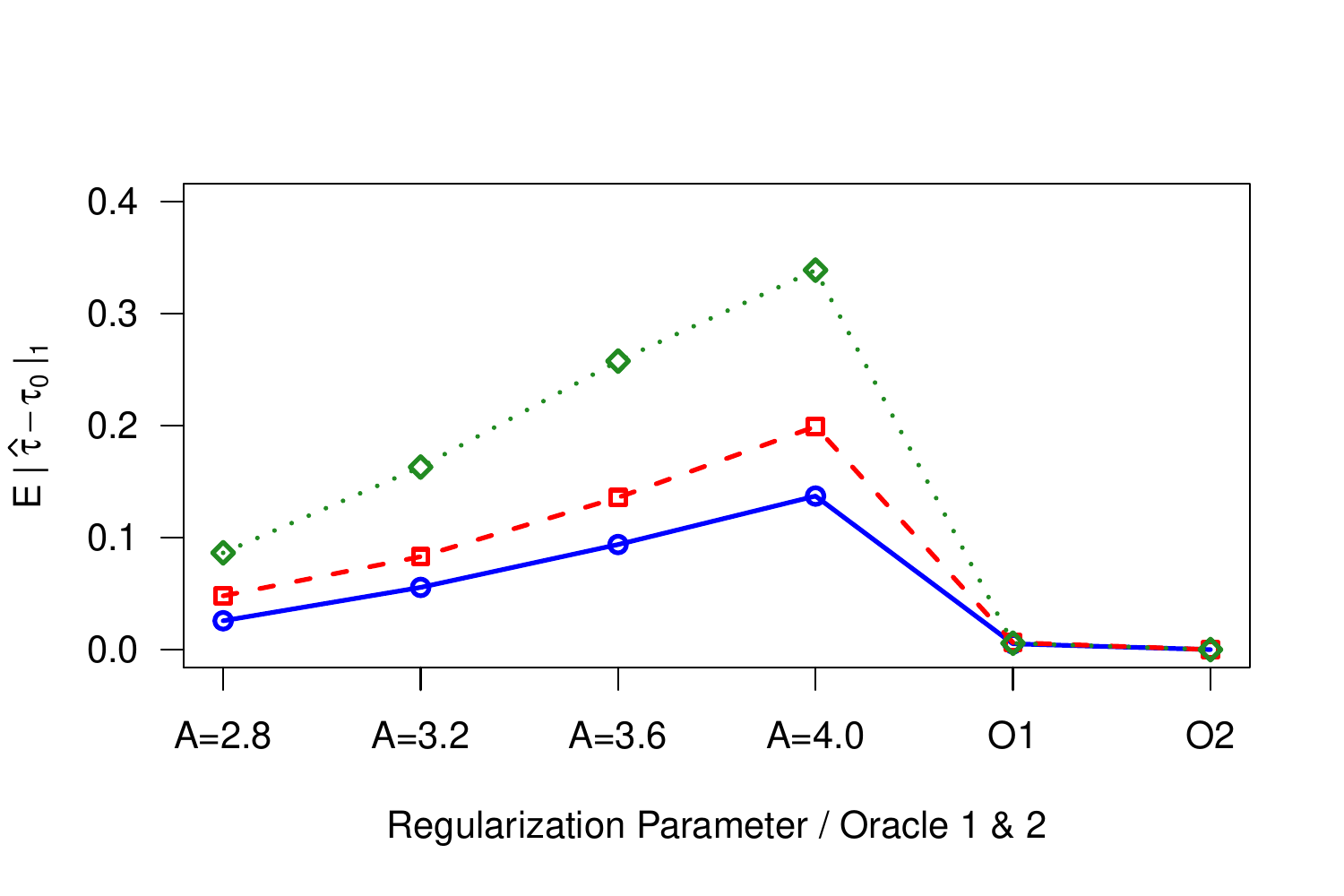} \\[0pt]
$M=200$\\[0pt]
\includegraphics[width = 2.8in]{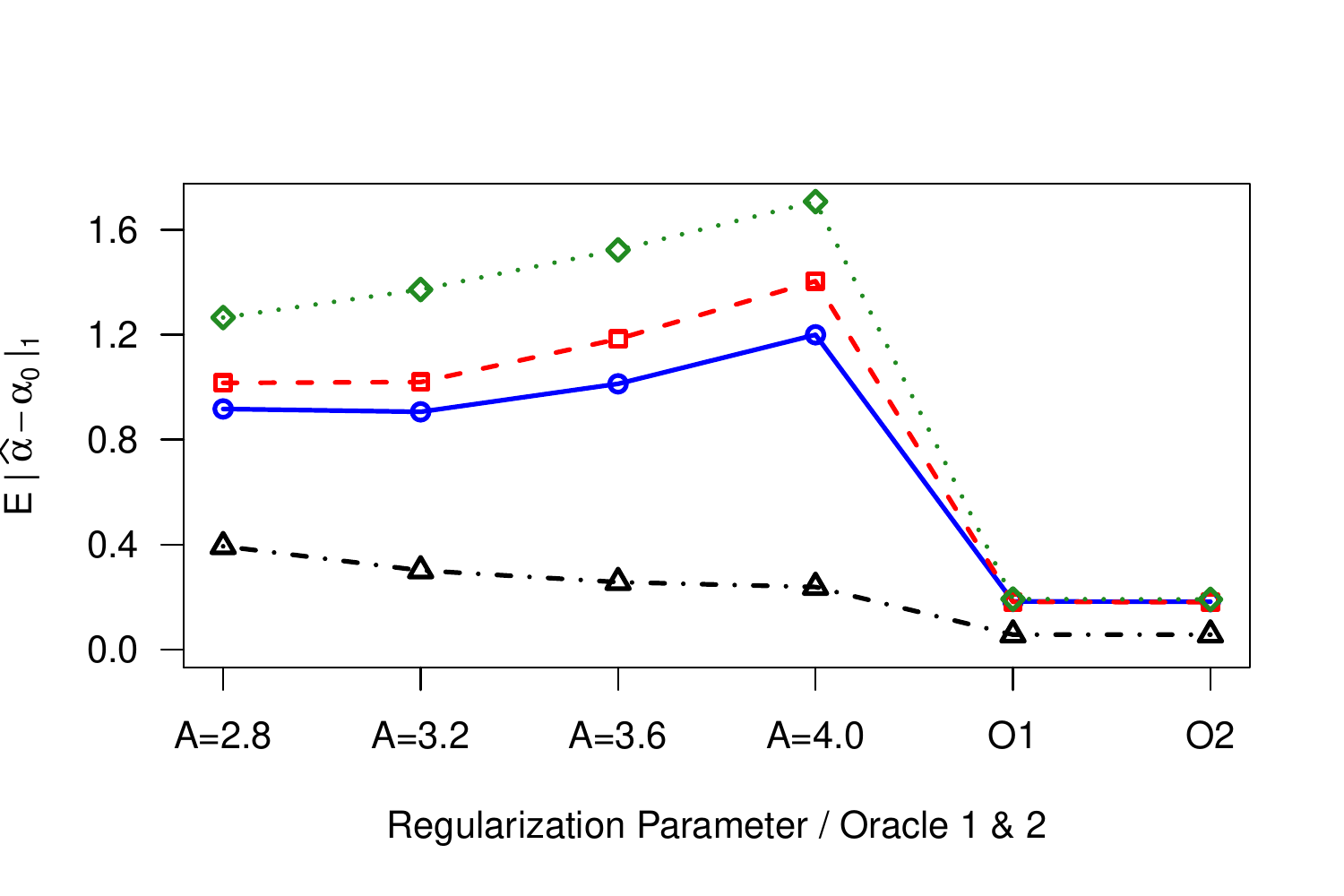}
\includegraphics[width = 2.8in]{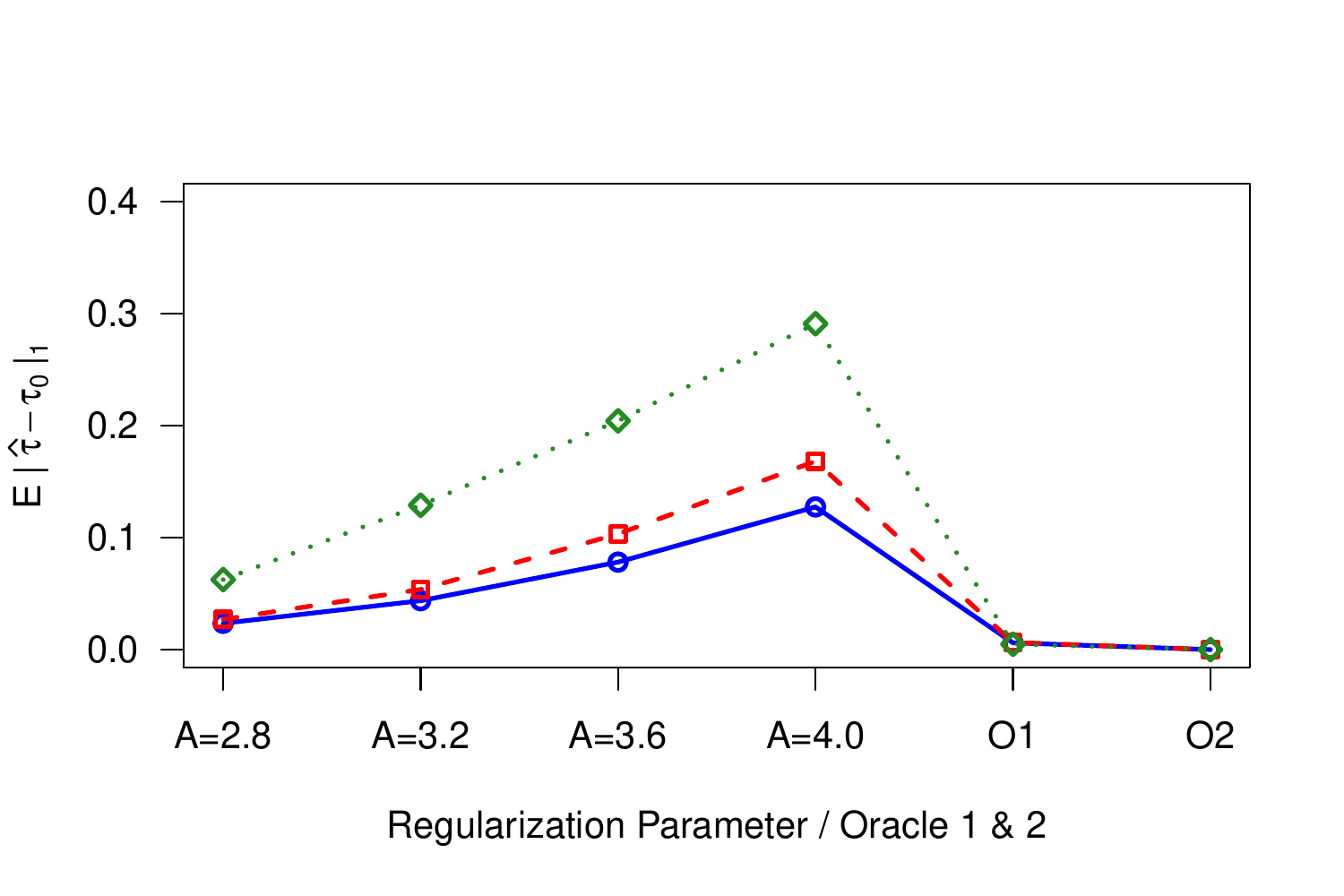} \\[0pt]
$M=400$\\[0pt]
\end{center}
\par
\label{fg:depX2}
\end{figure}

\begin{figure}[tbp]
\caption{Probability of Selecting True Parameters when $\rho=0$ (left panel) 
 and   $\rho=0.3$ (right panel)}
\begin{center}

\includegraphics[width = 2.8in]{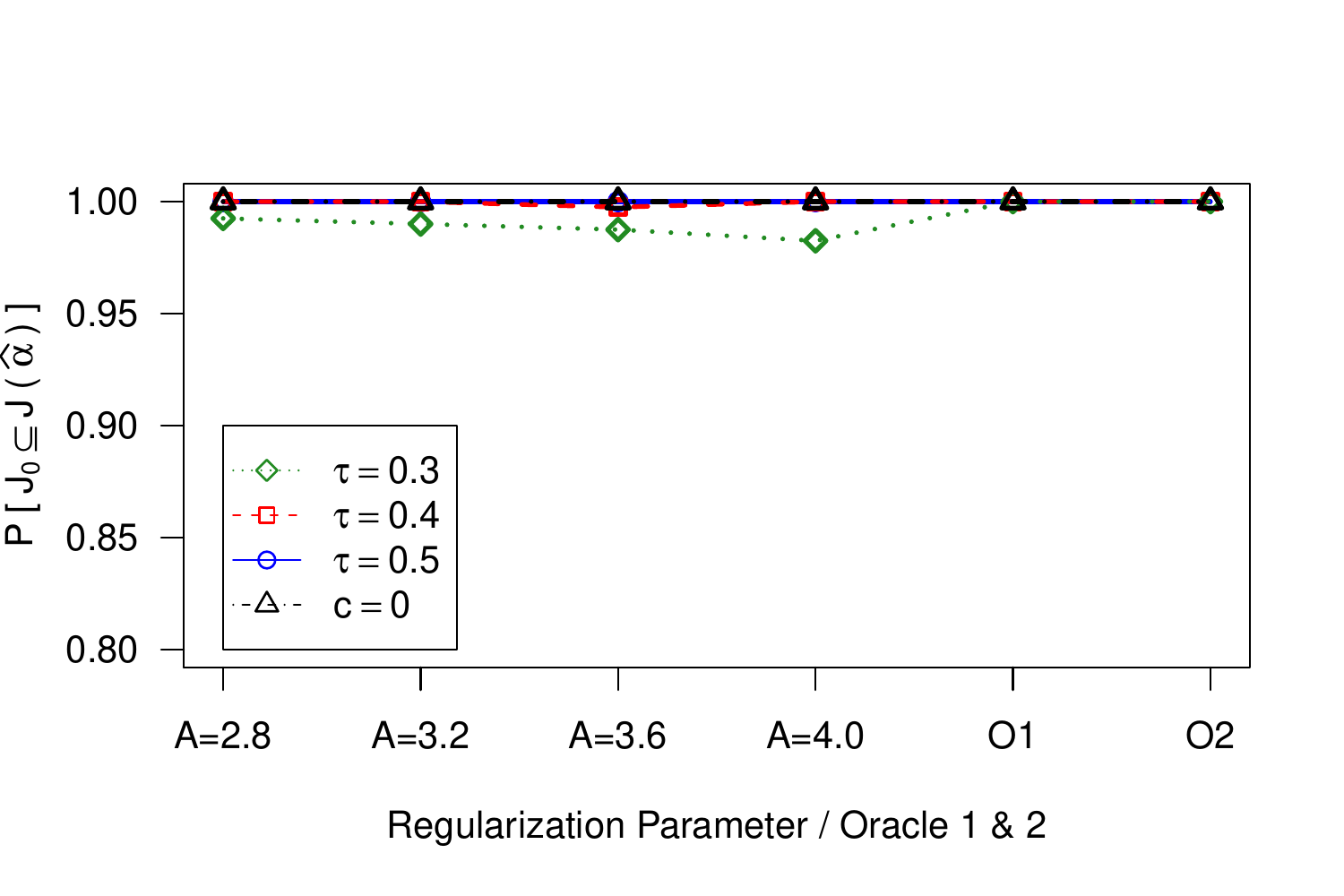}
\includegraphics[width = 2.8in]{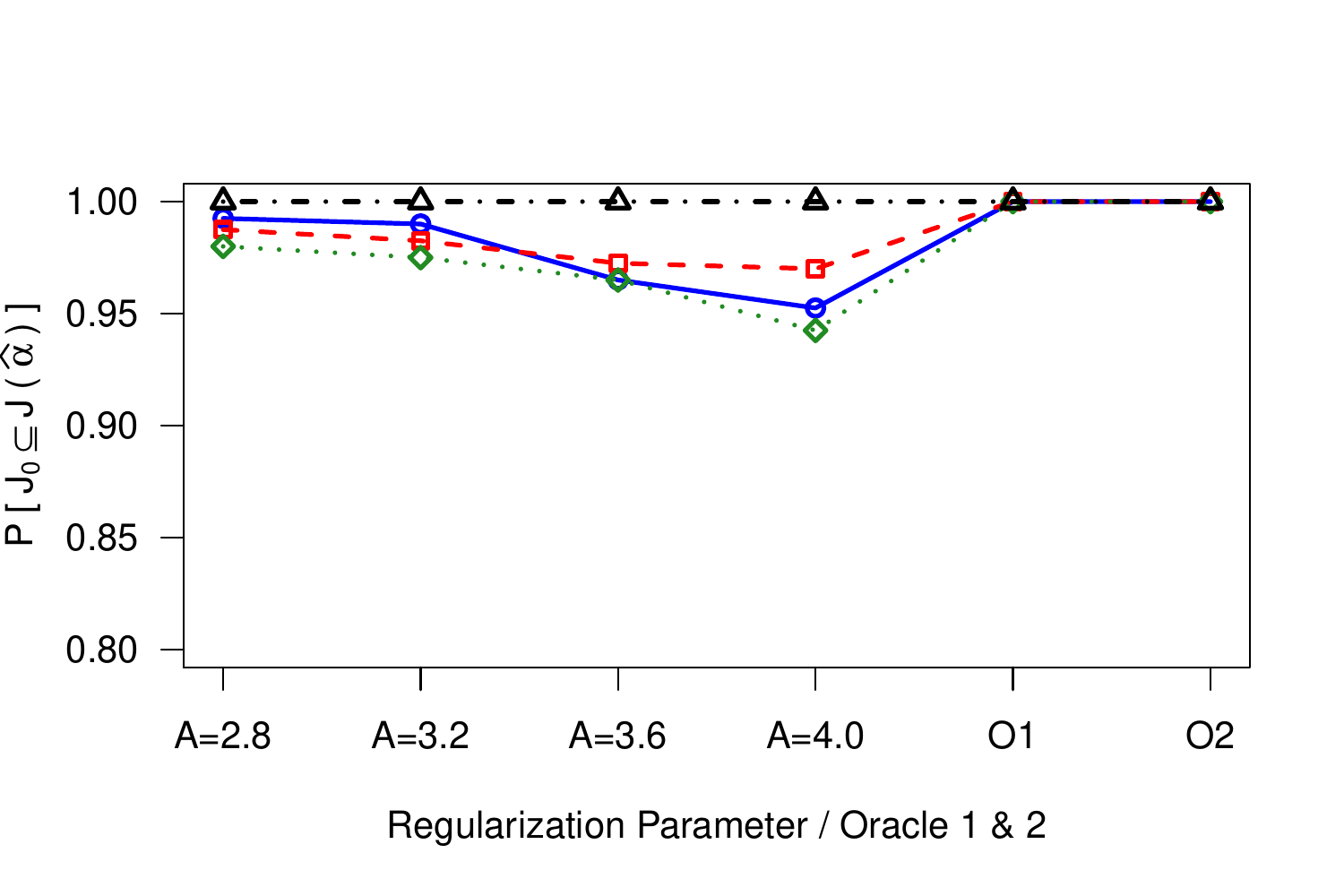}  \\[0pt]
$M=50$\\[0pt]
\includegraphics[width = 2.8in]{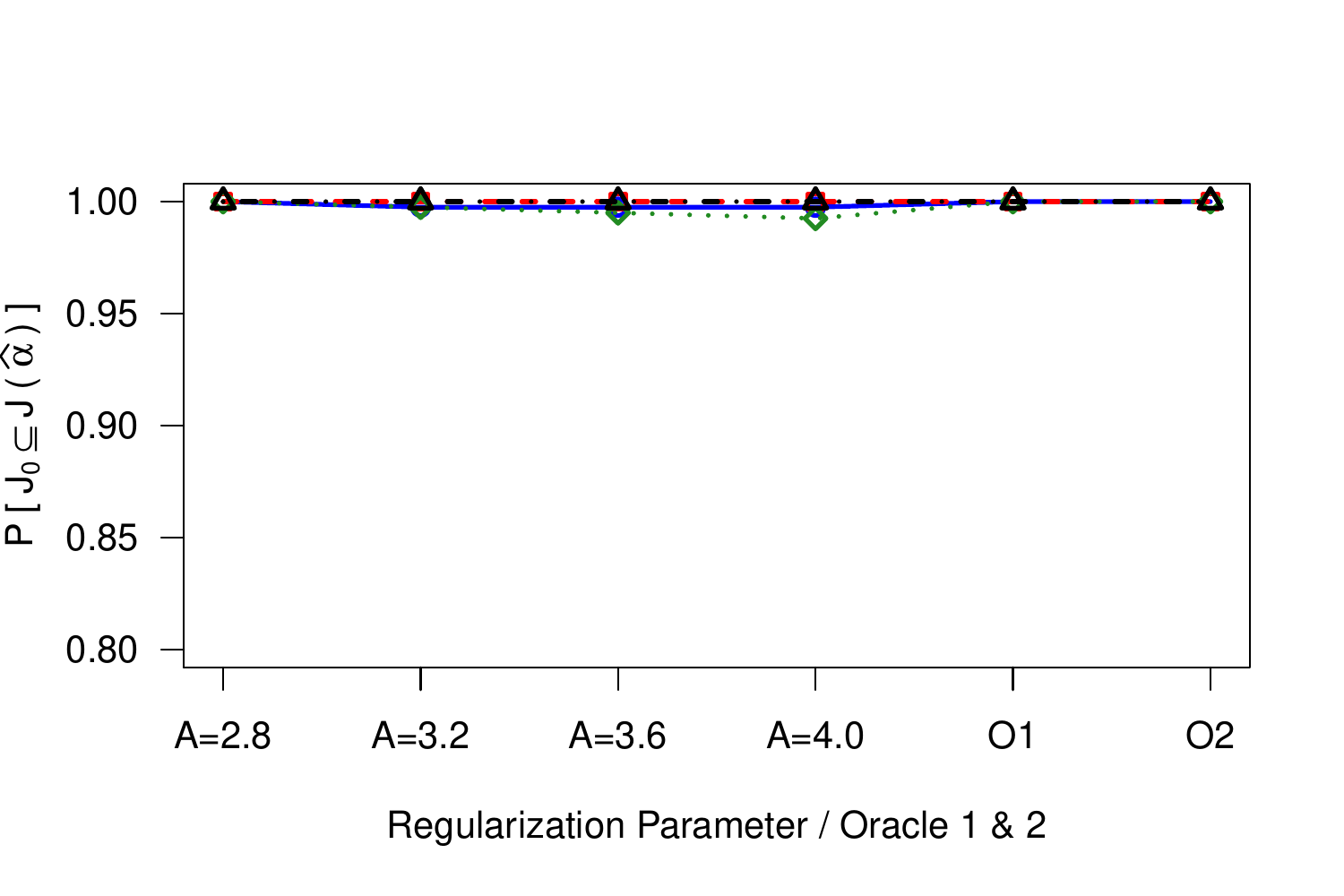}
\includegraphics[width = 2.8in]{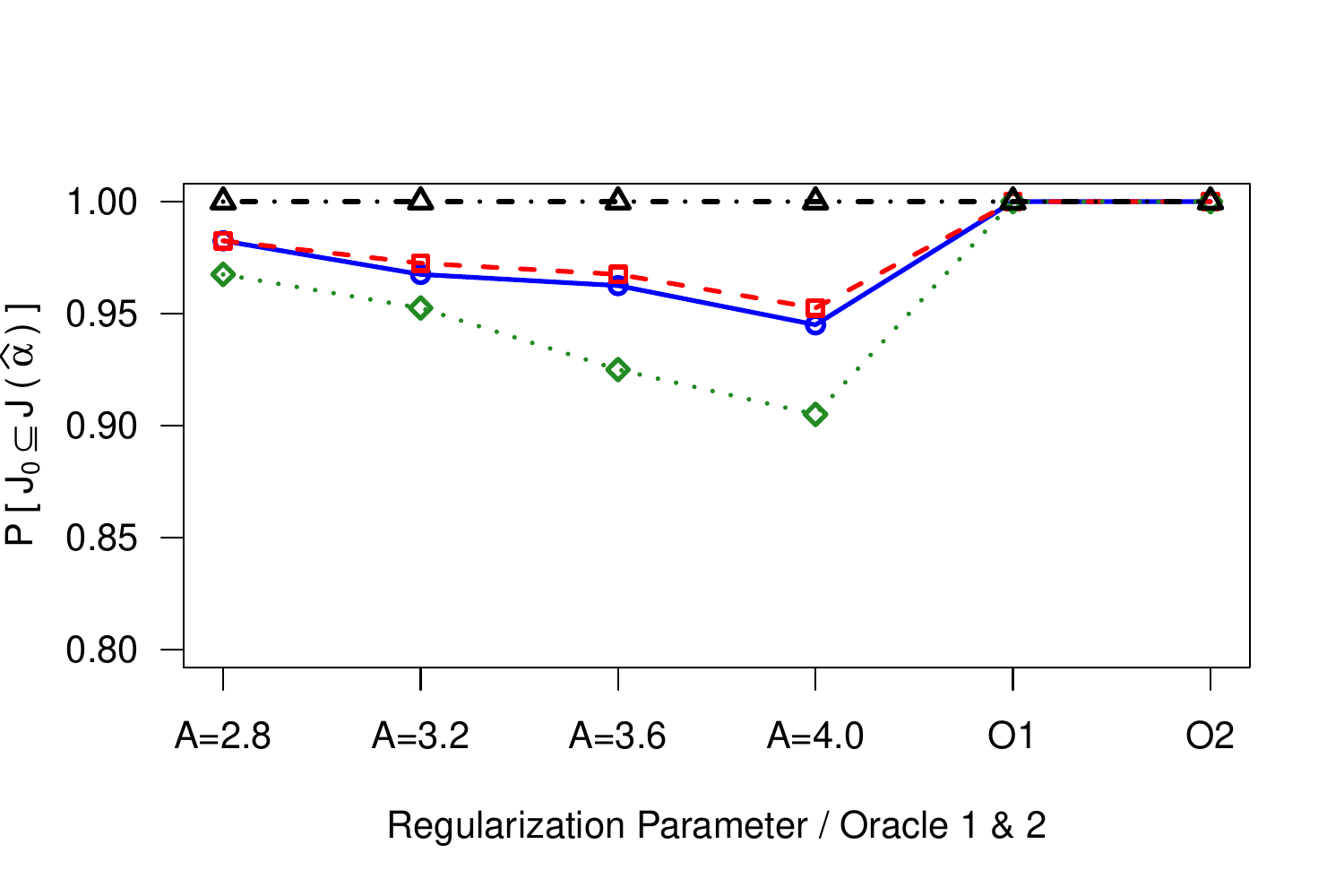}  \\[0pt]
$M=100$\\[0pt]
\includegraphics[width = 2.8in]{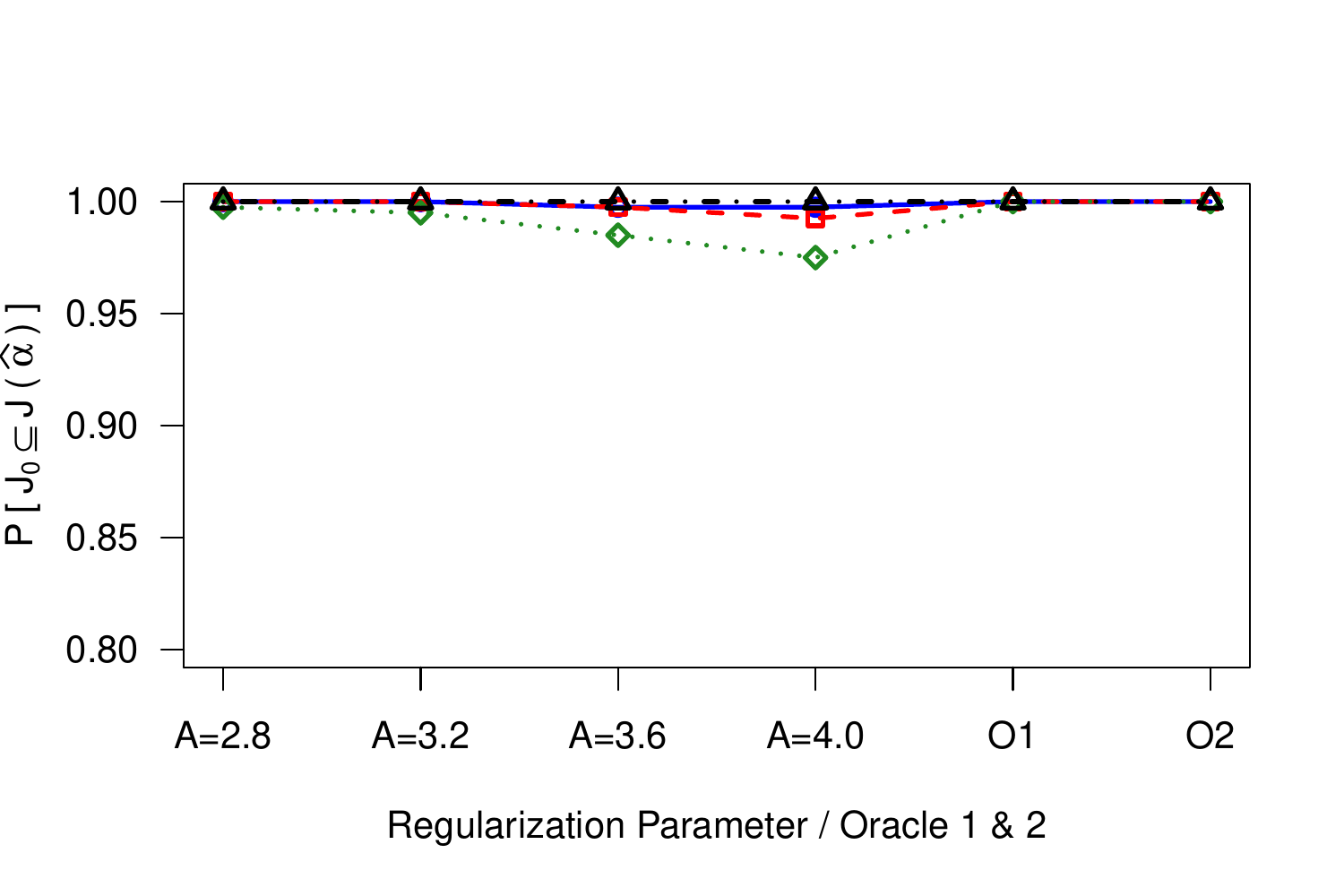}
\includegraphics[width = 2.8in]{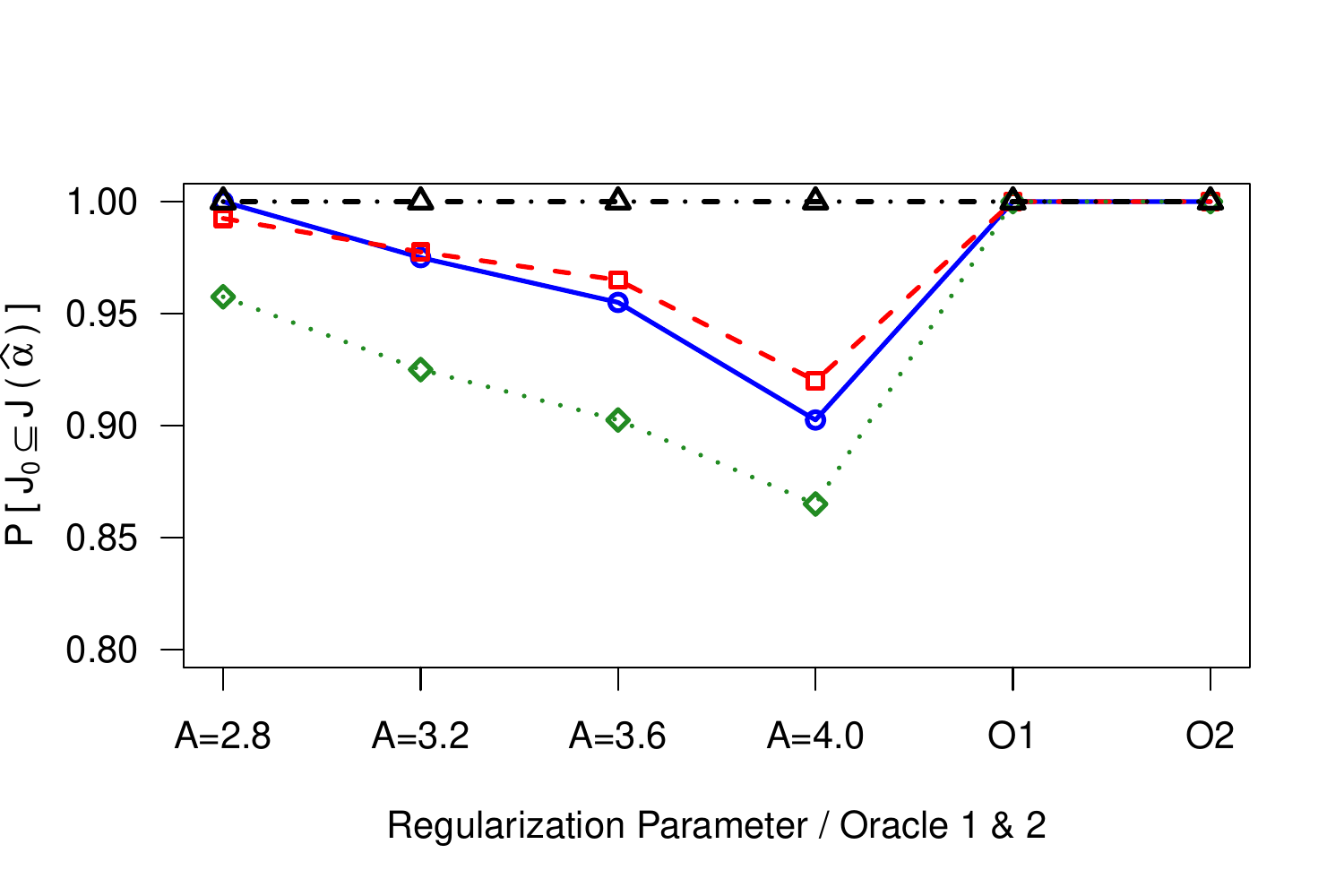}  \\[0pt]
$M=200$\\[0pt]
\includegraphics[width = 2.8in]{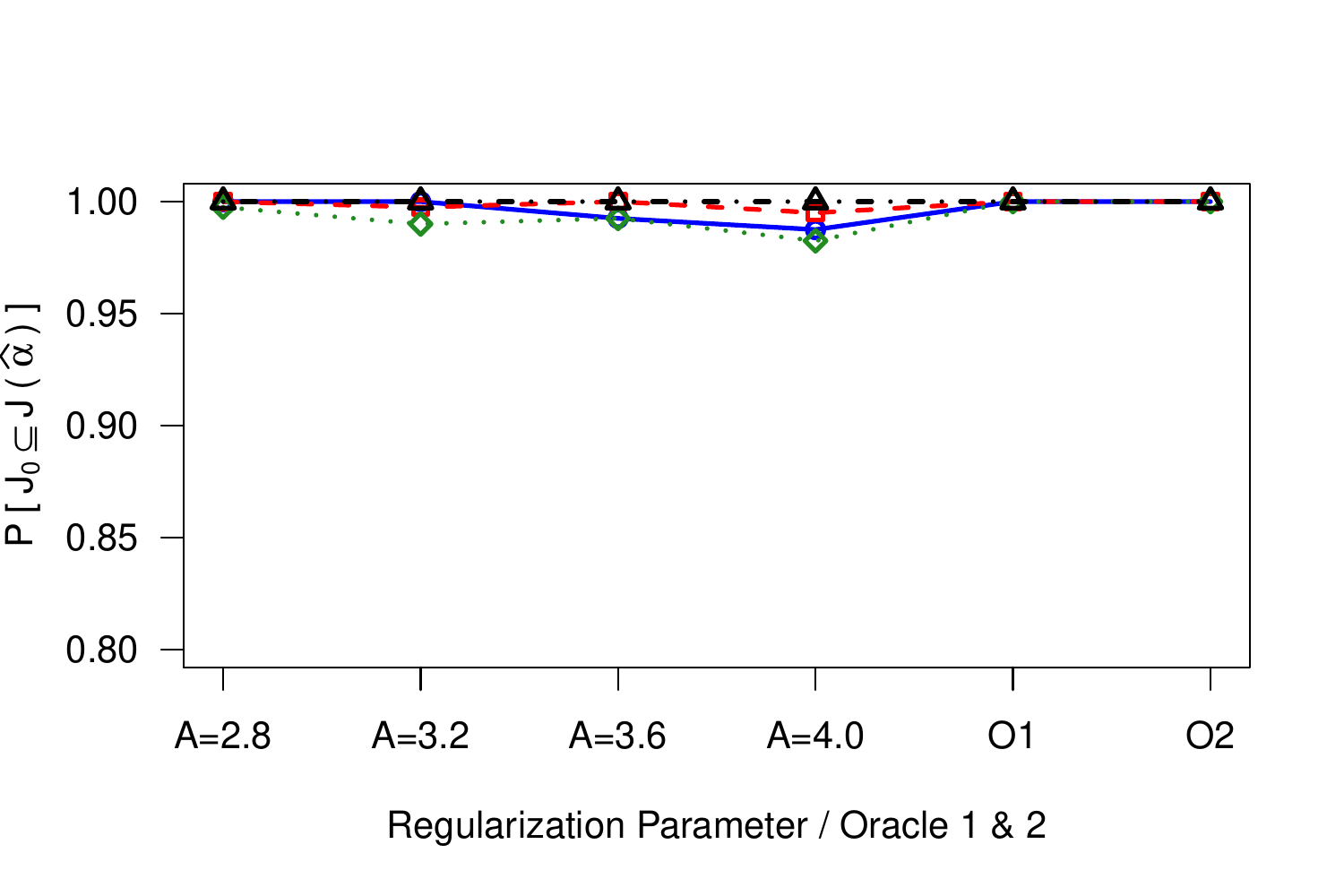}
\includegraphics[width = 2.8in]{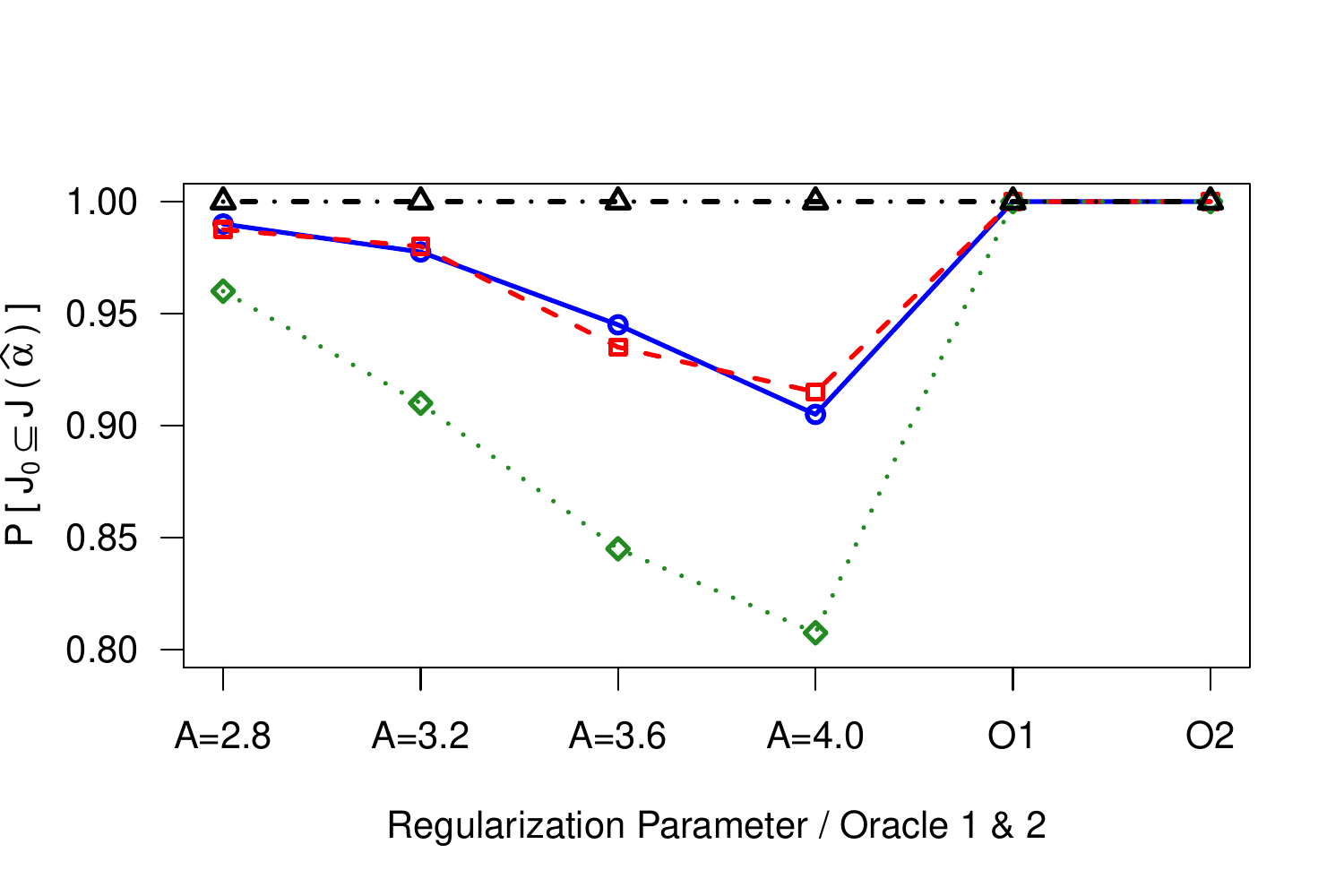}  \\[0pt]
$M=400$\\[0pt]
\end{center}
\par
\label{fg:cov}
\end{figure}

\clearpage

\bibliographystyle{chicago}
\bibliography{LeeSeoShin_13Mar2014}


\end{document}